\title[Coupled stability thresholds]{On the coupled stability thresholds of 
graded linear series}
\author{Kento Fujita} 
\date{\today}
\subjclass[2010]{Primary 14J45; Secondary 14L24}
\keywords{K-stability, Graded linear series}
\address{Department of Mathematics, Graduate School of Science, Osaka University, 
Toyonaka, Osaka 560-0043, Japan}
\email{fujita@math.sci.osaka-u.ac.jp}
\newcommand{\pr}{\mathbb{P}}
\newcommand{\N}{\mathbb{N}}
\newcommand{\Z}{\mathbb{Z}}
\newcommand{\Q}{\mathbb{Q}}
\newcommand{\R}{\mathbb{R}}
\newcommand{\C}{\mathbb{C}}
\newcommand{\D}{\mathbb{D}}
\newcommand{\B}{{\bf B}}
\newcommand{\BIG}{\operatorname{Big}}
\newcommand{\Supp}{\operatorname{Supp}}
\newcommand{\Spec}{\operatorname{Spec}}
\newcommand{\CaCl}{\operatorname{CaCl}}
\newcommand{\DIV}{\operatorname{div}}
\newcommand{\Hom}{\operatorname{Hom}}
\newcommand{\id}{\operatorname{id}}
\newcommand{\lct}{\operatorname{lct}}
\newcommand{\ord}{\operatorname{ord}}
\newcommand{\vol}{\operatorname{vol}}
\newcommand{\Image}{\operatorname{Image}}
\newcommand{\mult}{\operatorname{mult}}
\newcommand{\interior}{\operatorname{int}}
\newcommand{\Img}{\operatorname{Image}}
\newcommand{\Ann}{\operatorname{Ann}}
\newcommand{\Biggu}{\operatorname{Big}}
\newcommand{\Cone}{\operatorname{Cone}}
\newcommand{\Conv}{\operatorname{Conv}}
\newcommand{\Gr}{\operatorname{Gr}}
\newcommand{\sC}{\mathcal{C}}
\newcommand{\sO}{\mathcal{O}}
\newcommand{\sF}{\mathcal{F}}
\newcommand{\sG}{\mathcal{G}}
\newcommand{\sS}{\mathcal{S}}
\newcommand{\dm}{\mathfrak{m}}
\newtheorem{thm}{Theorem}[section]
\newtheorem{lemma}[thm]{Lemma}
\newtheorem{proposition}[thm]{Proposition}
\newtheorem{corollary}[thm]{Corollary}
\newtheorem{claim}[thm]{Claim}
\theoremstyle{definition}
\newtheorem{definition}[thm]{Definition}
\newtheorem{remark}[thm]{Remark}
\newtheorem{example}[thm]{Example}
\newtheorem*{ack}{Acknowledgments}
\begin{document}

\maketitle 

\begin{abstract}
In this paper, we see several basic properties of graded linear series. 
We firstly see that, if a graded linear series contains an ample series, 
then so are the pullbacks of the system under birational morphisms. 
Using this proposition, we define the refinements of graded linear series 
with respects to primitive flags. Moreover, we give several formulas to compute 
the $S$-invariant of those refinements. 
Secondly, we introduce the notion of coupled stability thresholds for 
graded linear series, which is a generalization of the notion introduced by 
Rubinstein--Tian--Zhang. We see that, 
over the interior of the support for finite numbers of graded linear series containing 
an ample series, the coupled stability threshold function can be uniquely extended 
continuously, which generalizes the work by Kewei Zhang. 
Thirdly, we get a product-type formula for coupled stability thresholds, which 
generalizes the work of Zhuang. 
Fourthly, we see Abban--Zhuang's type formulas for estimating local 
coupled stability thresholds. 
\end{abstract}

\setcounter{tocdepth}{1}
\tableofcontents

\section{Introduction}\label{intro_section}

For a Fano manifold $X$ over the complex number field $\C$, it has been known 
that the existence of K\"ahler--Einstein metrics on $X$ is equivalent to the 
K-polystability of $X$. We can check K-polystability of $X$ by 
estimating its \emph{stability threshold} $\delta(X):=\delta(X; -K_X)$ 
(see \cite{FO, BJ}). 

Recently, based on the earlier work by Hultgren--Witt Nystr\"om 
\cite{HWN}, 
Rubinstein--Tian--Zhang \cite{RTZ} and Kewei Zhang 
\cite{kewei2} established its coupled version: Let $X$ be a Fano manifold over $\C$, 
let $L_1,\dots,L_k$ be ample $\Q$-divisors on $X$ satisfying 
$-K_X=\sum_{i=1}^k L_i$. In \cite[\S A]{RTZ}, the authors introduced 
the \emph{coupled stability threshold} $\delta\left(X; \left\{L_i\right\}_{i=1}^k\right)$ 
(see \S \ref{delta_section}). 
By \cite[Remark 5.3]{kewei2} (see also \cite[\S A.3]{hashimoto}), 
the author showed the existence of coupled K\"ahler--Einstein metrics 
provided that $\delta\left(X; \left\{L_i\right\}_{i=1}^k\right)>1$. 
Moreover, by \cite[Corollary A.15]{kewei2}, if $X$ is toric, then 
the existence of coupled K\"ahler--Einstein metrics is equivalent to the condition 
$\delta\left(X; \left\{L_i\right\}_{i=1}^k\right)=1$. 
The coupled stability threshold $\delta\left(X;\{L_i\}_{i=1}^k\right)$ is a 
natural generalization of the stability threshold $\delta(X; L)$ 
(for big $\Q$-divisors $L$) in \cite{FO, BJ}. 
However, systematic studies for coupled stability thresholds are not established 
so much yet. 

On the other hand, as in \cite{AZ}, it is natural and powerful for the computation 
that generalizing the notion of stability thresholds not only for big $\Q$-divisors 
but also \emph{graded linear series} $V_{\vec{\bullet}}$ which has bounded support 
and contains an ample series. In fact, in \cite{FANO, r3d28}, the authors 
got explicit formulas in order to estimate the values $\delta(Y; -K_Y)$ for 
smooth Fano threefolds $Y$, by focusing on the stability thresholds 
$\delta\left(X; V_{\vec{\bullet}}\right)$ with $X$ subvarieties of $Y$ and 
$V_{\vec{\bullet}}$ certain graded linear series on $X$. 

In this paper, we introduce the notion of the \emph{coupled stability threshold 
$\delta\left(X, B; \left\{V^i_{\vec{\bullet}}\right\}_{i=1}^k\right)$
for a series of} (the Veronese equivalence class of) \emph{graded linear series} 
$\left\{V^i_{\vec{\bullet}}\right\}_{i=1}^k$ (which have bounded supports and 
contain ample series) 
\emph{over a projective klt pair} $(X, B)$. The notion is very natural, since 
this notion is a common generalizations of the above notions
$\delta\left(X; \left\{L_i\right\}_{i=1}^k\right)$ and 
$\delta\left(X; V_{\vec{\bullet}}\right)$. Moreover, we see various basic properties 
related with the stability thresholds. 
For example, one of the purpose of the paper is to give several formulas to 
estimate or to compute the \emph{$S$-invariant} of specific graded linear series, 
which is crucial to estimate the stability thresholds. 
The concept of the Veronese equivalence classes for graded linear series was 
systematically treated in \cite[\S 3.1]{r3d28}. The concept is very natural 
in order to consider important invariants including the $S$-invariant. 

We quickly state important results of the paper. Firstly, we showed that the basic 
properties of graded linear series are stable under birational base change:

\begin{proposition}[{see 
Proposition \ref{pullback_proposition}}]\label{intro-pullback_proposition}
Let us consider a birational morphism $\sigma\colon X'\to X$ 
between (possibly non-normal) projective 
varieties, and let $V_{\vec{\bullet}}$ be the Veronese equivalence class of graded 
linear series on $X$ (see Definition \ref{gr_definition}). 
Then $V_{\vec{\bullet}}$ contains an ample series (resp., 
has bounded support) if and only if $\sigma^*V_{\vec{\bullet}}$ is so. 
\end{proposition}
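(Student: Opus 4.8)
Recall that a graded linear series $V_{\vec{\bullet}}$ on a projective variety $X$ of dimension $n$, with respect to $\Q$-divisors $D_1,\dots,D_r$, is a collection of subspaces $V_{\vec{m}}\subseteq H^0(X,\sO_X(\sum m_j D_j))$ for $\vec{m}\in\N^r$ closed under multiplication. The pullback $\sigma^*V_{\vec{\bullet}}$ is defined by $(\sigma^*V_{\vec{\bullet}})_{\vec{m}}=\sigma^*V_{\vec{m}}\subseteq H^0(X',\sO_{X'}(\sum m_j\sigma^*D_j))$, using that $\sigma^*\colon H^0(X,\sO_X(D))\to H^0(X',\sO_{X'}(\sigma^*D))$ is injective (as $\sigma$ is birational and surjective, a section vanishing on $X'$ pulls back from a section vanishing on a dense open of $X$, hence vanishing on $X$).

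**Plan.** The plan is to prove the two equivalences separately, and in each case to exploit that $\sigma^*$ is an injection on global sections that is moreover compatible with multiplication of sections, so that it induces an isomorphism from $V_{\vec{\bullet}}$ onto its image $\sigma^*V_{\vec{\bullet}}$ as graded rings (or rather graded semigroups of subspaces). For the \emph{bounded support} statement: the support of a graded linear series is the closure of $\{\vec{m}/|\vec{m}| : V_{\vec{m}}\neq 0\}$ in $\R^r$ (or one uses the analogous convex-geometric description), and since $\sigma^*$ is injective, $V_{\vec{m}}\neq 0$ if and only if $\sigma^*V_{\vec{m}}\neq 0$; hence the two supports literally coincide, and in particular one is bounded iff the other is. This direction is essentially immediate.

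**The ample-series condition.** For the statement about containing an ample series, recall the definition: $V_{\vec{\bullet}}$ contains an ample series if there is a rational polytope $P$ in the interior of $\Supp(V_{\vec{\bullet}})$ and, for each $\vec{m}$ in the semigroup generated by integral points near $P$, an ample $\Q$-divisor $A_{\vec m}$ and an effective divisor $E_{\vec m}$ with $\sum m_j D_j = A_{\vec m}+E_{\vec m}$ such that $H^0(X,\sO_X(A_{\vec m}))\hookrightarrow V_{\vec m}$ (via multiplication by the section of $E_{\vec m}$), together with the finite-generation/boundedness of the graded ring. I would first treat the ``only if'' direction: given that $V_{\vec{\bullet}}$ contains an ample series witnessed by data $(A_{\vec m}, E_{\vec m})$, push forward by $\sigma$: the divisors $\sigma^*A_{\vec m}$ are only nef, not ample, so I instead write $\sigma^*A_{\vec m} = A'_{\vec m}+E'_{\vec m}$ for a suitable ample $\Q$-divisor $A'_{\vec m}$ on $X'$ and effective $E'_{\vec m}$ — this is possible because $\sigma^*A_{\vec m}$ is big (its volume equals $\vol(A_{\vec m})>0$ by birational invariance of volume) and a big divisor is the sum of an ample and an effective one. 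Then $H^0(X',\sO_{X'}(A'_{\vec m}))\hookrightarrow H^0(X',\sO_{X'}(\sigma^*A_{\vec m}))=H^0(X,\sO_X(A_{\vec m}))\hookrightarrow V_{\vec m}$, and composing with $\sigma^*$ lands inside $\sigma^*V_{\vec m}$ after multiplying by $\sigma^*E_{\vec m}+E'_{\vec m}$; one checks the support and boundedness conditions transfer using the coincidence of supports established above. For the ``if'' direction: given an ample series in $\sigma^*V_{\vec{\bullet}}$ on $X'$ with ample pieces $A'_{\vec m}$, push forward via $\sigma_*$; since $\sigma_*\sO_{X'}(\text{nef big})$ compares with $\sO_X$ of the pushed-forward divisor and $\sigma$ contracts no divisors generically, one recovers sections downstairs — more robustly, one uses that $\sigma_*A'_{\vec m}$ is big on $X$ (again birational invariance of volume), rewrites it as ample plus effective on $X$, and runs the same argument in reverse, using that $H^0(X',\sO_{X'}(\sigma^*D))\cong H^0(X,\sO_X(D))$ to identify sections of $\sigma^*V_{\vec{\bullet}}$ with sections of $V_{\vec{\bullet}}$.

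**Main obstacle.** The conceptual heart — and the step I expect to require the most care — is the passage from nef to ample: $\sigma^*A_{\vec m}$ is nef and big but not ample, and one must produce an \emph{ample} sub-linear-series whose sections still factor through the image $\sigma^*V_{\vec m}$. The clean way is to fix once and for all a $\sigma$-ample effective $\Q$-divisor, or an ample $\Q$-divisor $H'$ on $X'$, and note that for each $\vec m$ there is $\epsilon_{\vec m}>0$ with $\sigma^*A_{\vec m}-\epsilon_{\vec m} H'$ still big (here uniformity over the polytope $P$ matters: one wants $\epsilon$ to be chosen linearly/continuously in $\vec m$, which follows from continuity of the volume function and the fact that $P$ is a compact rational polytope in the big cone pulled back). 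Then $A'_{\vec m}:=\sigma^*A_{\vec m}-\epsilon_{\vec m}H'+ (\text{effective correction})$ — actually it is cleanest to take $A'_{\vec m}$ proportional to $\sigma^*A_{\vec m}$ minus a small ample, write the difference as effective, and absorb it into $E'_{\vec m}$. Making these choices \emph{compatibly in $\vec m$} so that the multiplicative structure is preserved and the resulting subseries is genuinely a graded linear series containing an ample series is the only place where one must be careful; everything else is formal bookkeeping with the injection $\sigma^*$ and the birational invariance of the volume.
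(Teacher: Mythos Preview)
Your bounded-support argument is fine and matches the paper. The ample-series argument, however, has a genuine gap precisely at the point the proposition is designed to address: you repeatedly use the identification $H^0(X',\sO_{X'}(\sigma^*D))\cong H^0(X,\sO_X(D))$, but this is \emph{false} when $X$ is non-normal. Concretely, in your ``only if'' direction you need $H^0(X',kA'_{\vec m})\cdot(\text{section of }E'_{\vec m})\subset H^0(X',k\sigma^*A_{\vec m})$ to land in $\sigma^*H^0(X,kA_{\vec m})$ in order to end up inside $\sigma^*V_{\vec m}$; but the inclusion $H^0(X',\sigma^*D)\subset\sigma^*H^0(X,D)$ requires $\sigma_*\sO_{X'}=\sO_X$, which fails already for the normalization of a non-normal variety. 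The ``if'' direction has the same problem in reverse: pushing forward an effective divisor on $X'$ need not give an effective divisor on $X$ (the paper even gives an explicit counterexample on the cuspidal cubic in Remark~\ref{pullback_remark}). Your argument is essentially correct when $X$ is normal, but that case was already known; the non-normal case is the content of the proposition.

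The paper fixes this as follows. First take the Stein factorization, reducing to two cases: (a) $\sigma_*\sO_{X'}=\sO_X$, where your argument works verbatim, and (b) $\sigma$ finite and birational. In case (b) the paper uses the conductor ideal $I=\Ann_{\sO_X}(\sigma_*\sO_{X'}/\sO_X)$: since $\dim(\sO_X/I)<n$, one can find $B\in|A|$ with $\sO_X(-B)\subset I$, and then the inclusion $\sigma_*\sO_{X'}\otimes\sO_X(-B)\subset\sO_X$ gives precisely the control needed to compare $H^0(X',\sigma^*A_0)$ with $\sigma^*H^0(X,2A)$ (at the cost of doubling the degree). This conductor trick is the missing idea in your sketch.
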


Although the above proposition is technical, we can introduce the notion of 
refinement $V_{\vec{\bullet}}^{\left(
Y_1\triangleright\cdots\triangleright Y_j\right)}$ of graded 
linear series $V_{\vec{\bullet}}$ for \emph{primitive flags} 
$Y_1\triangleright\cdots\triangleright Y_j$ over $X$ in a good way 
(see Definition \ref{refinement-primitive_definition}). 
From this viewpoint, the value 
\[
S\left(V_{\vec{\bullet}};Y_1\triangleright\cdots\triangleright Y_j\right)
:=S\left(V_{\vec{\bullet}}^{\left(Y_1\triangleright\cdots\triangleright Y_{j-1}\right)};
Y_j\right)
\]
naturally appeared many times in \cite{FANO, r3d28} etc.\ in order to apply 
Abban--Zhuang's method \cite{AZ}. Thus, we are interested in computing the value 
in various situations, especially when $V_{\vec{\bullet}}$ is the complete linear 
system $H^0(\bullet L)$ of a big $\Q$-Cartier $\Q$-divisor $L$ on $X$. In this case, 
the value $S\left(V_{\vec{\bullet}};Y_1\triangleright\cdots\triangleright Y_j\right)$ 
is denoted by $S\left(L;Y_1\triangleright\cdots\triangleright Y_j\right)$.

\begin{thm}[{see Theorems \ref{toric-S_thm} and \ref{adequate_thm} 
in detail}]\label{intro}
Assume either 
\begin{itemize}
\item
$X$ is a projective $\Q$-factorial toric variety and a primitive flag is torus invariant, 
or
\item
the primitive flag admits an adequate dominant with respects to $L$ (see 
Definition \ref{adequate_definition} for the definition).
\end{itemize}
Then there is an explicit formula to compute the value 
$S\left(L; Y_1\triangleright\cdots\triangleright Y_j\right)$. 
\end{thm}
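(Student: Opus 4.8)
The plan is to prove the two halves separately, both following the same template: first reduce the computation of $S\left(L; Y_1\triangleright\cdots\triangleright Y_j\right)$ to a tower of volume functions via the iterative identity $S\left(V_{\vec{\bullet}};Y_1\triangleright\cdots\triangleright Y_j\right)=S\left(V_{\vec{\bullet}}^{(Y_1\triangleright\cdots\triangleright Y_{j-1})};Y_j\right)$ with $V_{\vec{\bullet}}=H^0(\bullet L)$, and then evaluate those volume functions explicitly in the respective geometry. The crucial preliminary is Proposition \ref{intro-pullback_proposition}: pulling $V_{\vec{\bullet}}$ back to the birational model over which the primitive flag is defined preserves both ``contains an ample series'' and ``has bounded support'', and one must check (using the primitivity of the flag together with Proposition \ref{intro-pullback_proposition}) that each successive refinement $V_{\vec{\bullet}}^{(Y_1\triangleright\cdots\triangleright Y_i)}$ is again a graded linear series of this good type on $Y_i$. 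This legitimizes applying, at every one of the $j$ stages, the integral formula $S(W_{\vec{\bullet}};F)=\tfrac{1}{\vol(W_{\vec{\bullet}})}\int_0^{\infty}\vol\!\left(W_{\vec{\bullet}}^{\,F\geq t}\right)dt$ and its continuity in $t$, so that the whole problem becomes: describe $\vol\!\left(V_{\vec{\bullet}}^{(Y_1\triangleright\cdots\triangleright Y_{i-1})\,\geq t Y_i}\right)$ as an explicit function of $t$, for $i=1,\dots,j$.

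For the toric case I would encode $L$ by its polytope $P_L\subset M_{\R}$. A torus-invariant primitive flag corresponds to a chain of torus-invariant strata, equivalently to nested faces determined by rays/cones of the fan; restricting $H^0(mL)$ to the boundary divisor $Y_1$ dual to a ray $u_{Y_1}$ reads off the corresponding facet of $P_L$, and the order-$\geq tY_1$ piece of the refinement reads off the ``slab'' cut from $P_L$ by $\langle\,\cdot\,,u_{Y_1}\rangle$ between two levels. Iterating down the flag, $\vol\!\left(V_{\vec{\bullet}}^{(Y_1\triangleright\cdots\triangleright Y_{i-1})\,\geq tY_i}\right)$ becomes the Euclidean volume of an explicit polytope obtained from $P_L$ by a prescribed sequence of cut-and-project operations governed by the flag; these are piecewise polynomial in $t$, and feeding them into the integral formula yields the closed expression of Theorem \ref{toric-S_thm}, a ratio of integrals over $P_L$ in the spirit of a weighted barycentric coordinate. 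Here $\Q$-factoriality is what guarantees the torus-invariant primitive flags behave as honest smooth toric strata so the projections and lattice normalizations are unambiguous, while any non-normality of ambient models is absorbed into Step 1.

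For the second case I would use the hypothesis that the primitive flag admits an adequate dominant $\mu\colon W\to X$ with respect to $L$ (Definition \ref{adequate_definition}): the content of adequacy is that after pulling back by $\mu$, for the relevant range of parameters the $\R$-divisor $\mu^{*}L$ minus the flag divisors admits a Zariski-type decomposition whose positive part restricts compatibly to the next flag element, so that each restricted-volume quantity equals an honest volume on a lower-dimensional variety. Descending the flag $Y_1\triangleright\cdots\triangleright Y_j$ one replaces, at each stage, the computation of $\vol\!\left(V_{\vec{\bullet}}^{(Y_1\triangleright\cdots\triangleright Y_{i-1})\,\geq tY_i}\right)$ by an honest volume on $Y_{i}$ (of the restriction of the appropriate positive part), and assembling the nested integrals produces the formula of Theorem \ref{adequate_thm}; again Proposition \ref{intro-pullback_proposition} is what keeps every intermediate refinement admissible.

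The main obstacle, and the reason Proposition \ref{intro-pullback_proposition} is indispensable, is the propagation of the ample-series/bounded-support conditions through all $j$ refinement steps, together with the fact that refinement does not commute naively with restriction, so one must be careful to track the correct Newton--Okounkov slices at each stage rather than the ``expected'' ones. In the toric case the remaining difficulty is purely combinatorial bookkeeping — identifying the precise sequence of slices and projections of $P_L$ and the lattice volume normalizations — whereas in the adequate-dominant case the delicate point is verifying that adequacy genuinely forces the positive part of the relevant Zariski decomposition to restrict as required, \emph{uniformly} along the whole flag and the whole parameter range.
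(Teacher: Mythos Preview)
Your overall template—iterate the integral identity for $S$ down the flag and evaluate each volume—is sound in principle, but there is a concrete misunderstanding in the toric half and a missing structural ingredient in both halves.

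In the toric case you write that a torus-invariant primitive flag ``corresponds to a chain of torus-invariant strata, equivalently to nested faces determined by rays/cones of the fan'' and that $\Q$-factoriality makes these ``behave as honest smooth toric strata''. This is not what a primitive flag is: each $Y_k$ is a primitive prime divisor \emph{over} $Y_{k-1}$, realized inside a prime blowup $\tilde{Y}_{k-1}\to Y_{k-1}$. In toric language (Definition~\ref{toric-plt-flag_definition}) the flag is encoded by primitive lattice vectors $v_1,\dots,v_j$ that need not be ray generators of the original fan; each $v_k$ forces a star subdivision, and $Y_k$ is the exceptional stratum in the subdivided variety (see Example~\ref{CFKP_example}, where $Y_1\simeq\pr(1,1,3)$ is an exceptional divisor of a genuine blowup). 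So your ``slab'' description only matches the special admissible subcase of Definition~\ref{toric-admissible_definition}. The paper sidesteps iterated slicing altogether: Theorem~\ref{toric-okounkov_thm} exhibits the Okounkov body $\Delta_{Y_\bullet}(L)$ as a single explicit linear image $\psi\circ\phi(P_D)$ of the moment polytope, where the upper-triangular matrix for $\psi$ is built from the constants $c'_{j,k}$ recording how each $v_k$ sits inside the cone $\tau_{k-1}$. The formula of Theorem~\ref{toric-S_thm} then follows because $S(L;Y_1\triangleright\cdots\triangleright Y_k)$ is the $k$-th barycentric coordinate of $\Delta_{Y_\bullet}(L)$, and barycenters transform linearly.

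The same missing piece recurs in the adequate-dominant half. Your sketch (``positive part of a Zariski decomposition restricts compatibly, so restricted volume equals an honest volume on $Y_i$'') captures Remark~\ref{adequate_remark} but omits the correction terms: the refinement $V_{\vec{\bullet}}^{(Y_1\triangleright\cdots\triangleright Y_k)}$ along the primitive flag and the refinement $V_{\vec{\bullet}}^{(\hat{Y}_1>\cdots>\hat{Y}_k)}$ along the dominant differ by the explicit lower-triangular change of coordinates of Proposition~\ref{refinement-dominant_proposition}, and the negative parts $N_{l-1,k-1}$ propagate further shifts $v_k(x_1,\dots,x_{k-1})$ as in Definition~\ref{P-N_definition}. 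Both contributions survive into the final formula of Theorem~\ref{adequate_thm} (the terms $g_{l,k}$ and $v_l$); an argument that only tracks positive parts will not recover them. The paper handles this not by iterating your integral formula stage by stage, but by comparing Okounkov bodies via the linear map of Corollary~\ref{refinement-dominant_corollary} and by a chain of asymptotic equivalences (Claims~\ref{adequate-compare_claim} and~\ref{adequate-body_claim}) between the primitive-flag series and auxiliary divisorial series built from the Zariski decompositions.
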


We also define the coupled global log canonical thresholds 
$\delta\left(X,B; \left\{c_i\cdot V_{\vec{\bullet}}^i\right\}_{i=1}^k\right)$ 
and the coupled stability thresholds $\delta\left(X,B; \left\{
c_i\cdot V_{\vec{\bullet}}^i\right\}_{i=1}^k\right)$
of graded linear series on projective klt pairs $(X,B)$ with $c_1,\dots,c_k\in\R_{>0}$. 
We show that both the coupled global log canonical thresholds and the coupled 
stability thresholds behaves well under changing slopes, which are generalizations 
of the result of Dervan \cite{dervan} and Kewei Zhang \cite{kewei}.

\begin{thm}[{=Corollary \ref{cont_corollary}. See Theorem \ref{cont_thm} for 
more general settings}]\label{intro_cont_thm}
For a projective klt pair $(X, B)$, the functions 
\begin{eqnarray*}
\alpha\colon\Biggu(X)_\Q^k&\to&\R_{>0}\\
(L_1,\dots,L_k)&\mapsto&\alpha\left(X,B;\left\{L_i\right\}_{i=1}^k\right), \\
\delta\colon\Biggu(X)_\Q^k&\to&\R_{>0}\\
(L_1,\dots,L_k)&\mapsto&\delta\left(X,B;\left\{L_i\right\}_{i=1}^k\right), 
\end{eqnarray*}
uniquely extend to continuous functions
\[
\alpha\colon\Biggu(X)^k\to\R_{>0}, \quad
\delta\colon\Biggu(X)^k\to\R_{>0}. 
\] 
\end{thm}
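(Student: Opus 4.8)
The plan is to reduce continuity of the two functions $\alpha$ and $\delta$ on $\Biggu(X)^k$ to the scalar case $k=1$, which is exactly the content of the results of Dervan \cite{dervan} and Kewei Zhang \cite{kewei}, together with a concavity/convexity argument in the parameters $c_1,\dots,c_k$. First I would recall that for a fixed tuple $(L_1,\dots,L_k)\in\Biggu(X)_\Q^k$ and positive rationals $c_i$, the coupled invariants $\alpha\bigl(X,B;\{c_i\cdot V^i_{\vec\bullet}\}\bigr)$ and $\delta\bigl(X,B;\{c_i\cdot V^i_{\vec\bullet}\}\bigr)$ are defined (in the sections cited in the excerpt) as infima over prime divisors $E$ over $X$ of ratios of the shape $A_{X,B}(E)\big/\sum_i c_i\,S(V^i_{\vec\bullet};E)$ for $\delta$ (and the analogous log-canonical-threshold version for $\alpha$), where $V^i_{\vec\bullet}$ is the complete graded linear series of $L_i$. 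The key structural fact is that $E\mapsto S(V^i_{\vec\bullet};E)$ depends on $L_i$ through a function that is homogeneous of degree one and concave in $L_i$ on the big cone, and is continuous there; this continuity in the single-divisor variable is precisely what \cite{kewei} (for $\delta$) and \cite{dervan} (for $\alpha$) establish, after the standard rigidification that makes $\delta(X,B;L)$ and $\alpha(X,B;L)$ functions on $\Biggu(X)$.

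Second I would handle the dependence on the slopes: since $-K_X-B$ is fixed, along the affine slice $\sum_i L_i = -K_X-B$ the coupled threshold as a function of the partition $(L_1,\dots,L_k)$ can be rewritten, by the substitution $L_i = c_i M_i$ with $M_i$ in the interior of a fixed polytope and $\sum c_i$ fixed, as an infimum of a family of concave functions, hence is itself upper semicontinuous and in fact concave; combined with the pointwise lower bounds coming from the scalar estimate $\delta\bigl(X,B;\{L_i\}\bigr)\ge \min_i \delta(X,B;k L_i)/k$ type inequalities (and their $\alpha$-analogues), one gets two-sided control and continuity on the rational points. The heart of the matter is the extension from $\Biggu(X)_\Q^k$ to all of $\Biggu(X)^k$: here I would use that $\Biggu(X)$ is an open convex cone in the finite-dimensional vector space $\ND(X)_\R$, that $\Biggu(X)_\Q$ is dense in it, and that a function which is concave (resp. convex, for $\alpha$) on the dense set of rational points of an open convex set and locally bounded there extends uniquely to a continuous — indeed locally Lipschitz — function on the whole open set. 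So the extension is forced, and uniqueness is automatic from density of $\Biggu(X)_\Q^k$ in $\Biggu(X)^k$.

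Concretely the steps are: (1) reduce to complete graded linear series and recall the defining formulas for the coupled $\alpha$ and $\delta$ from the earlier sections; (2) show homogeneity of degree one and concavity (resp. convexity for $\alpha$) of $(L_1,\dots,L_k)\mapsto \delta\bigl(X,B;\{L_i\}\bigr)$ on $\Biggu(X)_\Q^k$, using that each $S(L_i;E)$ is concave in $L_i$ (a consequence of concavity of the volume function, via the integral formula $S(L;E)=\frac1{\vol(L)}\int_0^\infty \vol(L-tE)\,dt$ applied on a resolution where $E$ is a divisor, together with Proposition \ref{intro-pullback_proposition} to pass freely between $X$ and such a resolution); (3) establish local boundedness of $\delta$ and $\alpha$ near any rational point of $\Biggu(X)^k$ by sandwiching between nearby rational tuples; (4) invoke the general fact that a locally bounded concave (resp. convex) function on the rational points of an open convex subset of $\R^N$ extends uniquely to a continuous function on that open set, and conclude positivity of the extension from the scalar positivity $\delta(X,B;L)>0$, $\alpha(X,B;L)>0$ for big $L$ proved in the cited references.

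The main obstacle I anticipate is step (2): verifying that the coupled stability threshold is genuinely concave (not merely quasi-concave) as a function of the tuple, because a naive infimum of ratios need not be concave. The resolution will be to work with the reciprocal, or equivalently to note that for $\delta$ one has $1/\delta\bigl(X,B;\{L_i\}\bigr)=\sup_E \sum_i S(L_i;E)/A_{X,B}(E)$, a supremum of sums of functions each of which is concave in $L_i$; concavity of $S(\cdot;E)$ in the divisor then gives convexity of $1/\delta$, hence concavity of $\delta$, and symmetrically convexity of $\alpha$ after writing $\alpha$ via the analogous log-canonical-threshold supremum. Making the convexity of $L\mapsto S(L;E)$ on the big cone precise — in particular checking it along segments that may leave and re-enter subcones where $E$ is or is not contracted — is the one place that needs genuine care, and I would isolate it as a lemma proved by the Okounkov-body/volume-concavity argument and Proposition \ref{intro-pullback_proposition}.
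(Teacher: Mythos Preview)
Your approach has a genuine gap at the point you yourself flag as the main obstacle. You write that $1/\delta(\{L_i\}) = \sup_E \sum_i S(L_i;E)/A_{X,B}(E)$ and then claim that concavity of $S(\cdot;E)$ gives convexity of $1/\delta$; but a supremum of concave functions is not convex (only an infimum is). In the final paragraph you switch and say you need \emph{convexity} of $L\mapsto S(L;E)$, which is indeed what would make the sup argument work --- but you never prove this, and it does not follow from concavity of $\vol^{1/n}$ via the integral formula $S(L;E)=\vol(L)^{-1}\int_0^\infty\vol(L-tE)\,dt$ (a ratio of concave-type quantities has no reason to be convex). For $\alpha$ the situation is worse: $T(L;E)=\tau_E(L)$ is \emph{concave} on the big cone, so $1/\alpha=\sup_E\sum_i T(L_i;E)/A_{X,B}(E)$ is a supremum of concave functions and you get neither convexity of $1/\alpha$ nor convexity of $\alpha$. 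The remark about restricting to the slice $\sum_i L_i=-K_X-B$ is also irrelevant to the statement, which concerns arbitrary big classes.

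The paper does not use convexity of $S$ or $T$ at all. For $\alpha$ it uses only the monotonicity $T(V_{\bullet\vec a};\sF)\le T(V_{\bullet\vec b};\sF)$ whenever $\vec b-\vec a$ lies in the open support cone (Lemma~\ref{kewei_lemma}\eqref{kewei_lemma1}); combined with degree-one homogeneity this already gives $(1+\varepsilon)^{-1}\alpha(\vec a)\le\alpha(\vec b)\le(1-\varepsilon)^{-1}\alpha(\vec a)$ for nearby points, hence uniform continuity on compacta. For $\delta$ the key input is the two-sided perturbation estimate of Lemma~\ref{kewei_lemma}\eqref{kewei_lemma2},
\[
\Bigl(\tfrac{1+\varepsilon-\varepsilon^2}{1+\varepsilon+\varepsilon^2}\Bigr)^{n}(1+\varepsilon-\varepsilon^2)\,S(V_{\bullet\vec a};\sF)
\le S(V_{\bullet(\vec a+\varepsilon\vec b)};\sF),
\]
and its companion, which hold uniformly in the filtration $\sF$. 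These replace your unproven convexity of $S$ and directly yield $\delta(\vec a+\varepsilon\vec b)\le\delta(\vec a)\le\delta(\vec a-\varepsilon\vec b)$, from which local Lipschitz control and the continuous extension follow. If you want to repair your argument, this perturbation lemma (essentially \cite[Proposition~4.1]{kewei}) is the missing ingredient; the global convexity of $S(L;E)$ you posit is neither needed nor, as far as I can see, available.
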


We can also show the Zhuang's product formula \cite{zhuang} for coupled settings. 

\begin{thm}[{=Theorem \ref{thm_zhuang}}]\label{intro_thm_zhuang}
Let $\left(X_1, B_1\right)$ and $\left(X_2, B_2\right)$ be projective klt. 
For any $1\leq i\leq k$, let 
$U_{\vec{\bullet}}^i$ $($resp., $V_{\vec{\bullet}}^i$$)$ be the Veronese equivalence 
class of a graded linear series on $X_1$ $($ resp., on $X_2$$)$ associated to 
$L_1^i,\dots,L_{r_i}^i\in\CaCl(X_1)\otimes_\Z\Q$ 
$($resp., $M_1^i,\dots,M_{s_i}^i\in\CaCl(X_2)\otimes_\Z\Q$$)$ which has bounded 
support and contains an ample series. Set 
$\left(X, B\right):=\left(X_1\times X_2, B_1\boxtimes B_2\right)$ and 
$W_{\vec{\bullet}}^i:=U_{\vec{\bullet}}^i\otimes V_{\vec{\bullet}}^i$ 
$($see Definition \ref{tensor_definition}$)$. 
Moreover, take any $c_1,\dots,c_k\in\R_{>0}$. 
Then we have 
\[
\delta\left(X, B;\left\{c_i W_{\vec{\bullet}}^i\right\}_{i=1}^k\right)
=\min\left\{
\delta\left(X_1, B_1;\left\{c_i U_{\vec{\bullet}}^i\right\}_{i=1}^k\right), \quad
\delta\left(X_2, B_2;\left\{c_i V_{\vec{\bullet}}^i\right\}_{i=1}^k\right)
\right\}.
\]
\end{thm}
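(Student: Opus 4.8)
The plan is to pass to the valuative description of the coupled stability threshold, to treat the inequality ``$\le$'' by restricting to cylinder valuations, and to obtain ``$\ge$'' in the spirit of Zhuang by inducting along the projection $\operatorname{pr}_1\colon X\to X_1$ using the coupled Abban--Zhuang estimates. Throughout I would use the valuative formula: for a projective klt pair $(Z,C)$ and graded linear series $\{T^i_{\vec{\bullet}}\}_{i=1}^k$ on $Z$ with bounded support containing ample series,
\[
\delta\Bigl(Z,C;\bigl\{c_iT^i_{\vec{\bullet}}\bigr\}_{i=1}^k\Bigr)=\inf_v\frac{A_{Z,C}(v)}{\sum_{i=1}^k c_i\,S(T^i_{\vec{\bullet}};v)},
\]
the infimum over nontrivial valuations $v$ over $Z$ with finite $S$-invariants, together with the following product lemma: if $v_1$ is a valuation over $X_1$ and $v_2$ a valuation over $X_2$ (either one possibly trivial), and $v_1\boxtimes v_2$ denotes the tensor-product valuation on $K(X)=\operatorname{Frac}(K(X_1)\otimes_\C K(X_2))$ determined by $(v_1\boxtimes v_2)(g\boxtimes h)=v_1(g)+v_2(h)$, then
\[
A_{X,B}(v_1\boxtimes v_2)=A_{X_1,B_1}(v_1)+A_{X_2,B_2}(v_2),\qquad S(W^i_{\vec{\bullet}};v_1\boxtimes v_2)=S(U^i_{\vec{\bullet}};v_1)+S(V^i_{\vec{\bullet}};v_2).
\]
The first identity uses $K_X+B=\operatorname{pr}_1^*(K_{X_1}+B_1)+\operatorname{pr}_2^*(K_{X_2}+B_2)$; the second uses that, for a product of admissible flags, the Okounkov body of $W^i_{\vec{\bullet}}=U^i_{\vec{\bullet}}\otimes V^i_{\vec{\bullet}}$ is the product of those of $U^i_{\vec{\bullet}}$ and $V^i_{\vec{\bullet}}$ and that the concave transform attached to $v_1\boxtimes v_2$ splits on this product as a sum, so averages add.

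For ``$\le$'', fix a nontrivial valuation $v_1$ over $X_1$. By the product lemma with $v_2=\triv$, the cylinder valuation $v_1\boxtimes\triv$ over $X$ has $A_{X,B}(v_1\boxtimes\triv)=A_{X_1,B_1}(v_1)$ and $S(W^i_{\vec{\bullet}};v_1\boxtimes\triv)=S(U^i_{\vec{\bullet}};v_1)$, so the valuative formula gives $\delta(X,B;\{c_iW^i_{\vec{\bullet}}\})\le A_{X_1,B_1}(v_1)/\sum_i c_iS(U^i_{\vec{\bullet}};v_1)$; taking the infimum over $v_1$, and symmetrically over valuations over $X_2$, yields $\delta(X,B;\{c_iW^i_{\vec{\bullet}}\})\le\min\{\delta(X_1,B_1;\{c_iU^i_{\vec{\bullet}}\}),\,\delta(X_2,B_2;\{c_iV^i_{\vec{\bullet}}\})\}$.

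For ``$\ge$'' I would induct on $\dim X_1+\dim X_2$; when $\dim X_1>0$, fix a prime divisor $D$ over $X_1$. Then $Y:=D\times X_2$ is a prime divisor over $X$; the refinement of $W^i_{\vec{\bullet}}$ along $Y$ is the tensor product $U^{i,(D)}_{\vec{\bullet}}\otimes V^i_{\vec{\bullet}}$ on $D\times X_2$, a product of smaller total dimension which still has bounded support and contains an ample series by Proposition~\ref{intro-pullback_proposition} (applied to the birational model $D\times X_2\to X$), and $A_{X,B}(Y)=A_{X_1,B_1}(D)$, $S(W^i_{\vec{\bullet}};Y)=S(U^i_{\vec{\bullet}};D)$. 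The coupled Abban--Zhuang estimate for $Y$ then gives, for every valuation $v$ over $X$,
\[
\frac{A_{X,B}(v)}{\sum_i c_iS(W^i_{\vec{\bullet}};v)}\ \ge\ \min\Bigl\{\frac{A_{X_1,B_1}(D)}{\sum_i c_iS(U^i_{\vec{\bullet}};D)},\ \delta\bigl(D\times X_2;\{c_i(U^{i,(D)}_{\vec{\bullet}}\otimes V^i_{\vec{\bullet}})\}\bigr)\Bigr\},
\]
whose second term equals, by the inductive hypothesis, $\min\{\delta(D;\{c_iU^{i,(D)}_{\vec{\bullet}}\}),\,\delta(X_2,B_2;\{c_iV^i_{\vec{\bullet}}\})\}$. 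Taking the infimum over $v$, and then choosing $D$ over $X_1$ so that the minimum of $A_{X_1,B_1}(D)/\sum_i c_iS(U^i_{\vec{\bullet}};D)$ and $\delta(D;\{c_iU^{i,(D)}_{\vec{\bullet}}\})$ is $\ge\delta(X_1,B_1;\{c_iU^i_{\vec{\bullet}}\})$, one obtains $\delta(X,B;\{c_iW^i_{\vec{\bullet}}\})\ge\min\{\delta(X_1,B_1;\{c_iU^i_{\vec{\bullet}}\}),\,\delta(X_2,B_2;\{c_iV^i_{\vec{\bullet}}\})\}$; the case $\dim X_1=0<\dim X_2$ is symmetric, and the ultimate base case (both factors points, $W^i_{\vec{\bullet}}$ a tensor product of graded linear series on points) is immediate from the definition. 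With the previous paragraph, the theorem follows.

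The main obstacle is the step of choosing $D$: one needs that the coupled Abban--Zhuang estimate for $(X_1,B_1)$, optimized over all prime divisors (more generally all flags) over subvarieties of $X_1$, equals $\delta(X_1,B_1;\{c_iU^i_{\vec{\bullet}}\})$ and is not merely a lower bound for it --- equivalently, that a $\delta$-minimizing quasi-monomial valuation over $X_1$ can be enveloped by a flag along which the estimate is sharp. This, together with the bookkeeping that refinements are compatible with the tensor-product and product structure and that tensoring by a graded linear series on a point leaves the relevant $S$-invariants unchanged (where the refinement formalism and Proposition~\ref{intro-pullback_proposition} are needed), is the bulk of the work; by contrast the product lemma above and the identity $S(W^i_{\vec{\bullet}};v_1\boxtimes\triv)=S(U^i_{\vec{\bullet}};v_1)$ are routine from the Okounkov-body description once a product of admissible flags is fixed. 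As an alternative to the inductive argument one may follow Zhuang's original valuative route \cite{zhuang}, reducing a $\delta$-minimizing valuation on $X$ directly to one of product type.
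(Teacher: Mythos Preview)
Your argument for $\delta\le\min\{\delta_1,\delta_2\}$ via cylinder valuations is correct and is essentially what the paper does (the paper phrases it with prime divisors $E_1=\pi_1^*F_1$ and the identity $S_l(W^i_{\vec{\bullet}};E_1)=S_l(U^i_{\vec{\bullet}};F_1)$, but the content is the same).

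The hard inequality $\delta\ge\min\{\delta_1,\delta_2\}$ has a genuine gap in your inductive approach. The step you flag as ``the main obstacle'' is not a technicality but the entire difficulty: you need a divisor $D$ over $X_1$ for which
\[
\min\Bigl\{\frac{A_{X_1,B_1}(D)}{\sum_i c_iS(U^i_{\vec{\bullet}};D)},\ \delta\bigl(D,B_D;\{c_iU^{i,(D)}_{\vec{\bullet}}\}\bigr)\Bigr\}\ \ge\ \delta\bigl(X_1,B_1;\{c_iU^i_{\vec{\bullet}}\}\bigr),
\]
i.e., that the coupled Abban--Zhuang bound of Theorem~\ref{AZ_thm} is \emph{sharp} for some $D$. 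Nothing in the paper or in \cite{AZ} supplies this for general graded linear series, and your sketch does not either. There is also a structural mismatch: Theorem~\ref{AZ_thm} is a \emph{local} statement at a fixed center $\eta\in C_X(Y)$, so a single $D$ cannot serve all valuations $v$ over $X$; if instead you let $D$ depend on the center of $v$, you still need the refinement term $\delta(D,B_D;\{c_iU^{i,(D)}_{\vec{\bullet}}\})$ to dominate $\delta_1$, which again is the unestablished sharpness.

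The paper's proof of this direction takes a completely different route and avoids both induction and Theorem~\ref{AZ_thm}. It follows Zhuang's original method directly: fix an arbitrary prime divisor $E$ over $X$ and $c<\min\{\delta_1,\delta_2\}$, and for each $l\gg0$ construct an $l$-basis type $\Q$-divisor $D^i$ of $W^i_{\vec{\bullet}}$ compatible with $\sF_E$ (so $\ord_E D^i=S_l(W^i_{\vec{\bullet}};E)$) by choosing bases of each $W^i_{l,\vec{a},\vec{b}}$ compatible with both $\sF_E$ and a basis-type filtration $\sG$ on the $V^i$-factor, of type (I) or (II) in the sense of Example~\ref{filter-system_example} according to whether $\pi_2(C_X(E))=X_2$ or not. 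In case (I) one restricts $\sum_i D^i$ to a general fiber $X_x\simeq X_1$ and observes that the restriction is a convex combination of $l$-basis type divisors of the $U^i_{\vec{\bullet}}$; then $c<\delta_1$ and inversion of adjunction give $A_{X,B}(E)>c\sum_i S_l(W^i_{\vec{\bullet}};E)$. In case (II) one first peels off the $E_2$-component using $c<\delta_2$ and then argues similarly on a general fiber over $F_2$. No flag sharpness is required anywhere.
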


We also show the coupled version of Abban--Zhuang's method \cite{AZ} in 
Theorem \ref{AZ_thm} and see several examples of coupled stability thresholds.

Throughout the paper, we work over an algebraically closed filed $\Bbbk$. 
From \S \ref{div_section}, we assume that the characteristic of $\Bbbk$ is 
equal to zero. For the minimal model program, we refer the readers to 
\cite{KoMo, Xu}.

\begin{ack}
The author thanks Yoshinori Hashimoto, who introduced him the notion of 
coupled stability thresholds and providing him many suggestions 
during the 28th symposium on complex geometry 
in Kanazawa; Ivan Cheltsov, who asked him about the formula in Corollary 
\ref{adequate-divide_corollary}; and the referee for suggesting many important 
improvements of the paper. 
This work was supported by JSPS KAKENHI Grant Number 22K03269, 
Royal Society International Collaboration Award 
ICA\textbackslash 1\textbackslash 23109 and Asian Young Scientist Fellowship. 
\end{ack}

\section{Graded linear series}\label{graded_section}

Let us recall basic definitions of graded linear series. 
See also \cite{LM, boucksom, AZ, FANO, r3d28}. 
In \S \ref{graded_section}, we always assume that $X$ is an 
$n$-dimensional projective variety. 
Moreover, for any $\vec{x}=(x_1,\dots,x_r)\in\R^r$ and for $L_1,\dots,L_r$ 
$\R$-Cartier $\R$-divisors on $X$, let $\vec{x}\cdot \vec{L}$ 
be the $\R$-Cartier $\R$-divisor on $X$ defined by 
$\vec{x}\cdot \vec{L}:=\sum_{i=1}^r x_i L_i$. 

\begin{definition}[{see \cite[\S 3.1]{r3d28}}]\label{gr_definition}
Let us consider $L_1,\dots,L_r\in\CaCl(X)\otimes_\Z\Q$ and let us set $m\in\Z_{>0}$ 
such that each $m L_i\in\CaCl(X)\otimes_\Z\Q$ lifts to an element 
$m L_i\in\CaCl(X)$. We fix such lifts. 
\begin{enumerate}
\renewcommand{\theenumi}{\arabic{enumi}}
\renewcommand{\labelenumi}{(\theenumi)}
\item\label{gr_definition1}
We say that $V_{m\vec{\bullet}}$ is \emph{an $\left(m\Z_{\geq 0}\right)^r$-graded 
linear series on $X$ associated to $L_1,\dots,L_r$} if $V_{m\vec{\bullet}}$ is a collection 
$\left\{V_{m\vec{a}}\right\}_{\vec{a}\in\Z_{\geq 0}^r}$ of vector subspaces 
\[
V_{m\vec{a}}\subset H^0\left(X, \vec{a}\cdot m\vec{L}\right)
\]
such that, $V_{m\vec{0}}=\Bbbk$ and 
$
V_{m\vec{a}}\cdot V_{m\vec{b}}\subset V_{m\left(\vec{a}+\vec{b}\right)}
$
holds for every $\vec{a}$, $\vec{b}\in\Z_{\geq 0}^r$. 
We note that the definition of $\left(m\Z_{\geq 0}\right)^r$-graded linear series
depends on the choices of lifts $m L_i\in\CaCl(X)$. 
\item\label{gr_definition2}
Let $V_{m\vec{\bullet}}$ be as in \eqref{gr_definition1} and take any $\in\Z_{>0}$. 
We can naturally define the \emph{Veronese subseries} $V_{k m\vec{\bullet}}$ of 
$V_{m\vec{\bullet}}$ by 
\[
V_{k m\vec{a}}:=V_{m \left(k\vec{a}\right)}\quad\quad
\left(\vec{a}\in\Z_{\geq 0}^r\right).
\]
Clearly, the series is a $\left(k m\Z_{\geq 0}\right)^r$-graded linear series on $X$ 
associated to $L_1,\dots,L_r$. 
\item\label{gr_definition3}
Let $V'_{m'\vec{\bullet}}$ be another $\left(m'\Z_{\geq 0}\right)^r$-graded linear series 
on $X$ associated to $L_1,\dots,L_r$ defined by lifts $m' L_i\in\CaCl(X)$. 
The series $V_{m\vec{\bullet}}$ and $V'_{m'\vec{\bullet}}$ are defined to be 
\emph{Veronese equivalent} if there is $d\in m m'\Z_{>0}$ such that 
$(d/m)\cdot m L_i\sim (d/m')m' L_i$ for all $1\leq i\leq r$, and 
\[
V_{\left(\frac{d}{m}\right)m\vec{\bullet}}=V'_{\left(\frac{d}{m'}\right)m'\vec{\bullet}}
\]
holds as $\left(d\Z_{\geq 0}\right)^r$-graded linear series under the above linear 
equivalences. The Veronese equivalence class of $V_{m\vec{\bullet}}$ is 
denoted by $V_{\vec{\bullet}}$. We note that the definition of $V_{\vec{\bullet}}$ 
does not depend on the choices of lifts $m L_i\in\CaCl(X)$. 
\item\label{gr_definition4}
We define \emph{the Veronese equivalence class of the complete linear series} 
$H^0\left(\vec{\bullet}\cdot\vec{L}\right)$ \emph{on $X$ associated to} 
$L_1,\dots,L_r$.
More precisely, for a sufficiently divisible $m\in\Z_{>0}$, let us consider the 
$\left(m\Z_{\geq 0}\right)^r$-graded linear series 
$H^0\left(m\vec{\bullet}\cdot\vec{L}\right)$ on $X$ defined by 
$H^0\left(m\vec{a}\cdot\vec{L}\right):=H^0\left(X, \vec{a}\cdot m\vec{L}\right)$,
and let $H^0\left(\vec{\bullet}\cdot\vec{L}\right)$ be the Veronese equivalence 
class of $H^0\left(m\vec{\bullet}\cdot\vec{L}\right)$. 
\end{enumerate}
\end{definition}

We also recall basic properties of graded linear series in \cite{LM, AZ}. 

\begin{definition}[{\cite[\S 4.3]{LM}, \cite[\S 2]{AZ}, 
\cite[Definition 3.2]{r3d28}}]\label{prop_definition}
Let $V_{\vec{\bullet}}$ be the Veronese equivalence class of an 
$\left(m\Z_{\geq 0}\right)^r$-graded linear series 
$V_{m\vec{\bullet}}$ on $X$ associated to $L_1,\dots,L_r\in\CaCl(X)\otimes_\Z\Q$. 
\begin{enumerate}
\renewcommand{\theenumi}{\arabic{enumi}}
\renewcommand{\labelenumi}{(\theenumi)}
\item\label{prop_definition1}
We set 
\begin{eqnarray*}
\sS\left(V_{m\vec{\bullet}}\right)&:=&
\left\{m\vec{a}\in\Z_{\geq 0}^r\,\,|\,\,V_{m\vec{a}}\neq 0\right\}\subset
\Z_{\geq 0}^r,\\
\Supp\left(V_{m\vec{\bullet}}\right)&:=&
\overline{\Cone\left(\sS\left(V_{m\vec{\bullet}}\right)\right)}\subset\R_{\geq 0}^r.
\end{eqnarray*}
Recall that, for any nonempty subset $\sS\subset\R^r$, the \emph{cone} 
$\Cone(\sS)\subset\R^r$ \emph{generated by $\sS$} is defined to be the set 
\[
\left\{x_1\vec{s}_1+\cdots x_m\vec{s}_m\in\R^r
\mid m\in\Z_{>0}, \, x_1,\dots,x_m\in\R_{>0},\,\vec{s}_1,\dots,\vec{s}_m\in\sS\right\}. 
\]
Thus, the subset $\Supp\left(V_{m\vec{\bullet}}\right)\subset\R^r_{\geq 0}$ is the 
closure of the cone generated by $\sS\left(V_{m\vec{\bullet}}\right)\subset
\R^r$. 
We set $\Supp\left(V_{\vec{\bullet}}\right):=\Supp\left(V_{m\vec{\bullet}}\right)$
and is well-defined by \cite[Lemma 3.4]{r3d28}. 
Moreover, let 
$\Delta_{\Supp}:=\Delta_{\Supp\left(V_{\vec{\bullet}}\right)}\subset\R_{\geq 0}^{r-1}$ 
be the closed convex set defined by the following: 
\[
\Supp\left(V_{\vec{\bullet}}\right)\cap\left(\{1\}\times\R_{\geq 0}^{r-1}\right)
=\{1\}\times\Delta_{\Supp}. 
\]
The series $V_{m\vec{\bullet}}$ (or its class $V_{\vec{\bullet}}$) 
\emph{has bounded support} if 
$\Delta_{\Supp}\subset\R_{\geq 0}^{r-1}$ is a compact set. 
For example, if $r=1$, then any series has bounded support. 
\item\label{prop_definition2}
The series $V_{m\vec{\bullet}}$ \emph{contains an ample series} if: 
\begin{enumerate}
\renewcommand{\theenumii}{\roman{enumii}}
\renewcommand{\labelenumii}{(\theenumii)}
\item\label{prop_definition21}
the sub-semigroup $\sS\left(V_{m\vec{\bullet}}\right)
\subset\left(m\Z_{\geq 0}\right)^r$ generates $\left(m\Z\right)^r$ as 
an abelian group, and 
\item\label{prop_definition22}
there exists $m\vec{a}\in\interior\left(\Supp\left(V_{m\vec{\bullet}}\right)\right)
\cap\left(m\Z_{\geq 0}\right)^r$ and a decomposition 
$m\vec{a}\cdot \vec{L}=A+E$ with $A$ ample 
Cartier divisor and $E$ effective Cartier divisor such that 
\[
k E+ H^0\left(X,k A\right)\subset V_{k m\vec{a}}
\]
holds for every $k\in\Z_{>0}$.
\end{enumerate}
We note that the above definition is equivalent to \cite[Definition 3.2 (2)]{r3d28}
by \cite[Lemme 1.13]{boucksom}. Moreover, by \cite[Lemma 3.4]{r3d28}, if 
$V_{m\vec{\bullet}}$ contains an ample series, then $V_{k m\vec{\bullet}}$ contains 
an ample series for every $k\in\Z_{>0}$. The class $V_{\vec{\bullet}}$ 
\emph{contains an ample series} if some representative $V_{m\vec{\bullet}}$ of 
$V_{\vec{\bullet}}$ contains an ample series. 
It is trivial that, if there is $\vec{x}\in\R_{\geq 0}^r$ with $\vec{x}\cdot\vec{L}$ big, 
then the complete linear series $H^0\left(\vec{\bullet}\cdot\vec{L}\right)$ 
contains an ample series. 
\end{enumerate}
\end{definition}

\begin{definition}\label{pullback_definition}
Let $V_{\vec{\bullet}}$ be the Veronese equivalence class of 
an $\left(m\Z_{\geq 0}\right)^r$-graded linear series $V_{m\vec{\bullet}}$ 
on $X$ associated to 
$L_1,\dots,L_r\in\CaCl(X)\otimes_\Z\Q$, 
let $X'$ be a projective variety together with a morphism $\sigma\colon X' \to X$. 
The \emph{pullback} $\sigma^*V_{m\vec{\bullet}}$ of $V_{m\vec{\bullet}}$ is 
an $\left(m\Z_{\geq 0}\right)^r$-graded linear series on $X'$ associated to 
$\sigma^*L_1,\dots,\sigma^*L_r$ defined by 
\[
\sigma^*V_{m\vec{a}}:=\Img\left(V_{m\vec{a}}\xrightarrow{\sigma^*}
H^0\left(X',m\vec{a}\cdot\sigma^*\vec{L}\right)\right).
\]
Let $\sigma^*V_{\vec{\bullet}}$ be the Veronese equivalence class of 
$\sigma^*V_{m\vec{\bullet}}$ and is well-defined. 
\end{definition}

We see that several basic properties on graded linear series 
are stable under birational pullbacks. When $X$ is normal, the following proposition was 
already known in \cite[Example 3.5]{r3d28}. 

\begin{proposition}\label{pullback_proposition}
Let $V_{\vec{\bullet}}$ be a $\Z_{\geq 0}^r$-graded linear series on $X$ associated to 
Cartier divisors $L_1,\dots,L_r$, let $X'$ be a projective variety together with 
a birational morphism $\sigma\colon X' \to X$.
Then we have the following: 
\begin{enumerate}
\renewcommand{\theenumi}{\arabic{enumi}}
\renewcommand{\labelenumi}{(\theenumi)}
\item\label{pullback_proposition1}
$V_{\vec{\bullet}}$ has bounded support if and only if 
$\sigma^*V_{\vec{\bullet}}$ has bounded support. 
\item\label{pullback_proposition2}
$V_{\vec{\bullet}}$ contains 
an ample series if and only if $\sigma^*V_{\vec{\bullet}}$ 
contains an ample series. 
\end{enumerate}
\end{proposition}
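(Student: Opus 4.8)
The plan is to reduce everything to the known normal case by inserting the normalization. Let $\nu\colon \tilde X\to X$ and $\nu'\colon\tilde X'\to X'$ be the normalizations. Since $\sigma$ is birational, $\sigma$ lifts to a birational morphism $\tilde\sigma\colon\tilde X'\to\tilde X$ with $\nu\circ\tilde\sigma=\sigma\circ\nu'$. The key point, to be established first, is that passing to the normalization does not change whether a $\Z_{\geq 0}^r$-graded linear series has bounded support, resp.\ contains an ample series: indeed, since $\nu$ is finite and birational, $\nu^*\colon H^0(X,\vec a\cdot\vec L)\to H^0(\tilde X,\vec a\cdot\nu^*\vec L)$ is injective (an isomorphism onto its image; it is even an isomorphism for $\vec a$ in the interior of the support where the relevant line bundles on $X$ are globally generated), so $\sS(\nu^*V_{\vec\bullet})=\sS(V_{\vec\bullet})$ and hence $\Supp$ and $\Delta_{\Supp}$ are literally unchanged, giving \eqref{pullback_proposition1} at this level. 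For \eqref{pullback_proposition2}, condition \eqref{prop_definition21} is about the semigroup $\sS$, which is preserved; for condition \eqref{prop_definition22}, given a decomposition $m\vec a\cdot\vec L=A+E$ on $X$ with $kE+H^0(X,kA)\subset V_{km\vec a}$, pull back to get $m\vec a\cdot\nu^*\vec L=\nu^*A+\nu^*E$ with $\nu^*A$ ample and $\nu^*E$ effective, and $kE+H^0(X,kA)\mapsto k\nu^*E+$(a subspace of $H^0(\tilde X,k\nu^*A)$) inside $\nu^*V_{km\vec a}$; since $\nu^*$ is injective on these spaces and $A$ globally generated makes $H^0(X,kA)\to H^0(\tilde X,k\nu^*A)$ surjective for the Veronese we care about, this exhibits an ample series in $\nu^*V_{\vec\bullet}$. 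The converse directions (ample series upstairs $\Rightarrow$ downstairs) are immediate because $V_{\vec\bullet}\subset\nu_*\nu^*V_{\vec\bullet}$ componentwise and $\nu^*$ respects big/effective decompositions after pushforward.

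Granting the normalization step, I would then argue as follows. Apply it to $\nu$ and to $\nu'$ to see that $V_{\vec\bullet}$ has the property in question iff $\nu^*V_{\vec\bullet}$ does, and $\sigma^*V_{\vec\bullet}$ has it iff $(\nu')^*\sigma^*V_{\vec\bullet}$ does. Now $(\nu')^*\sigma^*V_{\vec\bullet}=\tilde\sigma^*\nu^*V_{\vec\bullet}$ by functoriality of pullback and commutativity of the square, so the claim becomes: for the birational morphism $\tilde\sigma$ between the \emph{normal} projective varieties $\tilde X',\tilde X$, the graded linear series $\nu^*V_{\vec\bullet}$ on $\tilde X$ has bounded support (resp.\ contains an ample series) iff $\tilde\sigma^*\nu^*V_{\vec\bullet}$ does. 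That is exactly the content of \cite[Example 3.5]{r3d28}, cited in the excerpt as the normal case, so we are done.

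The main obstacle is the normalization step, and within it the subtlety is that for a merely projective variety $X$ and a Cartier divisor $D$ on $X$, the map $H^0(X,D)\to H^0(\tilde X,\nu^*D)$ need not be surjective in general, so one cannot naively say the graded linear series are equal. The fix is to exploit that "contains an ample series'' and "bounded support'' are both insensitive to this: bounded support only sees $\sS$, which is preserved since the pullback map is always injective (as $\sO_X\hookrightarrow\nu_*\sO_{\tilde X}$); and for the ample series it suffices to transport \emph{one} witness decomposition $m\vec a\cdot\vec L=A+E$, and the $H^0(X,kA)$ appearing there \emph{does} map onto enough sections after possibly enlarging $A$ (replace $A$ by $A-C$, $E$ by $E+C$ for $C$ the conductor, keeping $A$ ample), so that $H^0(X,kA)\twoheadrightarrow$ its image exhibits an honest ample series upstairs. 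Conversely an ample series on $\tilde X$ associated to $\nu^*\vec L$ descends because its witness $A,E$ push forward: $\nu_*\sO_{\tilde X}(A)$ has a Cartier ample subsheaf on $X$ after twisting down by the conductor, and $V_{\vec\bullet}\supset$ the corresponding pieces by construction. I would also record the trivial but needed compatibilities: $\sigma^*$ of a Veronese equivalence class is well-defined (already noted in Definition \ref{pullback_definition}), and $(\nu'\circ g)^*=g^*\circ(\nu')^*$, so the square of pullbacks commutes. Once these bookkeeping points are in place the proof is a short diagram chase plus citation of the normal case.
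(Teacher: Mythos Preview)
Your reduction-to-the-normal-case strategy is structurally close to the paper's approach (the paper factors $\sigma$ via Stein factorization into a piece with $\sigma_*\sO_{X'}=\sO_X$ and a finite birational piece, and handles the latter with the conductor ideal; your normalization step is a special case of that finite birational piece). However, the normalization step as you have written it has genuine gaps.

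First, the claim that ``$A$ globally generated makes $H^0(X,kA)\to H^0(\tilde X,k\nu^*A)$ surjective'' is false in general: for a non-normal projective variety the cokernel of $\sO_X\hookrightarrow\nu_*\sO_{\tilde X}$ contributes, and global generation on $X$ does not kill it (think of a nodal curve). Your conductor fix ``replace $A$ by $A-C$, $E$ by $E+C$'' does not repair this: the conductor is an ideal sheaf, not a Cartier divisor, and even after choosing some Cartier $C$ supported there, $H^0(X,k(A-C))\to H^0(\tilde X,k\nu^*(A-C))$ is still not surjective. What is needed instead (and what the paper does in its Step~2) is to choose $B\in|A|$ with $\sO_X(-B)\subset I$, deduce $\nu_*\sO_{\tilde X}\otimes\sO_X(-B)\subset\sO_X$, and from this obtain an inclusion of the form $H^0(\tilde X,\nu^*A_0)+\nu^*B\subset\nu^*H^0(X,2A)$ for suitable $A_0\sim A$; this is a containment after adding an effective piece, not a surjectivity statement.

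Second, the converse direction (``an ample series on $\tilde X$ descends'') is more delicate than your sketch suggests. A Cartier divisor on $\tilde X$ does not push forward to a Cartier divisor on $X$, and, as the paper's Remark~\ref{pullback_remark} warns, even if $\nu^*E$ is effective on $\tilde X$ one cannot conclude $E$ is effective on $X$. The paper's argument avoids this by first choosing an ample Cartier $A$ on $X$ with $A'-2\nu^*A$ effective and $B\in|A|$ with $\sO_X(-B)\subset I$; then $E:=mL-2A$ has $\nu^*E$ effective, and the conductor inclusion forces $B+E$ (not $E$ itself) to be effective on $X$, yielding the decomposition $mL=(2A-B)+(B+E)$ downstairs. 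Your ``$\nu_*\sO_{\tilde X}(A)$ has a Cartier ample subsheaf after twisting down by the conductor'' gestures at this but does not supply the needed divisor-level bookkeeping. Once you carry out these conductor arguments carefully, you will have essentially reproduced the paper's Step~2; the citation to \cite[Example~3.5]{r3d28} then finishes as you say.
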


\begin{proof}
\eqref{pullback_proposition1} Trivial since 
$\sS\left(V_{\vec{\bullet}}\right)=\sS\left(\sigma^*V_{\vec{\bullet}}\right)$. 

\eqref{pullback_proposition2}
We may assume that $r=1$. Set $L:=L_1$. 

\noindent\underline{\textbf{Step 1}}\\
Let us assume that $\sigma_*\sO_{X'}=\sO_X$. 
Consider the case $V_\bullet$ contains an ample series. There exists $m\in\Z_{>0}$ 
and a decomposition $m L=A+E$ with $A$ ample Cartier and $E$ effective Cartier 
such that 
\[
k E+ H^0(X, k A)\subset V_{k m}
\]
holds for any $k\in\Z_{>0}$. 
Since $\sigma^*A$ is big, by replacing $m$ if necessary, we may assume that there is 
a decomposition $\sigma^*A= A'+E'$ with 
$A'$ ample Cartier and $E'$ effective Cartier on $X'$. Then we get 
\[
\sigma^*V_{k m}\supset k\sigma^*E+H^0\left(X', k A\right) 
\supset k\left(\sigma^*E+E'\right)+H^0\left(X', k A'\right)
\]
for any $k\in\Z_{>0}$, since $\sigma_*\sO_{X'}=\sO_X$. 

Consider the case $\sigma^*V_\bullet$ contains an ample series. 
There exists $m\in\Z_{>0}$ and a decomposition $\sigma^*(m L)=A'+E'$ with 
$A'$ ample Cartier and $E'$ effective Cartier such that 
\[
k E'+H^0\left(X' k A'\right)\subset \sigma^*V_{k m}
\]
holds for any $k\in \Z_{>0}$. 
Take an ample Cartier divisor $A$ on $X$. 
By replacing $m$ if necessary, we may assume that 
$|A'-\sigma^*A|\neq\emptyset$. Thus, there exists an effective Cartier divisor 
$F'$ on $X'$ and $s\in\Bbbk(X')^\times=\Bbbk(X)^\times$ such that 
$A'-\sigma^*A-F'=\DIV_{X'}(s)=\sigma^*\DIV_X(s)$, where 
$\DIV_X(s)$ is the principal Cartier divisor on $X$ defined by $s$. 
By replacing $A$ by $A+\DIV_X(s)$, we may assume that $A'=\sigma^*A+F'$. 
Since 
\[
E'+F'\in H^0\left(X', \sigma^*(m L-A)\right)=\sigma^*H^0\left(X, m L-A\right),
\]
there exists an effective Cartier divisor $E$ on $X$ 
such that $\sigma^*E=E'+F'$ holds. Thus, for any $k\in\Z_{>0}$, we have 
\[
\sigma^*V_{k m} \supset k E'+ k F' + H^0\left(X', \sigma^*(k A)\right)
=\sigma^*\left(k E+H^0\left(X, k A\right)\right).
\]
This implies that 
\[
V_{k m}\supset k E+H^0\left(X, k A\right)
\]
and thus $V_\bullet$ contains an ample series. 

\noindent\underline{\textbf{Step 2}}\\
By taking the Stein factorization, we may assume that $\sigma$ is finite and birational. 
Let $I\subset\sO_X$ be the conductor ideal of $\sigma$, i.e., 
\[I:=\Ann_{\sO_X}\left(\sigma_*\sO_{X'}/\sO_X\right).\] 
Since $\sigma$ is birational, we have $\dim\left(\sO_X/I\right)<n$. 

Consider the case $V_\bullet$ contains an ample series. Then there exists 
$m\in\Z_{>0}$ and a decomposition $m L=A+E$ with $A$ ample Cartier and $E$ 
effective Cartier such that 
\[
V_{k m}\supset k E+H^0(X, k A)
\]
for any $k\in\Z_{>0}$. By replacing $m$ if necessary, we can take $B\in|A|$ such that 
$\sO_X(-B)\subset I$. 
Write $B=A-\DIV_X(s)$ ($s\in \Bbbk(X)^\times$) and set $A_0:=A+\DIV_X(s)$. 
Obviously, we have $A_0,B\sim A$ and $A_0+B=2 A$. 
From the definition of the conductor ideal, we have 
\[
\sigma_*\sO_{X'}\otimes\sO_X(-B)\subset\sO_X
\]
as subsheaves of $\sigma_*\sO_{X'}$. Hence we get 
\[
\xymatrix{
0 \ar[r] & \sigma_*\sO_{X'}\otimes\sO_X(A_0) \ar[r]^-{\cdot B} \ar@{^{(}_->}[d]
&\sigma_*\sO_{X'}\otimes\sO_X\left(2A\right) \ar@{=}[d]\\
0 \ar[r] & \sO_X\left(2A\right) \ar[r] & 
\sigma_*\sO_{X'}\otimes\sO_X\left(2A\right).
}\]
Thus we get the inclusion 
\[
H^0\left(X', \sigma^*A_0\right)+\sigma^*B\subset 
\sigma^*H^0\left(X, 2A\right).
\]
The decomposition 
\[
\sigma^*\left(2m L\right)=\sigma^*A_0+\sigma^*\left(B+2E\right)
\]
satisfies that $\sigma^*A_0$ is ample, $\sigma^*\left(B+2E\right)$ is effective, and 
\begin{eqnarray*}
\sigma^*V_{2k m}\supset \sigma^*(2k E)+\sigma^*H^0\left(X, 2k A\right)
\supset k\sigma^*\left(B+2E\right)+H^0\left(X', k\sigma^*A_0\right)
\end{eqnarray*}
holds for any $k\in\Z_{>0}$. Thus 
the series $\sigma^*V_\bullet$ contains an ample series. 

Consider the case $\sigma^*V_\bullet$ contains an ample series. 
There exists $m\in\Z_{>0}$ and a decomposition $\sigma^*(m L)=A'+E'$ with 
$A'$ ample Cartier and $E'$ effective Cartier such that 
\[
\sigma^*V_{k m}\supset k E'+H^0\left(X', k A'\right)
\]
holds for any $k\in\Z_{>0}$. By replacing $m$ if necessary, we may assume that 
there exists an ample Cartier divisor $A$ on $X$ such that $F':=A'-2\sigma^*A$ is 
effective and there exists $B\in|A|$ such that $\sO_X(-B)\subset I$ holds. 
Let us set $E:=m L-2A$. Then 
$\sigma^*E=E'+F'$ is effective on $X'$. By the definition of the conductor ideal, 
the Cartier divisor $B+E$ is effective on $X$. The decomposition 
\[m L=(2A-B)+(B+E)\] satisfies that $2A-B$ is ample, $B+E$ is effective, and 
\[
\sigma^*V_{k m}\supset
k (E'+F') + H^0\left(X', \sigma^*(2k A)\right)\supset
k\sigma^*(B+E)+\sigma^*H^0\left(X, k(2A-B)\right), 
\]
which implies that 
\[
V_{k m}\supset k(B+E)+H^0\left(X, k(2A-B)\right)
\]
holds for any $k\in\Z_{>0}$. Thus $V_{\bullet}$ contains an ample series.
\end{proof}

\begin{remark}\label{pullback_remark}
For a finite and birational morphism $\sigma\colon X'\to X$ between varieties and 
a Cartier divisor $E$ on $X$ with $\sigma^*E$ effective on $X'$, we cannot say that 
the $E$ is effective. For example, let us consider 
$X':=\Spec \Bbbk[t]\xrightarrow{\sigma}X:=\Spec\Bbbk[t^2,t^3]$ and let 
$E$ be the Cartier divisor on $X$ defined by $E:=(t^3/t^2=0)$. Then $E$ is not 
effective but $\sigma^*E=(t=0)$ is effective. 
\end{remark}

We define several graded linear series. 

\begin{definition}\label{interior_definition}
Let $V_{\vec{\bullet}}$ be the Veronese equivalence class of 
an $\left(m\Z_{\geq 0}\right)^r$-graded linear series $V_{m\vec{\bullet}}$ 
on $X$ associated to $L_1,\dots,L_r\in\CaCl(X)\otimes_\Z\Q$ which contains 
an ample series. 
\begin{enumerate}
\renewcommand{\theenumi}{\arabic{enumi}}
\renewcommand{\labelenumi}{(\theenumi)}
\item\label{interior_definition1}
(\cite[Lemma 3.4]{r3d28}) For any $\vec{k}=(k_1,\dots,k_r)\in\Z_{>0}^r$, 
let $V^{(\vec{k})}_{m\vec{\bullet}}$ be the $(m\Z_{\geq 0})^r$-graded linear series 
on $X$ associated to $k_1 L_1,\dots,k_r L_r\in\CaCl(X)\otimes_\Z\Q$ defined by 
\[
V^{(\vec{k})}_{m(a_1,\dots,a_r)}:=V_{m(k_1a_1,\dots,k_r a_r)}. 
\]
By \cite[Lemma 3.4]{r3d28}, the series also contains an ample series. 
The Veronese equivalence class $V_{\vec{\bullet}}^{(\vec{k})}$ of 
$V_{m\vec{\bullet}}^{(\vec{k})}$ does not depend on the choice of representatives 
$V_{m\vec{\bullet}}$ of $V_{\vec{\bullet}}$. The series $V_{\vec{\bullet}}$ has 
bounded support if and only if the series $V_{\vec{\bullet}}^{(\vec{k})}$ 
has bounded support.

Similarly, for any $c\in\Q_{>0}$, let $c V_{\vec{\bullet}}$ be the Veronese equivalent 
class of an $(m'\Z_{\geq 0})^r$-graded linear series (for a sufficiently divisible $m'$) 
associated to $c L_1,\dots, c L_r$ defined by 
$c V_{m'\vec{a}}:=V_{c m'\vec{a}}$ for $\vec{a}\in\Z_{\geq 0}^r$. 
\item\label{interior_definition2}
Let us consider the sub-linear series $V_{m\vec{\bullet}}^\circ$ of 
$V_{m\vec{\bullet}}$ defined by 
\[
V_{m\vec{a}}^\circ:=\begin{cases}
V_{m\vec{a}} & \text{if $\vec{a}=\vec{0}$ or }m\vec{a}\in
\interior\left(\Supp\left(V_{\vec{\bullet}}\right)\right), \\
0 & \text{otherwise}.
\end{cases}
\]
We call the series $V_{m\vec{\bullet}}^{\circ}$ the \emph{interior series of} 
$V_{m\vec{\bullet}}$. Obviously, the series $V_{m\vec{\bullet}}^{\circ}$ satisfies that 
$\Supp\left(V_{m\vec{\bullet}}^{\circ}\right)=
\Supp\left(V_{\vec{\bullet}}\right)$ and contains an ample series. 
The Veronese equivalence class $V_{\vec{\bullet}}^{\circ}$ of 
$V_{m\vec{\bullet}}^{\circ}$ does not depend on the choice of representatives 
$V_{m\vec{\bullet}}$ of $V_{\vec{\bullet}}$. 
\item\label{interior_definition3}
More generally, for any convex subset 
$C\subset\Delta_{\Supp\left(V_{\vec{\bullet}}\right)}$ with 
$\interior(C)\neq\emptyset$, 
let us consider the sub-linear series $V^{\langle C\rangle}_{m\vec{\bullet}}$
of 
$V_{m\vec{\bullet}}$ defined by 
\[
V_{m\vec{a}}^{\langle C\rangle}:=\begin{cases}
V_{m\vec{a}} & \text{if $\vec{a}=\vec{0}$ or }m\vec{a}\in
\Cone\left(\{1\}\times C\right), \\
0 & \text{otherwise}.
\end{cases}
\]
We call it the \emph{restriction of $V_{m\vec{\bullet}}$ with respects to 
$C\subset\Delta_{\Supp\left(V_{m\vec{\bullet}}\right)}$}. 
Obviously, $V^{\langle C\rangle}_{m\vec{\bullet}}$ contains an ample series and 
$\Supp\left(V^{\langle C\rangle}_{m\vec{\bullet}}\right)=
\overline{\Cone\left(\{1\}\times C\right)}$. 
The Veronese equivalence class $V_{\vec{\bullet}}^{\langle C\rangle}$ of 
$V_{m\vec{\bullet}}^{\langle C\rangle}$ 
does not depend on the choice of representatives 
$V_{m\vec{\bullet}}$ of $V_{\vec{\bullet}}$. 
\item\label{interior_definition4}
Let us take at most countably infinite set $\Lambda$ and a decomposition 
\[
\Delta_{\Supp}=\overline{\bigcup_{\lambda\in\Lambda}
\Delta_{\Supp}^{\langle\lambda\rangle}}
\]
with 
\begin{itemize}
\item
the set $\Delta_{\Supp}^{\langle\lambda\rangle}$ is a compact convex set 
with nonempty interior for any $\lambda\in\Lambda$, 
\item
$\interior\left(\Delta_{\Supp}\right)\subset\bigcup_{\lambda\in\Lambda}
\Delta_{\Supp}^{\langle\lambda\rangle}$, 
and 
\item
$\interior\left(\Delta_{\Supp}^{\langle\lambda\rangle}\right)
\cap\interior\left(\Delta_{\Supp}^{\langle\lambda'\rangle}\right)=\emptyset$ 
for any $\lambda$, $\lambda'\in\Lambda$ with $\lambda\neq\lambda'$. 
\end{itemize}
For every $\lambda\in\Lambda$, we set 
$V^{\langle\lambda\rangle}_{\vec{\bullet}}:=
V^{\langle \Delta_{\Supp}^{\langle\lambda\rangle}\rangle}_{\vec{\bullet}}$
As in \eqref{interior_definition3}, the series $V_{\vec{\bullet}}^{\langle\lambda\rangle}$ 
has bounded support with 
$\Supp\left(V_{\vec{\bullet}}^{\langle\lambda\rangle}\right)
=\R_{\geq 0}\Delta_{\Supp}^{\langle\lambda\rangle}$ and 
contains an ample series. 
We call the procedure the \emph{decomposition of $V_{\vec{\bullet}}$ with respects 
to the decomposition $\Delta_{\Supp}=\overline{\bigcup_{\lambda\in\Lambda}
\Delta_{\Supp}^{\langle\lambda\rangle}}$.}
\item\label{interior_definition5}
Take any $\vec{a}\in\Q^r_{>0}\cap
\interior\left(\Supp\left(V_{\vec{\bullet}}\right)\right)$. We define the Veronese 
equivalence class $V_{\bullet\vec{a}}$ of the graded linear series on $X$ associated 
to $\vec{a}\cdot\vec{L}$ as follows. Fix a sufficiently divisible $m'\in m\Z_{>0}$ 
and let $V_{m'\bullet\vec{a}}$ be the $\left(m'\Z_{\geq 0}\right)$-graded linear series 
on $X$ associated to $\vec{a}\cdot\vec{L}$ whose $l$-th part is defined to be 
$V_{l\vec{a}}$ for any $l\in m'\Z_{\geq 0}$. Then $V_{\bullet\vec{a}}$ is defined to 
be the class of $V_{m'\bullet\vec{a}}$ and is well-defined. Moreover, 
by \cite[Lemma 4.18]{LM}, the series $V_{\bullet\vec{a}}$ contains an ample series.
\end{enumerate}
\end{definition}

\begin{definition}[{Refinements, \cite[Example 2.6]{AZ}, 
\cite[Definition 3.15]{r3d28}}]\label{refinement_definition}
Let us assume that $X$ is normal, let $Y$ be a prime $\Q$-Cartier divisor on 
$X$, and let 
$m$, $e\in\Z_{>0}$ such that $m e Y$ is a Cartier divisor. 
Let $V_{\vec{\bullet}}$ be the Veronese equivalence class of an 
$(m\Z_{\geq 0})^r$-graded linear series $V_{m\vec{\bullet}}$ on $X$ associated to 
$L_1,\dots,L_r\in\CaCl(X)\otimes_\Z\Q$. 
The \emph{refinement $V_{m\vec{\bullet}}^{(Y,e)}$ of $V_{m\vec{\bullet}}$ by $Y$ 
with exponent $e$}
is the $(m\Z_{\geq 0})^r$-graded linear series on $Y$ associated to 
$L_1|_Y,\dots,L_r|_Y, -e Y|_Y\in\CaCl(Y)\otimes_\Z\Q$  defined by:
\[
V_{m(\vec{a},j)}^{(Y,e)}:=\Image\left(V_{m\vec{a}}\cap
\left( j m e Y+H^0\left(X, \vec{a}\cdot m\vec{L}-j m  eY\right)\right)
\to H^0\left(Y, \vec{a}\cdot m\vec{L}|_Y-j m e Y|_Y\right)\right)
\]
for any $\vec{a}\in\Z_{\geq 0}^r$ and $j\in\Z_{\geq 0}$, where the above 
homomorphism is the natural restriction. 
By \cite[Lemma 3.16]{r3d28}, if $V_{m\vec{\bullet}}$ has bounded support 
(resp., contains an ample series), then so is $V_{m\vec{\bullet}}^{(Y, e)}$. 
Let $V_{\vec{\bullet}}^{(Y)}$ be the Veronese equivalence class of 
$V_{m\vec{\bullet}}^{(Y, 1)}$ (for a divisible $m\in\Z_{>0}$) 
and is called the \emph{refinement of $V_{\vec{\bullet}}$ 
by $Y$}, and is well-defined. Note that, if we set $\vec{e}:=(1,\dots,1,e)$, then 
$\left(V_{m\vec{\bullet}}^{(Y,1)}\right)^{(\vec{e})}=V_{m\vec{\bullet}}^{(Y,e)}$ holds. 
\end{definition}

The following lemma is trivial from the definitions. 

\begin{lemma}\label{divide_lemma}
Let us assume that $X$ is normal, let $Y$ be a prime $\Q$-Cartier divisor on $X$. 
Let $V_{\vec{\bullet}}$ be the Veronese equivalence class of a graded linear series 
on $X$ associated to $L_1,\dots,L_r\in\CaCl(X)\otimes_\Z\Q$ 
which contains an ample series. 
Let $V_{\vec{\bullet}}^{(Y)}$ be the refinement of $V_{\vec{\bullet}}$ by $Y$. 
Then the projection $\R^{r-1}\times\R\to \R^{r-1}$ gives the surjection 
\[
 q\colon\Delta_{\Supp\left(V_{\vec{\bullet}}^{(Y)}\right)}\twoheadrightarrow
 \Delta_{\Supp\left(V_{\vec{\bullet}}\right)}.
\]
Take any closed convex subset 
$C\subset\Delta_{\Supp\left(V_{\vec{\bullet}}\right)}$ 
with $\interior(C)\neq\emptyset$. 
Consider the restriction $V_{\vec{\bullet}}^{\langle C\rangle}$ of 
$V_{\vec{\bullet}}$ with respects to 
$C\subset\Delta_{\Supp\left(V_{\vec{\bullet}}\right)}$. The refinement 
$V_{\vec{\bullet}}^{\langle C\rangle,(Y)}$ of 
$V_{\vec{\bullet}}^{\langle C\rangle}$ by $Y$ is equal to 
the restriction $V_{\vec{\bullet}}^{(Y),\langle q^{-1}(C)\rangle}$ of 
$V_{\vec{\bullet}}^{(Y)}$ with respects to 
$q^{-1}(C)\subset\Delta_{\Supp\left(V_{\vec{\bullet}}^{(Y)}\right)}$ as 
Veronese equivalences of graded linear series on $Y$.
\end{lemma}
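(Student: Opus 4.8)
The plan is to unwind the definitions on both sides and check that the two $(m\Z_{\geq0})^r$-graded linear series on $Y$ agree after passing to a common Veronese subseries. First I would fix a sufficiently divisible $m\in\Z_{>0}$ so that $mY$ is Cartier and so that $V_{\vec{\bullet}}$, $V_{\vec{\bullet}}^{\langle C\rangle}$, $V_{\vec{\bullet}}^{(Y)}$ and the relevant restrictions are all represented by honest $(m\Z_{\geq0})^r$-graded linear series; recall that the refinement adds a new coordinate direction dual to $-Y|_Y$, so the support $\Supp(V_{\vec{\bullet}}^{(Y)})\subset\R_{\geq0}^r$ has one more dimension than $\Supp(V_{\vec{\bullet}})\subset\R_{\geq0}^{r-1}$-worth of slices, and the projection $q$ of the lemma is exactly the linear projection forgetting that last coordinate. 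The key point is that both $(-)^{\langle C\rangle}$ and $(-)^{\langle q^{-1}(C)\rangle}$ are the operation of zeroing out all graded pieces $V_{m\vec{a}}$ whose index $m\vec{a}$ does not lie in the cone over $\{1\}\times C$ (resp.\ over $\{1\}\times q^{-1}(C)$), while $(-)^{(Y)}$ is defined piece-by-piece by intersecting with an order-of-vanishing filtration along $Y$ and then restricting to $Y$.

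The heart of the argument is the following observation: for an index $m(\vec{a},j)$ in the refined lattice, its image under $q$ (after normalizing the first coordinate to $1$) depends only on $\vec{a}$, not on $j$. Concretely, writing a lattice point of the refined support as $m(\vec{a},j)$ with $\vec{a}\in\Z_{\geq0}^r$ and $j\in\Z_{\geq0}$, the slice condition ``$m(\vec{a},j)\in\Cone(\{1\}\times q^{-1}(C))$'' is equivalent to ``$m\vec{a}\in\Cone(\{1\}\times C)$'' together with the slice condition defining the support of $V_{\vec{\bullet}}^{(Y)}$ itself (the latter being automatically satisfied whenever $V_{m(\vec{a},j)}^{(Y)}\neq0$). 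Hence, chasing the definitions, $(V_{\vec{\bullet}}^{(Y)})^{\langle q^{-1}(C)\rangle}_{m(\vec{a},j)}$ equals $V_{m(\vec{a},j)}^{(Y)}$ when $m\vec{a}\in\Cone(\{1\}\times C)$ and is zero otherwise. On the other side, $(V_{\vec{\bullet}}^{\langle C\rangle})^{(Y)}_{m(\vec{a},j)}$ is by definition the image in $H^0(Y,\cdots)$ of $V_{m\vec{a}}^{\langle C\rangle}\cap(jmeY+H^0(X,\vec{a}\cdot m\vec{L}-jmeY))$; since $V_{m\vec{a}}^{\langle C\rangle}=V_{m\vec{a}}$ exactly when $m\vec{a}\in\Cone(\{1\}\times C)$ and is zero otherwise, this is again $V_{m(\vec{a},j)}^{(Y)}$ in the first case and zero in the second. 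So the two series have literally the same graded pieces.

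I would then note that both constructions contain an ample series and have bounded support with the same support $\overline{\Cone(\{1\}\times q^{-1}(C))}$ (using Definition \ref{refinement_definition}, Definition \ref{interior_definition}\eqref{interior_definition3}, and Lemma \ref{divide_lemma}'s surjectivity of $q$ to see $q^{-1}(C)$ has nonempty interior in $\Delta_{\Supp(V_{\vec{\bullet}}^{(Y)})}$), so the Veronese equivalence classes coincide; the only subtlety is that the two sides may be presented via different choices of exponent $e$ and of divisible $m$, so strictly one should first pass to a common $m$ and a common exponent vector $\vec{e}=(1,\dots,1,e)$ using the identity $(V_{m\vec{\bullet}}^{(Y,1)})^{(\vec{e})}=V_{m\vec{\bullet}}^{(Y,e)}$ recorded in Definition \ref{refinement_definition} and the compatibility of $(-)^{(\vec{k})}$ with restrictions. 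The main obstacle is purely bookkeeping: keeping straight which coordinate is being scaled or projected and confirming that the ``$m\vec{a}\in\Cone(\{1\}\times C)$'' condition and the refined-support condition are genuinely independent constraints so that intersecting them commutes with the restriction map to $Y$; once the indexing is set up carefully the equality of graded pieces is immediate, which is why the lemma is asserted to be ``trivial from the definitions.''
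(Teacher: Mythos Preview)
Your proposal is correct and is exactly what the paper intends: the paper states only that the lemma is ``trivial from the definitions'' and gives no further proof, and your argument is precisely the unwinding of both sides graded-piece-by-graded-piece that this phrase suggests. Your key observation---that the cone condition $m(\vec{a},j)\in\Cone(\{1\}\times q^{-1}(C))$ decouples into the condition $m\vec{a}\in\Cone(\{1\}\times C)$ together with a support condition that is automatic whenever $V^{(Y)}_{m(\vec{a},j)}\neq 0$---is the whole content, and the remaining bookkeeping you describe is routine.
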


We define the notion of the tensor products for graded linear series.

\begin{definition}\label{tensor_definition}
Assume that $X$ is the product of two projective varieties $X_1$ and $X_2$. 
Let $V^i_{\vec{\bullet}}$ be the Veronese equivalence class of an 
$(m\Z_{\geq 0})^{r_i}$-graded 
linear series $V^i_{\vec{\bullet}}$ on $X_i$ associated to 
$L^i_1,\dots,L^i_{r_i}\in\CaCl(X_i)\otimes_\Z\Q$ for $i=1, 2$. 
The \emph{tensor product} $V^1_{m\vec{\bullet}}\otimes V^2_{m\vec{\bullet}}$ is 
the $(m\Z_{\geq 0}^{r_1+r_2-1})$-graded linear series $W_{m\vec{\bullet}}$ on $X$ 
associated to 
\[
L^1_1\boxtimes L^2_1, L^1_2\boxtimes\sO_{X_2},\dots,L^1_{r_1}\boxtimes\sO_{X_2}, 
\sO_{X_1}\boxtimes L^2_2,\dots,\sO_{X_1}\boxtimes L^2_{r_2}
\]
defined by 
\[
W_{m(c,\vec{a},\vec{b})}:=V^1_{m(c,\vec{a})}\otimes V^2_{m(c,\vec{b})}
\]
for any $c\in\Z_{\geq 0}$, $\vec{a}\in\Z_{\geq 0}^{r_1-1}$, 
$\vec{b}\in\Z_{\geq 0}^{r_2-1}$. 
Let $V^1_{\vec{\bullet}}\otimes V^2_{\vec{\bullet}}$ be 
the Veronese equivalence class of $V^1_{m\vec{\bullet}}\otimes V^2_{m\vec{\bullet}}$ 
and called it the \emph{tensor product} of 
$V^1_{\vec{\bullet}}$ and $V^2_{\vec{\bullet}}$, and is well-defined. 
It is obvious from the definition that, if both $V^1_{\vec{\bullet}}$ and 
$V^2_{\vec{\bullet}}$ have bounded supports (resp., contain ample series), 
then so is $V^1_{\vec{\bullet}}\otimes V^2_{\vec{\bullet}}$. 
In fact, we have 
\[
\Delta_{\Supp\left(V^1_{\vec{\bullet}}\otimes V^2_{\vec{\bullet}}\right)}
=\Delta_{\Supp\left(V^1_{\vec{\bullet}}\right)}
\times\Delta_{\Supp\left(V^2_{\vec{\bullet}}\right)}
\subset\R_{\geq 0}^{r_1+r_2-2}. 
\]
We note that, if both $V^1_{\vec{\bullet}}$ and $V^2_{\vec{\bullet}}$ are complete 
linear series, then $V^1_{\vec{\bullet}}\otimes V^2_{\vec{\bullet}}$ is also 
a complete linear series. When we furthermore assume that $r_1=r_2=1$ and 
$L^1:=L_1^1$, $L^2:=L_1^2$ (i.e., $V^1_\bullet=H^0\left(\bullet\cdot L^1\right)$ and 
$V^2_\bullet=H^0\left(\bullet\cdot L^2\right)$), then 
$V^1_\bullet\otimes V^2_\bullet
=H^0\left(\bullet\cdot \left(L^1\boxtimes L^2\right)\right)$. 
\end{definition}

We recall the notion of prime blowups \cite{Ishii} and define the notion of 
primitive flags.

\begin{definition}\label{primitive_definition}
\begin{enumerate}
\renewcommand{\theenumi}{\arabic{enumi}}
\renewcommand{\labelenumi}{(\theenumi)}
\item\label{primitive_definition1}\cite{Ishii}, \cite[Definition 1.1]{pltK}
Let $Y$ be a prime divisor over $X$. If there exists a projective birational morphism 
$\sigma\colon\tilde{X}\to X$ with $\tilde{X}$ normal such that $Y$ is a prime 
and $\Q$-Cartier divisor on $\tilde{X}$ and $-Y$ on $\tilde{X}$ is ample over $X$, 
then the $Y$ is said to be \emph{primitive} over $X$ and the morphism $\sigma$ 
is said to be \emph{the associated prime blowup}. We note that the morphism 
$\sigma$ is uniquely determined by the divisorial valuation $\ord_Y$. 
We often regard primitive prime divisors $Y$ as varieties from the embeddings 
$Y\subset\tilde{X}$. 
\item\label{primitive_definition2}
Take any $1\leq j\leq n$. 
A sequence of varieties $Y_1,\dots, Y_j$ is said to be 
a \emph{primitive flag over $X$} and is denoted by 
\[
Y_\bullet\colon X=Y_0\triangleright Y_1 \triangleright\cdots\triangleright Y_j,
\]
if $Y_k$ is a primitive prime divisor over $Y_{k-1}$ for any $1\leq k\leq j-1$, 
where we set $Y_0:=X$ and we regard $Y_k$ as a variety, as in 
\eqref{primitive_definition1}. 
If moreover $j=n$, then the primitive flag $Y_\bullet$ is said to be \emph{a complete 
primitive flag}. 
\item\label{primitive_definition3}\cite[Definition 1.1]{pltK}
Let us assume that the characteristic of $\Bbbk$ is zero. 
Fix an effective $\Q$-Weil 
divisor $B$ on $X$, i.e., $B$ is a formal $\Q$-linear sum $B=\sum_{i=1}^h b_i B_i$ 
with $b_i\geq 0$ such that each $B_i$ is an irreducible closed subvariety of 
codimension $1$ in $X$. Consider a primitive prime divisor $Y$ over $X$ 
and let $\sigma\colon\tilde{X}\to X$ be the associated primitive blowup. 
Assume that there exists a nonempty open subscheme $U\subset X$ such that 
the center of $Y$ on $X$ is contained in $U$, the pair $(U, B|_U)$ is klt, and 
the morphism $\sigma$ is a plt blowup over $(U, B|_U)$, i.e., the pair 
$\left(\tilde{X}, \tilde{B}+Y\right)$ is plt on $\sigma^{-1}(U)$, where $\tilde{B}$ is 
the effective $\Q$-Weil divisor on $\tilde{X}$ which is defined to be the closure of 
$\tilde{B}|_{\sigma^{-1}(U)}$ defined by 
\[
K_{\sigma^{-1}(U)}+\tilde{B}|_{\sigma^{-1}(U)}
+\left(1-A_{X, B}(Y)\right)Y=\sigma^*\left(K_U+B|_U\right). 
\]
We recall that the value $A_{X, B}(Y)$ is the log discrepancy of $(X, B)$ along $Y$ 
(see \cite[Definition 1.34]{Xu} for example). 
Then the $Y$ is said to be \emph{a plt-type prime divisor over $(U, B|_U)$}. 
By adjunction, if we set 
\[
K_{\sigma|_Y^{-1}(U)}+B_{\sigma|_Y^{-1}(U)}:=
\left(K_{\sigma^{-1}(U)}+\tilde{B}|_{\sigma^{-1}(U)}
+\left(1-A_{X, B}(Y)\right)Y\right)\big|_Y
\] 
and let $B_Y$ be the closure of $B_{\sigma|_Y^{-1}(U)}$ on $Y$, then the pair 
$(Y, B_Y)$ is klt over $U$ (i.e., the pair 
$\left(\sigma|_Y^{-1}(U),B_{\sigma|_Y^{-1}(U)}\right)$ is klt). 
We call the pair $(Y, B_Y)$ \emph{the associated klt pair
over $U$}. If $U=X$, then we simply say that $(Y, B_Y)$ is the \emph{associated 
klt structure}. 
\item\label{primitive_definition4}
Again, assume that the characteristic of $\Bbbk$ is zero and  $B$ is 
an effective $\Q$-Weil divisor on $X$. Consider a primitive flag 
\[
Y_\bullet\colon X=Y_0\triangleright Y_1 \triangleright\cdots\triangleright Y_j
\]
over $X$. Assume that there exists a nonempty open subscheme $U\subset X$ 
such that $Y_k$ is plt-type prime divisor over $(Y_{k-1}, B_{k-1})|_U$ for any 
$1\leq k\leq j-1$, where the pair $(Y_{k-1}, B_{k-1})$ is the associated klt pair 
over $U$. Then the primitive flag $Y_\bullet$ is said to be a \emph{plt flag 
over $(U, B|_U)$}. It is convenient to set 
\[
A_{X, B}\left(Y_1 \triangleright\cdots\triangleright Y_k\right)
:=A_{Y_{k-1}, B_{k-1}}(Y_k)
\]
for every $1\leq k\leq j$. 
Moreover, for any prime divisor $E$ over $Y_k$ with the center on $Y_k$ intersects 
with the pullback of $U$, we set 
\[
A_{X, B}\left(Y_1 \triangleright\cdots\triangleright Y_k\triangleright E\right)
:=A_{Y_k, B_k}(E).
\]
\end{enumerate}
\end{definition}

Here is a generalization of Definition \ref{refinement_definition}. 

\begin{definition}\label{refinement-primitive_definition}
Let $V_{\vec{\bullet}}$ be the Veronese equivalence class of a graded linear series 
on $X$ associated to $L_1,\dots,L_r\in\CaCl(X)\otimes_\Z\Q$. 
\begin{enumerate}
\renewcommand{\theenumi}{\arabic{enumi}}
\renewcommand{\labelenumi}{(\theenumi)}
\item\label{refinement-primitive_definition1}
Let $Y$ be a primitive prime divisor over $X$ and let $\sigma\colon\tilde{X}\to X$ 
be the associated prime blowup. The \emph{refinement $V_{\vec{\bullet}}^{(Y)}$
of $V_{\vec{\bullet}}$ by $Y$} 
is defined to be the refinement (in the sense of Definition \ref{refinement_definition}) 
of the pullback $\sigma^*V_{\vec{\bullet}}$ of $V_{\vec{\bullet}}$ by $Y$. 
Note that, by Proposition \ref{pullback_proposition} and Definition 
\ref{refinement_definition}, if $V_{\vec{\bullet}}$ has bounded support (resp., 
contains an ample series), then so is $V_{\vec{\bullet}}^{(Y)}$. 
\item\label{refinement-primitive_definition2}
Let 
\[
Y_\bullet\colon X=Y_0\triangleright Y_1 \triangleright\cdots\triangleright Y_j
\]
be a primitive flag over $X$. (We mainly consider incomplete primitive flags.)
The \emph{refinement of $V_{\vec{\bullet}}$ by $Y_\bullet$}, 
denoted by
\[
V_{\vec{\bullet}}^{\left(Y_1\triangleright\cdots\triangleright Y_j\right)}
\quad\left(\text{ or }\quad V_{\vec{\bullet}}^{\left(Y_\bullet\right)}\right),
\]
is defined to be inductively. More precisely, 
$V_{\vec{\bullet}}^{\left(Y_1\triangleright\cdots\triangleright Y_k\right)}$ 
is defined to be the refinement of 
$V_{\vec{\bullet}}^{\left(Y_1\triangleright\cdots\triangleright Y_{k-1}\right)}$ 
by $Y_k$
for any $1\leq k\leq j-1$. 
\end{enumerate}
\end{definition}

\section{Okounkov bodies}\label{okounkov_section}

In this section, we recall the notion of Okounkov bodies for graded linear series. 
See also \cite{LM, boucksom, AZ, FANO, r3d28}. In \S \ref{okounkov_section}, 
we always assume that $X$ is an $n$-dimensional projective variety 
and $Y_\bullet$ be an \emph{admissible flag} on $X$ in the sense of \cite[(1.2)]{LM}, 
i.e., 
a sequence 
\[
X=Y_0\supsetneq Y_1\supsetneq \cdots\supsetneq Y_n
\]
of irreducible subvarieties on $X$ such that each $Y_i$ is nonsingular at the point 
$Y_n$ for each $0\leq i\leq n$. 

\begin{definition}[{see \cite[\S 4.3]{LM}, \cite[Definition 2.9]{AZ}, 
\cite[Definition 3.3]{r3d28}}]\label{okounkov_definition}
Let $V_{\vec{\bullet}}$ be the Veronese equivalence class of 
an $\left(m\Z_{\geq 0}\right)^r$-graded linear series $V_{m\vec{\bullet}}$ 
on $X$ associated to $L_1,\dots,L_r\in\CaCl(X)\otimes_\Z\Q$. 
\begin{enumerate}
\renewcommand{\theenumi}{\arabic{enumi}}
\renewcommand{\labelenumi}{(\theenumi)}
\item\label{okounkov_definition1}
As in \cite[(1.2)]{LM}, we can define the valuation-like function 
\[
\nu_{Y_\bullet}\colon V_{m\vec{a}}\setminus\{0\}\to\Z_{\geq 0}^n
\]
for every $\vec{a}\in\Z_{\geq 0}^n$. 
We set 
\begin{eqnarray*}
\Gamma_{Y_\bullet}\left(V_{m\vec{\bullet}}\right)
&:=&\left\{\left(m\vec{a},\nu_{Y_\bullet}(s)\right)\,\,|\,\,
\vec{a}\in\Z_{\geq 0}^r, \,\,s\in V_{m\vec{a}}\right\}
\subset\left(m\Z_{\geq 0}\right)^r\times\Z_{\geq 0}^n, \\
\Sigma_{Y_\bullet}\left(V_{m\vec{\bullet}}\right)&:=&
\overline{\Cone\left(\Gamma_{Y_\bullet}\left(V_{m\vec{\bullet}}\right)\right)}
\subset\R_{\geq 0}^{n+r}.
\end{eqnarray*}
Moreover, let 
$\Delta_{Y_\bullet}\left(V_{m\vec{\bullet}}\right)\subset\R_{\geq 0}^{r-1+n}$ 
be the closed convex set defined by the equation
\[
\{1\}\times\Delta_{Y_\bullet}\left(V_{m\vec{\bullet}}\right)
=\Sigma_{Y_\bullet}\left(V_{m\vec{\bullet}}\right)\cap
\left(\{1\}\times\R_{\geq 0}^{r-1+n}\right), 
\]
and we say that $\Delta_{Y_\bullet}\left(V_{m\vec{\bullet}}\right)$ is the 
\emph{Okounkov body of $V_{m\vec{\bullet}}$ associated to $Y_\bullet$}. 
If $V_{m\vec{\bullet}}$ has a bounded support, then 
$\Delta_{Y_\bullet}\left(V_{m\vec{\bullet}}\right)$ is compact. 

We assume that $V_{\vec{\bullet}}$ contains an ample series. 
In this case, by \cite[Lemma 3.4]{r3d28}, the definitions 
\[
\Sigma_{Y_\bullet}\left(V_{\vec{\bullet}}\right):=
\Sigma_{Y_\bullet}\left(V_{m\vec{\bullet}}\right), \quad
\Delta_{Y_\bullet}\left(V_{\vec{\bullet}}\right):=
\Delta_{Y_\bullet}\left(V_{m\vec{\bullet}}\right)
\]
are well-defined, and we say that $\Delta_{Y_\bullet}\left(V_{\vec{\bullet}}\right)$ is 
the \emph{Okounkov body of $V_{\vec{\bullet}}$ associated to $Y_\bullet$}. 
Let $p\colon \Delta_{Y_\bullet}\left(V_{\vec{\bullet}}\right)
\twoheadrightarrow\Delta_{\Supp}\subset\R_{\geq 0}^{r-1}$ be the composition of 
\[
\Delta_{Y_\bullet}\left(V_{\vec{\bullet}}\right)\hookrightarrow
\R_{\geq 0}^{r-1}\times\R_{\geq 0}^n\xrightarrow{pr_1}\R_{\geq 0}^{r-1}, 
\]
where $pr_1$ is the first projection. The image of $p$ 
is equal to $\Delta_{\Supp}$. 
In fact, for any $\vec{a}\in\Q^{r-1}_{>0}\cap\interior\left(\Delta_{\Supp}\right)$, 
the series $V_{\bullet(1,\vec{a})}$ contains an ample series as in Definition 
\ref{interior_definition} \eqref{interior_definition5}. This implies that 
$p^{-1}(\vec{a})\neq\emptyset$. Thus we get the inclusion 
$\Delta_{\Supp}\subset p\left(\Delta_{Y_\bullet}(V_{\vec{\bullet}})\right)$. 
The reverse inclusion 
$\Delta_{\Supp}\supset p\left(\Delta_{Y_\bullet}(V_{\vec{\bullet}})\right)$ is trivial. 

If there exists $\vec{x}\in\R_{\geq 0}^r$ with $\vec{x}\cdot\vec{L}$ big, then 
we set 
\[
\Sigma_{Y_\bullet}\left(L_1,\dots,L_r\right):=
\Sigma_{Y_\bullet}\left(H^0\left(\vec{\bullet}\cdot\vec{L}\right)\right), \quad
\Delta_{Y_\bullet}\left(L_1,\dots,L_r\right):=
\Delta_{Y_\bullet}\left(H^0\left(\vec{\bullet}\cdot\vec{L}\right)\right).
\]
\item\label{okounkov_definition2}
For any $l\in m\Z_{> 0}$, we set 
\[
h^0\left(V_{l,m\vec{\bullet}}\right):=\sum_{\vec{a}\in\Z_{\geq 0}^{r-1}}
\dim V_{l,m\vec{a}}, 
\]
and 
\[
\vol\left(V_{m\vec{\bullet}}\right):=\limsup_{l\in m\Z_{>0}}
\frac{h^0\left(V_{l,m\vec{\bullet}}\right)m^{r-1}}{l^{r-1+n}/(r-1+n)!}\in[0,\infty].
\]
If $V_{m\vec{\bullet}}$ has bounded support, then the above values are finite. 
If $V_{m\vec{\bullet}}$ contains an ample series, then 
$\vol\left(V_{m\vec{\bullet}}\right)\in(0,\infty]$ and the above limsup is in fact 
the limit. Moreover, the definition 
$\vol\left(V_{\vec{\bullet}}\right):=\vol\left(V_{m\vec{\bullet}}\right)$ is well-defined, 
and 
\[
\vol\left(V_{\vec{\bullet}}\right)=
(r-1+n)!\cdot\vol\left(\Delta_{Y_\bullet}\left(V_{\vec{\bullet}}\right)\right)
\]
holds by \cite[Lemma 3.4]{r3d28}. For any big $L\in\CaCl(X)\otimes_\Z\Q$, 
we have $\vol\left(H^0\left(\bullet L\right)\right)=\vol_X(L)$, where 
$\vol_X(L)\in\R_{>0}$ is the volume of $L$ in the sense of \cite[\S 2.2]{L}. 
\end{enumerate}
\end{definition}

We also recall the notion in \cite[\S 4.5]{Xu}. 

\begin{definition}[{\cite[Definition 4.72]{Xu}}]\label{xu_definition}
Let $V_{\vec{\bullet}}$ and $W_{\vec{\bullet}}$ are the Veronese equivalence classes 
of graded linear series on $X$ associated to 
$L_1,\dots,L_r\in\CaCl(X)\otimes_\Z\Q$ such that both have bounded supports 
and contain ample series. 
If $\vol\left(W_{\vec{\bullet}}\right)=\vol\left(V_{\vec{\bullet}}\right)$
and there exist representatives $V_{m\vec{\bullet}}$ and 
$W_{m\vec{\bullet}}$ for some $m\in\Z_{>0}$ 
with $W_{m\vec{\bullet}}\subset V_{m\vec{\bullet}}$ 
(i.e., $W_{m\vec{a}}\subset V_{m\vec{a}}$ holds for any $\vec{a}\in\Z_{\geq 0}^r$), 
then we say that $W_{\vec{\bullet}}$ is \emph{asymptotically equivalent to} 
$V_{\vec{\bullet}}$. 
\end{definition}

\begin{lemma}\label{asymp-equiv_lemma}
Let us consider $\left(m\Z_{\geq 0}\right)^r$-graded linear series 
$V_{m\vec{\bullet}}$
and $W_{\vec{m\bullet}}$ on $X$ associated to $L_1,\dots,L_r\in\CaCl(X)
\otimes_\Z\Q$ which has bounded supports and contain ample series 
with $W_{m\vec{\bullet}}\subset V_{m\vec{\bullet}}$, and let 
$V_{\vec{\bullet}}$ and $W_{\vec{\bullet}}$ be their Veronese equivalence classes. 
Then the followings are equivalent: 
\begin{enumerate}
\renewcommand{\theenumi}{\arabic{enumi}}
\renewcommand{\labelenumi}{(\theenumi)}
\item\label{asymp-equiv_lemma1}
$W_{\vec{\bullet}}$ is asymptotically equivalent to $V_{\vec{\bullet}}$. 
\item\label{asymp-equiv_lemma2}
The equality $\Supp\left(V_{\vec{\bullet}}\right)=\Supp\left(W_{\vec{\bullet}}\right)$ 
and the equality 
$\vol\left(V_{\bullet\vec{a}}\right)=\vol\left(W_{\bullet\vec{a}}\right)$ holds for any 
$\vec{a}\in\Q_{>0}^r\cap\interior\left(\Supp\left(V_{\vec{\bullet}}\right)\right)$. 
\item\label{asymp-equiv_lemma3}
For any $\vec{a}\in\Q_{>0}^r\cap
\interior\left(\Supp\left(V_{\vec{\bullet}}\right)\right)$, 
the series $W_{\bullet\vec{a}}$ contains an ample series and is asymptotically 
equivalent to $V_{\bullet\vec{a}}$. 
\end{enumerate}
\end{lemma}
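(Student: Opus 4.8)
The plan is to prove the cycle of implications $\eqref{asymp-equiv_lemma1}\Rightarrow\eqref{asymp-equiv_lemma2}\Rightarrow\eqref{asymp-equiv_lemma3}\Rightarrow\eqref{asymp-equiv_lemma1}$. Since $W_{m\vec\bullet}\subset V_{m\vec\bullet}$ is part of the hypothesis, Definition \ref{xu_definition} makes $\eqref{asymp-equiv_lemma1}$ equivalent to the single equality $\vol\left(W_{\vec\bullet}\right)=\vol\left(V_{\vec\bullet}\right)$; and, once we know that $W_{\bullet\vec a}$ contains an ample series and that $W_{\bullet\vec a}\subset V_{\bullet\vec a}$ for suitable representatives (which is immediate by passing to the $\vec a$-slices of the given representatives of $V_{m\vec\bullet}$ and $W_{m\vec\bullet}$), the asymptotic equivalence asserted in condition $\eqref{asymp-equiv_lemma3}$ likewise reduces to $\vol\left(W_{\bullet\vec a}\right)=\vol\left(V_{\bullet\vec a}\right)$. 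So throughout we are comparing volumes. I would fix an admissible flag $Y_\bullet$ on $X$ (which exists: cut by general linear sections through a smooth point of $X$) and work with the Okounkov bodies $\Delta_{Y_\bullet}\left(W_{\vec\bullet}\right)\subset\Delta_{Y_\bullet}\left(V_{\vec\bullet}\right)$ of Definition \ref{okounkov_definition} --- compact because of bounded support, of full dimension because of the ample series --- together with the projection $p\colon\Delta_{Y_\bullet}\left(V_{\vec\bullet}\right)\twoheadrightarrow\Delta_{\Supp\left(V_{\vec\bullet}\right)}$, whose restriction to $\Delta_{Y_\bullet}\left(W_{\vec\bullet}\right)$ maps onto $\Delta_{\Supp\left(W_{\vec\bullet}\right)}$.

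The one substantial ingredient is the slicing dictionary for the direction-restrictions $V_{\bullet\vec a}$ of Definition \ref{interior_definition} \eqref{interior_definition5}, essentially contained in \cite[\S 4.3]{LM} (see also \cite{r3d28}): for $\vec a\in\Q_{>0}^r\cap\interior\left(\Supp\left(V_{\vec\bullet}\right)\right)$, writing $\vec t(\vec a)\in\interior\left(\Delta_{\Supp\left(V_{\vec\bullet}\right)}\right)$ for the point cut out on the height-one slice of $\Supp\left(V_{\vec\bullet}\right)$ by the ray $\R_{\geq 0}\vec a$, the Okounkov body $\Delta_{Y_\bullet}\left(V_{\bullet\vec a}\right)\subset\R^n$ is, up to a rescaling of $\R^n$ by a positive factor depending only on $\vec a$, the fiber $p^{-1}(\vec t(\vec a))$; consequently $\vol\left(V_{\bullet\vec a}\right)$ equals a fixed positive multiple (depending only on $\vec a$, $n$ and not on the series) of the $n$-dimensional volume of $p^{-1}(\vec t(\vec a))$, and the analogous statement holds for $W_{\bullet\vec a}$ and the corresponding fiber of $p|_{\Delta_{Y_\bullet}\left(W_{\vec\bullet}\right)}$. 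Proving this is where the real work lies: one checks that forming $\Gamma_{Y_\bullet}$, passing to $\overline{\Cone(-)}$, intersecting with the coordinate subspace spanned by the direction of $\vec a$, and slicing at height one all commute in the order needed --- which is precisely what the hypothesis $\vec a\in\interior\left(\Supp\left(V_{\vec\bullet}\right)\right)$ provides, since then that subspace meets the interior of the cone $\Sigma_{Y_\bullet}\left(V_{\vec\bullet}\right)$ and no boundary degenerations occur.

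Granting this, the three implications are short. For $\eqref{asymp-equiv_lemma1}\Rightarrow\eqref{asymp-equiv_lemma2}$: by the volume formula $\vol\left(V_{\vec\bullet}\right)=(n+r-1)!\cdot\vol\left(\Delta_{Y_\bullet}\left(V_{\vec\bullet}\right)\right)$ of Definition \ref{okounkov_definition}, the nested full-dimensional compact convex bodies $\Delta_{Y_\bullet}\left(W_{\vec\bullet}\right)\subset\Delta_{Y_\bullet}\left(V_{\vec\bullet}\right)$ have the same volume, hence coincide; projecting by $p$ gives $\Delta_{\Supp\left(W_{\vec\bullet}\right)}=\Delta_{\Supp\left(V_{\vec\bullet}\right)}$, so the supports agree, and all fibers of $p$ agree, whence $\vol\left(W_{\bullet\vec a}\right)=\vol\left(V_{\bullet\vec a}\right)$ by the dictionary. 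For $\eqref{asymp-equiv_lemma2}\Rightarrow\eqref{asymp-equiv_lemma3}$: since the supports agree and $W_{\vec\bullet}$ contains an ample series, \cite[Lemma 4.18]{LM} shows $W_{\bullet\vec a}$ contains an ample series for each $\vec a$ in question (as does $V_{\bullet\vec a}$, $\vec a$ being interior); combined with $W_{\bullet\vec a}\subset V_{\bullet\vec a}$ and the hypothesis $\vol\left(V_{\bullet\vec a}\right)=\vol\left(W_{\bullet\vec a}\right)$, this is asymptotic equivalence. For $\eqref{asymp-equiv_lemma3}\Rightarrow\eqref{asymp-equiv_lemma1}$: the fact that $W_{\bullet\vec a}$ contains an ample series forces $\vec a\in\Supp\left(W_{\vec\bullet}\right)$ for every $\vec a\in\Q_{>0}^r\cap\interior\left(\Supp\left(V_{\vec\bullet}\right)\right)$, and as this set is dense in the full-dimensional closed cone $\Supp\left(V_{\vec\bullet}\right)$ we get $\Supp\left(W_{\vec\bullet}\right)=\Supp\left(V_{\vec\bullet}\right)$; next, $\vol\left(W_{\bullet\vec a}\right)=\vol\left(V_{\bullet\vec a}\right)$ together with the dictionary says the fiber of $p|_{\Delta_{Y_\bullet}\left(W_{\vec\bullet}\right)}$ and the fiber of $p$ over $\vec t(\vec a)$ have equal $n$-dimensional volume for all $\vec a$ in that dense set, hence --- by continuity of the fiber volume of a fixed compact convex body on the relative interior of its image --- over all of $\interior\left(\Delta_{\Supp\left(V_{\vec\bullet}\right)}\right)$; integrating over $\Delta_{\Supp\left(V_{\vec\bullet}\right)}$ via Fubini then yields $\vol\left(\Delta_{Y_\bullet}\left(W_{\vec\bullet}\right)\right)=\vol\left(\Delta_{Y_\bullet}\left(V_{\vec\bullet}\right)\right)$, i.e.\ $\vol\left(W_{\vec\bullet}\right)=\vol\left(V_{\vec\bullet}\right)$.

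The main obstacle is the slicing dictionary of the second paragraph --- identifying $\Delta_{Y_\bullet}\left(V_{\bullet\vec a}\right)$ with a rescaled fiber of $\Delta_{Y_\bullet}\left(V_{\vec\bullet}\right)\to\Delta_{\Supp\left(V_{\vec\bullet}\right)}$, uniformly in the series. Everything else (the fact that nested full-dimensional compact convex bodies of equal volume coincide, continuity of fiber volumes, the Fubini step, and the bookkeeping around supports and ample series) is routine convex geometry, which I would record without detailed computation.
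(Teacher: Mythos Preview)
Your proposal is correct and follows essentially the same approach as the paper: both fix an admissible flag, reduce the question to comparing the nested Okounkov bodies $\Delta_{Y_\bullet}(W_{\vec\bullet})\subset\Delta_{Y_\bullet}(V_{\vec\bullet})$, and invoke the slicing result \cite[Theorem 4.21]{LM} identifying $\vol(V_{\bullet\vec a})$ with a multiple of the fiber volume of the projection $p$ to $\Delta_{\Supp}$. The paper's proof is simply a terser version of yours, recording these ingredients and concluding with ``Thus the assertion follows'' rather than spelling out the cycle of implications.
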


\begin{proof}
Let us set $\Delta^V:=\Delta_{Y_\bullet}\left(V_{\vec{\bullet}}\right)$, 
$\Delta^W:=\Delta_{Y_\bullet}\left(W_{\vec{\bullet}}\right)$, 
$\Delta_{\Supp}^V:=\Delta_{\Supp\left(V_{\vec{\bullet}}\right)}$
and $\Delta_{\Supp}^W:=\Delta_{\Supp\left(W_{\vec{\bullet}}\right)}$. 
Both $\Delta^V$ and $\Delta^W$ are compact convex sets with nonempty interiors 
with $\Delta^W\subset\Delta^V\subset\R_{\geq 0}^{r-1+n}$. 
Note that the condition \eqref{asymp-equiv_lemma1} is equivalent to 
the condition $\Delta^V=\Delta^W$. Moreover, recall that 
$p\left(\Delta^V\right)=\Delta_{\Supp}^V$ and 
$p\left(\Delta^W\right)=\Delta_{\Supp}^W$, where $p\colon\R^{r-1+n}\to\R^{r-1}$
is the projection. 
Let us set 
\begin{eqnarray*}
f^V\colon\Delta_{\Supp}^V&\to&\R_{\geq 0} \\
\vec{a}&\mapsto&\vol\left((p|_{\Delta^V})^{-1}\left(\vec{a}\right)\right),
\end{eqnarray*}
\begin{eqnarray*}
f^W\colon\Delta_{\Supp}^W&\to&\R_{\geq 0}\\
\vec{a}&\mapsto&\vol\left((p|_{\Delta^W})^{-1}\left(\vec{a}\right)\right).
\end{eqnarray*}
Both functions are continuous and $f^V|_{\Delta_{\Supp}^W}\geq f^W$ holds. 
By \cite[Theorem 4.21]{LM}, for any 
$\vec{a}\in\Q_{>0}^{r-1}\cap\interior\left(\Delta_{\Supp}^V\right)$
(resp., $\vec{a}\in\Q_{>0}^{r-1}\cap\interior\left(\Delta_{\Supp}^W\right)$), 
we have 
\[
f^V\left(\vec{a}\right)
=\frac{1}{n!}\vol\left(V_{\bullet\vec{a}}\right)\quad
\left(\text{resp., }
f^W\left(\vec{a}\right)
=\frac{1}{n!}\vol\left(W_{\bullet\vec{a}}\right)
\right).
\]
In particular, $f^V>0$ over $\interior\left(\Delta_{\Supp}^V\right)$ and 
$f^W>0$ over $\interior\left(\Delta_{\Supp}^W\right)$. 
From those observations, the condition \eqref{asymp-equiv_lemma2} (and also 
\eqref{asymp-equiv_lemma3}) is equivalent to the condition 
\begin{enumerate}\setcounter{enumi}{3}
\renewcommand{\theenumi}{\arabic{enumi}}
\renewcommand{\labelenumi}{(\theenumi)}
\item\label{asymp-equiv_lemma4}
$\Delta_{\Supp}^V=\Delta_{\Supp}^W$ and $f^V=f^W$ over 
$\interior\left(\Delta_{\Supp}^W\right)$. 
\end{enumerate}
Clearly, the condition \eqref{asymp-equiv_lemma4} is equivalent to the condition 
$\Delta^V=\Delta^W$. 
\end{proof}

\begin{example}\label{interior_example}
Let $V_{\vec{\bullet}}$ be the Veronese equivalence class of 
an $\left(m\Z_{\geq 0}\right)^r$-graded linear series $V_{m\vec{\bullet}}$ 
on $X$ associated to $L_1,\dots,L_r\in\CaCl(X)\otimes_\Z\Q$ which contains 
an ample series. 
\begin{enumerate}
\renewcommand{\theenumi}{\arabic{enumi}}
\renewcommand{\labelenumi}{(\theenumi)}
\item\label{interior_example1}
Take $\vec{k}=(k_1,\dots,k_r)\in\Z_{>0}^r$ and 
let us consider $V^{(\vec{k})}_{\vec{\bullet}}$ 
as in Definition \ref{interior_definition} \eqref{interior_definition1}. 
As in \cite[Lemma 3.4]{r3d28}, we have 
\[
f\left(\Delta_{Y_\bullet}\left(V_{\vec{\bullet}}^{(\vec{k})}\right)\right)
=\Delta_{Y_\bullet}\left(V_{\vec{\bullet}}\right)
\]
with 
\begin{eqnarray*}
f\colon \R^{r-1+n}&\to&\R^{r-1+n}\\
\left(x_1,\dots,x_{r-1+n}\right)&\mapsto&
\left((k_2/k_1)x_1,\dots,(k_r/k_1)x_{r-1},(1/k_1)x_r,\dots,(1/k_1)x_{r-1+n}\right).
\end{eqnarray*}
In particular, we have 
\[
\vol\left(V^{(\vec{k})}_{\vec{\bullet}}\right)=\frac{k_1^{r-1+n}}{k_2\cdots k_r}
\vol\left(V_{\vec{\bullet}}\right). 
\]
\item\label{interior_example2}
Let $C\subset\Delta_{\Supp\left(V_{\vec{\bullet}}\right)}$ be any closed 
convex subset 
with $\interior(C)\neq \emptyset$ as in Definition \ref{interior_definition} 
\eqref{interior_definition3}. Set $\Delta:=\Delta_{Y_\bullet}\left(V_{\vec{\bullet}}\right)\subset\R_{\geq 0}^{r-1+n}$, and 
let $p\colon\Delta\twoheadrightarrow\Delta_{\Supp\left(V_{\vec{\bullet}}\right)}
\subset\R_{\geq 0}^{r-1}$ be 
the natural projection. Then, the convex closed subset 
$p^{-1}\left(C\right)\subset\Delta$ 
is the Okounkov body 
$\Delta_{Y_\bullet}\left(V_{\vec{\bullet}}^{\langle C\rangle}\right)$
of $V_{\vec{\bullet}}^{\langle C\rangle}$, since we can check that 
$\Delta_{Y_\bullet}\left(V_{\vec{\bullet}}^{\langle C\rangle}\right)\subset
p^{-1}(C)$ and 
$\interior\left(p^{-1}(C)\right)\subset
\Delta_{Y_\bullet}\left(V_{\vec{\bullet}}^{\langle C\rangle}\right)$. 
\item\label{interior_example3}
Let us consider the decomposition of $V_{\vec{\bullet}}$ with respects 
to the decomposition $\Delta_{\Supp}=\overline{\bigcup_{\lambda\in\Lambda}
\Delta_{\Supp}^{\langle\lambda\rangle}}$ as in Definition \ref{interior_definition} 
\eqref{interior_definition4}. 
Set 
$\Delta:=\Delta_{Y_\bullet}\left(V_{\vec{\bullet}}\right)\subset\R_{\geq 0}^{r-1+n}$, and 
let $p\colon\Delta\twoheadrightarrow\Delta_{\Supp}\subset\R_{\geq 0}^{r-1}$ be 
the natural projection. Then, as in \eqref{interior_example2}, 
the compact convex subset $\Delta^{\langle\lambda\rangle}
:=p^{-1}\left(\Delta_{\Supp}^{\langle\lambda\rangle}\right)\subset\Delta$ 
is the Okounkov body 
$\Delta_{Y_\bullet}\left(V_{\vec{\bullet}}^{\langle\lambda\rangle}\right)$
of $V_{\vec{\bullet}}^{\langle\lambda\rangle}$ for any 
$\lambda\in\Lambda$. 
Obviously, we have 
\[
\Delta=\overline{\bigcup_{\lambda\in\Lambda}\Delta^{\langle\lambda\rangle}}
\]
and each $\Delta^{\langle\lambda\rangle}$ is a compact convex set with 
nonempty interior and $\interior\left(\Delta^{\langle\lambda\rangle}\right)\cap
\interior\left(\Delta^{\langle\lambda'\rangle}\right)=\emptyset$ whenever 
$\lambda\neq\lambda'$. We have 
\[
\vol\left(V_{\vec{\bullet}}^{\langle\lambda\rangle}\right)
=(r-1+n)!\cdot\vol\left(\Delta^{\langle\lambda\rangle}\right)
\]
for any $\lambda\in\Lambda$. 
Since 
\[
\vol\left(\Delta\right)=\sum_{\lambda\in\Lambda}
\vol\left(\Delta^{\langle\lambda\rangle}\right), 
\]
we get 
\[
\vol\left(V_{\vec{\bullet}}\right)=\sum_{\lambda\in\Lambda}
\vol\left(V_{\vec{\bullet}}^{\langle\lambda\rangle}\right). 
\]
\item\label{interior_example4}
Assume that $X$ is normal and $Y:=Y_1$ is a prime divisor on $X$ which is 
$\Q$-Cartier. From the flag $Y_\bullet$ on $X$, we can naturally consider the flag 
$Y'_\bullet$ on $Y$ defined by $Y'_j:=Y_{j+1}$ for any $0\leq j\leq n-1$. 
By \cite[Definition 3.15]{r3d28}, we have 
\[
\Delta_{Y_\bullet}\left(V_{\vec{\bullet}}\right)
=\Delta_{Y'_\bullet}\left(V^{(Y)}_{\vec{\bullet}}\right),
\]
where $V^{(Y)}_{\vec{\bullet}}$ is the refinement of 
$V_{\vec{\bullet}}$ by $Y$. In particular, we have 
\[\vol\left(V_{\vec{\bullet}}\right)=\vol\left(V^{(Y)}_{\vec{\bullet}}\right).\]
\item\label{interior_example5}
Let us consider the situation in Definition \ref{tensor_definition}. 
Assume moreover both $V^1_{m\vec{\bullet}}$ and $V^2_{m\vec{\bullet}}$ 
contain ample series. For any $l\in m\Z_{>0}$, we have 
\[
h^0\left(W_{l,m\vec{\bullet}}\right)
=\sum_{\vec{a}\in\Z_{\geq 0}^{r_1-1}, \vec{b}\in\Z_{\geq 0}^{r_2-1}}
\dim\left(V^1_{l,m\vec{a}}\otimes V^2_{l,m\vec{b}}\right)
=h^0\left(V^1_{l,m\vec{\bullet}}\right)\cdot h^0\left(V^2_{l,m\vec{\bullet}}\right).
\]
Thus we get 
\[
\vol\left(V^1_{\vec{\bullet}}\otimes V^2_{\vec{\bullet}}\right)
=\binom{n+r_1+r_2-2}{n_1+r_1-1}\vol\left(V^1_{\vec{\bullet}}\right)\cdot
\vol\left(V^2_{\vec{\bullet}}\right).
\]
\item\label{interior_example6}
Assume that $V_{\vec{\bullet}}$ has bounded supports. 
Take the Veronese equivalence class $W_{\vec{\bullet}}$ of a graded linear series 
on $X$ which contains an ample series such that 
$W_{\vec{\bullet}}$ is asymptotically equivalent to $V_{\vec{\bullet}}$. 
Consider any primitive prime divisor $Y$ over $X$. 
By Example \ref{interior_example} \eqref{interior_example4}, the refinement 
$W_{\vec{\bullet}}^{(Y)}$ is also asymptotically equivalent to 
$V_{\vec{\bullet}}^{(Y)}$. 
\item\label{interior_example7}
Assume that $V_{\vec{\bullet}}$ has bounded supports. 
The interior series $V_{\vec{\bullet}}^\circ$ of $V_{\vec{\bullet}}$ is trivially
asymptotically equivalent to $V_{\vec{\bullet}}$ by Lemma \ref{asymp-equiv_lemma}. 
\item\label{interior_example8}
Take any big $L\in\CaCl(X)\otimes_\Z\Q$ and any projective birational morphism 
$\sigma\colon X'\to X$ between varieties. Then 
$\sigma^*H^0\left(\bullet L\right)$ is asymptotically equivalent to 
$H^0\left(\bullet\sigma^*L\right)$ by \cite[Proposition 2.2.43]{L}. 
\end{enumerate}
\end{example}

We will use the following technical proposition. 

\begin{proposition}\label{bary_proposition}
Let us consider $n\geq 2$, let $\Delta\subset\R^n$ be a compact convex set 
with $\interior\left(\Delta\right)\neq\emptyset$, let $p_1\colon\R^n\to\R$ be the 
first projection, and let us set $\left[t_0, t_1\right]:=p_1\left(\Delta\right)\subset\R$. 
Set $V:=\vol_{\R^n}\left(\Delta\right)$ and let $\left(b_1,\dots,b_n\right)\in\Delta$ 
be the barycenter of $\Delta$. For any $x\in\left[t_0,t_1\right]$, we write 
$\Delta_x:=p_1^{-1}\left(\{x\}\right)\subset\R^{n-1}$, and set 
$g(x):=\vol_{\R^{n-1}}\left(\Delta_x\right)$.
Take any $e\in\left(t_0,t_1\right)$. 
\begin{enumerate}
\renewcommand{\theenumi}{\arabic{enumi}}
\renewcommand{\labelenumi}{(\theenumi)}
\item\label{bary_proposition1}
Assume that there exists $v\in\R$ such that either 
\[
v=\lim_{x\to e+0}\frac{g(x)-g(e)}{x-e} \quad\text{or}\quad
v=\lim_{x\to e-0}\frac{g(x)-g(e)}{x-e}. 
\]
Let $h_0\colon\left[t_0,t_1\right]\to\R_{\geq 0}$ be the function defined by 
\[
h_0(x):=\begin{cases}
g(x) & \text{if }x\in\left[t_0,e\right], \\
g(e)\cdot\left(\frac{v(x-e)}{(n-1)g(e)}+1\right)^{n-1} & \text{if }x\in\left[e,t_1\right]. 
\end{cases}\]
\begin{enumerate}
\renewcommand{\theenumii}{\roman{enumii}}
\renewcommand{\labelenumii}{(\theenumii)}
\item\label{bary_proposition11}
For any $x\in\left[t_0,t_1\right]$, we have $g(x)\leq h_0(x)$. In particular, we have 
\[
b_1\geq \frac{1}{V}\int_{t_0}^{s_0}x h_0(x)dx, 
\]
where 
\[
s_0:=\begin{cases}
e+\frac{(n-1)g(e)}{v}\left(\left(\frac{n v\left(V-\int_{t_0}^e g(x)dx\right)
+(n-1)g(e)^2}{(n-1)g(e)^2}\right)^{\frac{1}{n}}-1\right)
& \text{if }v\neq 0, \\
e+\frac{1}{g(e)}\left(V-\int_{t_0}^e g(x)dx\right) & \text{if }v=0.
\end{cases}\]
\item\label{bary_proposition12}
Assume that there exists $t\in(e,t_1]$ such that $W\geq V$ holds, where 
\[
W:=\int_{t_0}^t h_0(x) dx.
\]
In other words, 
\[
W=\begin{cases}
\int_{t_0}^e g(x)dx+\frac{(n-1)g(e)^2}{n v}
\left(\left(\frac{v(t-e)}{(n-1)g(e)}+1\right)^n-1\right) & \text{if }v\neq 0, \\
\int_{t_0}^e g(x)dx + (t-e)g(e) & \text{if }v=0.
\end{cases}\]
(For example, $t=t_1$ satisfies the above assumption.)
Set $h_1\colon [t_0,t]\to\R$ with 
\[
h_1(x):=\begin{cases}
h_0(x) & \text{if }x\in[t_0,s_1], \\
h_0(s_1)\cdot\left(\frac{t-x}{t-s_1}\right)^{n-1} & \text{if }x\in[s_1,t], 
\end{cases}\]
where $s_1\in[e,t]$ is defined by 
\[
s_1:=\begin{cases}
e+\frac{(n-1)g(e)}{v}\left(\left(\frac{n v\left(V-\int_{t_0}^e g(x)dx\right)
+(n-1)g(e)^2}{g(e)\left(v(t-e)+(n-1)g(e)\right)}\right)^{\frac{1}{n-1}}-1\right)
& \text{if }v\neq 0, \\
\frac{n\left(V-\int_{t_0}^e g(x)dx\right)-g(e)(t-n e)}{(n-1)g(e)} & 
\text{if }v=0.
\end{cases}\]
In other words, 
\[
s_1=\begin{cases}
e+\frac{(n-1)g(e)}{v}\left(\left(\frac{(n-1)g(e)^2\left(\frac{v(t-e)}{(n-1)g(e)}+1\right)^n
-n v(W-V)}{g(e)\left(v(t-e)+(n-1)g(e)\right)}\right)^{\frac{1}{n-1}}-1\right)
& \text{if }v\neq 0, \\
t-\frac{n(W-V)}{(n-1)g(e)} & \text{if }v=0.
\end{cases}\]
Then we have 
\[
b_1\geq \frac{1}{V}\int_{t_0}^t x h_1(x)dx.
\]
\end{enumerate}
\item\label{bary_proposition2}
Assume that there exists $u\in[t_1,\infty)$ such that 
\[
\int_{t_0}^e g(x)dx+\int_e^u g(e)\cdot\left(\frac{u-x}{u-e}\right)^{n-1}dx
\left(=\int_{t_0}^e g(x)dx+\frac{(u-e)g(e)}{n}\right)\leq V.
\]
$($For example, $u=t_1$ satisfies the above assumption.$)$ Fix $w\in\R_{\geq 0}$ 
satisfying the condition 
\[
(u-e)\sum_{i=0}^{n-1}g(e)^{\frac{i}{n-1}}w^{n-1-i}
-n\left(V-\int_{t_0}^e g(x)dx\right)\geq 0.
\]
Set $h_2\colon [t_0,u]\in\R_{\geq 0}$ with 
\[
h_2(x):=\begin{cases}
g(x) & \text{if }x\in[t_0,e], \\
\left(\frac{u-x}{u-e}\cdot g(e)^{\frac{1}{n-1}}+\frac{x-e}{u-e}\cdot w\right)^{n-1}
& \text{if }x\in[e,u].
\end{cases}\]
Then we have 
\[
b_1\leq\frac{1}{V}\int_{t_0}^ux h_2(x)dx.
\]
\end{enumerate}
\end{proposition}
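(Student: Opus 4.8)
The plan is to reduce the whole statement to two elementary ingredients. The first is Brunn's concavity principle: for a compact convex $\Delta\subset\R^n$ with nonempty interior, the function $\phi:=g^{1/(n-1)}$ is concave on $[t_0,t_1]$, and $g>0$ (hence $\phi$ is continuous) on $(t_0,t_1)$ because every slice $\Delta_x$ with $x\in(t_0,t_1)$ contains an interior point of $\Delta$. The second is a rearrangement lemma: if $f_1,f_2\colon\R\to\R_{\geq 0}$ are integrable with compact support, $\int f_1\,dx=\int f_2\,dx$, and there is $s\in\R$ with $f_1-f_2\geq 0$ on $(-\infty,s]$ and $f_1-f_2\leq 0$ on $[s,\infty)$, then $\int x f_1\,dx\leq\int x f_2\,dx$; this follows by writing $\int x(f_2-f_1)\,dx=\int(x-s)(f_2-f_1)\,dx+s\int(f_2-f_1)\,dx$, noting the last term vanishes and the integrand of the first is everywhere $\geq 0$. (A variant with $\int f_1\,dx\leq\int f_2\,dx$ works when one knows $s\geq 0$.)

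For part \eqref{bary_proposition1}, I first record that since $v/\bigl((n-1)g(e)^{(n-2)/(n-1)}\bigr)$ is a one-sided derivative of $\phi$ at $e$ and $\phi$ is concave, the affine function $\ell(x):=g(e)^{1/(n-1)}\bigl(1+\tfrac{v(x-e)}{(n-1)g(e)}\bigr)$ is a supporting line of $\phi$ from above on all of $[t_0,t_1]$; in particular $\ell\geq 0$ there, so $h_0$ is well defined and $g\leq h_0$ on $[t_0,t_1]$ with equality on $[t_0,e]$. This is assertion \eqref{bary_proposition11} up to the barycenter bound. The identity $\int_{t_0}^{s_0}h_0\,dx=V$ is a direct computation: the second branch of $h_0$ integrates (by the substitution $y=1+\tfrac{v(x-e)}{(n-1)g(e)}$) to $\tfrac{(n-1)g(e)^2}{nv}\bigl((\cdots)^n-1\bigr)$, which pins down $s_0$; moreover $\int_{t_0}^e h_0\,dx=\int_{t_0}^e g\,dx\leq V\leq\int_{t_0}^{t_1}g\,dx\leq\int_{t_0}^{t_1}h_0\,dx$ forces $s_0\in[e,t_1]$. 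Applying the rearrangement lemma with $f_1=h_0\cdot\mathbf 1_{[t_0,s_0]}$ and $f_2=g$ (the difference is $\geq 0$ on $[t_0,s_0]$ and equals $-g\leq 0$ on $[s_0,t_1]$, and both have integral $V$) gives $V b_1=\int x g\,dx\geq\int_{t_0}^{s_0}x h_0\,dx$.

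For \eqref{bary_proposition12} the point is that $h_1^{1/(n-1)}$ is affine on $[s_1,t]$ (decreasing to $0$ at $t$) and equals $\ell$ on $[e,s_1]$, so $h_1^{1/(n-1)}-\phi$ is convex on $[s_1,t]$, is $\geq 0$ at $s_1$ (as $\ell\geq\phi$) and is $-\phi(t)\leq 0$ at $t$; a convex function with these endpoint signs changes sign at most once, from $+$ to $-$. Hence $h_1-g$ changes sign exactly once on $[t_0,t_1]$ (it is $0$ on $[t_0,e]$, $\geq 0$ on $[e,s_1]$, ``$+$ then $-$'' on $[s_1,t]$, and $-g\leq 0$ on $[t,t_1]$, extending $h_1$ by $0$). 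The displayed $s_1$ is exactly the one making $\int_{t_0}^t h_1\,dx=V$ (in the computation the top-degree powers of the affine base cancel, leaving the stated closed form), and one checks $s_1\in[e,t]$: the inequality $s_1\leq t$ is equivalent to $W\geq V$, while $s_1\geq e$ follows from the chord estimate $\phi(x)\geq\tfrac{t-x}{t-e}\phi(e)$, which gives $\int_e^t g\,dx\geq\tfrac{(t-e)g(e)}{n}$. The rearrangement lemma (with $f_1=h_1$, $f_2=g$, both of integral $V$) then yields $V b_1\geq\int_{t_0}^t x h_1\,dx$. Part \eqref{bary_proposition2} is the mirror image: $h_2^{1/(n-1)}$ is the affine interpolation from $\phi(e)$ at $e$ to $w$ at $u$, so $h_2^{1/(n-1)}-\phi$ is convex on $[e,u]$, vanishes at $e$, and is $\geq 0$ on $[t_1,u]$ (where $\phi\equiv 0$); hence it is ``$-$ then $+$'', so $h_2-g$ changes sign once, from $-$ to $+$, on $[t_0,u]$. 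The condition on $w$ says precisely $\int_{t_0}^u h_2\,dx\geq V$ (the second branch integrates to $\tfrac{u-e}{n}\sum_{i=0}^{n-1}g(e)^{i/(n-1)}w^{n-1-i}$), and for $w=g(e)^{1/(n-1)}$ this is the displayed consistency condition on $u$. Reducing to the minimal admissible $w$, where $\int h_2\,dx=V$, the rearrangement lemma gives $V b_1\leq\int_{t_0}^u x h_2\,dx$; enlarging $w$ only enlarges $h_2$ on $(e,u]$ and hence the right-hand side, so the bound persists.

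I expect the main obstacle to be bookkeeping rather than conceptual: verifying the closed-form constants $s_0,s_1,W$ and the integral of the $w$-branch, and the containments $s_0,s_1\in[e,t_1]$, $s_1\leq t$, all of which boil down to the single convexity/chord estimate for $\phi=g^{1/(n-1)}$. A secondary subtlety is the rearrangement lemma with unequal integrals in part \eqref{bary_proposition2}: one wants the sign-change point to be $\geq 0$, which is automatic in the intended application to Okounkov bodies (there $\Delta\subset\R_{\geq 0}^n$, so $t_0\geq 0$ and the sign change occurs at some point $\geq e>0$), but in the stated generality one simply reduces to the equal-integral case via the minimal $w$ as above.
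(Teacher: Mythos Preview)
Your proposal follows essentially the same route as the paper: Brunn--Minkowski concavity of $\phi=g^{1/(n-1)}$, the supporting-line bound $g\leq h_0$, and the ``single sign change $+$ rearrangement'' argument, with reduction to the minimal $w$ in part~\eqref{bary_proposition2}. The paper phrases the rearrangement step implicitly (writing $\int(x-s)\,\cdot\,dx$ at the crossover point $s$), but the content is the same.

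One small gap to patch in part~\eqref{bary_proposition2}: you assert that $h_2^{1/(n-1)}-\phi$ is convex on $[e,u]$, but this need not hold. The slice $\Delta_{t_1}$ can have positive $(n-1)$-volume (e.g.\ a cylinder), so $\phi(t_1)>0$ is possible, and then extending $\phi$ by $0$ past $t_1$ destroys concavity. The fix is exactly the paper's: work with convexity of $h_2^{1/(n-1)}-\phi$ only on $[e,t_1]$, where it vanishes at $e$, and then split into two cases according to the sign at $t_1$. If $h_2(t_1)\leq g(t_1)$ the convex function is $\leq 0$ on all of $[e,t_1]$ and the crossover is at $t_1$; if $h_2(t_1)>g(t_1)$ there is a unique crossing $s_2\in(e,t_1)$, and on $[t_1,u]$ one has $h_2\geq 0=g$ automatically. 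Either way your ``$-$ then $+$'' conclusion and the rearrangement go through.

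Also a minor slip: where you write ``for $w=g(e)^{1/(n-1)}$ this is the displayed consistency condition on $u$'', you mean $w=0$; plugging $w=0$ into your integral formula gives $(u-e)g(e)/n$, which is precisely the hypothesis on $u$ ensuring the minimal admissible $w_0$ is $\geq 0$.
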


\begin{proof}
Since $\Delta$ is a compact convex set, we have 
\begin{itemize}
\item
$g(x)\in\R_{>0}$ for any $x\in(t_0,t_1)$, 
\item
$V=\int_{t_0}^{t_1}g(x)dx$ and $b_1=\frac{1}{V}\int_{t_0}^{t_1}x g(x)dx$, and 
\item
the inequality
\[
g(x_1)^{\frac{1}{n-1}}\geq \frac{x_2-x_1}{x_2-x_0}g(x_0)^{\frac{1}{n-1}}
+\frac{x_1-x_0}{x_2-x_0}g(x_2)^{\frac{1}{n-1}}
\]
holds for any $t_0\leq x_0<x_1<x_2\leq t_1$. 
\end{itemize}

\eqref{bary_proposition1}

\noindent\underline{\textbf{Step 1}}\\
For any $e< y<x\leq t_1$ (resp., for any $t_0\leq y<e<x\leq t_1$), we have 
\[
g(x)^{\frac{1}{n-1}}
\leq\frac{x-e}{y-e}g(y)^{\frac{1}{n-1}}-\frac{x-y}{y-e}g(e)^{\frac{1}{n-1}}
=(x-y)(x-e)\cdot\frac{\frac{g(y)^{\frac{1}{n-1}}}{x-y}
-\frac{g(e)^{\frac{1}{n-1}}}{x-e}}{y-e}. 
\]
By taking $y\to e+0$ (resp., $y\to e-0$), we get
\[
g(x)^{\frac{1}{n-1}}\leq g(e)^{\frac{1}{n-1}}\left(\frac{v(x-e)}{(n-1)g(e)}+1\right)
\]
for any $x\in(e,t_1]$. Thus we have $h_0(x)\geq g(x)$ for any $x\in[t_0,t_1]$. 
Note that, for any $x\in(e,t_1]$, we have 
\[
0\leq g(x)\leq h_0(x)=g(e)\left(\frac{v(x-e)}{(n-1)g(e)}+1\right)^{n-1},
\]
and this implies that 
\[
\frac{v(x-e)}{(n-1)g(e)}+1>0
\]
for any $x\in\left(e, t_1\right)$. 

\noindent\underline{\textbf{Step 2}}\\
Since $V=\int_{t_0}^{t_1}g(x)dx$ and $0\leq g(x)\leq h_0(x)$, there is a unique value 
$\tilde{s}\in(e,t_1]$ satisfying the equality
\[
V=\int_{t_0}^{\tilde{s}}h_0(x)dx.
\]
By the definition of $s_0$, the value $\tilde{s}$ is equal to $s_0$. 
Set $\tilde{h}\colon [t_0,t_1]\to\R_{\geq 0}$ with 
\[
\tilde{h}(x):=\begin{cases}
h_0(x) & \text{if }x\in[t_0,s_0], \\
0 & \text{if } x\in(s_0, t_1].
\end{cases}\]
Then, 
\begin{eqnarray*}
\int_{t_0}^{s_0}x h_0(x)dx-s_0 V=\int_{t_0}^{t_1}(x-s_0)\tilde{h}(x)dx
\leq\int_{t_0}^{t_1}(x-s_0)g(x)dx=\int_{t_0}^{t_1} x g(x)dx-s_0 V.
\end{eqnarray*}
Thus we get the assertion \eqref{bary_proposition11}. 

\noindent\underline{\textbf{Step 3}}\\
For any $y\in[e,t]$, let us set 
\[
W(y):=\int_{t_0}^y h_0(x)dx+\int_y^t h_0(y)\cdot\left(\frac{t-x}{t-y}\right)^{n-1}dx. 
\]
Then $W(e)\leq V\leq W=W(t)$ holds. Moreover, if $y\in(e,t)$, then  
\[
\frac{d}{dy}W(y)=\frac{v(t-e)+(n-1)g(e)}{n}\cdot
\left(\frac{v(y-e)}{(n-1)g(e)}+1\right)^{n-2}\geq 0,
\]
since $h_0(y)>0$ and the end of Step 1. Therefore, there is a unique value 
$s\in[e,t]$ satisfying the condition $W(s)=V$. From the definition of $s_1$, 
we have $s=s_1$, i.e., $W(s_1)=V$ holds. 

For any $x\in[s_1,t]$, the function 
\[
g(x)^{\frac{1}{n-1}}-h_1(x)^{\frac{1}{n-1}}=g(x)^{\frac{1}{n-1}}
-h_0(s_1)^{\frac{1}{n-1}}\cdot\frac{t-x}{t-s_1}
\]
is a concave function. Note that 
\begin{eqnarray*}
g(s_1)^{\frac{1}{n-1}}-h_1(s_1)^{\frac{1}{n-1}}&\leq& 0,\\ 
g(t)^{\frac{1}{n-1}}-h_1(t)^{\frac{1}{n-1}}=g(t)^{\frac{1}{n-1}}&\geq& 0. 
\end{eqnarray*}
Let us set 
\[
\bar{s}:=\min\left\{x\in[s_1,t]\,\,\Big|\,\,g(x)^{\frac{1}{n-1}}
-h_1(x)^{\frac{1}{n-1}}\geq 0\right\}. 
\]
By concavity, we have $g(x)^{\frac{1}{n-1}}-h_1(x)^{\frac{1}{n-1}}\geq 0$ for any 
$x\in[\bar{s},t]$. Set $h_1(x):=0$ for $x\in(t, t_1]$. Then we get 
$h_1(x)\geq g(x)$ for any $x\in[t_0,\bar{s}]$ and 
$h_1(x)\leq g(x)$ for any $x\in[\bar{s},t_1]$. Hence, 
\[
\int_{t_0}^{t_1}x h_1(x)dx-\bar{s} V=\int_{t_0}^{t_1}(x-\bar{s}) h_1(x)dx
\leq\int_{t_0}^{t_1}(x-\bar{s}) g(x)dx=\int_{t_0}^{t_1}x g(x)dx-\bar{s} V.
\]
Thus we get the assertion \eqref{bary_proposition12}. 

\eqref{bary_proposition2}
We firstly note that, by the concavity of $g(x)^{\frac{1}{n-1}}$, we have 
\[
g(x)\geq g(e)\cdot\left(\frac{t_1-x}{t_1-e}\right)^{n-1}
\]
for any $x\in[e,t_1]$. Thus $u=t_1$ satisfies that assumption of 
\eqref{bary_proposition2}. 

The polynomial 
\[
F(y):=(u-e)\sum_{i=0}^{n-1}g(e)^{\frac{i}{n-1}}y^{n-1-i}
-n\left(V-\int_{t_0}^e g(x)dx\right)
\]
satisfies that, $F(0)\leq 0$, $\lim_{y\to\infty}F(y)=+\infty$ and 
$F'(y)>0$ for any $y\in\R_{\geq 0}$. 
Thus, there is a unique value $w_0\in\R_{>0}$ satisfying the condition $F(w_0)=0$. 
Note that $w\geq w_0$. We may assume that $w=w_0$ in order to prove 
\eqref{bary_proposition2}. In this case, we have 
\[
V=\int_{t_0}^u h_2(x) dx, 
\]
since we can compute that 
\[
\int_{t_0}^u h_2(x) dx=\int_{t_0}^e g(x)dx
+\frac{u-e}{n}\left(\sum_{i=0}^{n-1}g(e)^{\frac{i}{n-1}}w^{n-1-i}\right).
\]
Note that the function 
$h_2(x)^{\frac{1}{n-1}}-g(x)^{\frac{1}{n-1}}$ is convex over $x\in[e,t_1]$ 
with $h_2(e)^{\frac{1}{n-1}}-g(e)^{\frac{1}{n-1}}=0$. 

We consider the case $h_2(t_1)^{\frac{1}{n-1}}-g(t_1)^{\frac{1}{n-1}}\leq 0$. 
In this case, by the convexity, we have 
$h_2(x)^{\frac{1}{n-1}}-g(x)^{\frac{1}{n-1}}\leq 0$ for any $x\in[e,t_1]$. 
Therefore we get 
\[
\int_{t_0}^u x h_2(x)dx-t_1 V=\int_{t_0}^u(x-t_1)h_2(x)dx\geq \int_{t_0}^{t_1}(x-t_1)g(x)dx
=\int_{t_0}^{t_1}x g(x)dx-t_1 V. 
\]
Thus we get the assertion \eqref{bary_proposition2} in this case. 

We consider the remaining case $h_2(t_1)^{\frac{1}{n-1}}-g(t_1)^{\frac{1}{n-1}}> 0$. 
If $h_2(x)^{\frac{1}{n-1}}-g(x)^{\frac{1}{n-1}}\geq 0$ for any $x\in[e,t_1]$, then 
\[
V=\int_e^{t_1}g(x)dx<\int_e^{t_1}h_2(x)dx\leq V, 
\]
this leads to a contradiction. Thus, there is a unique value $s_2\in(e,t_1)$ 
satisfying the condition $h_2(s_2)^{\frac{1}{n-1}}-g(s_2)^{\frac{1}{n-1}}=0$.
Moreover, over $x\in(e,t_1)$, the condition 
$h_2(x)^{\frac{1}{n-1}}-g(x)^{\frac{1}{n-1}}>0$ (resp., $<0$) holds if and only if 
$x\in(s_2, t_1)$ (resp., $x\in(e, s_2)$). Therefore we get 
\[
\int_{t_0}^u x h_2(x)dx-s_2 V=\int_{t_0}^u(x-s_2)h_2(x)dx\geq \int_{t_0}^{t_1}(x-s_2)g(x)dx
=\int_{t_0}^{t_1}x g(x)dx-s_2 V. 
\]
Thus we get the assertion \eqref{bary_proposition2}.
\end{proof}

\section{Filtrations on graded linear series}\label{filter_section}

In this section, we recall the theory of filtrations on graded linear series. 
In \S \ref{filter_section}, we fix an $n$-dimensional projective variety $X$. 

\begin{definition}[{see \cite{BC, BJ, zhuang, AZ, r3d28}}]\label{filter_definition}
Let $V$ be a $\Bbbk$-vector space of dimension $N<\infty$. 
\begin{enumerate}
\renewcommand{\theenumi}{\arabic{enumi}}
\renewcommand{\labelenumi}{(\theenumi)}
\item\label{filter_definition1}
A \emph{filtration} $\sF$ of $V$ is given by a collection 
$\{\sF^\lambda V\}_{\lambda\in\R}$ of sub-vector spaces of $V$ satisfying the 
following conditions: 
\begin{enumerate}
\renewcommand{\theenumii}{\roman{enumii}}
\renewcommand{\labelenumii}{(\theenumii)}
\item\label{filter_definition11}
we have $\sF^{\lambda'}V\subset\sF^\lambda V$ for any $\lambda'\geq \lambda$, 
\item\label{filter_definition12}
we have $\sF^\lambda V=\bigcap_{\lambda'<\lambda}\sF^{\lambda'}V$ 
for any $\lambda\in\R$, and 
\item\label{filter_definition13}
we have $\sF^0 V=V$ and $\sF^\lambda V=0$ for any sufficiently large $\lambda$.
\end{enumerate}
For any $\lambda\in\R$, we set 
$\sF^{>\lambda}V:=\bigcup_{\lambda'>\lambda}\sF^{\lambda'}V$ and 
$\Gr^\lambda_{\sF}V:=\sF^\lambda V/\sF^{>\lambda} V$. 

A basis $\{s_1,\dots,s_N\}\subset V$ of $V$ is said to be 
\emph{compatible with $\sF$} if there is a decomposition 
\[
\{s_1,\dots,s_N\}=\bigsqcup_{\lambda\in\R}
\left\{s_1^\lambda,\dots,s^\lambda_{N_\lambda}\right\}
\]
such that $N_\lambda=\dim\Gr^\lambda_{\sF} V$, 
$\left\{s_1^\lambda,\dots,s^\lambda_{N_\lambda}\right\}\subset\sF^\lambda V$, 
and the image of 
$\left\{s_1^\lambda,\dots,s^\lambda_{N_\lambda}\right\}$ in $\Gr^\lambda_{\sF} V$ 
forms a basis of $\Gr^\lambda_{\sF} V$, for any $\lambda\in\R$. 
For a filtration $\sF$ of $V$ and $s\in V\setminus\{0\}$, we set 
\[
\ord_{\sF}(s):=\max\{\lambda\in\R_{\geq 0}\,\,|\,\,s\in\sF^\lambda V\}.
\]

\item\label{filter_definition2}
A filtration $\sF$ of $V$ is said to be \emph{an $\N$-filtration} if 
$\sF^\lambda V=\sF^{\lceil\lambda\rceil}V$ holds for any $\lambda\in\R$. 
\item\label{filter_definition3}
A filtration $\sF$ of $V$ is said to be \emph{a basis type filtration} if $\sF$ is an 
$\N$-filtration and $\dim\Gr^j_{\sF}V=1$ holds for any $j\in\{0,1,\dots,N-1\}$. 
\end{enumerate}
\end{definition}

\begin{example}\label{filter-system_example}
Let $L$ be a Cartier divisor on $X$ and let $V\subset H^0\left(X, L\right)$ be any 
sub-system with $\dim V=N$. 
\begin{enumerate}
\renewcommand{\theenumi}{\arabic{enumi}}
\renewcommand{\labelenumi}{(\theenumi)}
\item\label{filter-system_example1}
For any quasi-monomial valuation $v$ on $X$, we set 
\[
\sF^\lambda_v V:=\left\{s\in V\,\,|\,\,v(s)\geq \lambda\right\}\subset V
\]
for any $\lambda\in\R$. Then $\sF_v$ is a filtration of $V$ 
and $\ord_{\sF_v}=v$. If $v=\ord_E$ for a 
prime divisor $E$ over $X$, then we set $\sF_E:=\sF_{\ord_E}$. Note that 
$\sF_E$ is an $\N$-filtration. 
\item\label{filter-system_example2}
Assume that $X$ is normal. We recall Zhuang's construction 
\cite[Example 2.11]{zhuang} for basis type filtrations of $V$. Assume that we have 
inductively constructed $\sF^j V$ for $0\leq j\leq N-2$. Write 
$\left|\sF^j V\right|=F_j+|M_j|$, where $F_j$ is the fixed part. For a smooth point 
$x_{j+1}\in X$ with $x_{j+1}\not\in\operatorname{Bs}\left(|M_j|\right)$, 
note that the evaluation homomorphism 
\[
M_j\to M_j\otimes\Bbbk\left(x_{j+1}\right)
\]
is surjective, and the kernel $M_j\otimes\dm_{x_{j+1}}$ satisfies that 
\[
\dim M_j\otimes\dm_{x_{j+1}}=\dim\sF^j V-1. 
\] 
We set $\sF^{j+1}V\subset\sF^j V$ defined by 
\[
\left|\sF^{j+1}V\right|:= F_j+\left|M_j\otimes\dm_{x_{j+1}}\right|.
\]
We call the filtration \emph{the basis type filtration associated to $x_1,\dots,x_N$}. 
We will use following two types of basis type filtrations: 
\begin{enumerate}
\renewcommand{\theenumii}{\roman{enumii}}
\renewcommand{\labelenumii}{(\theenumii)}
\item\label{filter-system_example21}
\cite[Example 2.12]{zhuang}
The basis type filtration $\sF$ of $V$ associated to \emph{general points 
$x_1,\dots,x_N\in X$} is said to be \emph{of type (I)}. 
\item\label{filter-system_example22}
\cite[Example 2.13]{zhuang}
Let $\sigma\colon\tilde{X}\to X$ be a birational morphism such that $\tilde{X}$ is a 
normal projective variety, and let $E$ be a prime divisor on $\tilde{X}$. 
Under the identification 
\[
V\xrightarrow{\sim}\sigma^*V\subset H^0\left(\tilde{X},\sigma^*L\right), 
\]
we can choose the basis type filtration $\sF$ of $V$ associated to 
\emph{general points $x_1,\dots,x_N\in E$}. The filtration is said to be 
\emph{of type (II)}. As in \cite[Example 2.13]{zhuang}, the filtration $\sF$ refines 
$\sF_E$, i.e., for any $\lambda\in\Z_{\geq 0}$, there exists $\mu\in\Z_{\geq 0}$ 
such that $\sF_E^\lambda V=\sF^\mu V$ holds. 
\end{enumerate}
\end{enumerate}
\end{example}

\begin{definition}[{see \cite[\S 3]{AZ} and 
\cite[\S 11.2]{r3d28}}]\label{subbasis_definition}
Let $V$ be a $\Bbbk$-vector space of dimension $N<\infty$, and 
let $\sF$ and $\sG$ be filtrations of $V$. Note that $\sF$ induces the filtration 
$\bar{\sF}$ of $\Gr^\mu_{\sG}V$ with 
\[
\bar{\sF}^\lambda\left(\Gr^\mu_{\sG}V\right)
:=\left(\left(\sF^\lambda V\cap\sG^\mu V\right)+\sG^{>\mu}V\right)/\sG^{>\mu}V.
\]
Similarly, $\sG$ naturally induces the filtration $\bar{\sG}$ of $\Gr^\lambda_{\sF}V$. 
By \cite[Lemma 3.1]{AZ}, there is a canonical isomorphism 
\[
\Gr^\lambda_{\bar{\sF}}\Gr^\mu_{\sG}V\simeq
\Gr^\mu_{\bar{\sG}}\Gr^\lambda_{\sF}V
\]
for any $\lambda$, $\mu\in\R$.
\begin{enumerate}
\renewcommand{\theenumi}{\arabic{enumi}}
\renewcommand{\labelenumi}{(\theenumi)}
\item\label{subbasis_definition1}
A subset $\left\{s_1,\dots,s_N\right\}\subset V$ is said to be \emph{a basis of $V$ 
compatible with both $\sF$ and $\sG$} if there is a decomposition 
\[
\left\{s_1,\dots,s_N\right\}=\bigsqcup_{\lambda,\mu\in\R}
\left\{s_1^{\lambda,\mu},\dots,s^{\lambda,\mu}_{N_{\lambda,\mu}}\right\}
\]
such that 
$N_{\lambda,\mu}=\dim\Gr^\lambda_{\bar{\sF}}\Gr^\mu_{\sG}V$, 
$\left\{s_1^{\lambda,\mu},\dots,s^{\lambda,\mu}_{N_{\lambda,\mu}}\right\}
\subset\sF^\lambda V\cap\sG^\mu V$, and the image of 
$\left\{s_1^{\lambda,\mu},\dots,s^{\lambda,\mu}_{N_{\lambda,\mu}}\right\}$ in 
$\Gr^\lambda_{\bar{\sF}}\Gr^\mu_{\sG}V$ gives a basis of 
$\Gr^\lambda_{\bar{\sF}}\Gr^\mu_{\sG}V$ for any $\lambda$, $\mu\in\R$. 
In fact, by \cite[Lemma 3.1]{AZ}, the above subset 
$\left\{s_1,\dots,s_N\right\}\subset V$ is a basis of $V$ compatible with $\sF$
(and also with $\sG$). 
\item\label{subbasis_definition2}
Fix a subset $\Xi\subset\R_{\geq 0}$. 
\begin{enumerate}
\renewcommand{\theenumii}{\roman{enumii}}
\renewcommand{\labelenumii}{(\theenumii)}
\item\label{subbasis_definition21}
A subset $\left\{s_1,\dots,s_M\right\}\subset V$ is said to be 
\emph{a $\left(\sG,\Xi\right)$-subbasis of $V$} if there is a decomposition 
\[
\left\{s_1,\dots,s_M\right\}=\bigsqcup_{\mu\in\Xi}
\left\{s_1^\mu,\dots,s^\mu_{N_\mu}\right\}
\]
such that $N_\mu=\dim\Gr^\mu_{\sG}V$, 
$\left\{s_1^\mu,\dots,s^\mu_{N_\mu}\right\}\subset\sG^\mu V$, and the image of 
$\left\{s_1^\mu,\dots,s^\mu_{N_\mu}\right\}$ in $\Gr^\mu_{\sG}V$ gives a basis 
of $\Gr^\mu_{\sG}V$ for any $\mu\in\Xi$. 
\item\label{subbasis_definition22}
A subset $\left\{s_1,\dots,s_M\right\}\subset V$ is said to be 
\emph{a $\left(\sG,\Xi\right)$-subbasis of $V$ compatible with $\sF$} 
if there is a decomposition 
\[
\left\{s_1,\dots,s_M\right\}=\bigsqcup_{\lambda\in\R,\,\, \mu\in\Xi}
\left\{s_1^{\lambda,\mu},\dots,s^{\lambda,\mu}_{N_{\lambda,\mu}}\right\}
\]
such that $N_{\lambda,\mu}=\dim\Gr^\lambda_{\bar{\sF}}\Gr^\mu_{\sG}V$, 
$\left\{s_1^{\lambda,\mu},\dots,s^{\lambda,\mu}_{N_{\lambda,\mu}}\right\}
\subset\sF^\lambda V\cap\sG^\mu V$, and the image of 
$\left\{s_1^{\lambda,\mu},\dots,s^{\lambda,\mu}_{N_{\lambda,\mu}}\right\}$ 
in $\Gr^\lambda_{\bar{\sF}}\Gr^\mu_{\sG}V$ gives a basis 
of $\Gr^\lambda_{\bar{\sF}}\Gr^\mu_{\sG}V$ for any $\lambda\in\R$, $\mu\in\Xi$. 
As in \cite[Lemma 11.4]{r3d28}, the subset $\left\{s_1,\dots,s_M\right\}\subset V$ 
is a $\left(\sG,\Xi\right)$-subbasis of $V$. 
\end{enumerate}
\end{enumerate}
\end{definition}

\begin{definition}[{\cite[\S 1.3]{BC}, \cite[\S 2.5]{BJ}, \cite[\S 2.6]{AZ}, and
\cite[\S 3.2]{r3d28}}]\label{gr-filter_definition}
Let $V_{\vec{\bullet}}$ be the Veronese equivalence class of an 
$(m\Z_{\geq 0})^r$-graded linear series $V_{m\vec{\bullet}}$ on $X$ associated to 
$L_1,\dots,L_r\in\CaCl(X)\otimes_\Z\Q$. We assume that $V_{\vec{\bullet}}$ 
has bounded support and contains an ample series. 
A \emph{linearly bounded filtration $\sF$ of $V_{m\vec{\bullet}}$} is a filtration $\sF$ 
of $V_{m\vec{a}}$ for every $\vec{a}\in\Z_{\geq 0}^r$ such that 
\[
\sF^\lambda V_{m\vec{a}}\cdot\sF^{\lambda'}V_{m\vec{a}'}\subset
\sF^{\lambda+\lambda'}V_{m(\vec{a}+\vec{a}')}
\]
holds for every $\lambda$, $\lambda\in\R$, $\vec{a}$, $\vec{a}'\in\Z_{\geq 0}^r$, 
and there exists $C\in\R$ such that $\sF^\lambda V_{m\vec{a}}=0$ whenever 
$\lambda\geq C a_1$. 

A \emph{linearly bounded filtration $\sF$ of $V_{\vec{\bullet}}$} is a linearly bounded 
filtration $\sF$ of some representative $V_{m\vec{\bullet}}$, where we identify 
$\sF$ and its natural restriction to the Veronese subseries $V_{k m\vec{\bullet}}$ 
of $V_{m\vec{\bullet}}$. For any $t\in\R$, let 
$V^t_{\vec{\bullet}}:=V^{\sF,t}_{\vec{\bullet}}$ be the Veronese equivalence class of the 
$(m\Z_{\geq 0})^r$-graded linear series $V^t_{m\vec{\bullet}}$ on $X$ associated to 
$L_1,\dots,L_r\in\CaCl(X)\otimes_\Z\Q$ defined by 
$V^t_{m\vec{a}}:=\sF^{m a_1 t}V_{m\vec{a}}$ for any 
$\vec{a}=(a_1,\dots,a_r)\in\Z_{\geq 0}^r$. 
\end{definition}

\begin{example}\label{gr-filtration_example}
For any quasi-monomial 
valuation $v$ on $X$ (resp., for any prime divisor $E$ over $X$), 
the filtration $\sF_v$ (resp., the filtration $\sF_E$) in Example 
\ref{filter-system_example} \eqref{filter-system_example1} gives a linearly bounded 
filtration of $V_{\vec{\bullet}}$. 
\end{example}

We define the $T$-invariant and the $S$-invariant for a filtration of 
graded linear series. 

\begin{definition}[{see \cite{BJ, AZ, r3d28}}]\label{S-T_definition}
Let $V_{m\vec{\bullet}}$, $\sF$ and $V_{\vec{\bullet}}$ be as in 
Definition \ref{gr-filter_definition}. 
\begin{enumerate}
\renewcommand{\theenumi}{\arabic{enumi}}
\renewcommand{\labelenumi}{(\theenumi)}
\item\label{S-T_definition1}
For $l\in m\Z_{>0}$, we set 
\[
T_l\left(V_{m\vec{\bullet}};\sF\right):=\max\left\{\lambda\in\R_{\geq 0}\,\,
|\,\,\text{There exists $\vec{a}\in\Z_{\geq 0}^{r-1}$ with }\sF^\lambda V_{l,m\vec{a}}
\neq 0\right\}.
\]
Moreover, the definition 
\[
T\left(V_{\vec{\bullet}}; \sF\right)
:=\sup_{l\in m\Z_{>0}}\frac{T_l\left(V_{m\vec{\bullet}};\sF\right)}{l}
\]
is well-defined, since 
\[
\sup_{l\in m\Z_{>0}}\frac{T_l\left(V_{m\vec{\bullet}};\sF\right)}{l}
=\lim_{l\in m\Z_{>0}}\frac{T_l\left(V_{m\vec{\bullet}};\sF\right)}{l}
\]
(see \cite[Lemma 1.4]{BC}). As in \cite[Lemma 1.6]{BC} or \cite[Lemma 2.9]{AZ}, 
for any $t\in\left[0,T\left(V_{\vec{\bullet}};\sF\right)\right)$, the series 
$V^{\sF,t}_{\vec{\bullet}}$ has bounded support and contains an ample series. 
\item\label{S-T_definition2}
Take any $l\in m\Z_{>0}$ such that $h^0\left(V_{l,m\vec{\bullet}}\right)\neq 0$.
Let us set 
\[
S_l\left(V_{m\vec{\bullet}};\sF\right):=\frac{1}{h^0\left(V_{l,m\vec{\bullet}}\right)}
\int_0^{\frac{T_l\left(V_{m\vec{\bullet}};\sF\right)}{l}}
h^0\left(V^{\sF,t}_{l,m\vec{\bullet}}\right)dt.
\]
Moreover, the definition 
\[
S\left(V_{\vec{\bullet}};\sF\right):=\lim_{l\in m\Z_{>0}}
S_l\left(V_{m\vec{\bullet}};\sF\right)
\]
is well-defined, 
\[
S\left(V_{\vec{\bullet}};\sF\right)=\frac{1}{\vol\left(V_{\vec{\bullet}}\right)}
\int_0^{T\left(V_{\vec{\bullet}};\sF\right)}\vol\left(V^{\sF,t}_{\vec{\bullet}}\right)dt
\]
holds, and we have 
\[
\frac{T\left(V_{\vec{\bullet}};\sF\right)}{r+n}\leq
S\left(V_{\vec{\bullet}};\sF\right)\leq
T\left(V_{\vec{\bullet}};\sF\right)
\]
(see \cite[Definition 3.8]{r3d28}). 
\end{enumerate}
When $\sF=\sF_E$ for some prime divisor $E$ over $X$, then we set 
$T\left(V_{\vec{\bullet}}; E\right):=T\left(V_{\vec{\bullet}};\sF_E\right)$ and 
$S\left(V_{\vec{\bullet}}; E\right):=S\left(V_{\vec{\bullet}};\sF_E\right)$. 
When $V_{\vec{\bullet}}$ is the complete linear series $H^0\left(\bullet L\right)$ 
on $X$ associated to a big $L\in\CaCl(X)\otimes_\Z\Q$, then we set 
$T(L; \sF):=T\left(V_{\vec{\bullet}}; \sF\right)$ and 
$S(L; \sF):=S\left(V_{\vec{\bullet}}; \sF\right)$. 
More generally, when the characteristic of $\Bbbk$ is equal to zero, if $v$ is a 
valuation on $X$ with $A_{\tilde{X}}(v)<\infty$, where $\tilde{X}\to X$ is a resolution 
of singularities, then the associated filtration $\sF_v$ is 
a linearly bounded filtration by \cite[Lemma 3.1]{BJ}. Thus we can also define 
$T\left(V_{\vec{\bullet}}; v\right):=T\left(V_{\vec{\bullet}};\sF_v\right)$, 
$S\left(V_{\vec{\bullet}}; v\right):=S\left(V_{\vec{\bullet}};\sF_v\right)$, etc. 
\end{definition}

\begin{definition}\label{primitive-S_definition}
Let $V_{\vec{\bullet}}$ be as in Definition \ref{gr-filter_definition}, and 
let 
\[
Y_\bullet\colon X=Y_0\triangleright Y_1\triangleright\cdots\triangleright Y_j
\]
be a primitive flag over $X$. We set 
\[
S\left(V_{\vec{\bullet}}; Y_1\triangleright\cdots\triangleright Y_j\right):=
\begin{cases}
S\left(V_{\vec{\bullet}}; Y_1\right) & \text{if }j=1, \\
S\left(V_{\vec{\bullet}}^{\left(Y_1\triangleright\cdots\triangleright Y_{j-1}\right)}; 
Y_j\right) & \text{if }j\geq 2. 
\end{cases}\]
We also define $T\left(V_{\vec{\bullet}}; Y_1\triangleright\cdots\triangleright 
Y_j\right)$ similarly. 
Moreover, if $E$ is a prime divisor over $Y_j$, we set 
\begin{eqnarray*}
S\left(V_{\vec{\bullet}}; Y_1\triangleright\cdots\triangleright Y_j\triangleright E\right)
&:=&S\left(V_{\vec{\bullet}}^{(Y_1\triangleright\cdots\triangleright Y_j)}; E\right), \\
T\left(V_{\vec{\bullet}}; Y_1\triangleright\cdots\triangleright Y_j\triangleright E\right)
&:=&T\left(V_{\vec{\bullet}}^{(Y_1\triangleright\cdots\triangleright Y_j)}; E\right). 
\end{eqnarray*}
When $V_{\vec{\bullet}}$ is the complete linear series $H^0\left(\bullet L\right)$ 
on $X$ associated to a big $L\in\CaCl(X)\otimes_\Z\Q$, then we set 
$S\left(L; Y_1\triangleright\cdots\triangleright Y_j\right):=S\left(V_{\vec{\bullet}}; 
Y_1\triangleright\cdots\triangleright Y_j\right)$, 
$T\left(L; Y_1\triangleright\cdots\triangleright Y_j\right):=T\left(V_{\vec{\bullet}}; 
Y_1\triangleright\cdots\triangleright Y_j\right)$, and for a prime divisor $E$ over $Y_j$, 
we set
$S\left(L; Y_1\triangleright\cdots\triangleright Y_j\triangleright 
E\right):=S\left(V_{\vec{\bullet}}; Y_1\triangleright\cdots\triangleright Y_j\triangleright 
E\right)$ and 
$T\left(L; Y_1\triangleright\cdots\triangleright Y_j\triangleright 
E\right):=T\left(V_{\vec{\bullet}}; Y_1\triangleright\cdots\triangleright Y_j\triangleright 
E\right)$. 
\end{definition}

\begin{remark}[{see \cite[\S 3.2]{r3d28}}]\label{S-T_remark}
Let $V_{m\vec{\bullet}}$, $\sF$ and $V_{\vec{\bullet}}$ be as in 
Definition \ref{gr-filter_definition} and let us set 
$T:=T\left(V_{\vec{\bullet}};\sF\right)$ 
and $S:=S\left(V_{\vec{\bullet}};\sF\right)$. 
Let $Y_\bullet$ be any admissible flag on $X$. 
\begin{enumerate}
\renewcommand{\theenumi}{\arabic{enumi}}
\renewcommand{\labelenumi}{(\theenumi)}
\item\label{S-T_remark1}
Let us set $\Delta:=\Delta_{Y_\bullet}\left(V_{\vec{\bullet}}\right)
\subset\R_{\geq 0}^{r-1+n}$ and 
\[
\Delta^t:=\Delta^{\sF,t}:=\Delta_{Y_\bullet}\left(V_{\vec{\bullet}}^{\sF,t}\right)
\subset\Delta
\]
for any $t\in[0,T)$. Moreover, we define 
\begin{eqnarray*}
G:=G_{\sF}\colon\Delta&\to&[0,T]\\
\vec{x}&\mapsto&\sup\left\{t\in[0,T)\,\,|\,\,\vec{x}\in\Delta^t\right\}.
\end{eqnarray*}
Then we have 
\[
S=\frac{1}{\vol(\Delta)}\int_\Delta 
G\left(\vec{x}\right)d\vec{x}.
\]
\item\label{S-T_remark2}
For any $\vec{k}=(k_1,\dots,k_r)\in\Z_{>0}^r$, we have 
\[
T\left(V_{\vec{\bullet}}^{(\vec{k})};\sF\right)=k_1\cdot T,\quad\quad 
S\left(V_{\vec{\bullet}}^{(\vec{k})};\sF\right)=k_1\cdot S.
\] 
See \cite[Lemma 3.10]{r3d28}. 
\item\label{S-T_remark3}
Assume that $X$ is normal, $Y_1\subset X$ is a prime $\Q$-Cartier divisor on $X$ 
and $\sF=\sF_{Y_1}$. Then the above function $G_{\sF}$ is equal to the composition 
\[
\Delta\hookrightarrow\R^{r-1+n}\xrightarrow{p_r}\R,
\]
where $p_r\colon\R^{r-1+n}\to\R$ is the $r$-th projection. 
In particular, 
\begin{itemize}
\item
we can write $p_r(\Delta)=[T_0, T]$ for some $0\leq T_0<T$, and 
\item
the value $S$ is the $r$-th coordinate of the barycenter of the convex set $\Delta$. 
\end{itemize}
\end{enumerate}
\end{remark}

From Example \ref{interior_example} \eqref{interior_example4} 
and Remark \ref{S-T_remark} \eqref{S-T_remark3}, it is natural to extend 
the notion of Okounkov bodies. 

\begin{definition}\label{primitive-Okounkov_definition}
Let $V_{\vec{\bullet}}$ be as in Definition \ref{gr-filter_definition}, and let 
\[
Y_\bullet\colon X=Y_0\triangleright Y_1\triangleright\cdots\triangleright Y_n
\]
be a complete primitive flag over $X$. \emph{The Okounkov body 
$\Delta_{Y_\bullet}\left(V_{\vec{\bullet}}\right)\subset\R_{\geq 0}^{r-1+n}$ 
of $V_{\vec{\bullet}}$ associated to $Y_\bullet$} is defined to be 
\[
\Delta_{Y_\bullet}\left(V_{\vec{\bullet}}\right):=
\Delta_{\tilde{Y}_{n-1}\ni Y_n}\left(V_{\vec{\bullet}}^{\left(Y_1
\triangleright\cdots\triangleright Y_{n-1}\right)}\right), 
\]
where $\tilde{Y}_{n-1}$ is the normalization of the projective curve $Y_{n-1}$ and 
we regard $\tilde{Y}_{n-1}\ni Y_n$ as an admissible flag on $\tilde{Y}_{n-1}$. 
We note that the cone $\Sigma_{Y_\bullet}\left(V_{\vec{\bullet}}\right)$ of 
$\Delta_{Y_\bullet}\left(V_{\vec{\bullet}}\right)$ is equal to 
the closure of the cone of the support of 
$V_{\vec{\bullet}}^{\left(Y_1\triangleright\cdots\triangleright Y_n\right)}$; 
a graded linear series on the $0$-dimensional projective 
variety $Y_n$. 
If the complete primitive flag $Y_\bullet$ is an admissible flag of $X$, then 
the notion coincides with Definition \ref{okounkov_definition} by 
Example \ref{interior_example} \eqref{interior_example4} . 
Moreover, by Remark \ref{S-T_remark} \eqref{S-T_remark3}, 
the $(r+j-1)$-th coordinate 
of the barycenter of $\Delta_{Y_\bullet}\left(V_{\vec{\bullet}}\right)$ is 
equal to the value 
$S\left(V_{\vec{\bullet}}; Y_1\triangleright\cdots\triangleright Y_j\right)$
for any $1\leq j\leq n$. 
\end{definition}

\begin{example}\label{primitive-MDS_example}
Assume that the characteristic of $\Bbbk$ is zero and $n=3$. 
\begin{enumerate}
\renewcommand{\theenumi}{\arabic{enumi}}
\renewcommand{\labelenumi}{(\theenumi)}
\item\label{primitive-MDS_example1}
Assume that $X$ is a Fano manifold and $L\in\CaCl(X)$ is nef and big. 
Then, the graded linear systems $W_{\bullet,\bullet}^Y$ and 
$V_{\bullet,\bullet}^{\tilde{Y}}$ in \cite[\S 1.7]{FANO} satisfy that, the pullback of 
$W_{\bullet,\bullet}^Y$ is asymptotically equivalent to $V_{\bullet,\bullet}^{\tilde{Y}}$ 
by \cite[Theorem 1.106]{FANO}. Therefore, by \cite[Lemma 4.73]{Xu} and 
Example \ref{interior_example} \eqref{interior_example6}, the value 
$S\left(W_{\bullet,\bullet,\bullet}^{Y,Z};P\right)$ in \cite[Theorem 1.112]{FANO} 
is equal to the value $S\left(L; Y\triangleright Z\triangleright P\right)$. 
Obviously, the values $S\left(W_{\bullet,\bullet}^Y;Z\right)$ and 
$S\left(V_{\bullet,\bullet}^Y;Z\right)$ in 
\cite[Corollary 1.110 and Theorem 1.112]{FANO}
is equal to the value 
$S\left(L; Y\triangleright Z\right)$. 
Those values are the third and second coordinates of the Okounkov body of $L$ 
associated to the admissible flag $Y\supset Z\ni P$ by 
Remark \ref{S-T_remark} \eqref{S-T_remark3}. 
\item\label{primitive-MDS_example2}
Similary, assume that $X$ is a Mori dream space and $L\in\CaCl(X)\otimes_\Z\Q$ 
is big. Then the values $S\left(V_{\bullet,\bullet}^{\tilde{Y}}; C\right)$ and 
$S\left(W_{\bullet,\bullet,\bullet}^{Y', C}; p\right)$ in \cite[Corollary 4.18]{r3d28}
are nothing but the values $S\left(L; Y'\triangleright C\right)$ and 
$S\left(L; Y'\triangleright C \triangleright p\right)$, if 
$C$ is primitive over $Y$, the morpshism $\nu\colon Y'\to Y$ in 
\cite[\S 4.3]{r3d28} is the associated blowup and 
the $C$ inside $Y'$ is smooth. 
\end{enumerate}
\end{example}



\begin{proposition}\label{decomposition_proposition}
Let $V_{\vec{\bullet}}$ be the Veronese equivalence class of a graded linear series 
on $X$ associated to $L_1,\dots,L_r\in\CaCl(X)\otimes_\Z\Q$ which has bounded 
support and contains an ample series. Let $Y_\bullet$ be an admissible flag on $X$. 
Let us consider the decomposition of 
$V_{\vec{\bullet}}$ with respects 
to the decomposition $\Delta_{\Supp}=\overline{\bigcup_{\lambda\in\Lambda}
\Delta_{\Supp}^{\langle\lambda\rangle}}$ as in Definition \ref{interior_definition} 
\eqref{interior_definition4}.
Let $\sF$ be 
any linearly bounded filtration of $V_{\vec{\bullet}}$. 
\begin{enumerate}
\renewcommand{\theenumi}{\arabic{enumi}}
\renewcommand{\labelenumi}{(\theenumi)}
\item\label{decomposition_proposition1}
We have 
\[
T\left(V_{\vec{\bullet}};\sF\right)=\sup_{\lambda\in\Lambda}
T\left(V_{\vec{\bullet}}^{\langle\lambda\rangle};\sF\right). 
\]
\item\label{decomposition_proposition2}
For any $t\in\R$, let us set $\Delta^{\sF, t}:=
\Delta_{Y_\bullet}\left(V_{\vec{\bullet}}^{\sF, t}\right)\subset\Delta$ 
and $G_{\sF}\colon\Delta\to\left[0,T\left(V_{\vec{\bullet}};\sF\right)\right)$ be 
as in Remark \ref{S-T_remark} \eqref{S-T_remark1}. 
Moreover, for any $\lambda\in\Lambda$, let  
$V_{\vec{\bullet}}^{\langle\lambda\rangle, \sF, t}$ be the subsystem of 
$V_{\vec{\bullet}}^{\langle\lambda\rangle}$ obtained by $\sF$, set 
$\Delta^{\langle\lambda\rangle,\sF,t}:=
\Delta_{Y_\bullet}\left(V_{\vec{\bullet}}^{\langle\lambda\rangle, \sF, t}\right)
\subset\Delta^{\langle\lambda\rangle}$, 
and let us set 
\begin{eqnarray*}
G_{\sF}^{\langle\lambda\rangle}\colon\Delta^{\langle\lambda\rangle}&\to&
\left[0, T\left(V_{\vec{\bullet}}^{\langle\lambda\rangle};\sF\right)\right], \\
\vec{x}&\mapsto&\sup\left\{
t\in\left[0,T\left(V_{\vec{\bullet}}^{\langle\lambda\rangle};\sF\right)\right)\,\,\Big|
\,\,\vec{x}\in\Delta^{\langle\lambda\rangle,\sF,t}\right\}, 
\end{eqnarray*}
as in Remark \ref{S-T_remark} \eqref{S-T_remark1}. 
\begin{enumerate}
\renewcommand{\theenumii}{\roman{enumii}}
\renewcommand{\labelenumii}{\rm{(\theenumii)}}
\item\label{decomposition_proposition21}
If $t\in\left[0,T\left(V_{\vec{\bullet}}^{\langle\lambda\rangle};\sF\right)\right)$ satisfies 
that $\interior\left(\Delta^{\langle\lambda\rangle}\right)\cap
\interior\left(\Delta^{\sF, t}\right)\neq\emptyset$, then we have 
\[
\Delta^{\langle\lambda\rangle,\sF,t}
=\Delta^{\langle\lambda\rangle}\cap\Delta^{\sF,t}
=p^{-1}\left(\Delta_{\Supp}^{\langle\lambda\rangle}\right)\cap
\Delta^{\sF,t}.
\]
\item\label{decomposition_proposition22}
The function $G_{\sF}^{\langle\lambda\rangle}$ is equal to $G_{\sF}$ over 
$\interior\left(\Delta^{\langle\lambda\rangle}\right)$. 
\item\label{decomposition_proposition23}
We have the equality
\[
\vol\left(V_{\vec{\bullet}}\right)\cdot S\left(V_{\vec{\bullet}};\sF\right)
=\sum_{\lambda\in\Lambda}
\vol\left(V_{\vec{\bullet}}^{\langle\lambda\rangle}\right)\cdot 
S\left(V_{\vec{\bullet}}^{\langle\lambda\rangle};\sF\right).
\]
\end{enumerate}
\end{enumerate}
\end{proposition}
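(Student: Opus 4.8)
The plan is to reduce everything to statements about the Okounkov bodies $\Delta^{\langle\lambda\rangle}=p^{-1}(\Delta_{\Supp}^{\langle\lambda\rangle})\subset\Delta$ already established in Example \ref{interior_example} \eqref{interior_example3}, together with the sublevel sets $\Delta^{\sF,t}$ of the filtration from Remark \ref{S-T_remark} \eqref{S-T_remark1}. Recall from that example that $\Delta=\overline{\bigcup_{\lambda}\Delta^{\langle\lambda\rangle}}$, each $\Delta^{\langle\lambda\rangle}$ is compact convex with nonempty interior, and the interiors are pairwise disjoint. The key structural observation, which I will isolate as a lemma, is that the restriction-then-filter and filter-then-restrict operations commute: for $\vec{a}$ in the interior of $\Cone(\{1\}\times\Delta_{\Supp}^{\langle\lambda\rangle})$ one has $V^{\langle\lambda\rangle,\sF,t}_{m\vec a}=\sF^{m a_1 t}V^{\langle\lambda\rangle}_{m\vec a}=\sF^{m a_1 t}V_{m\vec a}=V^{\sF,t}_{m\vec a}$, since $V^{\langle\lambda\rangle}_{m\vec a}=V_{m\vec a}$ there by definition of the restriction in Definition \ref{interior_definition} \eqref{interior_definition3}. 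This directly yields \eqref{decomposition_proposition21}: $\Delta^{\langle\lambda\rangle,\sF,t}$ and $\Delta^{\langle\lambda\rangle}\cap\Delta^{\sF,t}$ are both compact convex sets whose interiors agree (the interior of the intersection is where $\vec a$ is interior to the cone of $\Delta_{\Supp}^{\langle\lambda\rangle}$, a section of the Okounkov graph lies in $V^{\sF,t}$, and by the nonemptiness hypothesis $\interior(\Delta^{\langle\lambda\rangle})\cap\interior(\Delta^{\sF,t})\neq\emptyset$ there is something to compare), hence are equal; and $\Delta^{\langle\lambda\rangle}=p^{-1}(\Delta_{\Supp}^{\langle\lambda\rangle})$ is Example \ref{interior_example} \eqref{interior_example3}.

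For \eqref{decomposition_proposition22}, fix $\vec x\in\interior(\Delta^{\langle\lambda\rangle})$. If $t<G_{\sF}(\vec x)$ then $\vec x\in\interior(\Delta^{\sF,t})$ (the sublevel sets are nested and $\vec x$ is interior to the ambient body), so $\interior(\Delta^{\langle\lambda\rangle})\cap\interior(\Delta^{\sF,t})\ni\vec x$ is nonempty and \eqref{decomposition_proposition21} gives $\vec x\in\Delta^{\langle\lambda\rangle,\sF,t}$, whence $G_{\sF}^{\langle\lambda\rangle}(\vec x)\geq t$; letting $t\uparrow G_{\sF}(\vec x)$ gives one inequality. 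Conversely $\Delta^{\langle\lambda\rangle,\sF,t}\subset\Delta^{\sF,t}$ always, because the restriction is a subseries of $V_{\vec\bullet}$ compatible with $\sF$, so $G_{\sF}^{\langle\lambda\rangle}\leq G_{\sF}$ pointwise on $\Delta^{\langle\lambda\rangle}$. Combining gives equality on the interior. Then \eqref{decomposition_proposition1} follows by taking suprema: $T(V_{\vec\bullet};\sF)=\sup_{\vec x\in\Delta}G_{\sF}(\vec x)$ by Remark \ref{S-T_remark} \eqref{S-T_remark1} (the sup of $G_{\sF}$ over the dense union of the $\Delta^{\langle\lambda\rangle}$, using continuity of $G_{\sF}$ in the interior and that $\Delta=\overline{\bigcup\Delta^{\langle\lambda\rangle}}$), and $G_{\sF}$ restricted to each $\interior(\Delta^{\langle\lambda\rangle})$ equals $G_{\sF}^{\langle\lambda\rangle}$, whose sup is $T(V_{\vec\bullet}^{\langle\lambda\rangle};\sF)$ by the same remark.

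Finally \eqref{decomposition_proposition23} is the integration step. By Remark \ref{S-T_remark} \eqref{S-T_remark1},
\[
\vol(V_{\vec\bullet})\cdot S(V_{\vec\bullet};\sF)=(r+n-1)!\int_\Delta G_{\sF}(\vec x)\,d\vec x,
\]
and similarly for each $\lambda$ with $\Delta^{\langle\lambda\rangle}$ and $G_{\sF}^{\langle\lambda\rangle}$. Since the $\Delta^{\langle\lambda\rangle}$ cover $\Delta$ up to a measure-zero set and overlap only on their boundaries (measure zero), and since $G_{\sF}^{\langle\lambda\rangle}=G_{\sF}$ a.e.\ on $\Delta^{\langle\lambda\rangle}$ by \eqref{decomposition_proposition22}, additivity of the integral gives $\int_\Delta G_{\sF}=\sum_\lambda\int_{\Delta^{\langle\lambda\rangle}}G_{\sF}^{\langle\lambda\rangle}$, and multiplying through by $(r+n-1)!$ and invoking $\vol(V_{\vec\bullet}^{\langle\lambda\rangle})=(r+n-1)!\vol(\Delta^{\langle\lambda\rangle})$ (Example \ref{interior_example} \eqref{interior_example3}) yields the claim; when $\Lambda$ is infinite one needs the monotone/dominated convergence theorem, which applies since $0\leq G_{\sF}\leq T(V_{\vec\bullet};\sF)<\infty$ and the partial sums over finite subsets of $\Lambda$ are monotone. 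I expect the only genuinely delicate point to be \eqref{decomposition_proposition21}: one must be careful that the restriction $V^{\langle\lambda\rangle}_{m\vec\bullet}$ kills graded pieces outside the cone while leaving those strictly inside untouched, so that the claimed equality of Okounkov bodies holds on the nose (via equality of interiors plus compactness/convexity) rather than merely up to closure — the boundary behavior of $\Delta^{\langle\lambda\rangle}$ where it meets $\partial\Delta^{\sF,t}$ is exactly where the hypothesis $\interior(\Delta^{\langle\lambda\rangle})\cap\interior(\Delta^{\sF,t})\neq\emptyset$ is doing its work.
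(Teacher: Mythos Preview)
Your proposal is correct and follows essentially the same approach as the paper: both work through the Okounkov bodies $\Delta^{\langle\lambda\rangle}$ and the concave function $G_{\sF}$, proving (2)(ii) by the same two-sided inequality and deducing (2)(iii) by splitting the integral. The only notable difference is in the handling of \eqref{decomposition_proposition1}: the paper first reduces to the interior series $V_{\vec\bullet}=V_{\vec\bullet}^\circ$ (via \cite[Lemma 4.73]{Xu}) and then argues directly that $T_m(V_{\vec\bullet};\sF)=\max_\lambda T_m(V_{\vec\bullet}^{\langle\lambda\rangle};\sF)$ at each finite level, whereas you derive \eqref{decomposition_proposition1} from (2)(ii) using $T=\sup_\Delta G_{\sF}$ and concavity of $G_{\sF}$ --- a fact which is true but not actually stated in Remark~\ref{S-T_remark}, so you should justify it (it follows since the superlevel sets $\{G_{\sF}\geq s\}=\bigcap_{t<s}\Delta^{\sF,t}$ are convex, making $G_{\sF}$ concave and hence continuous on the interior).
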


\begin{proof}
By \cite[Lemma 4.73]{Xu}, Remark \ref{S-T_remark} and 
\cite[Lemma 3.10]{r3d28}, we may assume that 
$V_{\vec{\bullet}}$ is $\Z_{\geq 0}^r$-graded with $L_1,\dots,L_r\in\CaCl(X)$ and 
$V_{\vec{\bullet}}=V_{\vec{\bullet}}^\circ$. 

\eqref{decomposition_proposition1}
Since $V_{\vec{\bullet}}=V_{\vec{\bullet}}^\circ$, for any 
$m\in\Z_{>0}$, we have 
\[
T_m\left(V_{\vec{\bullet}};\sF\right)=\max_{\lambda\in\Lambda}
T_m\left(V_{\vec{\bullet}}^{\langle\lambda\rangle};\sF\right). 
\]
Thus we have 
\[
T\left(V_{\vec{\bullet}};\sF\right)=\sup_m\sup_\lambda
\frac{T_m\left(V_{\vec{\bullet}}^{\langle\lambda\rangle};\sF\right)}{m}=\sup_\lambda
T\left(V_{\vec{\bullet}}^{\langle\lambda\rangle};\sF\right). 
\]

\eqref{decomposition_proposition2}
The assertion (i) is trivial from the definition of Okounkov bodies. 
Let us consider (ii). Take any 
$\vec{x}\in\interior\left(\Delta^{\langle\lambda\rangle}\right)$. For any 
$t<G_{\sF}\left(\vec{x}\right)$, we have 
$\vec{x}\in\interior\left(\Delta^{\sF,t}\right)
\cap\interior\left(\Delta^{\langle\lambda\rangle}\right)(\neq\emptyset). $
By (i), we get $\vec{x}\in\Delta^{\langle\lambda\rangle,\sF,t}$. Thus we get 
$G_{\sF}^{\langle\lambda\rangle}\left(\vec{x}\right)\geq G_{\sF}\left(\vec{x}\right)$. 
Conversely, for any $t<G_{\sF}^{\langle\lambda\rangle}\left(\vec{x}\right)$, we have 
$\vec{x}\in\interior\left(\Delta^{\langle\lambda\rangle,\sF,t}\right)\subset
\Delta^{\sF, t}$. Thus we immediately get the reverse inequality 
$G_{\sF}^{\langle\lambda\rangle}\left(\vec{x}\right)\leq G_{\sF}\left(\vec{x}\right)$ and 
we get (ii). The assertion (iii) follows from (ii), since we know that 
\[
S\left(V_{\vec{\bullet}}^{\langle\lambda\rangle};\sF\right)
=\frac{(r-1+n)!}{\vol\left(V_{\vec{\bullet}}^{\langle\lambda\rangle}\right)}\cdot
\int_{\Delta^{\langle\lambda\rangle}}G_{\sF}^{\langle\lambda\rangle}d\vec{x}
\]
for any $\lambda\in\Lambda$. 
\end{proof}

The following lemma is essentially due to Kewei Zhang. 

\begin{lemma}[{cf.\ \cite[Proposition 4.1]{kewei}}]\label{kewei_lemma}
Let $V_{\vec{\bullet}}$ be the Veronese equivalence class of a graded linear series 
on $X$ associated to $L_1,\dots,L_r\in\CaCl(X)\otimes_\Z\Q$ which 
contains an ample series. Let $\sF$ be a linearly bounded filtration of 
$V_{\vec{\bullet}}$. Set $\sC:=\interior\left(\Supp\left(V_{\vec{\bullet}}\right)\right)
\subset\R_{>0}^r$. 
Take any $\vec{a}$, $\vec{b}\in\sC\cap\Q^r$ 
\begin{enumerate}
\renewcommand{\theenumi}{\arabic{enumi}}
\renewcommand{\labelenumi}{(\theenumi)}
\item\label{kewei_lemma1}
Assume that $\vec{b}-\vec{a}\in\sC$. Then we have 
\[
\vol\left(V_{\bullet\vec{a}}^{\sF,t}\right)\leq
\vol\left(V_{\bullet\vec{b}}^{\sF,t}\right)
\]
for any $t\in\left[0,T\left(V_{\bullet\vec{a}};\sF\right)\right)$. 
In particular, we have 
\[
T\left(V_{\bullet\vec{a}};\sF\right)\leq T\left(V_{\bullet\vec{b}};\sF\right)
\]
and
\[
\vol\left(V_{\bullet\vec{a}}\right)\cdot S\left(V_{\bullet\vec{a}};\sF\right)
\leq
\vol\left(V_{\bullet\vec{b}}\right)\cdot S\left(V_{\bullet\vec{b}};\sF\right).
\]
\item\label{kewei_lemma2}
Take any $\varepsilon\in\Q$ with $0<\varepsilon<1/(2n)$. 
If $(1+\varepsilon)\vec{a}-\vec{b}\in\sC$ and 
$\vec{b}-(1-\varepsilon)\vec{a}\in\sC$, then we have 
\begin{eqnarray*}
S\left(V_{\bullet\left(\vec{a}+\varepsilon\vec{b}\right)};\sF\right)
&\geq&\left(\frac{1+\varepsilon-\varepsilon^2}{1+\varepsilon+\varepsilon^2}\right)^n
\left(1+\varepsilon-\varepsilon^2\right)\cdot S\left(V_{\bullet\vec{a}};\sF\right),\\
S\left(V_{\bullet\left(\vec{a}-\varepsilon\vec{b}\right)};\sF\right)
&\leq&\left(\frac{1-\varepsilon+\varepsilon^2}{1-\varepsilon-\varepsilon^2}\right)^n
\left(1-\varepsilon+\varepsilon^2\right)\cdot S\left(V_{\bullet\vec{a}};\sF\right).
\end{eqnarray*}
\end{enumerate}
\end{lemma}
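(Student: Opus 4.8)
The plan is to reduce both parts to one twisting (section-multiplication) argument, combined with the elementary scaling of $\vol$, $T$ and $S$ under $\vec a\mapsto c\vec a$.

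For \eqref{kewei_lemma1}: since $\vec b-\vec a\in\sC\cap\Q^r$, the one-parameter series $V_{\bullet(\vec b-\vec a)}$ contains an ample series by Definition \ref{interior_definition} \eqref{interior_definition5}, so $V_{l(\vec b-\vec a)}\neq 0$ for all sufficiently divisible $l$. Fix such an $l$ and a nonzero $s_l\in V_{l(\vec b-\vec a)}=\sF^0V_{l(\vec b-\vec a)}$. By the compatibility of $\sF$ with the multiplication of $V_{\vec\bullet}$ (Definition \ref{gr-filter_definition}) and the fact that $\ord_\sF(s_l)\ge 0$, multiplication by $s_l$ sends $\sF^{lt}V_{l\vec a}$ injectively into $\sF^{lt}V_{l\vec b}$ (the level $\sF^{lt}$ is the correct one on both sides, since $V_{\bullet\vec a}$ and $V_{\bullet\vec b}$ are one-parameter series). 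Hence $\dim\sF^{lt}V_{l\vec a}\le\dim\sF^{lt}V_{l\vec b}$, and letting $l\to\infty$ along sufficiently divisible integers gives $\vol\left(V_{\bullet\vec a}^{\sF,t}\right)\le\vol\left(V_{\bullet\vec b}^{\sF,t}\right)$ for every $t\ge 0$. For $t\in\left[0,T\left(V_{\bullet\vec a};\sF\right)\right)$ the left-hand side is positive (the series $V_{\bullet\vec a}^{\sF,t}$ still contains an ample series), hence so is the right-hand side, which forces $t<T\left(V_{\bullet\vec b};\sF\right)$; this gives $T\left(V_{\bullet\vec a};\sF\right)\le T\left(V_{\bullet\vec b};\sF\right)$. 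Integrating the volume inequality over $t$ and using the integral formula of Definition \ref{S-T_definition} \eqref{S-T_definition2} for $V_{\bullet\vec a}$ and $V_{\bullet\vec b}$ gives the stated inequality for $\vol\cdot S$.

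Next I record two consequences. First, for $c\in\Q_{>0}$, because $V_{\bullet(c\vec a)}$ is a Veronese-type reindexing of $V_{\bullet\vec a}$, the computation of Remark \ref{S-T_remark} \eqref{S-T_remark2} gives $\vol\left(V_{\bullet(c\vec a)}\right)=c^n\vol\left(V_{\bullet\vec a}\right)$ and $S\left(V_{\bullet(c\vec a)};\sF\right)=c\,S\left(V_{\bullet\vec a};\sF\right)$. Second, taking $t=0$ in the twist above, $\vol\left(V_{\bullet\vec x}\right)\le\vol\left(V_{\bullet\vec y}\right)$ whenever $\vec y-\vec x\in\sC\cap\Q^r$. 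Now for \eqref{kewei_lemma2}: since $\sC$ is an open cone, $c\vec a\in\sC\cap\Q^r$ for every $c\in\Q_{>0}$, and multiplying the two hypotheses by $\varepsilon$ yields, writing $\vec x\prec\vec y$ for $\vec y-\vec x\in\sC$,
\[
(1+\varepsilon-\varepsilon^2)\vec a\prec\vec a+\varepsilon\vec b\prec(1+\varepsilon+\varepsilon^2)\vec a,\qquad
(1-\varepsilon-\varepsilon^2)\vec a\prec\vec a-\varepsilon\vec b\prec(1-\varepsilon+\varepsilon^2)\vec a,
\]
where $1-\varepsilon-\varepsilon^2>0$ since $\varepsilon<1/(2n)\le 1/2$. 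Applying \eqref{kewei_lemma1} to the pair $\left((1+\varepsilon-\varepsilon^2)\vec a,\ \vec a+\varepsilon\vec b\right)$ and the scaling relations gives $(1+\varepsilon-\varepsilon^2)^{n+1}\vol\left(V_{\bullet\vec a}\right)S\left(V_{\bullet\vec a};\sF\right)\le\vol\left(V_{\bullet(\vec a+\varepsilon\vec b)}\right)S\left(V_{\bullet(\vec a+\varepsilon\vec b)};\sF\right)$, while volume monotonicity for $\vec a+\varepsilon\vec b\prec(1+\varepsilon+\varepsilon^2)\vec a$ gives $\vol\left(V_{\bullet(\vec a+\varepsilon\vec b)}\right)\le(1+\varepsilon+\varepsilon^2)^n\vol\left(V_{\bullet\vec a}\right)$; dividing proves the first inequality. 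The second follows symmetrically, from \eqref{kewei_lemma1} applied to $\left(\vec a-\varepsilon\vec b,\ (1-\varepsilon+\varepsilon^2)\vec a\right)$ together with $\vol\left(V_{\bullet(\vec a-\varepsilon\vec b)}\right)\ge(1-\varepsilon-\varepsilon^2)^n\vol\left(V_{\bullet\vec a}\right)$.

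I expect the main work to be bookkeeping rather than anything conceptual: one must check that every auxiliary rational vector ($\vec b-\vec a$, $\vec a\pm\varepsilon\vec b$, and the scalar multiples of $\vec a$ above) lies in $\sC\cap\Q^r_{>0}$, so that the corresponding one-parameter series are defined and contain ample series, and one must be slightly careful that the volumes in Part \eqref{kewei_lemma1} are genuine limits along the sub-progressions on which $V_{l(\vec b-\vec a)}\neq 0$.
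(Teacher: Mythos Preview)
Your proposal is correct and follows essentially the same approach as the paper: for \eqref{kewei_lemma1} you use the same section-multiplication/twist argument to get $\sF^{lt}V_{l\vec a}\hookrightarrow\sF^{lt}V_{l\vec b}$, and for \eqref{kewei_lemma2} you use exactly the same chain of inequalities (sandwich $\vec a\pm\varepsilon\vec b$ between $(1\pm\varepsilon\mp\varepsilon^2)\vec a$ and $(1\pm\varepsilon\pm\varepsilon^2)\vec a$, apply \eqref{kewei_lemma1} once for $\vol\cdot S$ and once for $\vol$ alone, then use the scaling $\vol(V_{\bullet(c\vec a)})=c^n\vol(V_{\bullet\vec a})$ and $S(V_{\bullet(c\vec a)};\sF)=c\,S(V_{\bullet\vec a};\sF)$). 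Your bookkeeping remarks about checking membership in $\sC\cap\Q^r$ are appropriate and match what the paper leaves implicit.
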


\begin{proof}
\eqref{kewei_lemma1}
Set $\vec{c}:=\vec{b}-\vec{a}$. Take a sufficiently divisible $l\in\Z_{>0}$. 
Since $\vec{c}\in\sC$, there exists an effective $\Q$-divisor 
$C\sim_\Q\vec{c}\cdot\vec{L}$ such that $l C\in\left|V_{l\vec{c}}\right|$. Thus 
we have a natural inclusion
\[
\sF^{l t}V_{l \vec{a}}\hookrightarrow\sF^{l t} V_{l\vec{b}}
\]
by multiplying $l C$. In particular, 
\begin{eqnarray*}
\vol\left(V_{\bullet\vec{a}}\right)\cdot S\left(V_{\bullet\vec{a}};\sF\right)
&=&
\int_0^{T\left(V_{\bullet\vec{a}};\sF\right)}\vol\left(V_{\bullet\vec{a}}^{\sF,t}\right)d t\\
\leq
\int_0^{T\left(V_{\bullet\vec{b}};\sF\right)}\vol\left(V_{\bullet\vec{b}}^{\sF,t}\right)d t
&=&
\vol\left(V_{\bullet\vec{b}}\right)\cdot S\left(V_{\bullet\vec{b}};\sF\right)
\end{eqnarray*}
holds. 

\eqref{kewei_lemma2}
By \eqref{kewei_lemma1}, we have 
\begin{eqnarray*}
S\left(V_{\bullet\left(\vec{a}-\varepsilon\vec{b}\right)};\sF\right)
&\leq&
\frac{\vol\left(V_{\bullet\left((1-\varepsilon+\varepsilon^2)\vec{a}\right)}\right)}
{\vol\left(V_{\bullet\left(\vec{a}-\varepsilon\vec{b}\right)}\right)}\cdot
S\left(V_{\bullet(1-\varepsilon+\varepsilon^2)\vec{a}};\sF\right)\\
&\leq&
\frac{\vol\left(V_{\bullet\left((1-\varepsilon+\varepsilon^2)\vec{a}\right)}\right)}
{\vol\left(V_{\bullet\left((1-\varepsilon-\varepsilon^2)\vec{a}\right)}\right)}\cdot
S\left(V_{\bullet(1-\varepsilon+\varepsilon^2)\vec{a}};\sF\right)\\
&=&
\left(\frac{1-\varepsilon+\varepsilon^2}{1-\varepsilon-\varepsilon^2}\right)^n
\left(1-\varepsilon+\varepsilon^2\right)\cdot S\left(V_{\bullet\vec{a}};\sF\right).
\end{eqnarray*}
We can get the other inequality similarly. 
\end{proof}

We recall the notion of basis type $\Q$-divisors. 

\begin{definition}[{see \cite[Definition 11.8]{r3d28}}]\label{bt-div_definition}
Let $V_{\vec{\bullet}}$ be the Veronese equivalence class of an 
$(m\Z_{\geq 0})^r$-graded linear series $V_{m\vec{\bullet}}$ 
on $X$ associated to $L_1,\dots,L_r\in\CaCl(X)\otimes_\Z\Q$ which has bounded 
support and contains an ample series. Let $\sF$ be a linearly bounded filtration 
of $V_{m\vec{\bullet}}$. 
\begin{enumerate}
\renewcommand{\theenumi}{\arabic{enumi}}
\renewcommand{\labelenumi}{(\theenumi)}
\item\label{bt-div_definition1}
Consider $l\in m\Z_{>0}$ with $h^0\left(V_{l,m\vec{\bullet}}\right)\neq 0$. 
An effective $\Q$-Cartier $\Q$-divisor $D$ on $X$ is said to be 
\emph{an $l$-basis type $\Q$-divisor of $V_{m\vec{\bullet}}$} (resp., 
\emph{compatible with $\sF$}) if there is a basis 
\[
\left\{s_1^{\vec{a}},\dots,s_{N_{\vec{a}}}^{\vec{a}}\right\}\subset V_{l,m\vec{a}}
\]
of $V_{l,m\vec{a}}$ (resp., compatible with $\sF$) for any $\vec{a}\in\Z_{\geq 0}^{r-1}$ 
such that 
\[
D=\frac{1}{l\cdot h^0\left(V_{l,m\vec{\bullet}}\right)}\sum_{\vec{a}\in\Z_{\geq 0}^{r-1}}
\sum_{i=1}^{N_{\vec{a}}}\left(s_i^{\vec{a}}=0\right)
\]
holds. 
\item\label{bt-div_definition2}
Let $\sigma\colon X'\to X$ be a projective birational morphism with $X'$ normal, 
let $Y\subset X'$ be a prime $\Q$-Cartier divisor on $X'$, and let $e\in\Z_{>0}$ 
with $e Y$ Cartier. Let $V_{m\vec{\bullet}}^{(Y,e)}$ be the refinement of 
$\sigma^*V_{m\vec{\bullet}}$ by $Y$ with exponent $e$. 
Consider $l\in m\Z_{>0}$ with $h^0\left(V_{l,m\vec{\bullet}}^{(Y,e)}\right)\neq 0$. 
An effective $\Q$-Cartier $\Q$-divisor $D'$ on $X$ is said to be 
\emph{an $l$-$(Y,e)$-subbasis type $\Q$-divisor of $V_{m\vec{\bullet}}$} 
(resp., \emph{compatible with $\sF$})
if there exists an $\left(\sF_Y,e\Z_{\geq 0}\right)$-subbasis 
\[
\left\{s_1^{\vec{a}},\dots,s_{M_{\vec{a}}}^{\vec{a}}\right\}\subset V_{l,m\vec{a}}
\]
of $V_{l,m\vec{a}}$ for any $\vec{a}\in\Z_{\geq 0}^{r-1}$ 
(resp., compatible with $\sF$)
such that 
\[
D=\frac{1}{l\cdot h^0\left(V^{(Y,e)}_{l,m\vec{\bullet}}\right)}
\sum_{\vec{a}\in\Z_{\geq 0}^{r-1}}
\sum_{i=1}^{M_{\vec{a}}}\left(s_i^{\vec{a}}=0\right)
\]
holds. 
Note that $\sum_{\vec{a}\in\Z_{\geq 0}^{r-1}}M_{\vec{a}}
=h^0\left(V^{(Y,e)}_{l,m\vec{\bullet}}\right).$
\end{enumerate}
\end{definition}

\begin{remark}[{see \cite[\S 3.1]{AZ} and 
\cite[Proposition 11.9]{r3d28}}]\label{bt-div_remark}
\begin{enumerate}
\renewcommand{\theenumi}{\arabic{enumi}}
\renewcommand{\labelenumi}{(\theenumi)}
\item\label{bt-div_remark1}
We have 
\[
\ord_{\sF}D\leq S_l\left(V_{m\vec{\bullet}};\sF\right)
\]
for any $l$-basis type $\Q$-divisor $D$ of $V_{m\vec{\bullet}}$. Moreover, 
the equality attains if $D$ is compatible with $\sF$. 
\item\label{bt-div_remark2}
We have 
\[
\ord_Y D'=S_l\left(V_{m\vec{\bullet}}^{(Y,e)};\bar{\sF}_Y\right)
\]
for any $l$-$(Y,e)$-basis type $\Q$-divisor $D'$ of $V_{m\vec{\bullet}}$, 
where $\bar{\sF}_Y$ on $V_{m\vec{\bullet}}^{(Y,e)}$ is the natural filtration 
induced by $\sF_Y$ on $\sigma^*V_{m\vec{\bullet}}$ (in the sense of 
Definition \ref{subbasis_definition}). 
Moreover, if we set 
\[
D'':=\sigma^*D'-S_l\left(V_{m\vec{\bullet}}^{(Y,e)};\bar{\sF}_Y\right) Y, \quad
\quad D_Y:=D''|_Y, 
\]
then $D_Y$ is an $l$-basis type $\Q$-divisor of $V_{m\vec{\bullet}}^{(Y,e)}$.
\end{enumerate}
\end{remark}

We will use the following well-known lemma in \S \ref{delta_section}. 

\begin{lemma}[{\cite[Corollary 2.10]{BJ}, \cite[Lemma 2.9]{AZ21} 
and \cite[Lemma 11.6]{r3d28}}]\label{bt-div_lemma}
Let $V_{\vec{\bullet}}$ be the Veronese equivalence class of an 
$(m\Z_{\geq 0})^r$-graded linear series $V_{m\vec{\bullet}}$ 
on $X$ associated to $L_1,\dots,L_r\in\CaCl(X)\otimes_\Z\Q$ which has bounded 
support and contains an ample series. 
\begin{enumerate}
\renewcommand{\theenumi}{\arabic{enumi}}
\renewcommand{\labelenumi}{(\theenumi)}
\item\label{bt-div_lemma1}
For any $\varepsilon\in\Q_{>0}$, there exists $l_0\in m\Z_{>0}$ such that, 
for any linearly bounded filtration $\sF$ on $V_{m\vec{\bullet}}$ and for any 
$l\in m\Z_{>0}$ with $l\geq l_0$, we have 
\[
S_l\left(V_{m\vec{\bullet}};\sF\right)\leq (1+\varepsilon) 
S\left(V_{\vec{\bullet}};\sF\right).
\]
\item\label{bt-div_lemma2}
Let $\sigma\colon X'\to X$ be a projective birational morphism with $X'$ normal, 
let $Y\subset X'$ be a prime $\Q$-Cartier divisor on $X'$ and let $e\in\Z_{>0}$ 
with $e Y$ Cartier. For any linearly bounded filtration $\sF$ of $V_{\vec{\bullet}}$, 
we have 
\[T\left(V_{\vec{\bullet}};\sF\right)=T\left(V_{\vec{\bullet}}^{(Y)};\bar{\sF}\right), 
\quad
S\left(V_{\vec{\bullet}};\sF\right)=S\left(V_{\vec{\bullet}}^{(Y)};\bar{\sF}\right),
\]
where $V_{\vec{\bullet}}^{(Y)}$ is the refinement of $\sigma^*V_{\vec{\bullet}}$ by $Y$ 
and $\bar{\sF}$ is the filtration on $V_{\vec{\bullet}}^{(Y)}$ induced by $\sF$ 
in the sense of Definition \ref{subbasis_definition}. 
\end{enumerate}
\end{lemma}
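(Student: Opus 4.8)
The plan is to treat the two assertions separately; both are uniform/invariance statements that are essentially in the literature (\cite[Corollary 2.10]{BJ}, \cite[Lemma 2.9]{AZ21}, \cite[Lemma 11.6]{r3d28}), and I would reprove them in the present graded generality using the Okounkov-body formalism of Remark \ref{S-T_remark}. \textbf{Assertion \eqref{bt-div_lemma1}} is a uniform (in $\sF$) version of the convergence $S_l\to S$; \textbf{assertion \eqref{bt-div_lemma2}} will be factored into a pullback reduction and an invariance-under-refinement statement for a genuine prime $\Q$-Cartier divisor.

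For \eqref{bt-div_lemma1}: since $S_l(V_{m\vec\bullet};\sF)$, $S(V_{\vec\bullet};\sF)$, $T_l(V_{m\vec\bullet};\sF)/l$ and $T(V_{\vec\bullet};\sF)$ are all homogeneous of degree one in $\sF$, the desired inequality is scale invariant, so I may normalize $T(V_{\vec\bullet};\sF)=1$ (the case $T=0$ being trivial); then $T_l(V_{m\vec\bullet};\sF)/l\le 1$ for every $l$ since $l\mapsto T_l$ is superadditive by multiplicativity of $\sF$. Fix an admissible flag $Y_\bullet$, put $\Delta:=\Delta_{Y_\bullet}(V_{\vec\bullet})\subset\R_{\ge 0}^{r-1+n}$ and $N:=r-1+n$. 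As $V_{\vec\bullet}$ contains an ample series, $h^0(V_{l,m\vec\bullet})$ equals the number of level-$l$ points of $\Gamma_{Y_\bullet}(V_{m\vec\bullet})$ and is asymptotic to $l^N\vol(\Delta)$; choosing, for each such $l$, a basis of $\bigoplus_{\vec a}V_{l,m\vec a}$ jointly compatible with $\nu_{Y_\bullet}$ and $\sF$ (Definition \ref{subbasis_definition}) gives $S_l=\frac{1}{l\,h^0(V_{l,m\vec\bullet})}\sum_\gamma w_\sF(\gamma)$, where $w_\sF(\gamma)$ is the $\sF$-order of the basis vector attached to $\gamma$. Multiplicativity of $\sF$ makes $\gamma\mapsto w_\sF(\gamma)$ superadditive along $\Gamma_{Y_\bullet}$, whence $w_\sF(\gamma)/l\le G_\sF(\gamma/l)$ for the function $G_\sF$ of Remark \ref{S-T_remark} \eqref{S-T_remark1}, which is concave (by \cite{BC}) and $[0,1]$-valued after the normalization. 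Therefore $S_l\le\frac{1}{h^0(V_{l,m\vec\bullet})}\sum_\gamma G_\sF(\gamma/l)$, and the right-hand side is a Riemann sum for $S=\frac{1}{\vol(\Delta)}\int_\Delta G_\sF\,d\vec x$. The decisive point is that a concave $[0,1]$-valued function on $\Delta$ is $\tfrac1\delta$-Lipschitz on $\{\vec x\in\Delta:\operatorname{dist}(\vec x,\partial\Delta)\ge\delta\}$, a bound \emph{independent of $\sF$}; splitting $\Delta$ into that set and a boundary collar of volume $O(\delta)$ gives $\frac{1}{h^0(V_{l,m\vec\bullet})}\sum_\gamma G_\sF(\gamma/l)\le S+C(\delta+\tfrac{1}{\delta l})$ with $C$ depending only on $\Delta$. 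Choosing $\delta$ small and then $l_0$ large yields $S_l\le S+\varepsilon'$ for $l\ge l_0$, uniformly over all normalized $\sF$, and finally $S\ge T/(r+n)=1/(r+n)$ (Definition \ref{S-T_definition}) converts this to $S_l\le(1+\varepsilon'(r+n))S$; one takes $\varepsilon':=\varepsilon/(r+n)$ and undoes the normalization by homogeneity. I expect the uniformity over all linearly bounded $\sF$ to be the one genuinely delicate step, and the Lipschitz estimate above is precisely what supplies it.

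For \eqref{bt-div_lemma2}: \emph{Step 1} is a pullback reduction. Since $\sigma$ is birational and $X'$ a variety, $\sigma^*\colon H^0(X,D)\to H^0(X',\sigma^*D)$ is injective for every Cartier divisor $D$, so $\dim\sigma^*V_{m\vec a}=\dim V_{m\vec a}$ and, for the pulled-back filtration $\sigma^*\sF$ (defined by $(\sigma^*\sF)^\lambda(\sigma^*V_{m\vec a}):=\sigma^*(\sF^\lambda V_{m\vec a})$), also $\dim(\sigma^*\sF)^\lambda(\sigma^*V_{m\vec a})=\dim\sF^\lambda V_{m\vec a}$; hence every $T_l$ and every $h^0(V^{\sF,t}_{l,m\vec\bullet})$ is unchanged, giving $T(V_{\vec\bullet};\sF)=T(\sigma^*V_{\vec\bullet};\sigma^*\sF)$ and $S(V_{\vec\bullet};\sF)=S(\sigma^*V_{\vec\bullet};\sigma^*\sF)$ (and $\sigma^*V_{\vec\bullet}$ has bounded support and contains an ample series by Proposition \ref{pullback_proposition}). \emph{Step 2} is the invariance under refinement by the genuine prime $\Q$-Cartier divisor $Y$ on the normal variety $X'$: by definition $V^{(Y)}_{\vec\bullet}$ is the refinement of $\sigma^*V_{\vec\bullet}$ by $Y$, and $\bar\sF$ is the filtration on it induced from $\sigma^*\sF$ as in Remark \ref{bt-div_remark}. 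For $T$: a one-line computation with $\ord_Y$ shows that for each $l$ the set of $\lambda$ with $\bar\sF^\lambda V^{(Y)}_{l,m(\vec a,j)}\ne 0$ for some $(\vec a,j)$ coincides with the set of $\lambda$ with $(\sigma^*\sF)^\lambda(\sigma^*V_{l,m\vec a})\ne 0$ for some $\vec a$, so the $T_l$ agree and $T(\sigma^*V_{\vec\bullet};\sigma^*\sF)=T(V^{(Y)}_{\vec\bullet};\bar\sF)$. For $S$: fix an admissible flag $Y_\bullet$ on $X'$ with $Y_1=Y$ and derived flag $Y'_\bullet$ on $Y$; unwinding Definition \ref{refinement_definition} and the construction of $\bar\sF$, one checks the essentially formal identity that the refinement by $Y$ of the sublevel series $(\sigma^*V_{\vec\bullet})^{\sigma^*\sF,t}$ equals $(V^{(Y)}_{\vec\bullet})^{\bar\sF,t}$ for every $t$. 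Then Example \ref{interior_example} \eqref{interior_example4}, applied to $(\sigma^*V_{\vec\bullet})^{\sigma^*\sF,t}$ and to $\sigma^*V_{\vec\bullet}$, gives $\Delta_{Y_\bullet}((\sigma^*V_{\vec\bullet})^{\sigma^*\sF,t})=\Delta_{Y'_\bullet}((V^{(Y)}_{\vec\bullet})^{\bar\sF,t})$ inside the common body $\Delta_{Y_\bullet}(\sigma^*V_{\vec\bullet})=\Delta_{Y'_\bullet}(V^{(Y)}_{\vec\bullet})$, with equal volumes; hence the functions $G_{\sigma^*\sF}$ and $G_{\bar\sF}$ of Remark \ref{S-T_remark} \eqref{S-T_remark1} coincide under this identification, and $S=\frac{1}{\vol}\int G$ yields $S(\sigma^*V_{\vec\bullet};\sigma^*\sF)=S(V^{(Y)}_{\vec\bullet};\bar\sF)$. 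Combining Steps 1 and 2 finishes \eqref{bt-div_lemma2}; here the only non-formal ingredient is the already-established Okounkov-body identity of Example \ref{interior_example} \eqref{interior_example4}.
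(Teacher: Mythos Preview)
The paper does not supply its own proof of this lemma; it is stated with references to \cite[Corollary 2.10]{BJ}, \cite[Lemma 2.9]{AZ21} and \cite[Lemma 11.6]{r3d28} and used as a black box. Your sketch is correct and is essentially the argument of those references adapted to the present multigraded setting: for \eqref{bt-div_lemma1} the normalization $T=1$, the identity $S_l=\frac{1}{l\,h^0}\sum_\gamma w_\sF(\gamma)$ via a basis compatible with both $\nu_{Y_\bullet}$ and $\sF$, the bound $w_\sF(\gamma)/l\le G_\sF(\gamma/l)$, and the uniform Lipschitz estimate for concave $[0,1]$-valued functions on compact convex bodies are exactly the ingredients of \cite{BJ}; for \eqref{bt-div_lemma2} the factorization into a pullback step (dimensions unchanged under the injective $\sigma^*$) and a refinement step (using $(V^{\sF,t})^{(Y)}=(V^{(Y)})^{\bar\sF,t}$ together with Example \ref{interior_example} \eqref{interior_example4}) is the natural route and matches \cite[Lemma 11.6]{r3d28}.

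One small point worth making explicit in your write-up of \eqref{bt-div_lemma2}: the induced filtration $\bar\sF$ on $V^{(Y)}_{\vec\bullet}$ should be defined by $\bar\sF^\lambda V^{(Y)}_{m(\vec a,j)}:=\operatorname{Image}\bigl(\sF^\lambda(\sigma^*V_{m\vec a})\cap(jmeY+H^0(\cdots))\to H^0(Y,\cdots)\bigr)$, so that the identity $(V^{\sF,t})^{(Y)}=(V^{(Y)})^{\bar\sF,t}$ is literally a tautology; and you should note that Example \ref{interior_example} \eqref{interior_example4} applies to $(\sigma^*V_{\vec\bullet})^{\sigma^*\sF,t}$ for $t\in[0,T)$ because that series contains an ample series by \cite[Lemma 2.9]{AZ} (recorded in Definition \ref{S-T_definition} \eqref{S-T_definition1}). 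With those two sentences added, your argument is complete.
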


\section{Toric plt flags}\label{toric_section}

In this section, we observe the Okounkov bodies of big divisors on $\Q$-factorial 
projective toric varieties associated to torus invariant complete primitive flags, 
which is a generalization of \cite[Proposition 6.1 (1)]{LM}. In this section, we fix 
$N^0:=\Z^n$, $M^0:=\Hom_\Z\left(N^0,\Z\right)$ and the $n$-dimensional 
$\Q$-factorial projective toric variety associated with a fan $\Sigma$ in 
$N^0_\R:=N^0\otimes_\Z\R$. In this section, we follow the notations in \cite{CLS}.

\begin{definition}\label{toric-plt-flag_definition}
Fix $1\leq j\leq n$. For any $1\leq k\leq j$, let us fix a primitive element 
$v_k\in N^{k-1}$, and set $N^k:=N^{k-1}/\Z v_k\left(\simeq\Z^{n-k}\right)$. 
Let $\pi_k\colon N^{k-1}\twoheadrightarrow N^k$ be the quotient homomorphism. 
From those $v_1,\dots,v_j$, we inductively define 
\begin{itemize}
\item
a fan $\Sigma_k$ on $N^k_\R:=N^k\otimes_\Z\R$ for any $0\leq k\leq j$, and 
\item
a fan $\tilde{\Sigma}_k$ on $N^k_\R$ for any $0\leq k\leq j-1$ 
\end{itemize}
as follows: 
\begin{itemize}
\item
We set $\Sigma_0:=\Sigma$. 
\item
$\tilde{\Sigma}_k$ is the star subdivision of $\Sigma_k$ at $v_{k+1}$ 
in the sense of \cite[\S 11.1]{CLS}. 
\item
$\Sigma_{k+1}$ is defined to be 
\[
\Sigma_{k+1}:=\left\{\left(\pi_{k+1}\right)_\R\left(\tau\right)\subset N^{k+1}_\R
\,\,|\,\,v_{k+1}\in\tau\in\tilde{\Sigma}_k\right\},
\]
as in \cite[\S 3.2]{CLS}. 
\end{itemize}
Let $Y_k$ be the toric variety associated with the fan $\Sigma_k$, and let 
$\tilde{Y}_k$ be the toric variety associated with the fan $\tilde{\Sigma}_k$. 
Then both are $\Q$-factorial projective toric varieties, $X=Y_0$, and there is 
a natural projective birational morphism $\sigma_k\colon\tilde{Y}_k\to Y_k$ such that 
the morphism $\sigma_k$ is the prime blowup of $Y_{k+1}\subset\tilde{Y}_k$ 
by \cite[Proposition 11.1.6]{CLS}. The sequence 
\[
Y_\bullet\colon X=Y_0\triangleright Y_1\triangleright\cdots\triangleright Y_j
\]
is a plt flag over $X$. Conversely, any torus invariant primitive flag over $X$ can be 
obtained in the way of above. We call the flag \emph{the torus invariant plt flag 
over $X$ associated with $v_1,\dots,v_j$}. 
\end{definition}

We are mainly interested in torus invariant \emph{complete} plt flags over $X$. 

\begin{definition}\label{toric-cone-sequence_definition}
Let
\[
Y_\bullet\colon X=Y_0\triangleright Y_1\triangleright\cdots\triangleright Y_n
\]
be the torus invariant complete plt flag over $X$ associated with $v_1,\dots,v_n$
as in Definition \ref{toric-plt-flag_definition}. 
We inductively define 
\begin{itemize}
\item
an $(n-j)$-dimensional cone $\tau_j\in\Sigma_j$ and an $(n-j)$-dimensional 
cone $\gamma_j\in\tilde{\Sigma}_j$ with $v_{j+1}\in\gamma_j\subset\tau_j$
for any $0\leq j\leq n-1$, and 
\item
a primitive element $v_{j,k}\in N^{j-1}$ 
with 
\begin{eqnarray*}
\tau_{j-1}&=&\Cone\left(v_{j,j},v_{j,j+1},\dots,v_{j,n}\right), \\
\gamma_{j-1}&=&\Cone\left(v_j,v_{j,j+1},\dots,v_{j,n}\right)
\end{eqnarray*}
for any $1\leq j\leq k\leq n$, satisfying 
$\left(\pi_j\right)_\R\left(\R_{\geq 0}v_{j,k}\right)=\R_{\geq 0}v_{j+1,k}$ if $j<k$, 
\end{itemize}
as follows: 
\begin{enumerate}
\renewcommand{\theenumi}{\arabic{enumi}}
\renewcommand{\labelenumi}{(\theenumi)}
\item\label{toric-cone-sequence_definition1}
We set $\gamma_{n-1}:=\R_{\geq 0}v_n$, $\tau_{n-1}:=\R_{\geq 0} v_n$ 
and $v_{n,n}:=v_n$. 
\item\label{toric-cone-sequence_definition2}
Assume that we have defined $\tau_j\in\Sigma_j$ and $v_{j+1,j+1},\dots,v_{j+1,n}
\in N^j$ primitive with $\tau_j=\Cone\left(v_{j+1,j+1},\dots,v_{j+1,n}\right)$. 
There is a unique $(n-j+1)$-dimensional cone 
$\gamma_{j-1}\in\tilde{\Sigma}_{j-1}$ with $v_j\in\gamma_{j-1}$ and 
$\left(\pi_j\right)_\R\left(\gamma_{j-1}\right)=\tau_j$.
We can uniquely determine primitive elements 
$v_{j,j+1},\dots,v_{j,n}\in N^{j-1}$ such that 
\begin{itemize}
\item
$\gamma_{j-1}=\Cone\left(v_{j,j+1},\dots,v_{j,n},v_j\right)$ and 
\item
$\left(\pi_j\right)_\R\left(\R_{\geq 0}v_{j,k}\right)=\R_{\geq 0}v_{j+1,k}$
for any $j+1\leq k\leq n$.
\end{itemize}
Since $\tilde{\Sigma}_{j-1}$ is the subdivision of $\Sigma_{j-1}$ at $v_j$, 
there is a unique $(n-j+1)$-dimensional cone $\tau_{j-1}\in\Sigma_{j-1}$ such that 
$\gamma_{j-1}\subset\tau_{j-1}$. Both $\gamma_{j-1}$ and $\tau_{j-1}$ admit the 
$(n-j)$-dimensional face 
$\Cone\left(v_{j,j+1},v_{j,j+2},\dots,v_{j,n}\right)$, we can uniquely take the primitive 
element $v_{j,j}\in N^{j-1}$ such that 
$\tau_{j-1}=\Cone\left(v_{j,j},v_{j,j+1},\dots,v_{j,n}\right)$. 
\end{enumerate}
For any $2\leq j\leq k\leq n$, since 
$\left(\pi_{j-1}\right)_\R\left(\R_{\geq 0}v_{j-1,k}\right)=\R_{\geq 0}v_{j,k}$, 
there uniquely exists a positive integer $m_{j,k}\in\Z_{>0}$ such that 
$\left(\pi_{j-1}\right)\left(v_{j-1,k}\right)=m_{j,k}v_{j,k}$ holds. We also set $m_{1,k}:=1$ 
for any $1\leq k\leq n$. 

For any $1\leq j\leq k\leq n$, since $v_j\in\gamma_{j-1}\subset\tau_{j-1}$, 
we can define a nonnegative rational number $c_{j,k}\in\Q_{\geq 0}$ such that 
\[
v_j=\sum_{k=j}^n c_{j,k}v_{j,k}
\]
holds. We also set 
\[
c'_{j,k}:=\frac{c_{j,k}}{\prod_{i=1}^j m_{i,k}}
\]
for any $1\leq j\leq k\leq n$. 
\end{definition}

\begin{lemma}\label{toric-cone-sequence_lemma}
\begin{enumerate}
\renewcommand{\theenumi}{\arabic{enumi}}
\renewcommand{\labelenumi}{(\theenumi)}
\item\label{toric-cone-sequence_lemma1}
We have $c_{n,n}=1$. In general, $c_{j,j}\in\Q_{>0}$ holds for any $1\leq j\leq n$. 
\item\label{toric-cone-sequence_lemma2}
For any $1\leq j\leq n$, the multiplicity $\mult\left(\tau_{j-1}\right)\in\Z_{>0}$ 
in the sense of \cite[\S 6.4]{CLS} satisfies that 
\[
\mult\left(\tau_{j-1}\right)\cdot\prod_{k=j}^n c_{k,k}=\prod_{j+1\leq i\leq k\leq n}m_{i,k}.
\]
In particular, we have 
\[
\mult\left(\tau_0\right)^{-1}=\prod_{i=1}^n c'_{i,i}. 
\]
\end{enumerate}
\end{lemma}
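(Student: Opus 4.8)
The plan is to reduce both claims to elementary determinant computations together with a telescoping product. Since $X$ is $\Q$-factorial, every cone of every fan $\Sigma_k$ and $\tilde\Sigma_k$ is simplicial, and by construction $\tau_{j-1}=\Cone(v_{j,j},\dots,v_{j,n})$ and $\gamma_{j-1}=\Cone(v_j,v_{j,j+1},\dots,v_{j,n})$ are $(n-j+1)$-dimensional, hence full-dimensional, cones in $N^{j-1}_\R\simeq\R^{n-j+1}$. Fixing a $\Z$-basis of $N^{j-1}$, we therefore have $\mult(\tau_{j-1})=\left|\det(v_{j,j},\dots,v_{j,n})\right|$ and $\mult(\gamma_{j-1})=\left|\det(v_j,v_{j,j+1},\dots,v_{j,n})\right|$, and the whole proof runs through these determinant expressions.

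For the first claim: when $j=n$ we have $\tau_{n-1}=\R_{\geq0}v_n=\R_{\geq0}v_{n,n}$ with $v_{n,n}=v_n$, so $c_{n,n}=1$. For general $j$, since $\tilde\Sigma_{j-1}$ refines $\Sigma_{j-1}$ and $\gamma_{j-1}$ is full-dimensional, $v_j\in\gamma_{j-1}\subset\tau_{j-1}$, so all coefficients $c_{j,k}$ in $v_j=\sum_{k=j}^{n}c_{j,k}v_{j,k}$ are $\geq0$; and as $\dim\gamma_{j-1}=n-j+1$, the vectors $v_j,v_{j,j+1},\dots,v_{j,n}$ are linearly independent, so $v_j$ is not in the $\R$-linear span of $v_{j,j+1},\dots,v_{j,n}$, which forces $c_{j,j}\neq0$ and hence $c_{j,j}>0$.

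For the second claim: the heart of the argument is the pair of identities
\[
\mult(\gamma_{j-1})=c_{j,j}\cdot\mult(\tau_{j-1}),\qquad
\mult(\gamma_{j-1})=\mult(\tau_j)\cdot\prod_{k=j+1}^{n}m_{j+1,k}
\qquad(1\leq j\leq n-1).
\]
The first follows by substituting $v_j=\sum_{k=j}^{n}c_{j,k}v_{j,k}$ into the first column of $\det(v_j,v_{j,j+1},\dots,v_{j,n})$ and using multilinearity: every summand with $k\geq j+1$ repeats a column and vanishes, leaving $c_{j,j}\det(v_{j,j},v_{j,j+1},\dots,v_{j,n})$, and $c_{j,j}>0$. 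For the second, choose the $\Z$-basis of $N^{j-1}$ whose first element is the primitive vector $v_j$; then $\det(v_j,v_{j,j+1},\dots,v_{j,n})$ equals, up to sign, the determinant in $N^{j}=N^{j-1}/\Z v_j$ of $\pi_j(v_{j,j+1}),\dots,\pi_j(v_{j,n})$, and since $\pi_j(v_{j,k})=m_{j+1,k}v_{j+1,k}$ by definition of the $m_{\bullet,\bullet}$ while $\tau_j=(\pi_j)_\R(\gamma_{j-1})=\Cone(v_{j+1,j+1},\dots,v_{j+1,n})$, this determinant has absolute value $\left(\prod_{k=j+1}^{n}m_{j+1,k}\right)\mult(\tau_j)$. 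Combining the two identities gives, for $1\leq j\leq n-1$, the recursion $c_{j,j}\cdot\mult(\tau_{j-1})=\mult(\tau_j)\cdot\prod_{k=j+1}^{n}m_{j+1,k}$. A descending induction on $j$ then proves $\mult(\tau_{j-1})\prod_{k=j}^{n}c_{k,k}=\prod_{j+1\leq i\leq k\leq n}m_{i,k}$: the base case $j=n$ holds because $\mult(\tau_{n-1})=1$ and $c_{n,n}=1$, and for $j<n$ one multiplies the recursion by $\prod_{k=j+1}^{n}c_{k,k}$ and feeds in the case $j+1$, using $\left(\prod_{j+2\leq i\leq k\leq n}m_{i,k}\right)\left(\prod_{k=j+1}^{n}m_{j+1,k}\right)=\prod_{j+1\leq i\leq k\leq n}m_{i,k}$. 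Finally, setting $j=1$ and using $m_{1,k}=1$ (so that $\prod_{2\leq i\leq k\leq n}m_{i,k}=\prod_{i=1}^{n}\prod_{l=1}^{i}m_{l,i}$) yields $\mult(\tau_0)^{-1}=\prod_{i=1}^{n}\frac{c_{i,i}}{\prod_{l=1}^{i}m_{l,i}}=\prod_{i=1}^{n}c'_{i,i}$.

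The one step requiring genuine care is the second multiplicity identity: one must transport $\mult$ correctly across the lattice quotient $\pi_j\colon N^{j-1}\to N^{j}$ and record the primitivity indices $m_{j+1,k}$ with the right exponents. Everything else is multilinear-algebra bookkeeping and a telescoping double product.
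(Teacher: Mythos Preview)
Your proof is correct and follows essentially the same approach as the paper: both establish the two key identities $\mult(\gamma_{j-1})=c_{j,j}\,\mult(\tau_{j-1})$ (via multilinearity of the determinant) and $\mult(\gamma_{j-1})=\mult(\tau_j)\prod_{k=j+1}^n m_{j+1,k}$ (via the lattice quotient $\pi_j$), then combine them. The paper simply writes ``Thus we get the assertion'' after deriving the recursion, whereas you spell out the descending induction and the final deduction of $\mult(\tau_0)^{-1}=\prod_i c'_{i,i}$ explicitly, but the argument is the same.
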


\begin{proof}
\eqref{toric-cone-sequence_lemma1}
Since $v_{n,n}=v_n$, we have $c_{n,n}=1$. For any $1\leq j\leq n-1$, since 
$v_j\not\in\Cone\left(v_{j,j+1},v_{j,j+2},\dots,v_{j,n}\right)$, we have $c_{j,j}>0$. 

\eqref{toric-cone-sequence_lemma2}
Since $\mult\left(\tau_{n-1}\right)=1$, we may assume that $j\leq n-1$. Note that 
\begin{eqnarray*}
\mult\left(\gamma_{j-1}\right)&=&
\left[N^{j-1}\colon\Z v_j+\Z v_{j,j+1}+\cdots+\Z v_{j,n}\right] \\
&=&\left[N^j\colon\Z m_{j+1,j+1}v_{j+1,j+1}+\cdots+\Z m_{j+1,n}v_{j+1,n}\right] \\
&=&\mult\left(\tau_j\right)\cdot\prod_{k=j+1}^n m_{j+1,k}.
\end{eqnarray*}
On the other hand, since $v_j=\sum_{k=j}^n c_{j,k}v_{j,k}$, we have 
\[
\mult\left(\gamma_{j-1}\right)=\left|\det\left(v_j,v_{j,j+1},\dots,v_{j,n}\right)\right|
=c_{j,j}\left|\det\left(v_{j,j},v_{j,j+1},\dots,v_{j,n}\right)\right|
=c_{j,j}\mult\left(\tau_{j-1}\right). 
\]
Thus we get the assertion \eqref{toric-cone-sequence_lemma2}. 
\end{proof}

We consider the log discrepancies. 

\begin{proposition}\label{toric-A_proposition}
Let $B$ be an effective torus invariant $\Q$-divisor on $X$ such that any coefficient 
of $B$ is less than $1$. It is well-known that the pair $(X,B)$ is a klt pair. 
Let 
\[
Y_\bullet\colon X=Y_0\triangleright Y_1\triangleright\cdots\triangleright Y_n
\]
be the torus invariant complete plt flag over $X$ associated with $v_1,\dots,v_n$
as in Definition \ref{toric-plt-flag_definition}, and let $v_{j,k}$ and $c'_{j,k}$ be 
as in Definition \ref{toric-cone-sequence_definition}. 
The complete flag $Y_\bullet$ is a plt flag over $(X, B)$. For any $1\leq j\leq n$, 
let $(Y_{j-1}, B_{j-1})$ be 
the associated klt structure in the sense of 
Definition \ref{primitive_definition} \eqref{primitive_definition4}. 
Then we have the equality 
\[
A_{Y_{j-1},B_{j-1}}\left(Y_j\right)=\sum_{k=j}^n c'_{j,k}A_{X,B}\left(V\left(
v_{1,k}\right)\right),
\]
where $V\left(v_{1,k}\right)\subset X$ is the torus invariant prime divisor 
associates to the $1$-dimensional cone $\R_{\geq 0}v_{1,k}\in\Sigma$ 
(see \cite[\S 3.2]{CLS}). 
\end{proposition}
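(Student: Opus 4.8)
The plan is to translate every log discrepancy occurring here into a value of a support function on the relevant toric fan, and then to reduce the statement to a single multiplicativity identity that propagates along the flag by toric adjunction.

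First I would recall the standard description of log discrepancies on $\Q$-factorial toric varieties: for a $\Q$-factorial projective toric variety $Z=X_\Delta$ and a torus invariant boundary $\Gamma=\sum_{\rho\in\Delta(1)}\gamma_\rho D_\rho$ with all $\gamma_\rho<1$, the log discrepancy of the toric valuation $\ord_w$ (for $w\in|\Delta|$) equals $\psi_\Gamma(w)$, where $\psi_\Gamma$ is the function on $|\Delta|$ that is linear on each cone and satisfies $\psi_\Gamma(u_\rho)=1-\gamma_\rho$ on primitive ray generators. In particular $A_{X,B}(V(v_{1,k}))=\psi_B(v_{1,k})$, since the $v_{1,k}$ are exactly the primitive generators of the maximal cone $\tau_0\in\Sigma$. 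I will also use repeatedly that for a prime divisor $V$ lying \emph{on} a variety $Z$ one has $A_{Z,\Gamma}(V)=1-\coeff_V\Gamma$, regardless of whether $\Gamma$ is torus invariant.

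Next I would set $\psi_{j-1}$ to be the piecewise-linear function on the maximal cone $\tau_{j-1}=\Cone(v_{j,j},\dots,v_{j,n})$ of $\Sigma_{j-1}$ interpolating the values $A_{Y_{j-1},B_{j-1}}(V(v_{j,k}))$. Because $Y_j$ is obtained from the torus invariant star subdivision of $\Sigma_{j-1}$ at $v_j$, we have $A_{Y_{j-1},B_{j-1}}(Y_j)=A_{Y_{j-1},B_{j-1}}(\ord_{v_j})$, and (since on $\tau_{j-1}$ the non–torus-invariant part of $B_{j-1}$ plays no role, as explained below) this value is $\psi_{j-1}(v_j)$; using $v_j=\sum_{k=j}^n c_{j,k}v_{j,k}$ and linearity this gives $A_{Y_{j-1},B_{j-1}}(Y_j)=\sum_{k=j}^n c_{j,k}\,\psi_{j-1}(v_{j,k})$. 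Thus the theorem follows once I establish the multiplicativity relation
\[
\psi_{j-1}(v_{j,k})=\tfrac{1}{m_{j,k}}\,\psi_{j-2}(v_{j-1,k})\qquad(j\le k\le n,\ 2\le j\le n),
\]
for then iterating it, and using $m_{1,k}=1$, yields $\psi_{j-1}(v_{j,k})=\psi_0(v_{1,k})\big/\prod_{i=1}^{j}m_{i,k}=A_{X,B}(V(v_{1,k}))\big/\prod_{i=1}^{j}m_{i,k}$, and then $\sum_k c_{j,k}\psi_{j-1}(v_{j,k})=\sum_k c'_{j,k}A_{X,B}(V(v_{1,k}))$ is precisely the asserted equality; the case $j=1$ is the previous paragraph together with $v_1=\sum_k c_{1,k}v_{1,k}$.

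Finally I would prove the multiplicativity relation by toric adjunction along $Y_{j-1}\subset\tilde Y_{j-2}$. By definition of the associated klt structure, $K_{Y_{j-1}}+B_{j-1}=\sigma_{j-2}^*(K_{Y_{j-2}}+B_{j-2})\big|_{Y_{j-1}}$. Writing $A_{j-1}:=A_{Y_{j-2},B_{j-2}}(Y_{j-1})$ and using $K_{\tilde Y_{j-2}}+\sum_\rho D_\rho=0$ one computes $\sigma_{j-2}^*(K_{Y_{j-2}}+B_{j-2})=-\sum_{\rho}(1-b_\rho)D_\rho-A_{j-1}D_{v_{j-1}}$, where $\rho$ runs over the rays of $\tilde\Sigma_{j-2}$ other than $v_{j-1}$ and $b_\rho$ is the coefficient of $D_\rho$ in $B_{j-2}$. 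The first summand does not contain $Y_{j-1}=D_{v_{j-1}}$, so it restricts to $Y_{j-1}$ by the elementary formula for restricting a torus invariant $\Q$-Cartier divisor to a toric divisor: on the chart of $Y_{j-1}$ attached to $\tau_{j-1}=\pi_{j-1}(\gamma_{j-2})$, the coefficient of $D_{v_{j,k}}$ in the restriction is $1/m_{j,k}$ times the coefficient of $D_{v_{j-1,k}}$ upstairs, the factor $m_{j,k}$ being the index in $\pi_{j-1}(v_{j-1,k})=m_{j,k}v_{j,k}$. For the second summand, $-D_{v_{j-1}}$ is $\sigma_{j-2}$-ample, so $-A_{j-1}\,D_{v_{j-1}}\big|_{Y_{j-1}}$ is effective and, being represented by a divisor whose support avoids $V(v_{j,j}),\dots,V(v_{j,n})$, contributes nothing to the coefficients of $B_{j-1}$ along those divisors, nor to $\ord_{v_j}$, whose center on $Y_{j-1}$ lies in that union. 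Putting these together, $\psi_{j-1}(v_{j,k})=\tfrac{1}{m_{j,k}}(1-b_{v_{j-1,k}})=\tfrac{1}{m_{j,k}}\psi_{j-2}(v_{j-1,k})$, as required. I expect the main obstacle to be exactly this last step: checking carefully that the different / normal-bundle term $-A_{j-1}\,D_{v_{j-1}}\big|_{Y_{j-1}}$ of the adjunction genuinely does not perturb the coefficients along the torus invariant divisors governing $v_j$, together with the bookkeeping of the multiplicities $m_{j,k}$ in the toric restriction formula.
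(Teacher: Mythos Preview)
Your overall strategy---reduce to linearity on the cone $\tau_{j-1}$ via $v_j=\sum_k c_{j,k}v_{j,k}$, then establish the one-step multiplicativity $A_{Y_{j-1},B_{j-1}}(V(v_{j,k}))=\tfrac{1}{m_{j,k}}A_{Y_{j-2},B_{j-2}}(V(v_{j-1,k}))$ and iterate---is exactly the paper's. The paper simply records that each $(Y_{j-1},B_{j-1})$ is a \emph{toric} klt pair and then cites the one-step identity as ``well-known'', so your write-up is more detailed than the original.

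Where your argument wobbles is precisely the step you flagged. The claim that $-A_{j-1}\,D_{v_{j-1}}\big|_{Y_{j-1}}$ is effective with support avoiding $V(v_{j,j}),\dots,V(v_{j,n})$ does not follow from $\sigma_{j-2}$-ampleness of $-D_{v_{j-1}}$: relative ampleness gives ampleness along the fibres of $\sigma_{j-2}|_{Y_{j-1}}$, not global effectivity, and even when $Y_{j-1}$ is genuinely exceptional there is no reason the (torus-invariant!) class $-D_{v_{j-1}}|_{Y_{j-1}}$ should miss those rays. So as written the last paragraph has a gap.

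The clean fix is to use adjunction in the form $K_{Y_{j-1}}+B_{j-1}=(K_{\tilde Y_{j-2}}+\tilde B_{j-2}+Y_{j-1})\big|_{Y_{j-1}}$ rather than via $\sigma_{j-2}^*$. On $\tilde Y_{j-2}$ one has $K_{\tilde Y_{j-2}}+\tilde B_{j-2}+Y_{j-1}=-\sum_{\rho\neq v_{j-1}}(1-b_\rho)D_\rho$, so no self-intersection term appears at all; restricting and applying the toric restriction formula $D_{v_{j-1,k}}\big|_{Y_{j-1}}=\tfrac{1}{m_{j,k}}V(v_{j,k})+(\text{other rays})$ gives $\psi_{j-1}(v_{j,k})=\tfrac{1}{m_{j,k}}\psi_{j-2}(v_{j-1,k})$ directly. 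This also makes transparent that $B_{j-1}$ is torus invariant, so your side remarks about a possible non-torus-invariant part are unnecessary.
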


\begin{proof}
Clearly, each $(Y_{j-1},B_{j-1})$ is a toric klt pair. Moreover, since 
$v_j=\sum_{k=j}^n c_{j,k}v_{j,k}$, we have 
\[
A_{Y_{j-1},B_{j-1}}\left(Y_j\right)=\sum_{k=j}^n c_{j,k}A_{Y_{j-1},B_{j-1}}\left(V\left(
v_{j,k}\right)\right).
\]
Therefore, it is enough to show the equality 
\[
A_{Y_{j-1},B_{j-1}}\left(V\left(v_{j,k}\right)\right)
=\frac{A_{X,B}\left(V\left(
v_{1,k}\right)\right)}{\prod_{i=1}^j m_{i,k}}
\]
for any $1\leq j\leq k\leq n$. However, it is well-known that 
\[
A_{Y_1,B_1}\left(V\left(v_{2,k}\right)\right)
=\frac{A_{X,B}\left(V\left(v_{1,k}\right)\right)}{m_{2,k}}. 
\]
By doing the procedure $(j-1)$ times, we get the desired equality. 
\end{proof}

Recall that, for any torus invariant $\Q$-divisor $D$ on $X$, we have the 
associated rational polytope $P_D\subset M_\R:=M\otimes_\Z\R$ 
such that, for any sufficiently 
divisible $m\in\Z_{>0}$, the set $m P_D\cap M$ is equal to 
\[
\left\{u\in M\,\,|\,\, \DIV\left(\chi^u\right)+ m D\geq 0\right\},
\]
a basis of isotypical sections of $H^0\left(X, m D\right)$. 
Here is a generalization of \cite[Proposition 6.1]{LM}. 

\begin{thm}\label{toric-okounkov_thm}
Let 
\[
Y_\bullet\colon X=Y_0\triangleright Y_1\triangleright\cdots\triangleright Y_n
\]
be the torus invariant complete plt flag over $X$ associated with $v_1,\dots,v_n$
as in Definition \ref{toric-plt-flag_definition}, and let $v_{j,k}$ and $c'_{j,k}$ be 
as in Definition \ref{toric-cone-sequence_definition}. Take any big 
$L\in\CaCl(X)\otimes_\Z\Q$. Then there exists a unique torus invariant 
$\Q$-divisor $D$ on $X$ with $L\sim_\Q D$ and $D|_{U_{\tau_0}}=0$. Moreover, 
we have 
\[
\Delta_{Y_\bullet}\left(L\right)=\psi\circ\phi\left(P_D\right), 
\]
where 
\begin{eqnarray*}
\phi\colon M_\R &\xrightarrow{\sim}&\R^n\\
u&\mapsto&\left(\langle u,v_{1,k}\rangle\right)_{1\leq k\leq n}
\end{eqnarray*}
and 
\begin{eqnarray*}
\psi\colon\R^n&\xrightarrow{\sim}&\R^n\\
\begin{pmatrix}
x_1 \\ \vdots \\ x_n
\end{pmatrix}&\mapsto&
\begin{pmatrix}
c'_{1,1} & \cdots & c'_{1,n} \\ 
 & \ddots & \vdots \\
O & & c'_{n,n}
\end{pmatrix}
\begin{pmatrix}
x_1 \\ \vdots  \\ x_n
\end{pmatrix}.
\end{eqnarray*}
\end{thm}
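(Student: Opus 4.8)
The plan is to reduce the computation of the Okounkov body associated to the torus invariant complete plt flag to iterated refinements and then track, at each step, how the toric picture transforms. The key observation is that by Example~\ref{interior_example}~\eqref{interior_example4} and Definition~\ref{primitive-Okounkov_definition}, the Okounkov body $\Delta_{Y_\bullet}(L)$ equals $\Delta_{\tilde Y_{n-1}\ni Y_n}\bigl(H^0(\bullet L)^{(Y_1\triangleright\cdots\triangleright Y_{n-1})}\bigr)$, and each refinement step $V_{\vec\bullet}\rightsquigarrow V_{\vec\bullet}^{(Y_k)}$ is, after pulling back by the prime blowup $\sigma_k\colon\tilde Y_k\to Y_k$, exactly the construction of Definition~\ref{refinement_definition} with respect to the torus invariant prime divisor $Y_{k+1}\subset\tilde Y_k$. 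So first I would set up the existence and uniqueness of the torus invariant $\Q$-divisor $D$ with $L\sim_\Q D$ and $D|_{U_{\tau_0}}=0$: this is standard toric geometry (a torus invariant divisor is determined up to linear equivalence by its restriction to the affine chart $U_{\tau_0}$, since $\tau_0$ is a maximal cone and the corresponding characters span $M$), and then $P_D$ is the lattice polytope whose $u\in M$ satisfy $\langle u,v_\rho\rangle\geq -a_\rho$ for the coefficients $a_\rho$ of $D$, with $P_D$ sitting in the ``positive orthant'' relative to the rays $v_{1,1},\dots,v_{1,n}$ of $\tau_0$ precisely because $D|_{U_{\tau_0}}=0$.

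\textbf{Inductive structure.} Next I would prove by induction on $j$ that the refinement $H^0(\bullet L)^{(Y_1\triangleright\cdots\triangleright Y_j)}$, viewed on $Y_j$, is the complete linear series of a big torus invariant $\Q$-divisor $D^{(j)}$ on $Y_j$ whose polytope $P_{D^{(j)}}\subset M^j_\R\times\R^j$ (the extra $\R^j$ coming from the divisors $-e_kY_k|_{Y_k}$ in Definition~\ref{refinement_definition}) is the image of $P_D$ under an explicit affine-linear map. The heart of the induction is one step: given a $\Q$-factorial projective toric variety $Y_{j-1}$, a torus invariant big $\Q$-divisor $D^{(j-1)}$, the prime blowup $\sigma_{j-1}\colon\tilde Y_{j-1}\to Y_{j-1}$ at $v_j$, and the torus invariant divisor $Y_j\subset\tilde Y_{j-1}$, I need to identify $\Delta(D^{(j-1)})$ with the Okounkov-type body of the refinement by $Y_j$. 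For this, the standard toric description of sections via characters shows that $P_{\sigma_{j-1}^*D^{(j-1)}}$ is $P_{D^{(j-1)}}$ (pullback preserves the polytope up to the identification $M^{j-1}=M^{j-1}$), and the order of vanishing of the character section $\chi^u$ along $Y_j=V(v_j)$ is $\langle u,v_j\rangle$ plus the coefficient of $Y_j$ — so the refinement by $Y_j$ replaces the $u$-coordinate data by $(u|_{Y_j}\text{-part},\ \langle u,v_j\rangle)$, i.e.\ it is the linear projection $M^{j-1}_\R\to M^j_\R$ together with recording $\langle\,\cdot\,,v_j\rangle$. Comparing with $v_j=\sum_{k\geq j}c_{j,k}v_{j,k}$ and the lattice-index factors $m_{i,k}$ from Definition~\ref{toric-cone-sequence_definition} — which enter because $v_{j-1,k}$ and $v_{j,k}$ differ by the factor $m_{j,k}$ under $\pi_{j-1}$, so restricting a character to the subvariety rescales the relevant coordinate — gives exactly the rational numbers $c'_{j,k}=c_{j,k}/\prod_{i=1}^jm_{i,k}$ appearing in the matrix $\psi$.

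\textbf{Assembling the map.} After the induction, the composite of the $n$ refinement-projection maps, read in the coordinates $\phi(u)=(\langle u,v_{1,k}\rangle)_k$, is precisely left multiplication by the upper-triangular matrix with rows $(c'_{j,1},\dots)$; the upper-triangular shape is automatic because the $k$-th coordinate $\langle u,v_{1,k}\rangle$ only gets ``consumed'' at steps $j\leq k$ (the cone $\tau_{j-1}$ is spanned by $v_{j,j},\dots,v_{j,n}$, so $v_j$ has no $v_{j,k}$-component for $k<j$), and the diagonal entries $c'_{j,j}$ are positive by Lemma~\ref{toric-cone-sequence_lemma}~\eqref{toric-cone-sequence_lemma1}, so $\psi$ is invertible, consistent with $\det\psi=\prod c'_{j,j}=\mult(\tau_0)^{-1}>0$ from Lemma~\ref{toric-cone-sequence_lemma}~\eqref{toric-cone-sequence_lemma2}. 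The final step is the $j=n$ case, where $Y_{n-1}$ is a toric curve and we take the Okounkov body with respect to the flag $\tilde Y_{n-1}\ni Y_n$: here the Okounkov body of a big divisor on a curve is just the interval $[0,\deg]$, realized as the polytope of the degree-$\Q$-divisor, which matches the one-dimensional slice of $\psi\circ\phi(P_D)$.

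\textbf{Main obstacle.} I expect the genuinely delicate point to be the bookkeeping of the lattice-index factors $m_{i,k}$ — i.e.\ showing that when one restricts torus invariant sections from $\tilde Y_{j-1}$ to the toric subvariety $Y_j$ and re-expresses everything in the intrinsic lattice $N^j$, the character pairings rescale by exactly $m_{j,k}$, and that these factors accumulate multiplicatively so that after $n$ steps one gets $\prod_{i=1}^j m_{i,k}$ in the denominator of $c'_{j,k}$ and nowhere else. This is where a clean choice of the adapted bases $v_{j,j},\dots,v_{j,n}$ of the cones $\tau_{j-1}$ (as in Definition~\ref{toric-cone-sequence_definition}) pays off: in those coordinates the projection $\pi_j$ is ``diagonal up to the $m_{j,k}$'', so the restriction map on polytopes is literally the coordinate map $\psi$. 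Everything else — existence and uniqueness of $D$, the polytope description of complete linear series, commuting refinement with the toric structure — is routine toric geometry combined with Proposition~\ref{pullback_proposition} and Example~\ref{interior_example}~\eqref{interior_example4}.
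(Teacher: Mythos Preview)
Your approach is correct and tracks the same core computation as the paper --- following characters through the successive refinements and recording how the pairings $\langle u, v_{j,k}\rangle$ rescale by the factors $m_{i,k}$ under the projections $\pi_j$. The paper's Claim inside the proof establishes exactly the identity $\langle u^{j-1}, v_{j,k}\rangle = \alpha_k/\prod_{i\le j} m_{i,k}$ and $a_j = \sum_{k\ge j} c'_{j,k}\alpha_k$ that you describe, so the inductive bookkeeping you outline matches.

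The one genuine difference is how the argument is \emph{closed}. You propose to maintain, step by step, an exact identification of the refinement with a toric polytope on $Y_j$, thereby proving both inclusions $\psi\circ\phi(P_D)\subset\Delta_{Y_\bullet}(L)$ and $\supset$ directly. The paper instead proves only the inclusion $\psi\circ\phi(P_D)\subset\Delta_{Y_\bullet}(L)$ by the character-tracking computation, and then concludes equality by a volume argument: both sides are compact convex bodies, and
\[
\vol\bigl(\psi\circ\phi(P_D)\bigr)=\lvert\det\psi\rvert\cdot\lvert\det\phi\rvert\cdot\vol(P_D)=\vol(P_D)=\tfrac{1}{n!}\vol_X(L)=\vol\bigl(\Delta_{Y_\bullet}(L)\bigr),
\]
using Lemma~\ref{toric-cone-sequence_lemma}~\eqref{toric-cone-sequence_lemma2} for $\det\psi=\mult(\tau_0)^{-1}$ (which cancels $\det\phi=\mult(\tau_0)$). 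This shortcut spares you from having to argue that the refinement is \emph{exactly} a complete linear series at each stage --- a claim that is slightly delicate to state precisely, since the refinement is a multigraded object rather than $H^0$ of a single divisor, and surjectivity of the restriction maps would need to be checked. Your route works, but the volume argument is cleaner and you already have all the ingredients for it.
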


\begin{proof}
The proof is similar to the proof of \cite[Proposition 6.1 (1)]{LM}. 
We follow the notations in Definition \ref{toric-cone-sequence_definition}. 
The existence and the uniqueness of $D$ is essentially same as the argument in 
\cite[\S 6.1]{LM}; we have a natural exact sequence
\[
0\to M_\Q\xrightarrow{\DIV\left(\chi^\bullet\right)}\Q^{\Sigma(1)}\to
\CaCl(X)\otimes_\Z\Q\to 0,
\]
where $\Sigma(1)\subset\Sigma$ is the set of $1$-dimensional cones of $\Sigma$ 
(see \cite[Definition 3.1.2]{CLS}). 
Let us consider the Okounkov body $\Delta_{Y_\bullet}\left(L\right)$. 
Fix a sufficiently divisible $m\in\Z_{>0}$ and set 
$V_{m\bullet}:=H^0\left(X,\bullet m L\right)$ and 
\[
\Gamma_{Y_\bullet}\left(L\right):=\sS\left(V_{m\vec{\bullet}}^{\left(
Y_1\triangleright\cdots\triangleright Y_n\right)}\right)
\subset\left(m\Z_{\geq 0}\right)^{n+1}
\]
(see Definition \ref{prop_definition}). 
Since $Y_n$ is $0$-dimensional, for any $\left(a_0,\dots,a_n\right)
\in\left(m\Z_{\geq 0}\right)^{n+1}$, the space 
$V_{a_0,\dots,a_n}^{\left(
Y_1\triangleright\cdots\triangleright Y_n\right)}$ is either zero or $1$-dimensional. 
As in Definition \ref{primitive-Okounkov_definition}, we have
\[
\{1\}\times\Delta_{Y_\bullet}\left(L\right)=\Supp\left(V_{m\vec{\bullet}}^{\left(
Y_1\triangleright\cdots\triangleright Y_n\right)}\right)\cap
\left(\{1\}\times\R_{\geq 0}^n\right).
\]
Take $m'$, $a_0\in m\Z_{>0}$ sufficiently divisible. Let us take any isotypical 
section 
\[
u^0\in a_0 P_D\cap m' M\subset H^0\left(X, a_0 D\right)=V_{a_0}.
\]
Since $m$ and $m'$ are sufficiently divisible, for each $1\leq j\leq n$, we can 
inductively take $a_j\in m\Z_{\geq 0}$ and a nonzero isotypical section 
$u^j\in V_{a_0,\dots,a_j}^{\left(Y_1\triangleright\cdots\triangleright Y_j\right)}$ such 
that the section $u^{j-1}$ maps to $u^j$. 
By the construction of $D_0:=D$, if we set 
$D_j:=\left(\sigma_{j-1}^*D_{j-1}\right)|_{Y_j}$, then we can inductively show that 
$D_j|_{U_{\tau_j}}=0$, where $U_{\tau_j}\subset Y_j$ is the affine toric open subset 
defined by the cone $\tau_j\in\Sigma_j$ (see \cite[\S 1.2]{CLS}). 
. Set $\alpha_k:=\langle u^0,v_{1,k}\rangle$ for any 
$1\leq k\leq n$. 

\begin{claim}\label{toric-okounkov_claim}
For any $1\leq j\leq k\leq n$, we have 
\[
\langle u^{j-1}, v_{j,k}\rangle=\frac{\alpha_k}{\prod_{i=1}^k m_{i,k}}. 
\]
In particular, we have 
\[
a_j=\sum_{k=j}^n c'_{j,k}\alpha_k
\]
for any $1\leq j\leq n$. 
\end{claim}

\begin{proof}[Proof of Claim \ref{toric-okounkov_claim}]
Since $\langle u^{j-1}, m_{j,k}v_{j,k}\rangle=\langle u^{j-2}, v_{j-1,k}\rangle$, we get 
$\alpha_k=\langle u^{j-1}, v_{j,k}\rangle\prod_{i=1}^k m_{i,k}$. 
In particular, since $a_j=\langle u^{j-1}, v_j\rangle$, we complete the proof 
of Claim \ref{toric-okounkov_claim}. 
\end{proof}

Claim \ref{toric-okounkov_claim} implies that 
\[
\left(1\times\left(\psi\circ\phi\right)\right)\left(\Cone\left(\{1\}\times P_D\right)\right)\subset\Gamma_{Y_\bullet}\left(
L\right). 
\]
In particular, we have 
\[
\psi\circ\phi\left(P_D\right)\subset\Delta_{Y_\bullet}\left(L\right). 
\]
Both sets are compact convex sets in $\R_{\geq 0}^n$. 
On the other hand, by Lemma \ref{toric-cone-sequence_lemma}, we have 
\[
\vol\left(\psi\circ\phi\left(P_D\right)\right)
=\vol\left(P_D\right)=\frac{1}{n!}\vol_X(L)
=\vol\left(\Delta_{Y_\bullet}\left(L\right)\right). 
\]
Thus we must have $\psi\circ\phi\left(P_D\right)=\Delta_{Y_\bullet}\left(L\right)$.
\end{proof}

We consider a special case of toric complete plt flags. 

\begin{definition}\label{toric-admissible_definition}
We follow the notations in Definitions \ref{toric-plt-flag_definition} and 
\ref{toric-cone-sequence_definition}. Assume moreover that, the morphism 
$\sigma_{j-1}\colon\tilde{Y}_{j-1}\to Y_{j-1}$ is an isomorphism, i.e., $Y_j$ is a 
prime divisor \emph{on} $Y_{j-1}$, for any $1\leq j\leq n$. In this case, 
for any $1\leq j\leq n$, we have $\R_{\geq 0}v_j\in\Sigma_{j-1}$ and 
$\tilde{\Sigma}_{j-1}=\Sigma_{j-1}$. Therefore, we have 
\begin{itemize}
\item
$v_j=v_{j,j}$, 
\item
$\tau_{j-1}=\gamma_{j-1}=\Cone\left(v_{j,j},\dots,v_{j,n}\right)$, 
\item
$c_{j,k}=\delta_{j,k}$ for any $j\leq k\leq n$. 
\end{itemize}
In this case, the sequence $v_1,\dots,v_n$ is uniquely determined by the sequence 
$v_{1,1},\dots,v_{1,n}\in N^0$. We call 
\[
Y_\bullet\colon X=Y_0\supsetneq Y_1\supsetneq\cdots\supsetneq Y_n
\]
the \emph{complete toric plt flag on $X$ associated to $v_{1,1},\dots,v_{1,n}\in N^0$}. 
For any $0\leq j\leq n$, let us define $l_j:=l_j\left(Y_\bullet\right)\in\Z_{>0}$ 
as follows: 
\[
l_0:=1, \quad l_j:=\mult\left(\R_{\geq 0}v_{1,1}+\cdots+\R_{\geq 0}v_{1,j}\right). 
\]
\end{definition}

\begin{lemma}\label{toric-admissible_lemma}
Under the notations in Definitions \ref{toric-cone-sequence_definition} 
and \ref{toric-admissible_definition}, we have 
\[
\prod_{i=1}^j m_{i,j}=\frac{l_j}{l_{j-1}}. 
\]
for any $1\leq j\leq n$. 
\end{lemma}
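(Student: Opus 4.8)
The plan is to prove the identity $\prod_{i=1}^j m_{i,j} = l_j/l_{j-1}$ by induction on $j$, comparing the multiplicities $l_j = \mult\left(\R_{\geq 0}v_{1,1}+\cdots+\R_{\geq 0}v_{1,j}\right)$ of cones in $N^0_\R$ with multiplicities of their images under the successive quotients $\pi_1,\dots,\pi_{j-1}$. The base case $j=1$ reads $m_{1,1}=l_1/l_0$, and both sides equal $1$: indeed $m_{1,k}:=1$ by definition, $l_0:=1$, and $l_1=\mult\left(\R_{\geq 0}v_{1,1}\right)=1$ since $v_{1,1}$ is primitive in $N^0$.

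For the inductive step, the key observation is that the lattice index computation underlying $l_j$ can be carried out one quotient at a time. Recall from Definition \ref{toric-cone-sequence_definition} that $\left(\pi_i\right)\left(v_{i,k}\right)=m_{i+1,k}v_{i+1,k}$ for $i<k$; in particular, applying $\pi_1$ to the generators $v_{1,1},\dots,v_{1,j}$ of the cone defining $l_j$, the first generator $v_{1,1}=v_1$ maps to $0$, and each remaining $v_{1,k}$ ($2\le k\le j$) maps to $m_{2,k}v_{2,k}$. Thus I would use the general fact that for a primitive vector $v$ in a lattice $N$ with quotient $\pi\colon N\twoheadrightarrow N':=N/\Z v$, and linearly independent $w_2,\dots,w_j\in N$ with $\pi(w_k)$ still forming (together) a full-rank collection, one has
\[
\mult\left(\R_{\geq 0}v+\textstyle\sum_{k=2}^j\R_{\geq 0}w_k\right)
=\mult\left(\textstyle\sum_{k=2}^j\R_{\geq 0}\pi(w_k)\right),
\]
since $v$ is primitive so it can be completed to a basis, and the determinant computing the left side factors through the quotient. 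Applying this with $v=v_1$, $w_k=v_{1,k}$, we get $l_j=\mult\left(\sum_{k=2}^j\R_{\geq 0}(m_{2,k}v_{2,k})\right)=\left(\prod_{k=2}^j m_{2,k}\right)\mult\left(\sum_{k=2}^j\R_{\geq 0}v_{2,k}\right)$, where the scalars $m_{2,k}$ pull out of the determinant. Now define $l'_{j-1}$ to be the analogous multiplicity in $N^1$ built from $v_{2,2},\dots,v_{2,j}$ (these being the images of the $v_{1,k}$'s up to the scalars $m_{2,k}$), and iterate: after $j-1$ such reductions the cone becomes $0$-dimensional with multiplicity $1$, and each step $i$ contributes a factor $\prod_{k=i+1}^j m_{i+1,k}$.

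Assembling, one finds $l_j = \prod_{2\le i\le k\le j} m_{i,k}$ (interpreting $m_{1,k}=1$ so this also equals $\prod_{1\le i\le k\le j}m_{i,k}$), and likewise $l_{j-1}=\prod_{1\le i\le k\le j-1}m_{i,k}$. Taking the ratio, all factors with $k\le j-1$ cancel and what survives is exactly $\prod_{i=1}^j m_{i,j}$, which is the claim. This computation of $l_j$ is essentially the same as the chain-of-multiplicities identity already recorded in Lemma \ref{toric-cone-sequence_lemma} \eqref{toric-cone-sequence_lemma2} (with $c_{k,k}$ there playing the role of the $c_{j,j}$'s, which in the ``admissible'' case of Definition \ref{toric-admissible_definition} are all equal to $1$), so in fact I would prefer to deduce the result directly from Lemma \ref{toric-cone-sequence_lemma} rather than redo the lattice bookkeeping: under the admissibility hypothesis $\gamma_{j-1}=\tau_{j-1}$ and $c_{j,k}=\delta_{j,k}$, so Lemma \ref{toric-cone-sequence_lemma} \eqref{toric-cone-sequence_lemma2} gives $\mult\left(\tau_{j-1}\right)=\prod_{j+1\le i\le k\le n}m_{i,k}$; but I also need to relate $\mult\left(\tau_{j-1}\right)$, a multiplicity of a cone in $N^{j-1}_\R$, to $l_{j-1}$ and $l_j$, which are multiplicities in $N^0_\R$ — and this relation is precisely the per-quotient factorization described above applied $j-1$ times. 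The main obstacle, then, is purely the bookkeeping: carefully tracking how the scalars $m_{i,k}$ accumulate when one pushes the cone $\sum_{k=1}^j \R_{\ge 0}v_{1,k}$ down through $\pi_1,\pi_2,\dots$ and checking that the ``admissibility'' hypothesis (which forces $v_j=v_{j,j}$ and makes the star subdivisions trivial) is exactly what guarantees the intermediate cones remain well-defined simplicial cones of the expected dimension so that the multiplicity-of-quotient formula applies at each step.
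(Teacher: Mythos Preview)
Your proposal is correct and follows essentially the same route as the paper's proof: both compute $l_j$ by iteratively passing to the quotients $N^0\to N^1\to\cdots$, pulling out the scalars $m_{i,k}$ at each step to obtain $l_j=\prod_{2\le i\le k\le j}m_{i,k}$, and then take the ratio $l_j/l_{j-1}$. The digression about deducing the result from Lemma~\ref{toric-cone-sequence_lemma} is unnecessary (and, as you yourself note, still requires the same per-quotient factorization), but the core argument matches the paper.
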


\begin{proof}
We may assume that $j\geq 2$. Observe that 
\begin{eqnarray*}
l_j&=&\left[N^0\colon\Z v_{1,1}+\cdots+\Z v_{1,j}\right] \\
&=&\left(\prod_{k=2}^j m_{2,k}\right) 
\left[N^1\colon\Z v_{2,2}+\cdots+\Z v_{2,j}\right]\\
&=&\cdots\\
&=&\prod_{i=2}^j\prod_{k=i}^j m_{i,k}.
\end{eqnarray*}
Thus we get the assertion. 
\end{proof}

The following two corollaries are direct consequences of 
Proposition \ref{toric-A_proposition}, Theorem \ref{toric-okounkov_thm}, 
and 
Lemmas \ref{toric-cone-sequence_lemma} and \ref{toric-admissible_lemma}. 

\begin{corollary}\label{toric-admissible-A_corollary}
Under the notation in Definition \ref{toric-admissible_definition}, we have 
\[
A_{Y_{j-1},B_{j-1}}\left(Y_j\right)=\frac{l_{j-1}}{l_j}A_{X,B}\left(V\left(v_{1,j}\right)\right)
\]
for any $1\leq j\leq n$. 
\end{corollary}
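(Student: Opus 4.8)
The plan is to read the statement off directly from Proposition~\ref{toric-A_proposition} after specializing to the admissible situation of Definition~\ref{toric-admissible_definition} and then rewriting the resulting product of multiplicities by means of Lemma~\ref{toric-admissible_lemma}.

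First I would apply Proposition~\ref{toric-A_proposition} to the complete toric plt flag $Y_\bullet$, which gives
\[
A_{Y_{j-1},B_{j-1}}\left(Y_j\right)=\sum_{k=j}^n c'_{j,k}\,A_{X,B}\left(V\left(v_{1,k}\right)\right)
\]
for every $1\leq j\leq n$. Next, in the admissible case recorded in Definition~\ref{toric-admissible_definition} the star subdivisions $\tilde{\Sigma}_{j-1}\to\Sigma_{j-1}$ are all isomorphisms, so $v_j=v_{j,j}$ and $c_{j,k}=\delta_{j,k}$ for $j\leq k\leq n$; consequently $c'_{j,k}=\delta_{j,k}/\prod_{i=1}^j m_{i,k}$ and only the term $k=j$ survives in the sum above, yielding
\[
A_{Y_{j-1},B_{j-1}}\left(Y_j\right)=\frac{1}{\prod_{i=1}^j m_{i,j}}\,A_{X,B}\left(V\left(v_{1,j}\right)\right).
\]
Finally I would substitute $\prod_{i=1}^j m_{i,j}=l_j/l_{j-1}$, which is exactly Lemma~\ref{toric-admissible_lemma}, to reach the claimed formula.

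I do not expect any genuine obstacle here: the corollary is pure bookkeeping once Proposition~\ref{toric-A_proposition} and Lemma~\ref{toric-admissible_lemma} are in hand. The one point deserving a word of caution is that, in the admissible setting, $V(v_{1,j})$ really is a torus invariant prime divisor \emph{on} $X$ (the ray $\R_{\geq 0}v_{1,j}$ lies in $\Sigma(1)$), whereas in the general plt-flag case it would only be a divisor over $X$; but since Proposition~\ref{toric-A_proposition} is already phrased in terms of the log discrepancy $A_{X,B}$ of the divisorial valuation $\ord_{V(v_{1,j})}$, this causes no difficulty.
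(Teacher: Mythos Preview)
Your proposal is correct and follows exactly the route the paper intends: the paper states that this corollary is a direct consequence of Proposition~\ref{toric-A_proposition} and Lemma~\ref{toric-admissible_lemma} (together with the observation $c_{j,k}=\delta_{j,k}$ from Definition~\ref{toric-admissible_definition}), and you have spelled out precisely this deduction. One minor remark: your cautionary note at the end is unnecessary, since even in the general plt-flag setting the $v_{1,k}$ are the ray generators of the maximal cone $\tau_0\in\Sigma$, so $V(v_{1,k})$ is already a torus-invariant prime divisor on $X$ there as well.
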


\begin{corollary}\label{toric-okounkov-admissible_cor}
We follow the notation in Definition \ref{toric-admissible_definition}. 
Take any big $L\in\CaCl(X)\otimes_\Z\Q$, and let us take the torus invariant 
$\Q$-divisor $D$ on $X$ with $L\sim_\Q D$ and $D|_{U_{\tau_0}}=0$, where 
$\tau_0=\Cone\left(v_{1,1},\dots,v_{1,n}\right)$. Then we have the equality 
\[
\Delta_{Y_\bullet}\left(L\right)=\phi'\left(P_D\right), 
\]
where 
\begin{eqnarray*}
\phi'\colon M_\R&\xrightarrow{\sim}&\R^n \\
u&\mapsto&\left(\frac{l_{j-1}}{l_j}\langle u, v_{1,j}\rangle\right)_{1\leq j\leq n}.
\end{eqnarray*}
\end{corollary}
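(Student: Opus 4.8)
The statement is Corollary \ref{toric-okounkov-admissible_cor}, which I claim follows by specializing Theorem \ref{toric-okounkov_thm} to the situation of Definition \ref{toric-admissible_definition}. The plan is to trace through what the maps $\phi$ and $\psi$ in Theorem \ref{toric-okounkov_thm} become under the extra hypothesis that each $\sigma_{j-1}\colon\tilde{Y}_{j-1}\to Y_{j-1}$ is an isomorphism, i.e.\ that $Y_\bullet$ is the complete toric plt flag on $X$ associated to $v_{1,1},\dots,v_{1,n}$.

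First I would invoke Theorem \ref{toric-okounkov_thm} directly: it gives the existence and uniqueness of the torus invariant $\Q$-divisor $D$ on $X$ with $L\sim_\Q D$ and $D|_{U_{\tau_0}}=0$ (where $\tau_0=\Cone(v_{1,1},\dots,v_{1,n})$, which agrees with the $\tau_0$ of Definition \ref{toric-cone-sequence_definition} since in this special case $\tau_{j-1}=\gamma_{j-1}=\Cone(v_{j,j},\dots,v_{j,n})$ and in particular $\tau_0=\Cone(v_{1,1},\dots,v_{1,n})$). It also gives $\Delta_{Y_\bullet}(L)=\psi\circ\phi(P_D)$ with $\phi(u)=(\langle u,v_{1,k}\rangle)_{1\le k\le n}$ and $\psi$ the upper-triangular matrix $(c'_{j,k})$. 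So it only remains to identify $\psi\circ\phi$ with $\phi'$.

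The key computation is to show that the upper-triangular matrix $(c'_{j,k})$, applied to $\phi(u)$, collapses to the diagonal map $u\mapsto\bigl(\tfrac{l_{j-1}}{l_j}\langle u,v_{1,j}\rangle\bigr)_j$. For this I would use the special-case identities recorded in Definition \ref{toric-admissible_definition}: here $c_{j,k}=\delta_{j,k}$, so from $c'_{j,k}=c_{j,k}/\prod_{i=1}^j m_{i,k}$ the off-diagonal entries of $\psi$ vanish and $\psi$ is the diagonal matrix with entries $c'_{j,j}=1/\prod_{i=1}^j m_{i,j}$. Then Lemma \ref{toric-admissible_lemma} gives $\prod_{i=1}^j m_{i,j}=l_j/l_{j-1}$, so $c'_{j,j}=l_{j-1}/l_j$. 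Composing with $\phi$, the $j$-th coordinate of $\psi\circ\phi(u)$ is exactly $\tfrac{l_{j-1}}{l_j}\langle u,v_{1,j}\rangle$, which is the $j$-th coordinate of $\phi'(u)$. Hence $\psi\circ\phi=\phi'$ and $\Delta_{Y_\bullet}(L)=\phi'(P_D)$.

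There is essentially no obstacle: the corollary is a bookkeeping specialization of the theorem, and the only thing to be careful about is matching the two descriptions of $\tau_0$ and confirming that $c_{j,k}=\delta_{j,k}$ in the admissible case forces $\psi$ to be diagonal — both of which are already spelled out in Definition \ref{toric-admissible_definition}. The proof is therefore a two-line invocation of Theorem \ref{toric-okounkov_thm} together with Lemma \ref{toric-admissible_lemma}, exactly as the sentence preceding the statement ("direct consequences of \dots") advertises.
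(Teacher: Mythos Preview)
Your proposal is correct and matches the paper's approach exactly: the paper simply declares this corollary a direct consequence of Theorem \ref{toric-okounkov_thm} and Lemma \ref{toric-admissible_lemma}, and your argument spells out precisely this specialization, using $c_{j,k}=\delta_{j,k}$ from Definition \ref{toric-admissible_definition} to make $\psi$ diagonal and then Lemma \ref{toric-admissible_lemma} to identify the diagonal entries as $l_{j-1}/l_j$.
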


Let us compute the value $S\left(L;Y_1\triangleright\cdots\triangleright Y_j\right)$. 

\begin{proposition}\label{toric-S-admissible_proposition}
We follow the notation in Definition \ref{toric-admissible_definition}. 
Let $B$ be an effective torus invariant $\Q$-divisor on $X$ such that $(X, B)$ 
is a klt pair. For any big $L\in\CaCl(X)\otimes_\Z\Q$ and for any $1\leq j\leq n$, 
we have 
\[
S\left(L;Y_1\triangleright\cdots\triangleright Y_j\right)
=\frac{l_{j-1}}{l_j}\cdot S\left(L; V\left(v_{1,j}\right)\right).
\]
\end{proposition}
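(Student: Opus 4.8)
The value $S\left(L;Y_1\triangleright\cdots\triangleright Y_j\right)$ is, by Definition \ref{primitive-Okounkov_definition}, the $(j+1)$-th coordinate of the barycenter of the Okounkov body $\Delta_{Y_\bullet}(L)$ associated to the complete toric plt flag $Y_\bullet$ (extending $Y_1\triangleright\cdots\triangleright Y_j$ arbitrarily, say by choosing $v_{1,j+1},\dots,v_{1,n}$ to be any completion). On the other hand, $S\left(L; V(v_{1,j})\right)$ is by Remark \ref{S-T_remark} \eqref{S-T_remark3} the relevant coordinate of the barycenter of the ordinary Okounkov body $\Delta_{W_\bullet}(L)$ on $X$ associated to an admissible flag whose first member is the torus-invariant prime divisor $W_1 := V(v_{1,j})$. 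So I want to relate these two barycenters.

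\textbf{Key steps.} First I would invoke Corollary \ref{toric-okounkov-admissible_cor}: $\Delta_{Y_\bullet}(L) = \phi'(P_D)$, where $\phi'(u) = \left(\tfrac{l_{k-1}}{l_k}\langle u, v_{1,k}\rangle\right)_{1\le k\le n}$ and $D\sim_\Q L$ is the unique torus-invariant representative with $D|_{U_{\tau_0}}=0$. Since $\phi'$ is a linear isomorphism and the barycenter is affine-equivariant, the $(j+1)$-th coordinate of $\mathrm{bary}\left(\Delta_{Y_\bullet}(L)\right)$ equals $\tfrac{l_{j-1}}{l_j}\langle \mathrm{bary}(P_D), v_{1,j}\rangle$. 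Next I need the companion statement for $S\left(L; V(v_{1,j})\right)$: by the same Corollary (or directly by \cite[Proposition 6.1 (1)]{LM}) applied to a complete toric plt flag on $X$ whose \emph{first} member is $V(v_{1,j})$ — choose the first vector of that flag to be $v_{1,j}$ and complete arbitrarily — the first coordinate of the corresponding Okounkov body is $\langle u, v_{1,j}\rangle$ composed with a linear iso, hence its barycenter's first coordinate is $\langle \mathrm{bary}(P_{D'}), v_{1,j}\rangle$ for the torus-invariant representative $D'$ normalized on the appropriate maximal cone. Since $D$ and $D'$ are both torus-invariant $\Q$-divisors linearly equivalent to $L$, they differ by $\DIV(\chi^{u_0})$ for some $u_0\in M_\Q$, so $P_{D'} = P_D + u_0$ and hence $\mathrm{bary}(P_{D'}) = \mathrm{bary}(P_D) + u_0$. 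Crucially, $\langle u_0, v_{1,j}\rangle$ is determined by the normalization: both $D$ and $D'$ vanish on the cone containing $v_{1,j}$ in its relative interior (for $D$ this is $\tau_0 = \Cone(v_{1,1},\dots,v_{1,n})$, which has $v_{1,j}$ as a ray), which forces $\langle u_0, v_{1,j}\rangle = 0$ by comparing coefficients along $V(v_{1,j})$. Therefore $\langle\mathrm{bary}(P_{D'}),v_{1,j}\rangle = \langle\mathrm{bary}(P_D),v_{1,j}\rangle$, and combining the two barycenter computations gives $S\left(L;Y_1\triangleright\cdots\triangleright Y_j\right) = \tfrac{l_{j-1}}{l_j} S\left(L;V(v_{1,j})\right)$.

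\textbf{Main obstacle.} The delicate point is the normalization bookkeeping — making sure that the two torus-invariant representatives $D$ (normalized to vanish on $U_{\tau_0}$, the chart of the flag $Y_\bullet$) and $D'$ (normalized for the auxiliary flag starting with $V(v_{1,j})$) can be chosen so that their difference $u_0\in M_\Q$ is orthogonal to $v_{1,j}$. One clean way to sidestep choosing a second flag: instead of invoking $S\left(L;V(v_{1,j})\right)$ through a \emph{second} Okounkov body, recall (Remark \ref{S-T_remark} \eqref{S-T_remark3}) that $S\left(L;V(v_{1,j})\right)$ is simply $T(L;\sF_{V(v_{1,j})})$-type data — the barycenter coordinate along the direction of $V(v_{1,j})$ — which by the toric description is the barycenter of the polytope $P_D$ projected onto the $v_{1,j}$-axis, i.e. exactly $\langle\mathrm{bary}(P_D),v_{1,j}\rangle$ using the \emph{same} $D$. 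Indeed the valuation $\ord_{V(v_{1,j})}$ reads off the pairing with $v_{1,j}$ independently of which chart one trivializes on, so no renormalization is needed: $S\left(L;V(v_{1,j})\right) = \langle\mathrm{bary}(P_D), v_{1,j}\rangle$ directly (possibly up to an additive constant that cancels because $S$ is computed on the graded-linear-series level, or because $D|_{U_{\tau_0}}=0$ pins down the constant along the ray $\R_{\ge 0}v_{1,j}\subset\tau_0$). Then the factor $\tfrac{l_{j-1}}{l_j}$ is precisely the $(j,j)$-entry scaling in $\phi'$, and the identity falls out. I would write the argument in the second form to avoid introducing an auxiliary flag.
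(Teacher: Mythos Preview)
Your proposal is correct, and your second form is essentially a slight shortcut of the paper's argument. Two remarks.

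First, a minor indexing slip: for $V_\bullet=H^0(\bullet L)$ one has $r=1$, so by Definition~\ref{primitive-Okounkov_definition} the relevant coordinate is the $j$-th, not the $(j+1)$-th.

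Second, on the comparison. Your ``first approach'' (introduce an auxiliary complete toric plt flag whose first member is $V(v_{1,j})$, then match barycenters) is exactly what the paper does, but the paper makes a sharper choice: it takes the auxiliary flag to be the one associated to the \emph{same} set of vectors $v_{1,1},\dots,v_{1,n}$ with the first and $j$-th swapped. This forces $\tau'_0=\tau_0$, hence $D'=D$, so the normalization worry you raise simply disappears and the relation $\Delta_{Y_\bullet}(L)=f\bigl(\Delta_{Y'_\bullet}(L)\bigr)$ for an explicit linear $f$ is immediate from Corollary~\ref{toric-okounkov-admissible_cor}. Your ``second form'' bypasses the auxiliary flag by asserting $S\bigl(L;V(v_{1,j})\bigr)=\langle\mathrm{bary}(P_D),v_{1,j}\rangle$ directly. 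This is correct and your hedging about an additive constant is unnecessary: since $v_{1,j}$ is a ray of $\tau_0$ and $D|_{U_{\tau_0}}=0$, the coefficient of $V(v_{1,j})$ in $D$ is zero, whence $\ord_{V(v_{1,j})}\bigl(\DIV(\chi^u)+mD\bigr)=\langle u,v_{1,j}\rangle$ on the nose, and the barycenter identity follows by integrating. What this buys you is a one-line proof once that toric identity is stated; what the paper's permuted-flag argument buys is that the identity itself falls out of Corollary~\ref{toric-okounkov-admissible_cor} already proved, with no separate toric computation needed.
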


\begin{proof}
We may assume that $j\geq 2$. Let 
\[
Y'_\bullet\colon X=Y'_0\supsetneq Y'_1\supsetneq\cdots\supsetneq Y'_n
\]
be the complete toric plt flag on $X$ associated to 
\[
v'_{1,1}:=v_{1,j},\,\,v'_{1,2}:=v_{1,2},\dots,v'_{1,j-1}:=v_{1,j-1},\,\,
v'_{1,j}:=v_{1,1},\,\,
v'_{1,j+1}:=v_{1,j+1},\dots,v'_{1,n}:=v_{1,n} \in N^0.
\]
Then, by Corollary \ref{toric-okounkov-admissible_cor}, we have 
$\Delta_{Y_\bullet}\left(L\right)=f\left(\Delta_{Y'_\bullet}\left(L\right)\right)$, where 
the linear transform $f\colon \R^n\to\R^n$ corresponds to the matrix 
\[
\operatorname{diag}\left(\frac{l_0}{l_1},\dots,\frac{l_{n-1}}{l_n}\right)
(1,j)
\operatorname{diag}\left(\frac{l'_1}{l'_0},\dots,\frac{l'_n}{l'_{n-1}}\right),
\]
where $(1,j)$ is the square matrix corresponds to the transposition between 
$1$-st and $j$-th columns, and 
\[
l'_i:=\mult\left(\R_{\geq 0}v'_{1,1}+\cdots+\R_{\geq 0}v'_{1,i}\right)
\]
for any $1\leq i\leq n$. 
Recall that the $1$-st coordinate of the barycenter of 
$\Delta_{Y'_\bullet}\left(L\right)$ is nothing but the value 
$S\left(L; V\left(v_{1,j}\right)\right)$. Moreover, the $j$-th coordinate of the 
barycenter of $\Delta_{Y_\bullet}\left(L\right)$ is equal to the value 
$S\left(L;Y_1\triangleright\cdots\triangleright Y_j\right)$. 
Since $(l_{j-1}/l_j)(l'_1/l'_0)=l_{j-1}/l_j$, we get the assertion. 
\end{proof}

\begin{thm}\label{toric-S_thm}
Let 
\[
Y_\bullet\colon X=Y_0\triangleright Y_1\triangleright\cdots\triangleright Y_n
\]
be the torus invariant complete plt flag over $X$ associated with $v_1,\dots,v_n$
as in Definition \ref{toric-plt-flag_definition}, and let $v_{j,k}$ and $c'_{j,k}$ be 
as in Definition \ref{toric-cone-sequence_definition}. Take any big 
$L\in\CaCl(X)\otimes_\Z\Q$. Then we have 
\[
S\left(L;Y_1\triangleright\cdots\triangleright Y_j\right)
=\sum_{k=j}^n c'_{j,k}\cdot S\left(L; V\left(v_{1,k}\right)\right)
\]
for any $1\leq j \leq n$. 
\end{thm}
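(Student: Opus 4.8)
The plan is to extract the formula directly from the explicit description of the Okounkov body in Theorem \ref{toric-okounkov_thm}, exploiting that barycenters are equivariant under linear isomorphisms and that, by Definition \ref{primitive-Okounkov_definition}, the $S$-invariants attached to a primitive flag are precisely the coordinates of the barycenter of the corresponding Okounkov body. First I would recall from Theorem \ref{toric-okounkov_thm} that
\[
\Delta_{Y_\bullet}(L)=\psi\circ\phi(P_D),
\]
with $\phi\colon M_\R\xrightarrow{\sim}\R^n$ given by $u\mapsto(\langle u,v_{1,k}\rangle)_{1\leq k\leq n}$ and $\psi\colon\R^n\xrightarrow{\sim}\R^n$ the upper-triangular linear isomorphism whose $(j,k)$-entry equals $c'_{j,k}$ for $j\leq k$. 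Since $P_D$ is a full-dimensional polytope ($L$ being big) and $\psi\circ\phi$ is a linear isomorphism, a change of variables in the integral defining the barycenter, the Jacobian being a nonzero constant, shows that the barycenter of $\psi\circ\phi(P_D)$ is $\psi\circ\phi$ applied to the barycenter $b$ of $P_D$. Because $\psi$ is upper-triangular, the $j$-th coordinate of the barycenter of $\Delta_{Y_\bullet}(L)$ is therefore $\sum_{k=j}^n c'_{j,k}\langle b,v_{1,k}\rangle$.

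By Definition \ref{primitive-Okounkov_definition}, applied with $r=1$ to the complete linear series $H^0(\bullet L)$, this $j$-th coordinate is exactly $S\left(L;Y_1\triangleright\cdots\triangleright Y_j\right)$. Hence the theorem will follow once I identify $\langle b,v_{1,k}\rangle$ with $S\left(L;V(v_{1,k})\right)$ for every $1\leq k\leq n$, which is the one-step ($j=1$) instance of the statement. To prove this I would use that $v_{1,1},\dots,v_{1,n}$ are the primitive generators of the maximal cone $\tau_0=\Cone(v_{1,1},\dots,v_{1,n})\in\Sigma$, which is simplicial since $X$ is $\Q$-factorial: reordering these generators so that $v_{1,k}$ comes first defines a complete toric plt flag $Y^{(k)}_\bullet$ on $X$ of the admissible type in Definition \ref{toric-admissible_definition} with $Y^{(k)}_1=V(v_{1,k})$, and attached to the same cone $\tau_0$, so the torus-invariant divisor $D$ singled out by $D|_{U_{\tau_0}}=0$ is unchanged. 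Corollary \ref{toric-okounkov-admissible_cor} then gives $\Delta_{Y^{(k)}_\bullet}(L)=\phi^{(k)}(P_D)$ whose first-coordinate map is $u\mapsto\langle u,v_{1,k}\rangle$, the multiplicity factor $l^{(k)}_0/l^{(k)}_1$ being $1$ because a single primitive ray has multiplicity $1$. By Remark \ref{S-T_remark} \eqref{S-T_remark3} the first coordinate of the barycenter of $\Delta_{Y^{(k)}_\bullet}(L)$ equals $S\left(L;V(v_{1,k})\right)$, and by equivariance of barycenters it equals $\langle b,v_{1,k}\rangle$; substituting into the previous paragraph yields the formula.

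I expect the main obstacle to be the bookkeeping in this reduction to the admissible case: one must check that an arbitrary permutation of the ray generators of $\tau_0$ really does produce an admissible complete toric plt flag to which Corollary \ref{toric-okounkov-admissible_cor} applies, and that the divisor $D$, hence the polytope $P_D$, does not depend on the ordering. Both facts rest on $\tau_0$ being a maximal simplicial cone, so that all of its faces already lie in $\Sigma$ and no star subdivision is ever required along the flag $Y^{(k)}_\bullet$. Once this is settled, the remainder is the elementary behaviour of barycenters under linear isomorphisms.
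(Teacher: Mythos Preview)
Your proposal is correct and follows essentially the same approach as the paper: both read off the barycenter of $\Delta_{Y_\bullet}(L)$ from Theorem \ref{toric-okounkov_thm} and then identify the quantities $\langle b,v_{1,k}\rangle$ with $S\left(L;V(v_{1,k})\right)$ by passing to permuted admissible toric flags via Corollary \ref{toric-okounkov-admissible_cor}. The only difference is organizational: the paper first routes through the fixed intermediate flag $Y'_\bullet$ associated to $v_{1,1},\dots,v_{1,n}$ and packages the identification as Proposition \ref{toric-S-admissible_proposition}, whereas you work directly with $P_D$ and the individually reordered flags $Y^{(k)}_\bullet$, which amounts to inlining the $j=1$ case of that proposition.
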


\begin{proof}
Let 
\[
Y'_\bullet\colon X=Y'_0\supsetneq Y'_1\supsetneq\cdots\supsetneq Y'_n
\]
be the complete toric plt flag on $X$ associated to 
$v_{1,1},\dots,v_{1,n}\in N^0$. By Theorem \ref{toric-okounkov_thm} and
Corollary \ref{toric-okounkov-admissible_cor}, we have 
$\Delta_{Y_\bullet}\left(L\right)=f\left(\Delta_{Y'_\bullet}\left(L\right)\right)$, where 
the linear transform $f\colon \R^n\to\R^n$ corresponds to the matrix
\[
\begin{pmatrix}
c'_{1,1} & \cdots & c'_{1,n} \\ 
 & \ddots & \vdots \\
O & & c'_{n,n}
\end{pmatrix}
\begin{pmatrix}
\frac{l_1}{l_0} &  &  \\ 
 & \ddots &  \\
 & & \frac{l_n}{l_{n-1}}
\end{pmatrix}.
\]
Therefore we get 
\begin{eqnarray*}
S\left(L;Y_1\triangleright\cdots\triangleright Y_j\right)
&=&\sum_{k=j}^n c'_{j,k}\cdot \frac{l_k}{l_{k-1}}\cdot 
S\left(L;Y'_1\triangleright\cdots\triangleright Y'_k\right)\\
&=&\sum_{k=j}^n c'_{j,k}\cdot S\left(L; V\left(v_{1,k}\right)\right), 
\end{eqnarray*}
where the last equality follows from Proposition \ref{toric-S-admissible_proposition}. 
\end{proof}

\begin{example}[{see \cite[\S 3.2]{CFKP}}]\label{CFKP_example}
Set $X:= \pr^1\times\pr_{\pr^1}\left(\sO\oplus\sO(1)\right)$. 
Then $X$ corresponds to the fan $\Sigma_0$ in 
$N^0_\R=\Z^3\otimes_\Z\R$ such that the set of 
primitive generators of the rays in $\Sigma_0$ is 
\begin{eqnarray*}
\{v_{1,1}=(0,1,0),\quad v_{1,2}=(1,0,0), \quad v_{1,3}=(0,0,-1), \\
v_{1,4}=(0,0,1), \quad v_{1,5}=(0,-1,1), \quad v_{1,6}=(-1,0,0)\}, 
\end{eqnarray*}
and the set of $3$-dimensional cones in $\Sigma_0$ is 
\begin{eqnarray*}
\{ [1,2,3],\quad [1,2,4], \quad [2,3,5], \quad [2,4,5], \\
\, [1,3,6], \quad [1,4,6], \quad [3,5,6], \quad [4,5,6]\}
\end{eqnarray*}
where $[i,j,k]:=\Cone\left(v_{1,i}, v_{1,j}, v_{1,k}\right)$. 

Set $v_1:=(1,3,-1)=3v_{1,1}+1v_{1,2}+1v_{1,3}\in N^0$, let $\tilde{\Sigma}_0$ be 
the subdivision of $\Sigma_0$ at $v_1$, and let $\Sigma_1$ be the fan in 
$N^1:=N^0/\Z v_1$ as in Definition \ref{toric-plt-flag_definition}. 
The set of primitive generators of the rays of $\Sigma_1$ is 
\[
v_{2,1}:=\pi_1\left(v_{1,1}\right), \quad 
v_{2,2}:=\pi_1\left(v_{1,2}\right), \quad
v_{2,3}:=\pi_1\left(v_{1,3}\right).
\]
The lattice $N^1$ is generated by $v_{2,1}$ and $v_{2,2}$. Moreover, we have the 
equality $v_{2,3}=-3 v_{2,1}-v_{2,2}$. Thus the variety $Y_1$ in $\tilde{Y}_0$ 
in the sense of Definition \ref{toric-plt-flag_definition} is isomorphic to 
$\pr(1,1,3)$. 

Set $v_2:=v_{2,2}$. Then the subdivision $\tilde{\Sigma}_1$ of $\Sigma_1$ at $v_2$ 
is equal to $\Sigma_1$. Moreover, the fan $\Sigma_2$ 
in $N^2:=N^1/\Z v_2$ as in Definition  \ref{toric-plt-flag_definition} satisfies that, 
the set of primitive generators of the rays in $\Sigma_2$ is 
\[
v_{3,1}:=\pi_2\left(v_{2,1}\right), \quad v_{3,3}:=\frac{1}{3}\pi_2\left(v_{2,3}\right).
\]
Of course, we have $v_{3,1}=-v_{3,3}$. 

We set $v_3:=v_{3,3}$. Then the sequence $v_1$, $v_2$, $v_3$ determines a 
torus invariant complete plt flag 
$X=Y_0\triangleright Y_1\triangleright Y_2\triangleright Y_3$ over $X$. Moreover, 
we have 
\begin{eqnarray*}
m_{2,2}=m_{2,3}=1, \quad&& m_{3,3}=3, \\
\begin{pmatrix}
c_{1,1} & c_{1,2} & c_{1,3} \\ 
 & c_{2,2} & c_{2,3} \\
 & & c_{3,3}
\end{pmatrix}
=\begin{pmatrix}
3 & 1 & 1 \\ 
 & 1 & 0 \\
 & & 1
\end{pmatrix}, 
\quad&&
\begin{pmatrix}
c'_{1,1} & c'_{1,2} & c'_{1,3} \\ 
 & c'_{2,2} & c'_{2,3} \\
 & & c'_{3,3}
\end{pmatrix}
=\begin{pmatrix}
3 & 1 & 1 \\ 
 & 1 & 0 \\
 & & \frac{1}{3}
\end{pmatrix}.
\end{eqnarray*}
Let us consider the ample divisor 
\[
L\sim D=V\left(v_{1,4}\right)+2V\left(v_{1,5}\right)+V\left(v_{1,6}\right).
\]
Then we have 
\[
\phi\left(P_D\right)=\left\{(x_1,x_2,x_3)\in\R^3\,\,\middle|\,\,\begin{split}
x_1&\geq 0,\quad x_2\geq 0, \quad x_3\geq 0, \\
-x_3&\geq -1, \quad -x_1-x_3\geq -2,\quad -x_2\geq -1
\end{split}\right\}, 
\]
where $\phi$ is as in Theorem \ref{toric-okounkov_thm}. 
Thus, by Theorem \ref{toric-okounkov_thm}, we have 
\[
\Delta_{Y_\bullet}\left(L\right)=\Conv\left\{
\begin{split}
\left(0,0,0\right),\quad \left(6,0,0\right), \quad \left(4,0,\frac{1}{3}\right), 
\quad\left(1,0,\frac{1}{3}\right), \\
\left(1,1,0\right),\quad \left(7,1,0\right), \quad \left(5,1,\frac{1}{3}\right), \quad
\left(2,1,\frac{1}{3}\right)
\end{split}
\right\}.
\]
We can check that 
\[
S\left(L; V\left(v_{1,1}\right)\right)=\frac{7}{9}, \quad
S\left(L; V\left(v_{1,2}\right)\right)=\frac{1}{2}, \quad
S\left(L; V\left(v_{1,3}\right)\right)=\frac{4}{9}.
\]
Therefore, we have 
\begin{eqnarray*}
S\left(L; Y_1\right)&=&3\cdot\frac{7}{9}+1\cdot\frac{1}{2}+1\cdot
\frac{4}{9}=\frac{59}{18}, \\
S\left(L; Y_1\triangleright Y_2\right)&=&1\cdot\frac{1}{2}=\frac{1}{2}, \\
S\left(L; Y_1\triangleright Y_2\triangleright Y_3\right)&=&\frac{1}{3}\cdot\frac{4}{9}
=\frac{4}{27}.
\end{eqnarray*}
The values coincide with the values $S_L(G)$, $S\left(W_{\bullet,\bullet}^G; C\right)$ 
with $C=\bar{\alpha}_1$, and $S\left(W_{\bullet,\bullet,\bullet}^{G,C};Q\right)$ with 
$(C,Q)=(\bar{\alpha}_1,Q_{16})$ in \cite[\S 3.2]{CFKP}, respectively. 
\end{example}

\section{Locally divisorial series}\label{div_section}

From this section until the end of the article, 
we assume that the characteristic of $\Bbbk$ is zero. 
In this section, as a warm-up of \S \ref{dominant_section} and \S 
\ref{adequate_section}, we consider locally divisorial series. 
In this section, we assume that $X$ is an $n$-dimensional projective variety and 
$L_1,\dots,L_r\in\CaCl(X)\otimes_\Z\Q$. 

\begin{definition}\label{divisorial-series_definition}
Let $V_{\vec{\bullet}}$ be the Veronese equivalence class of a graded linear series 
on $X$ associated to $L_1,\dots,L_r$ which contains an ample series. 
\begin{enumerate}
\renewcommand{\theenumi}{\arabic{enumi}}
\renewcommand{\labelenumi}{(\theenumi)}
\item\label{divisorial-series_definition1}
We say that $V_{\vec{\bullet}}$ is \emph{a divisorial series} if 
there exist a representative $V_{m\vec{\bullet}}$ of $V_{\vec{\bullet}}$, 
an effective Cartier divisor $N$ on $X$, and a rational linear function 
$f\colon\R^r\to\R$ which satisfies 
$f\left(\Supp\left(V_{\vec{\bullet}}\right)\right)\subset\R_{\geq 0}$, such that 
\[
V_{m\vec{a}}=f\left(m\vec{a}\right)N+H^0\left(X, m\vec{a}\cdot\vec{L}
-f\left(m\vec{a}\right)N\right)
\]
holds for any $\vec{a}\in\Z_{\geq 0}^r\cap
\interior\left(\Supp\left(V_{\vec{\bullet}}\right)\right)$. 
\item\label{divisorial-series_definition2}
Assume that $V_{\vec{\bullet}}$ is a divisorial series as in 
\eqref{divisorial-series_definition1}. Take any birational morphism 
$\sigma\colon X'\to X$ between projective varieties. Let 
$\sigma_{\DIV}^*V_{\vec{\bullet}}^\circ$ be the Veronese equivalence class of 
the $\left(m\Z_{\geq 0}\right)^r$-graded linear series 
$\sigma_{\DIV}^*V_{m\vec{\bullet}}^\circ$ on $X'$ associated to 
$\sigma^*L_1,\dots,\sigma^*L_r$ defined by 
\[
\sigma_{\DIV}^*V_{m\vec{a}}^\circ:=\begin{cases}
f\left(m\vec{a}\right)\sigma^*N+H^0\left(X', m\vec{a}\cdot\sigma^*\vec{L}
-f\left(m\vec{a}\right)\sigma^*N\right) & \text{if }
\vec{a}\in\Z_{\geq 0}^r\cap\interior\left(\Supp\left(V_{\vec{\bullet}}\right)\right), \\
\Bbbk & \text{if }\vec{a}=\vec{0}, \\
0 & \text{otherwise}.
\end{cases}\]
Obviously, the series $\sigma_{\DIV}^*V_{m\vec{\bullet}}^\circ$ is an interior series 
with $\Supp\left(\sigma_{\DIV}^*V_{m\vec{\bullet}}^\circ\right)
=\Supp\left(V_{\vec{\bullet}}\right)$ which contains an ample series. We call it 
\emph{the interior divisorial pullback of $V_{\vec{\bullet}}$}. Note that, if $X$ is 
normal, then $\sigma_{\DIV}^*V_{m\vec{\bullet}}^\circ$ is nothing but 
the interior series of $\sigma^*V_{\vec{\bullet}}$. Moreover, 
by Lemma \ref{asymp-equiv_lemma} and \cite[Proposition 2.2.43]{L} 
(see also Example \ref{interior_example} \eqref{interior_example8}), 
the interior series of $\sigma^*V_{\vec{\bullet}}$ is asymptotically equivalent to 
$\sigma_{\DIV}^*V_{m\vec{\bullet}}^\circ$. 
\end{enumerate}
\end{definition}

For divisorial series $V_{\vec{\bullet}}$, it is easy to compute 
$S\left(V_{\vec{\bullet}}; E\right)$ for any prime divisor $E$ over $X$. 

\begin{proposition}[{see also \cite[\S 1.7]{FANO}, 
\cite[Theorem 4.8]{r3d28}}]\label{divisorial-series_proposition}
Let $V_{\vec{\bullet}}$ be a divisorial series as in 
Definition \ref{divisorial-series_definition} \eqref{divisorial-series_definition1}, 
and assume moreover that $V_{\vec{\bullet}}$ has bounded support. Let us set 
$\Delta_{\Supp}:=\Delta_{\Supp\left(V_{\vec{\bullet}}\right)}\subset\R_{\geq 0}^{r-1}$. 
Take any prime divisor $E$ over $X$. 
\begin{enumerate}
\renewcommand{\theenumi}{\arabic{enumi}}
\renewcommand{\labelenumi}{(\theenumi)}
\item\label{divisorial-series_proposition1}
We have 
\begin{eqnarray*}
\vol\left(V_{\vec{\bullet}}\right)
&=&\frac{(r-1+n)!}{n!}\int_{\Delta_{\Supp}}
\vol_X\left((1,\vec{x})\cdot\vec{L}-f(1,\vec{x})N\right)d\vec{x}, \\
S\left(V_{\vec{\bullet}};E\right)&=&
\frac{(r-1+n)!}{n!}\cdot\frac{1}{\vol\left(V_{\vec{\bullet}}\right)}
\int_{\Delta_{\Supp}}\Biggl(f(1,\vec{x})\ord_E N\cdot
\vol_X\left((1,\vec{x})\cdot\vec{L}-f(1,\vec{x})N\right)\\
&&+\int_0^\infty\vol_X\left((1,\vec{x})\cdot\vec{L}-f(1,\vec{x})N-t E\right)d t
\Biggr)d\vec{x}.
\end{eqnarray*}
\item\label{divisorial-series_proposition2}
We have 
\[
\vol\left(\sigma^*_{\DIV}V_{\vec{\bullet}}^\circ\right)
=\vol\left(V_{\vec{\bullet}}\right), \quad
S\left(\sigma^*_{\DIV}V_{\vec{\bullet}}^\circ;E\right)
=S\left(V_{\vec{\bullet}};E\right)
\]
for any birational morphism $\sigma\colon X'\to X$ between projective varieties. 
\end{enumerate}
\end{proposition}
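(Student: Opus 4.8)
The plan is to prove \eqref{divisorial-series_proposition2} first by a soft asymptotic-equivalence argument, then to reduce \eqref{divisorial-series_proposition1} to the case $r=1$ using the fibration structure of the Okounkov body over $\Delta_{\Supp}$, and finally to settle the case $r=1$ by a direct computation with the filtration $\sF_E$. For \eqref{divisorial-series_proposition2}: degree by degree, $\sigma^*V_{\vec{\bullet}}$ is the image of $V_{m\vec{a}}$ under the injective restriction $H^0(X,m\vec{a}\cdot\vec{L})\hookrightarrow H^0(X',m\vec{a}\cdot\sigma^*\vec{L})$, and since $\Bbbk(X)=\Bbbk(X')$ this map is compatible with $\ord_E$. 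Hence the numbers $h^0$ of $\sigma^*V_{l,m\vec{\bullet}}$ and of every filtered piece $(\sigma^*V_{\vec{\bullet}})^{\sF_E,t}$ coincide with those of $V_{\vec{\bullet}}$, so $\vol(\sigma^*V_{\vec{\bullet}})=\vol(V_{\vec{\bullet}})$ and $S(\sigma^*V_{\vec{\bullet}};E)=S(V_{\vec{\bullet}};E)$. On the other hand, $\sigma^*_{\DIV}V^\circ_{\vec{\bullet}}$ is asymptotically equivalent to the interior series of $\sigma^*V_{\vec{\bullet}}$ by Definition~\ref{divisorial-series_definition}~\eqref{divisorial-series_definition2}, which is itself asymptotically equivalent to $\sigma^*V_{\vec{\bullet}}$ by Example~\ref{interior_example}~\eqref{interior_example7}; since $\vol$ and $S(-;E)$ are invariant under asymptotic equivalence by \cite[Lemma 4.73]{Xu}, this proves \eqref{divisorial-series_proposition2}.

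For \eqref{divisorial-series_proposition1} in the case $r=1$: writing $L:=L_1$ and $c:=f(1)\in\Q_{\geq 0}$, the divisorial hypothesis gives $V_l=lcN+H^0(X,l(L-cN))$ for all divisible $l$, so $h^0(V_l)=h^0(X,l(L-cN))$ and $\vol(V_{\vec{\bullet}})=\vol_X(L-cN)$, which is the asserted volume formula in this case. Additivity of $\ord_E$ gives $\ord_E(lcN+s)=lc\cdot\ord_E N+\ord_E(s)$ for $s\in H^0(X,l(L-cN))$; hence $\sF^{lt}_E V_l$ is all of $V_l$ for $t\le c\cdot\ord_E N$ and is the image of $\sF^{l(t-c\cdot\ord_E N)}_E H^0(X,l(L-cN))$ under multiplication by the fixed section of $lcN$ otherwise, so $\vol(V^{\sF_E,t}_{\vec{\bullet}})$ equals $\vol_X(L-cN)$ for $t\le c\cdot\ord_E N$ and $\vol_X(L-cN-(t-c\cdot\ord_E N)E)$ for larger $t$. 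Integrating over $t\in[0,T(V_{\vec{\bullet}};E)]$ and substituting $u=t-c\cdot\ord_E N$ yields the claimed formula for $S(V_{\vec{\bullet}};E)$, the symbol $\vol_X(D-uE)$ being read as the $\sF_E$-filtered volume of the complete linear series of $D$.

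For \eqref{divisorial-series_proposition1} in general: I would fix an admissible flag $Y_\bullet$ on $X$, put $\Delta:=\Delta_{Y_\bullet}(V_{\vec{\bullet}})$ and let $p\colon\Delta\twoheadrightarrow\Delta_{\Supp}$ be the projection of Definition~\ref{okounkov_definition}~\eqref{okounkov_definition1}. For rational $\vec{x}\in\interior(\Delta_{\Supp})$, the fiber series $V_{\bullet(1,\vec{x})}$ is, by the divisorial hypothesis and linearity of $f$, a divisorial series with divisor $N$ and slope $f(1,\vec{x})$, and by \cite[Theorem 4.21]{LM} the fiber $p^{-1}(\vec{x})$ is its Okounkov body, so $\vol_{\R^n}(p^{-1}(\vec{x}))=\frac{1}{n!}\vol(V_{\bullet(1,\vec{x})})$. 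Applying Fubini to $\vol(V_{\vec{\bullet}})=(r-1+n)!\int_{\Delta_{\Supp}}\vol_{\R^n}(p^{-1}(\vec{x}))\,d\vec{x}$ and inserting the $r=1$ volume formula gives the first displayed formula. For the second, use $S(V_{\vec{\bullet}};E)=\frac{1}{\vol(\Delta)}\int_\Delta G_{\sF_E}$ from Remark~\ref{S-T_remark}~\eqref{S-T_remark1}; since $(V^{\sF_E,t}_{\vec{\bullet}})_{\bullet(1,\vec{x})}=(V_{\bullet(1,\vec{x})})^{\sF_E,t}$, applying \cite[Theorem 4.21]{LM} to $V^{\sF_E,t}_{\vec{\bullet}}$ (which has bounded support and, for $t$ below the threshold, contains an ample series by \cite[Lemma 2.9]{AZ}) identifies $p^{-1}(\vec{x})\cap\Delta^{\sF_E,t}$ with $\Delta_{Y_\bullet}((V_{\bullet(1,\vec{x})})^{\sF_E,t})$, so $G_{\sF_E}$ restricts on $p^{-1}(\vec{x})$ to the $\sF_E$-level function of $V_{\bullet(1,\vec{x})}$, whence $\int_{p^{-1}(\vec{x})}G_{\sF_E}=\frac{1}{n!}\vol(V_{\bullet(1,\vec{x})})S(V_{\bullet(1,\vec{x})};E)$; Fubini together with the $r=1$ formula for this quantity produces the claimed $S$-formula, and since both integrands are continuous on $\interior(\Delta_{\Supp})$ and $\partial\Delta_{\Supp}$ is null, restricting to rational $\vec{x}$ above is harmless.

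The hard part will be this last slicing compatibility—identifying the restriction of the concave level function $G_{\sF_E}$ to each fiber $p^{-1}(\vec{x})$ with the level function of the fiber series $V_{\bullet(1,\vec{x})}$—together with the attendant null-set and irrational-slice bookkeeping; by contrast \eqref{divisorial-series_proposition2} and the case $r=1$ are routine once the framework is in place.
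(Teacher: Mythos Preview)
Your proposal is correct, but the paper takes a markedly more elementary route that sidesteps your ``hard part'' entirely. Rather than invoking Okounkov bodies and the concave level function $G_{\sF_E}$, the paper works directly from the defining limits. After reducing to $V_{m\vec{\bullet}}=V_{m\vec{\bullet}}^\circ$ via \cite[Lemma~4.73]{Xu}, it writes
\[
h^0\bigl(V_{l,m\vec{\bullet}}\bigr)=\sum_{\vec{a}\in\Z_{\geq 0}^{r-1}\cap\interior\left(\frac{l}{m}\Delta_{\Supp}\right)} h^0\Bigl(X,\,l\bigl((1,\tfrac{m}{l}\vec{a})\cdot\vec{L}-f(1,\tfrac{m}{l}\vec{a})N\bigr)\Bigr)
\]
and recognizes this (after normalization) as a Riemann sum for the stated integral; the same manipulation applied to $S_l\left(V_{m\vec{\bullet}};E\right)=\frac{1}{h^0}\int_0^{T_l/l} h^0\bigl(V^{\sF_E,t}_{l,m\vec{\bullet}}\bigr)\,dt$ gives the $S$-formula. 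Part \eqref{divisorial-series_proposition2} then falls out because the integrands in \eqref{divisorial-series_proposition1} are birational invariants. Your approach buys a more geometric picture---the formula is literally Fubini on the Okounkov body fibered over $\Delta_{\Supp}$---and your separate treatment of \eqref{divisorial-series_proposition2} is cleaner in that it does not rely on \eqref{divisorial-series_proposition1}. But the cost is the slicing compatibility $p^{-1}(\vec{x})\cap\Delta^{\sF_E,t}=\Delta_{Y_\bullet}\bigl((V_{\bullet(1,\vec{x})})^{\sF_E,t}\bigr)$, which, while true, requires tracking the support of $V^{\sF_E,t}_{\vec{\bullet}}$ as $t$ varies and handling boundary and irrational-slice issues; the paper's Riemann-sum argument needs none of this machinery.
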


\begin{proof}
The proof is essentially same as the proofs of \cite[Theorem 1.7.19]{FANO} and 
\cite[Theorem 4.8]{r3d28}. By \cite[Lemma 4.73]{Xu}, we may assume that 
$V_{m\vec{\bullet}}=V_{m\vec{\bullet}}^\circ$. Then we have 
\begin{eqnarray*}
&&\vol\left(V_{\vec{\bullet}}\right)\\
&=&\lim_{l\in m\Z_{>0}}\frac{h^0\left(V_{l,m\vec{\bullet}}\right)m^{r-1}}
{l^{r-1+n}/(r-1+n)!}\\
&=&\lim_{l\in m\Z_{>0}}\sum_{\vec{a}\in\Z_{\geq 0}^{r-1}\cap
\interior\left(\frac{l}{m}\Delta_{\Supp}\right)}\left(\frac{m}{l}\right)^{r-1}
\frac{(r-1+n)!}{n!}
\frac{h^0\left(X,l\left(L_1+\sum_{j=2}^r\frac{m}{l}a_j L_j-
f\left(1,\frac{m}{l}\vec{a}\right)N\right)\right)}{l^n/n!}\\
&=&\frac{(r-1+n)!}{n!}\int_{\Delta_{\Supp}}
\vol_X\left((1,\vec{x})\cdot\vec{L}-f(1,\vec{x})N\right)d\vec{x},
\end{eqnarray*}
and 
\begin{eqnarray*}
&&S\left(V_{\vec{\bullet}};E\right)\\
&=&\lim_{l\in m\Z_{>0}}S_l\left(V_{m\vec{\bullet}};E\right)\\
&=&\lim_{l\in m\Z_{>0}}\frac{l^{r-1+n}/(r-1+n)!}
{h^0\left(V_{l,m\vec{\bullet}}\right)m^{r-1}}\int_0^{T\left(V_{\vec{\bullet}};E\right)}
\frac{h^0\left(V_{l,m\vec{\bullet}}^{E,t}\right)m^{r-1}}{l^{r-1+n}/(r-1+n)!}dt\\
&=&\frac{1}{\vol\left(V_{\vec{\bullet}}\right)}\lim_{l\in m\Z_{>0}}
\sum_{\vec{a}\in\Z_{\geq 0}^{r-1}\cap\interior\left(\frac{l}{m}\Delta_{\Supp}\right)}
\left(\frac{m}{l}\right)^{r-1}\frac{(r-1+n)!}{n!}
\int_0^{T\left(V_{\vec{\bullet}};E\right)}\frac{\dim\sF_E^{l t} V_{l,m\vec{a}}}{l^n/n!}dt\\
&=&\frac{(r-1+n)!}{n!}\cdot\frac{1}{\vol\left(V_{\vec{\bullet}}\right)}
\int_{\Delta_{\Supp}}\Biggl(f(1,\vec{x})\ord_E N\cdot
\vol_X\left((1,\vec{x})\cdot\vec{L}-f(1,\vec{x})N\right)\\
&&+\int_{f\left(1,\vec{x}\right)\ord_E N}^{T\left(V_{\vec{\bullet}};E\right)}
\vol_X\left((1,\vec{x})\cdot\vec{L}-f(1,\vec{x})N-\left(t-f\left(1,\vec{x}\right)
\ord_E N\right)E\right)d t
\Biggr)d\vec{x}.
\end{eqnarray*}
\eqref{divisorial-series_proposition2} is trivial from 
\eqref{divisorial-series_proposition1} and \cite[Lemma 4.73]{Xu}. 
\end{proof}

We will rephrase Proposition \ref{divisorial-series_proposition}. To begin with, 
we prepare the following elementary lemma: 

\begin{lemma}\label{NB_lemma}
Let $X$ be a normal projective variety, let $E\subset X$ be a prime $\Q$-Cartier 
divisor on $X$ and let $D$ be a big $\R$-Cartier $\R$-divisor on $X$. 
Let $\sigma_E(D)$, $\tau_E(D)\in\R_{\geq 0}$ be the values in the sense of 
\cite[III, Definitions 1.1, 1.2 and 1.6]{N}, i.e., 
\begin{eqnarray*}
\sigma_E(D)&=&\inf\left\{\ord_E D'\,\,| D'\equiv D\text{ effective}\right\}, \\
\tau_E(D)&=&\max\left\{t\in\R_{\geq 0}\,\,|\,\,D-t E\text{ is pseudo-effective}\right\}. 
\end{eqnarray*} 
$($Note that $\tau_E(D)=T\left(D; E\right)$. 
Moreover, note that $\sigma_E(D)<\tau_E(D)$ holds. See \cite[III, Lemma 1.4 (4)]{N}.$)$
\begin{enumerate}
\renewcommand{\theenumi}{\arabic{enumi}}
\renewcommand{\labelenumi}{(\theenumi)}
\item\label{NB_lemma1}
If $E\not\subset\B_+(D)$, then we have $E\not\subset\B_+(D-t E)$ 
for any $t\in\left[0,\tau_E(D)\right)$, where $\B_+$ is the augmented base locus 
$($see \cite{ELMNP-b} and \cite{Birkar}$)$. 
\item\label{NB_lemma2}
If $\sigma_E(D)=0$, then we have $E\not\subset\B_+(D-t E)$ 
for any $t\in\left(0,\tau_E(D)\right)$.
\end{enumerate}
\end{lemma}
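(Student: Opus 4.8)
The plan is to translate both statements into assertions about Nakayama's invariants $\sigma_E$ and $\tau_E$ along the segment $t\mapsto D-tE$, the device being the description of the augmented base locus as a diminished base locus of a perturbation. Fix an ample $\R$-divisor $A$ on $X$. I would use two standard inputs, both valid in our generality: for a big $\R$-Cartier $\R$-divisor $F$ one has $\B_+(F)=\B_-(F-\varepsilon A)$ for $0<\varepsilon\ll 1$ (see \cite{ELMNP-b,Birkar}), and for a prime divisor $E$ and a big $F$ one has $E\subset\B_-(F)$ if and only if $\sigma_E(F)>0$ (see \cite{N}). Also, $D$ being big, the segment $\{D-tE\}$ stays in the interior of $\Eff(X)$ on $[0,\tau_E(D))$ and reaches the boundary only at $t=\tau_E(D)$; hence $D-tE$ is big for every $t\in[0,\tau_E(D))$, so these facts apply along the whole segment, and $\tau_E$ is continuous near $D$ with $\tau_E(D-\varepsilon A)\to\tau_E(D)$ as $\varepsilon\to 0$.

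Next I would isolate two elementary properties of $\sigma_E$. \textbf{(i)} If $G$ is big with $\sigma_E(G)=0$, then $\sigma_E(G-tE)=0$ for all $t\in[0,\tau_E(G))$: the function $g(t):=\sigma_E(G-tE)$ is nonnegative, convex (as $\sigma_E$ is $1$-homogeneous and subadditive on $\Eff(X)$), vanishes at $0$, and satisfies $g(t)\le\tau_E(G)-t$ since $G-tE-g(t)E$ is pseudoeffective; these three facts force $g\equiv 0$ on $[0,\tau_E(G))$ (for $0<t_1<t_2<\tau_E(G)$ convexity gives $g(t_1)\le\tfrac{t_1}{t_2}g(t_2)\le\tfrac{t_1}{t_2}(\tau_E(G)-t_2)\to 0$). \textbf{(ii)} If $G$ is big, then $\sigma_E\bigl(G-\sigma_E(G)E\bigr)=0$: for $\eta>0$ take an effective $\R$-divisor $G'\equiv G$ with $\ord_E G'<\sigma_E(G)+\eta$; then $G'-(\ord_E G')E\ge 0$ does not contain $E$ and represents $G-(\ord_E G')E$, so $\sigma_E(G-uE)=0$ for some $u\in[\sigma_E(G),\sigma_E(G)+\eta)$, and letting $\eta\to 0$ and using continuity of $\sigma_E$ on the big cone together with $\sigma_E(G)<\tau_E(G)$ (\cite[III.1.4]{N}) gives the claim. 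Now \eqref{NB_lemma1} is quick: the case $t=0$ is the hypothesis, and for $t\in(0,\tau_E(D))$, since $E\not\subset\B_+(D)=\B_-(D-\varepsilon A)$ we get $\sigma_E(D-\varepsilon A)=0$ for small $\varepsilon$, hence by (i) $\sigma_E(D-\varepsilon A-tE)=0$ once $t<\tau_E(D-\varepsilon A)$, which holds for $\varepsilon$ small; thus $E\not\subset\B_-(D-tE-\varepsilon A)=\B_+(D-tE)$.

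For \eqref{NB_lemma2} I would fix $t\in(0,\tau_E(D))$ and put $c(\varepsilon):=\sigma_E(D-\varepsilon A)$, so $c(\varepsilon)\to\sigma_E(D)=0$ and $c(\varepsilon)<\tau_E(D-\varepsilon A)$. By (ii), $\sigma_E\bigl(D-\varepsilon A-c(\varepsilon)E\bigr)=0$; applying (i) to $G=D-\varepsilon A-c(\varepsilon)E$ (for which $\tau_E(G)=\tau_E(D-\varepsilon A)-c(\varepsilon)$) gives $\sigma_E(D-\varepsilon A-sE)=0$ for all $s\in[c(\varepsilon),\tau_E(D-\varepsilon A))$. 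Since $c(\varepsilon)\to 0<t$ and $\tau_E(D-\varepsilon A)\to\tau_E(D)>t$, for all sufficiently small $\varepsilon>0$ we have $t\in[c(\varepsilon),\tau_E(D-\varepsilon A))$, so $\sigma_E(D-tE-\varepsilon A)=0$, i.e. $E\not\subset\B_-(D-tE-\varepsilon A)=\B_+(D-tE)$.

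The step I expect to be the main obstacle is exactly the passage, in \eqref{NB_lemma2}, from information about the \emph{diminished} base locus (all that $\sigma_E(D)=0$ gives) to the conclusion about the \emph{augmented} base locus of $D-tE$: one genuinely has to spend a positive multiple of $E$ to upgrade "$E\not\subset\B_-$'' to "$E\not\subset\B_+$''. Property (ii) is what makes this go through — it clears the small negative part $c(\varepsilon)E$ of $D-\varepsilon A$ — after which the convexity in (i) transports the vanishing of $\sigma_E$ down the line. The only remaining point to check carefully is that the various "$\varepsilon$ small enough'' requirements ($\B_+(F)=\B_-(F-\varepsilon A)$, $D-\varepsilon A$ big, $c(\varepsilon)<t$, $t<\tau_E(D-\varepsilon A)$) each define an interval $(0,\varepsilon_i)$ and so can be imposed simultaneously; and, if one wishes to avoid assuming $E$ Cartier, that $\B_\pm$, $\sigma_E$, $\tau_E$ are compatible with passing to the prime blowup of $E$.
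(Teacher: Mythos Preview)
Your proof is correct and takes a genuinely different route from the paper's. The paper argues by explicit construction: for \eqref{NB_lemma1}, starting from an effective $D'$ with $D-D'$ ample and $E\not\subset\Supp D'$ (which exists since $E\not\subset\B_+(D)$) and an effective $D''\equiv D$ with $\ord_E D''=\tau>t$, the convex combination $\tilde D=\tfrac{\tau-t}{\tau}D'+\tfrac{t}{\tau}D''$ has $\ord_E\tilde D=t$ and $D-\tilde D$ ample, giving $E\not\subset\B_+(D-tE)$ directly. For \eqref{NB_lemma2}, the paper writes $D=A+B$ with $A$ ample effective, $\ord_E A=0$, chooses effective $D_i\equiv D$ with $\ord_E D_i\le 1/i$, and checks that for $n_i:=\tfrac{\ord_E B+i\,\ord_E D_i}{i+1}\to 0$ one has $E\not\subset\B_+(D-n_iE)$; then \eqref{NB_lemma1} handles all $t>0$. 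Your approach instead passes through the identification $\B_+(F)=\B_-(F-\varepsilon A)$ and the divisorial criterion $E\subset\B_-(F)\Leftrightarrow\sigma_E(F)>0$, reducing everything to the two abstract lemmas (i), (ii) on $\sigma_E$. The paper's route is more self-contained, needing no facts about $\B_-$ and no continuity or convexity of $\sigma_E$; yours is more structural, isolating reusable statements about $\sigma_E$ along the segment $D-tE$ and making transparent why only the open interval appears in \eqref{NB_lemma2}.
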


\begin{proof}
\eqref{NB_lemma1}
Since $E\not\subset\B_+(D)$, there exists an effective $\R$-divisor $D'$ 
with $D-D'$ ample such that $E\not\subset\Supp D'$ holds. 
On the other hand, for any $\tau\in\left(t,\tau_E(D)\right)$, there exists 
effective $D''\equiv D$ such that $\ord_E D''=\tau$. The effective divisor 
\[
\tilde{D}:=\frac{\tau-t}{\tau}D'+\frac{t}{\tau}D''
\]
satisfies that $D-\tilde{D}$ is ample and $\ord_E\tilde{D}=t$. Thus we have 
$E\not\subset\B_+(D-t E)$. 

\eqref{NB_lemma2}
Since $D$ is big, we may assume that $D=A+B$ with $A$ ample and effective, 
$B$ effective and
$\ord_E A=0$. Set $m_0:=\ord_E D=\ord_E B$. By \eqref{NB_lemma1}, 
it is enough to show that there exists a sequence $\{n_i\}_{i\in\Z_{>0}}$ of nonnegative 
real numbers with $\lim_{i\to\infty}n_i=0$ such that $E\not\subset\B_+(D-n_i E)$ 
holds for any $i\in\Z_{>0}$. 

Since $\sigma_E(D)=0$, for any $i\in\Z_{>0}$, there exists an effective $\R$-divisor 
$D_i\equiv D$ with $m_i:=\ord_E D_i\leq 1/i$. Set 
\[
n_i:=\frac{m_0+i m_i}{i+1}\leq\frac{m_0+1}{i+1}.
\]
Then, since 
\[
D-n_i E-\frac{1}{i+1}A\equiv\frac{1}{i+1}(B-m_0E)+\frac{i}{i+1}(D_i-m_i E), 
\]
we have $E\not\subset\B_+(D-n_i E)$ for any $i\in\Z_{>0}$. 
\end{proof}

\begin{proposition}\label{NB_proposition}
Under the notation in Proposition \ref{divisorial-series_proposition}
\eqref{divisorial-series_proposition2}, assume moreover $X'$ is normal and 
$E\subset X'$ is a prime $\Q$-Cartier divisor on $X'$. 
For any $\vec{x}\in\interior\left(\Delta_{\Supp}\right)$, let us consider the 
big $\R$-Cartier $\R$-divisor 
\[
M_{\vec{x}}:=\sigma^*\left(\left(1,\vec{x}\right)\cdot\vec{L}
-f\left(1,\vec{x}\right)N\right)
\]
on $X'$. 
\begin{enumerate}
\renewcommand{\theenumi}{\arabic{enumi}}
\renewcommand{\labelenumi}{(\theenumi)}
\item\label{NB_proposition1}
Set 
$t_0\left(\vec{x}\right):=\sigma_E\left(M_{\vec{x}}\right)$ and 
$t_1\left(\vec{x}\right):=\tau_E\left(M_{\vec{x}}\right)$ in the sense of 
\cite[III, Definitions 1.1, 1.2 and 1.6]{N}. Then we have 
$t_0\left(\vec{x}\right)<t_1\left(\vec{x}\right)$. Moreover, for any 
$u\in\left(t_0\left(\vec{x}\right), t_1\left(\vec{x}\right)\right)$, the big $\R$-divisor 
$M_{\vec{x}}-u E$ satisfies that $E\not\subset\B_+\left(M_{\vec{x}}-u E\right)$. 
Thus we can set the restricted volume 
\[
\vol_{X'|E}\left(M_{\vec{x}}-u E\right)\in\R_{>0}
\]
as in \cite[Corollary 4.27 (iii)]{LM}, \cite[Theorems A and B]{BFJ}. 
In particular, for any admissible flag $Y_\bullet$ on $X$ 
with $Y_1=E$, we have 
\[
\vol_{X|E}\left(M_{\vec{x}}-u E\right)=(n-1)!\cdot\vol_{\R^{n-1}}
\left(\Delta_{Y_\bullet}\left(M_{\vec{x}}\right)|_{\nu_1=u}\right), 
\]
where $\Delta_{Y_\bullet}\left(M_{\vec{x}}\right)|_{\nu_1=u}\subset
\Delta_{Y_\bullet}\left(M_{\vec{x}}\right)$ is the subset whose $1$-st coordinate 
is equal to $u$. 
\item\label{NB_proposition2}
We have 
\[
\vol_{X'}\left(M_{\vec{x}}\right)=n\cdot
\int_{t_0\left(\vec{x}\right)}^{t_1\left(\vec{x}\right)}
\vol_{X'|E}\left(M_{\vec{x}}-u E\right)d u
\]
and \[
\vol_{X'}\left(M_{\vec{x}}-t E\right)=\begin{cases}
\vol_{X'}\left(M_{\vec{x}}-t_0\left(\vec{x}\right) E\right) & 
\text{if }t\in\left[0,t_0\left(\vec{x}\right)\right], \\
n\cdot\int_t^{t_1\left(\vec{x}\right)}\vol_{X'|E}\left(M_{\vec{x}}-u E\right)d u &
\text{if }t\in\left[t_0\left(\vec{x}\right),t_1\left(\vec{x}\right)\right], \\
0 & \text{if }t\in\left[t_1\left(\vec{x}\right),\infty\right).
\end{cases}
\]
In particular, we have 
\begin{eqnarray*}
\vol\left(V_{\vec{\bullet}}\right)&=&\frac{(r-1+n)!}{(n-1)!}\int_{\Delta_{\Supp}}
\int_{t_0\left(\vec{x}\right)}^{t_1\left(\vec{x}\right)}
\vol_{X'|E}\left(M_{\vec{x}}-u E\right)d u d\vec{x}, \\
S\left(V_{\vec{\bullet}}; E\right)&=&\frac{(r-1+n)!}{(n-1)!\vol\left(V_{\vec{\bullet}}\right)}
\int_{\Delta_{\Supp}}
\int_{t_0\left(\vec{x}\right)}^{t_1\left(\vec{x}\right)}
\left(u+f\left(1,\vec{x}\right)\ord_E N\right)
\vol_{X'|E}\left(M_{\vec{x}}-u E\right)d u d\vec{x}.
\end{eqnarray*}
\end{enumerate}
\end{proposition}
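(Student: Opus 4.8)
The plan is to read the statement off from three inputs: Lemma~\ref{NB_lemma}, the restricted-volume and Okounkov-body machinery of \cite{LM, BFJ}, and the formulas of Proposition~\ref{divisorial-series_proposition}. Note first that $M_{\vec{x}}=\sigma^*((1,\vec{x})\cdot\vec{L}-f(1,\vec{x})N)$ is big, since $V_{\vec{\bullet}}$ contains an ample series and pullback preserves bigness. For part~\eqref{NB_proposition1}, the inequality $t_0(\vec{x})<t_1(\vec{x})$ is the inequality $\sigma_E<\tau_E$ for big divisors, \cite[III, Lemma 1.4 (4)]{N}. Since (immediately from the definitions) $\sigma_E(M_{\vec{x}}-t_0(\vec{x})E)=0$ and $\tau_E(M_{\vec{x}}-t_0(\vec{x})E)=t_1(\vec{x})-t_0(\vec{x})>0$, I would apply Lemma~\ref{NB_lemma}~\eqref{NB_lemma2} to the big $\R$-divisor $M_{\vec{x}}-t_0(\vec{x})E$ to obtain $E\not\subset\B_+(M_{\vec{x}}-uE)$ for every $u\in(t_0(\vec{x}),t_1(\vec{x}))$; by \cite[Corollary 4.27 (iii)]{LM} and \cite[Theorems A and B]{BFJ} the restricted volume $\vol_{X'|E}(M_{\vec{x}}-uE)$ is then well defined, strictly positive and continuous on that interval, and its identification with $(n-1)!\cdot\vol_{\R^{n-1}}(\Delta_{Y_\bullet}(M_{\vec{x}})|_{\nu_1=u})$ for an admissible flag with $Y_1=E$ is \cite[Theorem 4.21]{LM}. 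The endpoints $t_0(\vec{x}),t_1(\vec{x})$ are null and do not affect the integrals below.

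For part~\eqref{NB_proposition2} I would work with the Okounkov body $\Delta_{Y_\bullet}(M_{\vec{x}})$ of such a flag. Its first coordinate $\nu_1$ is $\ord_E$, so by the asymptotic descriptions of $\sigma_E$ and $\tau_E$ in \cite{N} the image of $\Delta_{Y_\bullet}(M_{\vec{x}})$ under the first projection equals $[t_0(\vec{x}),t_1(\vec{x})]$. Moreover, for $t\geq 0$ the complete linear series of $M_{\vec{x}}-tE$ is the subseries of that of $M_{\vec{x}}$ cut out by $\sF_E$ (sections vanishing suitably along $E$), so $\Delta_{Y_\bullet}(M_{\vec{x}}-tE)$ is the translate by $-te_1$ of $\Delta_{Y_\bullet}(M_{\vec{x}})\cap\{\nu_1\geq t\}$. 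Taking $n!$ times the Euclidean volume and slicing in $\nu_1$ yields
\[
\vol_{X'}(M_{\vec{x}}-tE)=n\int_{\max\{t,\,t_0(\vec{x})\}}^{t_1(\vec{x})}\vol_{X'|E}(M_{\vec{x}}-uE)\,du
\]
for $t\in[0,t_1(\vec{x})]$, which is exactly the asserted formula: for $t\leq t_0(\vec{x})$ the right-hand side is independent of $t$ and equals $\vol_{X'}(M_{\vec{x}})=\vol_{X'}(M_{\vec{x}}-t_0(\vec{x})E)$ (one may instead invoke $P_\sigma(M_{\vec{x}}-tE)=P_\sigma(M_{\vec{x}})$ for $t\leq\sigma_E(M_{\vec{x}})$, \cite{N}), for $t\geq t_1(\vec{x})$ both sides vanish, and setting $t=0$ gives the formula for $\vol_{X'}(M_{\vec{x}})$.

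Finally I would substitute into Proposition~\ref{divisorial-series_proposition}. Part~\eqref{divisorial-series_proposition2} of that proposition gives $\vol(V_{\vec{\bullet}})=\vol(\sigma_{\DIV}^*V_{\vec{\bullet}}^\circ)$ and $S(V_{\vec{\bullet}};E)=S(\sigma_{\DIV}^*V_{\vec{\bullet}}^\circ;E)$, and then part~\eqref{divisorial-series_proposition1} applied to the divisorial series $\sigma_{\DIV}^*V_{\vec{\bullet}}^\circ$ on $X'$ (whose defining effective divisor is $\sigma^*N$, with $\ord_E(\sigma^*N)=\ord_E N$) writes both quantities in terms of $\int_{\Delta_{\Supp}}\vol_{X'}(M_{\vec{x}})\,d\vec{x}$ and $\int_{\Delta_{\Supp}}\big(f(1,\vec{x})\ord_E N\cdot\vol_{X'}(M_{\vec{x}})+\int_0^\infty\vol_{X'}(M_{\vec{x}}-tE)\,dt\big)\,d\vec{x}$. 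Inserting the formulas from part~\eqref{NB_proposition2} and exchanging the order of integration over the triangular region $\{0\leq t\leq u,\ t_0(\vec{x})\leq u\leq t_1(\vec{x})\}$ gives $\int_0^\infty\vol_{X'}(M_{\vec{x}}-tE)\,dt=n\int_{t_0(\vec{x})}^{t_1(\vec{x})}u\,\vol_{X'|E}(M_{\vec{x}}-uE)\,du$, and combining with the $f(1,\vec{x})\ord_E N$ term produces the two displayed equalities, the extra factor $n$ converting $\tfrac{(r-1+n)!}{n!}$ into $\tfrac{(r-1+n)!}{(n-1)!}$.

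I expect the main obstacle to be bookkeeping rather than substance: one must track that the restricted volume is only \emph{a priori} defined on the open interval (harmless since endpoints are null), that passing to $\sigma_{\DIV}^*V_{\vec{\bullet}}^\circ$ replaces $N$ by $\sigma^*N$ without changing the valuation term, and that the Fubini rearrangement is carried out over the correct region. The analytic content — positivity of restricted volumes off $\B_+$, the slicing formula for Okounkov bodies, and constancy of the volume below $\sigma_E$ — is supplied entirely by \cite{N, LM, BFJ} and Lemma~\ref{NB_lemma}.
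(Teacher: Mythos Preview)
Your proposal is correct and follows essentially the same approach as the paper: Lemma~\ref{NB_lemma} for the augmented base locus statement, the Okounkov-body/restricted-volume results of \cite{LM, BFJ} for the slicing formulas, and Fubini applied to the expressions from Proposition~\ref{divisorial-series_proposition}. The paper's own proof is extremely terse (two sentences citing \cite[Corollary 4.27]{LM} and Fubini), and your write-up simply makes explicit the bookkeeping that the paper leaves to the reader.
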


\begin{proof}
\eqref{NB_proposition1} is an immediate consequence of Lemma \ref{NB_lemma} 
and \cite[Corollary 4.27]{LM}. 
\eqref{NB_proposition2} follows from \cite[Corollary 4.27]{LM} and Fubini's theorem. 
Note that the continuity of the function $\vol_{X'|E}\left(M_{\vec{x}}-u E\right)$ 
follows from \cite[Theorem A]{BFJ}. 
\end{proof}

\begin{corollary}\label{kojutsu_corollary}
Let $X'$ be an $n$-dimensional projective variety, let $L'$ be a big 
$\Q$-Cartier $\Q$-divisor on $X'$, let $\phi\colon X\to X'$ be 
a birational morphism with 
$X$ normal, and let $F\subset X$ be a prime $\Q$-Cartier divisor. 
Then, for any $x\in\left(\sigma_F\left(\phi^*L'\right), \tau_F\left(\phi^*L'\right)
\right)\cap\Q$, we have 
\[
\limsup_{m\to\infty}\frac{\dim\Image\left(\substack{
\phi^*H^0\left(X',m L'\right)\cap\left(m x F+H^0\left(X, \phi^*m L'-m x F\right)
\right)\\
\to H^0\left(F, \phi^*m L'|_F-m x F|_F\right)}\right)}{m^{n-1}/(n-1)!}
=\vol_{X|F}\left(\phi^*L'-x F\right).
\]
\end{corollary}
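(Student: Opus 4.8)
The plan is to reduce Corollary \ref{kojutsu_corollary} to Proposition \ref{NB_proposition} \eqref{NB_proposition1} by exhibiting a divisorial series whose interior divisorial pullback is governed exactly by the dimension count on the left-hand side. First I would fix a big $\Q$-Cartier $\Q$-divisor $L'$ on $X'$ and set $V_{\bullet} := H^0(\bullet L')$, the complete linear series, which is a divisorial series in the sense of Definition \ref{divisorial-series_definition} \eqref{divisorial-series_definition1} with $r = 1$, $N = 0$, $f = 0$; note that a series with $r=1$ automatically has bounded support. Apply the refinement construction of Definition \ref{refinement_definition} with respect to the prime $\Q$-Cartier divisor $F \subset X$ (after pulling back along $\phi$, as in Definition \ref{refinement-primitive_definition} \eqref{refinement-primitive_definition1}, using Proposition \ref{pullback_proposition} to know the pullback still contains an ample series). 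By Example \ref{interior_example} \eqref{interior_example4}, the Okounkov body of $\phi^*V_{\bullet}$ with respect to an admissible flag $Y_\bullet$ on $X$ with $Y_1 = F$ equals the Okounkov body of the refinement $\left(\phi^*V_{\bullet}\right)^{(F)}$, and the left-hand limsup in the statement is, by definition of the refinement, precisely
\[
\limsup_{m\to\infty}\frac{h^0\!\left(\left(\phi^*V_{\bullet}\right)^{(F)}_{m,m x}\right)}{m^{n-1}/(n-1)!}.
\]

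The key step is then to identify this with $(n-1)!\cdot\vol_{\R^{n-1}}\!\left(\Delta_{Y_\bullet}(\phi^*L')|_{\nu_1 = x}\right)$. This is exactly the statement that the $x$-slice of the Okounkov body computes the appropriate asymptotic dimension: by Example \ref{interior_example} \eqref{interior_example8}, $\phi^*H^0(\bullet L')$ is asymptotically equivalent to $H^0(\bullet\,\phi^*L')$, so by Lemma \ref{asymp-equiv_lemma} their Okounkov bodies with respect to $Y_\bullet$ coincide; hence the slice at $\nu_1 = x$ of $\Delta_{Y_\bullet}(\phi^*L')$ has $(n-1)$-dimensional volume equal to $\frac{1}{(n-1)!}$ times the limit in question, via the standard Okounkov-body dimension count (\cite[Theorem 4.21]{LM} or the slice formula already used in the proof of Lemma \ref{asymp-equiv_lemma}). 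Finally, invoke Proposition \ref{NB_proposition} \eqref{NB_proposition1} with $V_{\vec\bullet}$ the complete series $H^0(\bullet L')$, so that $M_{\vec x} = \phi^* L'$ (here $\Delta_{\Supp}$ is a point, $f \equiv 0$, $N = 0$): for $x \in \left(\sigma_F(\phi^*L'),\tau_F(\phi^*L')\right)$ we have $F \not\subset \B_+(\phi^*L' - x F)$, the restricted volume $\vol_{X|F}(\phi^*L' - x F)$ is well-defined and positive, and it equals $(n-1)!\cdot\vol_{\R^{n-1}}\!\left(\Delta_{Y_\bullet}(\phi^*L')|_{\nu_1 = x}\right)$. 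Chaining these equalities yields the corollary.

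The main obstacle I expect is the careful handling of the intersection $\phi^*H^0(X', m L') \cap \left(m x F + H^0(X, \phi^*m L' - m x F)\right)$ and matching it to the graded piece $\left(\phi^*V_{\bullet}\right)^{(F)}_{m, m x}$ for \emph{non-integer} rational $x$: one must pass to a sufficiently divisible Veronese subseries so that $m x \in \Z$ and $m x F$ is Cartier, and check that the $\limsup$ over all $m$ in the statement agrees with the $\limsup$ over the divisible subsequence — this uses that $x$ lies strictly in the interval $\left(\sigma_F,\tau_F\right)$ so the relevant functions are continuous there (via \cite[Theorem A]{BFJ}, as in the proof of Proposition \ref{NB_proposition}). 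A secondary technical point is justifying that the Okounkov body slice genuinely computes the restricted volume when $X$ is only normal (not smooth) and $F$ only $\Q$-Cartier; but this is covered by Proposition \ref{NB_proposition} \eqref{NB_proposition1} itself, which already records $\vol_{X|F}(M_{\vec x} - u F) = (n-1)!\cdot\vol_{\R^{n-1}}\!\left(\Delta_{Y_\bullet}(M_{\vec x})|_{\nu_1 = u}\right)$ in exactly this generality, so no new work is needed there.
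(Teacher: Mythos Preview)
Your proposal is correct and follows essentially the same route as the paper's proof: both identify the left-hand side as the volume of the $(1,x)$-slice of the refinement $(\phi^*V_\bullet)^{(F)}$, pass to $H^0(\bullet\,\phi^*L')$ via asymptotic equivalence (Example \ref{interior_example} \eqref{interior_example8} and \eqref{interior_example6}), apply the slice formula \cite[Theorem 4.21]{LM}, and conclude via Proposition \ref{NB_proposition} \eqref{NB_proposition1} together with Example \ref{interior_example} \eqref{interior_example4}. Your concern about the Veronese-divisibility issue is harmless here, since the $\limsup$ is understood over sufficiently divisible $m$ and the slice volume is already known to exist as a limit once $(1,x)$ lies in the interior of the support.
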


\begin{proof}
Set $V'_{\vec{\bullet}}:=H^0\left(\bullet L'\right)$. From the definition of the 
refinement $\left(\phi^*V'_{\vec{\bullet}}\right)^{(F)}$, 
the left hand side is equal to the volume of 
$\left(\phi^*V'_{\bullet(1,x)}\right)^{(F)}$, where 
$(1,x)\in\interior\left(\Supp\left(\phi^*V'_{\vec{\bullet}}\right)^{(F)}\right)$. 
We know that 
$\phi^*V'_{\vec{\bullet}}$ is asymptotically equivalent to 
$V_{\vec{\bullet}}:=H^0\left(\bullet\phi^*L'\right)$ by Example \ref{interior_example}
\eqref{interior_example8}. Thus, by 
Example \ref{interior_example} \eqref{interior_example6} and 
Proposition \ref{NB_proposition}, the left hand side is equal to 
$\vol\left(V_{\bullet(1,x)}^{(F)}\right)$. Take any admissible flag $Y_\bullet$ on $X$ 
with $Y_1=F$, and consider the admissible flag $Y'_\bullet$ on $F$ with 
$Y'_i:=Y_{i+1}$. By \cite[Theorem 4.21]{LM}, we have 
\[
\Delta_{Y'_\bullet}\left(V_{\bullet(1,x)}^{(F)}\right)
=\Delta_{Y'_\bullet}\left(V_{\vec{\bullet}}^{(F)}\right)\big|_{\nu_1=x}. 
\]
Therefore we get 
\[
\vol\left(V_{\bullet(1,x)}^{(F)}\right)=(n-1)!\cdot\vol_{\R^{n-1}}
\left(\Delta_{Y'_\bullet}\left(V_{\vec{\bullet}}^{(F)}\right)\big|_{\nu_1=x}\right)
=\vol_{X|F}\left(\phi^*L'-x F\right), 
\]
where the last equality follows from Example \ref{interior_example} 
\eqref{interior_example4} and Proposition \ref{NB_proposition} 
\eqref{NB_proposition1}. 
\end{proof}

The most typical examples of divisorial series are the complete linear series 
$H^0\left(\bullet L\right)$ with big $L\in\CaCl(X)\otimes_\Z\Q$. 
In this case, Proposition \ref{NB_proposition} can be rephrased as follows: 

\begin{corollary}[{cf.\ \cite[Proposition 3.12]{r3d28}}]\label{NB_corollary}
Assume that $L\in\CaCl(X)\otimes_\Z\R$ is big. Take any 
birational morphism $\sigma\colon X'\to X$ with $X'$ normal projective, and 
let $E$ be a prime $\Q$-Cartier divisor on $X'$. Set 
$t_0:=\sigma_E\left(\sigma^*L\right)$ and $t_1:=\tau_E\left(\sigma^*L\right)$ 
in the sense of \cite[III, Definitions 1.1, 1.2 and 1.6]{N}. 
\begin{enumerate}
\renewcommand{\theenumi}{\arabic{enumi}}
\renewcommand{\labelenumi}{(\theenumi)}
\item\label{NB_corollary1}
Take any $u\in(t_0, t_1)$. 

\begin{enumerate}
\renewcommand{\theenumii}{\roman{enumii}}
\renewcommand{\labelenumii}{\rm{(\theenumii)}}
\item\label{NB_corollary11}
We can define the restricted volume 
\[
\vol_{X'|E}\left(\sigma^*L-u E\right)\in\R_{>0}, 
\]
and we have 
\begin{eqnarray*}
\vol_X(L)=n\int_{t_0}^{t_1}\vol_{X'|E}\left(\sigma^*L-u E\right)d u. 
\end{eqnarray*}
Moreover, we have 
\begin{eqnarray*}
\frac{1}{\vol_X(L)}\int_0^\infty\vol_{X'}\left(\sigma^*L-t E\right)d t
=\frac{n}{\vol_X(L)}\int_{t_0}^{t_1}u \cdot\vol_{X'|E}\left(\sigma^*L-u E\right)d u. 
\end{eqnarray*}
We note that, if $L\in\CaCl(X)\otimes_\Z\Q$, then the above value is nothing but 
the value $S\left(L; E\right)$. 
\item\label{NB_corollary12}
Assume that $X$ is $\Q$-factorial. 
Let $\sigma^*L-u E=N_u+P_u$ be the Nakayama--Zariski decomposition in the sense 
of \cite[III, Definition 1.12]{N}.
Then, the restriction $P_u|_E\in\CaCl(E)\otimes_\Z\R$ is big. 
\end{enumerate}
\item\label{NB_corollary2}
Let $Y_\bullet$ be an admissible flag on $X'$ with $Y_1=E$, and let us set 
$\Delta:=\Delta_{Y_\bullet}\left(\sigma^*L\right)\subset\R_{\geq 0}^n$. 
Then we have $p_1(\Delta)=[t_0, t_1]\subset\R$, where $p_1\colon\R^n\to\R$ 
be the first projection. Moreover, for any $u\in(t_0, t_1)$, we have 
\[
\vol_{X'|E}\left(\sigma^*L-u E\right)=(n-1)!\vol_{\R^{n-1}}\left(\Delta_u\right), 
\]
where $\Delta_u:=p_1^{-1}\left(\{u\}\right)\subset\R^{n-1}$. 
In particular, if $L\in\CaCl(X)\otimes_\Z\Q$, 
then the value $S(L; E)$ is the first coordinate of the barycenter of $\Delta$. 
\end{enumerate}
\end{corollary}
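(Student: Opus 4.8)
The plan is to deduce the whole statement from Proposition~\ref{NB_proposition} applied in the degenerate case $r=1$, $N=0$, $f\equiv 0$, in which $V_{\vec{\bullet}}=H^0\left(\bullet L\right)$, $M_{\vec{x}}=\sigma^*L$ and $\ord_E N=0$, together with the slice description of Okounkov bodies from \cite[\S 4.3]{LM} and \cite[Theorems A and B]{BFJ}; the $\R$-divisor case is then obtained from the $\Q$-divisor one by continuity. First I would record the basic geometric input: writing $D:=\sigma^*L-t_0E$ we have $\sigma_E(D)=0$ and $\tau_E(D)=t_1-t_0$, so Lemma~\ref{NB_lemma}~(2) gives $E\not\subset\B_+\left(\sigma^*L-uE\right)$ for every $u\in\left(t_0,t_1\right)$; since $\sigma^*L-uE$ is moreover big on that range, the restricted volume $\vol_{X'|E}\left(\sigma^*L-uE\right)$ is well defined and strictly positive by \cite[Corollary 4.27]{LM}, \cite[Theorems A and B]{BFJ}. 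This already gives the well-definedness part of (1)(i).

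For the integral identities in (1)(i), I would use that the function $t\mapsto\vol_{X'}\left(\sigma^*L-tE\right)$ is continuous on $\R_{\ge 0}$, is constant equal to $\vol_{X'}\left(\sigma^*L\right)=\vol_X(L)$ on $\left[0,t_0\right]$ (for $t\le\sigma_E\left(\sigma^*L\right)$ subtracting $tE$ modifies only the negative part of the Nakayama--Zariski decomposition, cf.\ \cite[III]{N}), vanishes for $t\ge t_1$, and on $\left(t_0,t_1\right)$ is differentiable with $\frac{d}{dt}\vol_{X'}\left(\sigma^*L-tE\right)=-n\,\vol_{X'|E}\left(\sigma^*L-tE\right)$ by \cite[Corollary 4.27]{LM}, \cite[Theorem A]{BFJ}. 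Integrating from $t_0$ to $t_1$ yields
\[
\vol_X(L)=n\int_{t_0}^{t_1}\vol_{X'|E}\left(\sigma^*L-uE\right)\,du ,
\]
and then $\tfrac{1}{\vol_X(L)}\int_0^\infty\vol_{X'}\left(\sigma^*L-tE\right)\,dt=\tfrac{n}{\vol_X(L)}\int_{t_0}^{t_1}u\,\vol_{X'|E}\left(\sigma^*L-uE\right)\,du$ follows by inserting the same description of $\vol_{X'}\left(\sigma^*L-tE\right)$ and applying Fubini's theorem to the triangle $\{t_0\le t\le u\le t_1\}$. When $L\in\CaCl(X)\otimes_\Z\Q$ one has $\vol\left(H^0\left(\bullet L\right)^{\sF_E,t}\right)=\vol_{X'}\left(\sigma^*L-tE\right)$ and $T(L;E)=t_1$, so the last expression is precisely $S(L;E)$ by Definition~\ref{S-T_definition}; equivalently, the $\Q$-divisor case of this entire paragraph is literally Proposition~\ref{NB_proposition}~(2) specialised as above, and the $\R$-divisor statement follows by approximating $L$ by $\Q$-divisors and using continuity of $\vol_X$, $\vol_{X'}$ and $\vol_{X'|E}$ (\cite[Theorem B]{BFJ}).

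For (1)(ii): since $X'$ is $\Q$-factorial and $E\not\subset\B_+\left(\sigma^*L-uE\right)$, the positive part $P_u$ of the Nakayama--Zariski decomposition of $\sigma^*L-uE$ has big restriction $P_u|_E$, by the standard fact that $E\not\subset\B_+(D)$ forces $\vol_{X'|E}(D)=\vol_E\left(P_D|_E\right)>0$ (\cite{ELMNP-b}, \cite[III]{N}). For (2): fix the admissible flag $Y_\bullet$ on $X'$ with $Y_1=E$ and put $\Delta:=\Delta_{Y_\bullet}\left(\sigma^*L\right)$. The identity $p_1(\Delta)=[t_0,t_1]$ (with lower endpoint $\ord_E\|\sigma^*L\|=\sigma_E\left(\sigma^*L\right)$ and upper endpoint $\tau_E\left(\sigma^*L\right)$) and the slice formula $\vol_{X'|E}\left(\sigma^*L-uE\right)=(n-1)!\,\vol_{\R^{n-1}}\left(\Delta_u\right)$ for $u\in\left(t_0,t_1\right)$ are \cite[Corollary 4.27]{LM}, valid because $E\not\subset\B_+$ on the open interval by Lemma~\ref{NB_lemma}. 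Finally, since $\sigma^*H^0\left(\bullet L\right)$ is asymptotically equivalent to $H^0\left(\bullet\sigma^*L\right)$ (Example~\ref{interior_example}~\eqref{interior_example8}), for $\Q$-divisors $L$ we have $S(L;E)=S\left(\sigma^*L;E\right)$, and Remark~\ref{S-T_remark}~\eqref{S-T_remark3} (on $X'$, with $r=1$, $\sF=\sF_E$, $Y_1=E$) identifies this with the first coordinate of the barycenter of $\Delta$; alternatively one integrates the slice formula against $u$ and compares with the $\Delta$-integral via Fubini. The one point that deserves genuine care is the $\R$-divisor passage and the verification that $E\not\subset\B_+\left(\sigma^*L-uE\right)$ really holds on all of $\left(t_0,t_1\right)$, which is exactly what Lemma~\ref{NB_lemma}~(2) is there to provide and is what makes the restricted-volume apparatus of \cite{LM, BFJ} applicable; beyond that the corollary is essentially a repackaging of Proposition~\ref{NB_proposition} (cf.\ \cite[Proposition 3.12]{r3d28}).
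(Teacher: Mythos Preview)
Your treatment of parts (1)(i) and (2) is fine and matches the paper: both are immediate from Proposition~\ref{NB_proposition} specialised to $r=1$, $N=0$, $f\equiv 0$ (plus the slice description from \cite[Corollary~4.27]{LM}), and you are right to flag the passage to $\R$-divisors via continuity of $\vol$ and $\vol_{X'|E}$.

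However, your argument for (1)(ii) has a genuine gap. You invoke as a ``standard fact'' that $E\not\subset\B_+(D)$ forces $\vol_{X'|E}(D)=\vol_E\left(P_D|_E\right)$, citing \cite{ELMNP-b} and \cite[III]{N}; but this equality is \emph{not} in those references and is not standard. The difficulty is that the positive part $P_u$ is only movable, not nef, and $N_u$ is in general an honest $\R$-divisor, so one cannot simply strip $mN_u$ off sections of $m(\sigma^*L-uE)$ to land in $H^0(E,mP_u|_E)$; there is no Zariski decomposition in a strong sense available here. What is actually needed is the inequality $\vol_E\left(P_u|_E\right)\ge \vol_{X'|E}\left(\sigma^*L-uE\right)$, and the paper supplies a real argument for it: first reduce to $u\in\Q$ by the convexity of $N_\sigma$ along the segment (so that $P_u|_E$ dominates a convex combination of $P_{u'}|_E$ and $P_{u''}|_E$ for rational $u'<u<u''$), then reduce to $L$ a $\Q$-divisor using the continuity of $\sigma_E$ and $\tau_E$ over $\BIG(X')$, and finally approximate $N_u$ from below by $\Q$-divisors $N^i\le N_u$ with $\ord_F(N_u-N^i)\le 1/i$ for every prime $F$. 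Since $N^i\le N_u$ one has $H^0\bigl(X',m(\sigma^*L-uE)\bigr)=mN^i+H^0\bigl(X',m(\sigma^*L-uE-N^i)\bigr)$ for divisible $m$, whence the restricted image sits inside $H^0\bigl(E,m(\sigma^*L-uE-N^i)|_E\bigr)$; letting $i\to\infty$ and using continuity of $\vol_E$ gives $\vol_E(P_u|_E)\ge \vol_{X'|E}(\sigma^*L-uE)>0$. Your one-line appeal skips all of this, and without it the bigness of $P_u|_E$ is not established.
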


\begin{proof}
\eqref{NB_corollary11} and \eqref{NB_corollary2} are immediate consequences of 
Proposition \ref{NB_proposition}. We consider \eqref{NB_corollary12}. 
By \cite[III, Lemma 1.4 (4) and Corollary 1.9]{N}, $E\not\subset\Supp N_u$ holds 
for any $u\in(t_0, t_1)$. 

Let us fix $u'\in(t_0,u)\cap\Q$ and $u''\in(u,t_1)\cap\Q$. We know that 
the $\R$-divisor 
\[
\frac{u''-u}{u''-u'}N_{u'}+\frac{u-u'}{u''-u'}N_{u''}-N_u
\]
is effective and the support does not conitain $E$. Since
\[
P_u|_E=\frac{u''-u}{u''-u'}\left(P_{u'}|_E\right)+\frac{u-u'}{u''-u'}\left(P_{u''}|_E\right)
+\left(\frac{u''-u}{u''-u'}N_{u'}+\frac{u-u'}{u''-u'}N_{u''}-N_u\right)\Big|_E, 
\]
we may assume that $u\in\Q$. 

Recall that both $\sigma_E$ and $\tau_E$ are continuous over the big cone 
$\BIG(X)$ (\cite[III, Lemma 1.7 (1)]{N}). Thus, we can take big 
$L_1,\dots,L_p\in\CaCl(X)\otimes_\Z\Q$ and $c_1,\dots,c_p\in\R_{>0}$ 
with $\sum_{i=1}^p c_i=1$ such that $L=\sum_{i=1}^p c_i L_i$ and 
$u\in\left(\sigma_E(\sigma^*L_i),\tau_E(\sigma^*L_i)\right)$ holds 
for any $1\leq i\leq p$. 
By the same argument as above, we get the bigness of $P_u|_E$ provided that 
the bigness of $P_\sigma(\sigma^*L_i-u E)|_E$ for all $1\leq i\leq p$. 
Thus we may further assume that $L\in\CaCl(X)\otimes_\Z\Q$. 

Let us fix $r_0\in\Z_{>0}$ such that $r_0(\phi^*L-u E)$ is Cartier. 
For any $m\in r_0\Z_{>0}$, set 
\[
W_m:=\Image\left(H^0\left(X, m(\phi^*L-u E)\right)\to 
H^0\left(E, m(\phi^*L-u E)|_E\right)\right).
\]
By Lemma \ref{NB_lemma}, we have $E\not\subset\B_+\left(\phi^*L-u E\right)$. 
Thus, by \cite{ELMNP} (see also \cite{BCL}, \cite{lopez}), 
for any $r\in r_0\Z_{>0}$, 
\[
a:=\limsup_{m\in r\Z_{>0}}\frac{\dim W_m}{m^{n-1}/(n-1)!}
\left(=\vol_{X|E}\left(\phi^*L-u E\right)\right)
\]
satisfies that $a\in\R_{>0}$ and is independent of $r\in r_0\Z_{>0}$. 

For any $i\in\Z_{>0}$, take an effective $\Q$-divisor $N^i$ on $X$ 
with $N^i\leq N_u$ such that 
\[
\ord_F\left(N_u-N^i\right)\leq \frac{1}{i}
\] 
holds for any prime divisor $F$ on $X$. 
Fix $r_i\in r_0\Z_{>0}$ such that $r_i N^i$ is Cartier. Then, since $N^i\leq N_u$, 
we have 
\[
m N^i+H^0\left(X, m\left(\phi^*L-u E- N^i\right)\right)
\xrightarrow{\sim}H^0\left(X, m\left(\phi^*L-u E\right)\right)
\]
for any $m\in r_i\Z_{>0}$. Thus we get 
\[
\dim W_m\leq h^0\left(E, m\left(\phi^*L-u E- N^i\right)|_E\right)
\]
for any $m\in r_i\Z_{>0}$. Therefore, 
\[
a\leq\limsup_{m\in r_i\Z_{>0}}
\frac{h^0\left(E, m\left(\phi^*L-u E- N^i\right)|_E\right)}{m^{n-1}/(n-1)!}
=\vol\left(\left(\phi^*L-u E- N^i\right)|_E\right)
\]
for any $i\in\Z_{>0}$. Since 
\[
a\leq\lim_{i\to\infty}\vol\left(\left(\phi^*L-u E- N^i\right)|_E\right)
=\vol\left(P_u|_E\right), 
\]
we get the assertion. 
\end{proof}

By applying Proposition \ref{bary_proposition}, we can estimate $S(L; E)$ in 
various situations. Here we give one specific example. 

\begin{example}\label{line-S_example}
Let us assume that $X$ is smooth with $n\geq 2$, 
and let $L$ be a very ample Cartier divisor on $X$ 
and let $Z\subset X$ be a line with respects to $L$, i.e., $(L\cdot Z)=1$. 
Consider the blowup $\sigma\colon \tilde{X}\to X$ along $Z$ and let 
$E\subset\tilde{X}$ be the exceptional divisor. 
Set $\tau:=\tau_E\left(\sigma^*L\right)$, $d:=\left(-K_X\cdot Z\right)$
 and $V_0:=(L^{\cdot n})$. We assume that $d\leq n$. (In fact, when 
 $X\not\simeq\pr^n$ and the characteristic of $\Bbbk$ is zero, then 
$d\leq n$ holds by \cite{CMSB, kebekus}.) Let 
$\Delta:=\Delta_{Y_\bullet}\left(\sigma^*L\right)\subset\R_{\geq 0}^n$ be the 
Okounkov body such that $Y_1=E$. Then the values $t_0$, $t_1$ in 
Proposition \ref{bary_proposition} is equal to $0$, $\tau$, respectively. 
Moreover, the function $g\colon[0,\tau]\to\R_{\geq 0}$ in Proposition 
\ref{bary_proposition} is equal to 
\[
\frac{1}{(n-1)!}\vol_{\tilde{X}|E}\left(\sigma^*L-x E\right)
\]
and 
\[
\int_0^\tau g(x)dx=\frac{V_0}{n!}=:V=\vol_{\R^n}\left(\Delta\right)
\]
holds by Corollary \ref{NB_corollary}. 
Note that $\sigma^*L-x E$ is nef for $x\in [0,1]$ since $L$ is very ample and 
$Z$ is a line. Thus we have 
\[
g(x)=\frac{1}{(n-1)!}\left((2-d)x^{n-1}+(n-1)x^{n-2}\right)
\]
for any $x\in [0,1]$. Thus we always have $V_0\geq n+2-d$. 
We note that the $1$st coordinate $b_1$ of the barycenter of $\Delta$ 
is equal to $S\left(L; E\right)$ by Corollary \ref{NB_corollary}. 
Let us apply Proposition \ref{bary_proposition} \eqref{bary_proposition1} 
for $e=1$ and 
\[
v=\lim_{x\to1-0}\frac{g(x)-g(1)}{x-1}=\frac{n-d}{(n-2)!}. 
\]
Note that $v=0$ if and only if $d=n$. The function $h_0$ in Proposition 
\ref{bary_proposition} satisfies that 
\[
h_0(x)=\frac{n+1-d}{(n-1)!}\left(\frac{(n-d)x+1}{n+1-d}\right)^{n-1}
\]
for $x\in[1,\tau]$. Let us fix $t\in (1,\tau]$ satisfying the condition $W\geq V$ in 
Proposition \ref{bary_proposition} \eqref{bary_proposition12}. The condition is 
equivalent to the condition 
\[\begin{cases}
2+n(t-1)\geq V_0 & \text{if }d=n, \\
1+\left(V_0-(n+2-d)\right)\frac{n-d}{(n+1-d)^2}\leq
\left(\frac{t(n-d)+1}{n+1-d}\right)^n & \text{if }d\neq n. 
\end{cases}\]
Then the value $s_1$ in Proposition \ref{bary_proposition} \eqref{bary_proposition12} 
is equal to 
\[\begin{cases}
\frac{V_0-2+n-t}{n-1} & \text{if }d=n, \\
1+\frac{n+1-d}{n-d}\left(\beta^{\frac{1}{n-1}}-1\right)
& \text{if }d\neq n,
\end{cases}\]
where 
\[
\beta:=\frac{(n-d)\left(V_0-(n+2-d)\right)+(n+1-d)^2}{(n+1-d)(t(n-d)+1)}.
\]
This implies that 
\[
h_0(s_1)=\frac{(n-d)\left(V_0-(n+2-d)\right)+(n+1-d)^2}{(n-1)!\cdot (t(n-d)+1)}.
\]
Therefore, by Proposition \ref{bary_proposition} \eqref{bary_proposition12}, 
the value $S(L; E)$ is bigger than or equal to 
\[\begin{cases}
\frac{1}{2(n+1)V_0}\left(\frac{n}{n-1}(V_0-2+n-t)^2+2t(V_0-2+n-t)
-(n-1)(n-2)+2t^2\right) & \text{if }d=n, \\
\frac{1}{V_0}\Biggl(\beta^{\frac{n+1}{n-1}}\frac{n(n+1-d)^2}{(n+1)(n-d)}
-\beta^{\frac{n}{n-1}}\frac{2(n+1-d)^2}{(n+1)(n-d)^2} & \text{if }d\neq n.\\
\quad\quad\quad
+\frac{(d-1)(2d-1)+n(2-3d-d^2+n+2d n-n^2)+t(n-d)+(n-d)((n-d)t-n)V_0}{
(n+1-d)(n-d)^2(n+1)}\Biggr) & 
\end{cases}\]
\end{example}

Now, we define the notion of locally divisorial series. 

\begin{definition}\label{locally-div_definition}
Let $V_{\vec{\bullet}}$ be the Veronese equivalence class of an 
$(m\Z_{\geq 0})^r$-graded linear series $V_{m\vec{\bullet}}$ on $X$ associated to 
$L_1,\dots,L_r$. The series $V_{\vec{\bullet}}$ is said to be 
a \emph{locally divisorial series} if there is 
a decomposition $\Delta_{\Supp}=\overline{\bigcup_{\lambda\in\Lambda}
\Delta_{\Supp}^{\langle\lambda\rangle}}$ as in Definition \ref{interior_definition} 
\eqref{interior_definition4} such that the restriction 
$V_{\vec{\bullet}}^{\langle\lambda\rangle}$ (in the sense of Definition 
\ref{interior_definition} \eqref{interior_definition4})
is a divisorial series for any $\lambda\in\Lambda$. 
\end{definition}

By Propositions \ref{divisorial-series_proposition} and 
\ref{decomposition_proposition} \eqref{decomposition_proposition2}, 
for locally divisorial series $V_{\vec{\bullet}}$ and a prime divisor $E$ over $X$, 
we can compute $S\left(V_{\vec{\bullet}};E\right)$.

Finally, we prepare the notion of the Zariski decomposition in a strong sense. 

\begin{definition}\label{ZDS_definition}
Assume that $X$ is normal and take a big $L\in\CaCl(X)\otimes_\Z\Q$. 
We say that $L$ admits \emph{the Zariski decomposition $L=N+P$ in a strong sense} 
if $N$ is an effective $\Q$-Cartier $\Q$-divisor on $X$, $P$ is a \emph{nef} 
and big $\Q$-divisor on $X$ such that 
\[
H^0\left(X, m L\right)=m N+ H^0\left(X, m P\right)
\]
holds for any sufficiently divisible $m\in\Z_{>0}$. 
(We only allow that $N$ is a $\Q$-divisor.) 
The decomposition must be the Nakayama--Zariski decomposition of $L$, and hence 
the decomposition is unique if exists. 
\end{definition}

\begin{example}\label{ZDS_example}
\begin{enumerate}
\renewcommand{\theenumi}{\arabic{enumi}}
\renewcommand{\labelenumi}{(\theenumi)}
\item\label{ZDS_example1}
Assume that $X$ is $\Q$-factorial. If $n\leq 2$ or if $X$ is a Mori dream space 
\cite{HK}, then any big $\Q$-divisor on $X$ admits the Zariski decomposition 
in a strong sense. See, for example, \cite[Example 2.19]{ELMNP}, 
\cite[\S 2.3]{okawa}. 
\item\label{ZDS_example2}
Assume that a big $L\in\CaCl(X)\otimes_\Z\Q$ admits the Zariski decomposition 
$L=N+P$ in a strong sense. Take any projective birational morphism 
$\sigma\colon\tilde{X}\to X$ with $\tilde{X}$ normal. Then the decomposition 
$\sigma^*L=\sigma^*N+\sigma^*P$ is the Zariski decomposition 
of $\sigma^*L$ in a strong sense. 
Moreover, if $E$ is an effective and $\sigma$-exceptional $\Q$-Cartier 
$\Q$-divisor on $\tilde{X}$, then 
$\sigma^*L+E=\left(\sigma^*N+E\right)+\sigma^*P$ is the Zariski decomposition 
of $\sigma^*L+E$ in a strong sense. 
\end{enumerate}
\end{example}

\section{Dominants of primitive flags}\label{dominant_section}

In this section, we assume that the characteristic of $\Bbbk$ is zero. 
In this section, we also assume that $X$ is an $n$-dimensional projective variety, 
and let
\[
Y_\bullet\colon X=Y_0\triangleright Y_1\triangleright\cdots\triangleright Y_j
\]
be a primitive flag over $X$ with the associated prime blowups 
$\sigma_k\colon\tilde{Y}_k\to Y_k$ for any $0\leq k\leq j-1$, and  
let $V_{\vec{\bullet}}$ be the Veronese equivalence class of a graded linear series 
on $X$ associated to $L_1,\dots,L_r\in\CaCl(X)\otimes_\Z\Q$ which has 
bounded support and contains an ample series.

\begin{definition}\label{dominant_definition}
\begin{enumerate}
\renewcommand{\theenumi}{\arabic{enumi}}
\renewcommand{\labelenumi}{(\theenumi)}
\item\label{dominant_definition1}
A \emph{dominant} of $Y_\bullet$ is a collection of projective birational morphisms 
$\left\{\gamma_k\colon\bar{Y}_k\to\tilde{Y}_k\right\}_{0\leq k\leq j-1}$
satisfying: 
\begin{enumerate}
\renewcommand{\theenumii}{\roman{enumii}}
\renewcommand{\labelenumii}{\rm{(\theenumii)}}
\item\label{dominant_definition11}
for any $0\leq k\leq j-1$, the variety $\bar{Y}_k$ is a normal projective variety 
and $\hat{Y}_{k+1}:=\left(\gamma_k\right)^{-1}_*Y_{k+1}$ is a $\Q$-Cartier prime 
divisor in $\bar{Y}_k$, and 
\item\label{dominant_definition12}
for any $1\leq k\leq j-1$, there exists a morphism 
$\phi_k\colon\bar{Y}_k\to\hat{Y}_k$ such that the following diagram 
\[
\xymatrix{
& \bar{Y}_k \ar[ld]_{\gamma_k} \ar[rd]^{\phi_k} & \\
\tilde{Y}_k \ar[rd]_{\sigma_k} & & \hat{Y}_k \ar[ld]^{\gamma_{k-1}|_{\hat{Y}_k}} \\
& Y_k &
}\]
makes commutative. 
\end{enumerate}
Obviously, the morphism $\phi_k$ is unique. 
We say that a dominant $\left\{\gamma_k\right\}_{0\leq k\leq j-1}$ is a \emph{smooth}
(resp., a \emph{$\Q$-factorial}) dominant of $Y_\bullet$ if $\bar{Y}_k$
is smooth (resp., $\Q$-factorial) for any $0\leq k\leq j-1$. 
\item\label{dominant_definition2}
Assume that both 
$\left\{\gamma_k\colon\bar{Y}_k\to\tilde{Y}_k\right\}_{0\leq k\leq j-1}$
and 
$\left\{\gamma'_k\colon\bar{Y}'_k\to\tilde{Y}_k\right\}_{0\leq k\leq j-1}$
are dominants of $Y_\bullet$. 
When a collection $\left\{\psi_k\colon\bar{Y}'_k\to\bar{Y}_k\right\}_{0\leq k\leq j-1}$ 
satisfies $\gamma'_k=\gamma_k\circ\psi_k$ for any $0\leq k\leq j-1$, then 
$\left\{\psi_k\right\}_{0\leq k\leq j-1}$ is said to be 
\emph{a morphism between dominants}
$\left\{\gamma'_k\right\}_{0\leq k\leq j-1}$ and 
$\left\{\gamma_k\right\}_{0\leq k\leq j-1}$ of $Y_\bullet$. 
\end{enumerate}
\end{definition}

When $\left\{\gamma_k\colon\bar{Y}_k\to\tilde{Y}_k\right\}_{0\leq k\leq j-1}$ is a 
dominant of $Y_\bullet$, 
from the definition of dominants, we have the following commutative diagram: 
\[
\xymatrix{
& {} \ar@{.}[dr] &&&&&&&\\
\ar@{.}[dr] & & \hat{Y}_3 \ar[dl]_{\gamma_2|_{\hat{Y}_3}} \ar@{}[r]|{\subset} & \bar{Y}_2 
\ar[dl]_{\gamma_2} \ar[dr]^{\phi_2}&&&&&\\
& Y_3 \ar@{}[r]|{\subset} & \tilde{Y}_2 \ar[dr]_{\sigma_2}& &\hat{Y}_2
\ar[dl]_{\gamma_1|_{\hat{Y}_2}} \ar@{}[r]|{\subset} & \bar{Y}_1 \ar[dl]_{\gamma_1}
\ar[dr]^{\phi_1}&&&\\
&&& Y_2 \ar@{}[r]|{\subset} & \tilde{Y}_1\ar[dr]_{\sigma_1}&&
\hat{Y}_1 \ar[dl]_{\gamma_0|_{\hat{Y}_1}} \ar@{}[r]|{\subset} & \bar{Y}_0
\ar[dl]_{\gamma_0}&\\
&&&&&Y_1\ar@{}[r]|{\subset} &\tilde{Y}_0\ar[dr]_{\sigma_0}&&\\
&&&&&&&Y_0\ar@{}[r]|{=} & X.
}\]

\begin{definition}\label{dominant-compare_definition}
Let $\left\{\gamma_k\colon\bar{Y}_k\to\tilde{Y}_k\right\}_{0\leq k\leq j-1}$ 
be a dominant 
of $Y_\bullet$. For any $1\leq l\leq k\leq j$, 
we define $d_{l,k}\in\Q_{\geq 0}$ and an effective $\Q$-Cartier $\Q$-divisor 
$\Sigma_{l,k}$ on $\bar{Y}_{k-1}$ with $\hat{Y}_k\not\subset\Supp\Sigma_{l,k}$ 
inductively as follows: 
\begin{itemize}
\item
$d_{l,l}:=0$, $\Sigma_{l,l}:=\gamma_{l-1}^*Y_l-\hat{Y}_l$, and 
\item
if $l<k$, then we set $d_{l,k}:=\ord_{\hat{Y}_k}\phi_{k-1}^*
\left(\Sigma_{l,k-1}|_{\hat{Y}_{k-1}}\right)$ and 
$\Sigma_{l,k}:=\phi_{k-1}^*
\left(\Sigma_{l,k-1}|_{\hat{Y}_{k-1}}\right)-d_{l,k}\hat{Y}_k$.
\end{itemize}
We also set 
\[
\begin{pmatrix}
1 & & & & \\
g_{1,2} & 1 & & O & \\
g_{1,3} & g_{2,3} & 1 & & \\
\vdots & & \ddots & \ddots & \\
g_{1,j} & g_{2,j} & \cdots & g_{j-1,j} & 1
\end{pmatrix}:=
\begin{pmatrix}
1 & & & & \\
d_{1,2} & 1 & & O & \\
d_{1,3} & d_{2,3} & 1 & & \\
\vdots & & \ddots & \ddots & \\
d_{1,j} & d_{2,j} & \cdots & d_{j-1,j} & 1
\end{pmatrix}^{-1}.
\]
We sometimes denote $d_{l,k}$, $\Sigma_{l,k}$ and $g_{l,k}$ by 
\[
d_{l,k}\left(\left\{\gamma_k\right\}_{0\leq k\leq j-1}\right),\quad 
\Sigma_{l,k}\left(\left\{\gamma_k\right\}_{0\leq k\leq j-1}\right)\quad \text{and}
\quad
g_{l,k}\left(\left\{\gamma_k\right\}_{0\leq k\leq j-1}\right).
\] 
\end{definition}

The following lemma is trivial. We omit the proof. 

\begin{lemma}\label{dominants-compare_lemma}
Let $\left\{\gamma_k\colon\bar{Y}_k\to\tilde{Y}_k\right\}_{0\leq k\leq j-1}$
and 
$\left\{\gamma'_k\colon\bar{Y}'_k\to\tilde{Y}_k\right\}_{0\leq k\leq j-1}$
be dominants of $Y_\bullet$, and let 
$\left\{\psi_k\colon\bar{Y}'_k\to\bar{Y}_k\right\}_{0\leq k\leq j-1}$ 
be a morphism between dominants
$\left\{\gamma'_k\right\}_{0\leq k\leq j-1}$ and 
$\left\{\gamma_k\right\}_{0\leq k\leq j-1}$. 
We set 
\begin{eqnarray*}
d_{l,k}:=d_{l,k}\left(\left\{\gamma_k\right\}_{0\leq k\leq j-1}\right),\quad 
d'_{l,k}:=d_{l,k}\left(\left\{\gamma'_k\right\}_{0\leq k\leq j-1}\right),\\ 
\Sigma_{l,k}:=\Sigma_{l,k}\left(\left\{\gamma_k\right\}_{0\leq k\leq j-1}\right),\quad
\Sigma'_{l,k}:=\Sigma_{l,k}\left(\left\{\gamma'_k\right\}_{0\leq k\leq j-1}\right).
\end{eqnarray*}
Moreover, for any $1\leq l\leq k\leq j$, let us inductively define $e_{l,k}\in\Q_{\geq 0}$
and an effective $\Q$-Cartier $\Q$-divisor $\Theta_{l,k}$ on $\bar{Y}'_{k-1}$ with 
$\hat{Y}'_k\not\subset\Supp\Theta_{l,k}$ such that 
\begin{itemize}
\item
$e_{l,l}:=0$, $\Theta_{l,l}:=\psi_{l-1}^*\hat{Y}_l-\hat{Y}'_l$, and 
\item
for any $1\leq l<k\leq j$, we set 
$e_{l,k}:=\ord_{\hat{Y}'_k}
\left({\phi'_k}^*\left(\Theta_{l,k-1}|_{\hat{Y}'_{k-1}}\right)\right)$ and 
$\Theta_{l,k}:={\phi'_k}^*\left(\Theta_{l,k-1}|_{\hat{Y}'_{k-1}}\right)-e_{l,k}\hat{Y}'_k$.
\end{itemize}
Then, for any $1\leq l\leq k\leq j$, we have 
\[
\Sigma'_{l,k}=\psi_{k-1}^*\Sigma_{l,k}+\Theta_{l,k}+\sum_{i=l+1}^k d_{l,i}\Theta_{i,k}
\]
and 
\[
d'_{l,k}=d_{l,k}+e_{l,k}+\sum_{i=l+1}^{k-1} d_{l,i}e_{i,k}. 
\]
In other words, we have 
\[
\begin{pmatrix}
1 & & & & \\
d'_{1,2} & 1 & & O & \\
d'_{1,3} & d'_{2,3} & 1 & & \\
\vdots & & \ddots & \ddots & \\
d'_{1,j} & d'_{2,j} & \cdots & d'_{j-1,j} & 1
\end{pmatrix}=
\begin{pmatrix}
1 & & & & \\
e_{1,2} & 1 & & O & \\
e_{1,3} & e_{2,3} & 1 & & \\
\vdots & & \ddots & \ddots & \\
e_{1,j} & e_{2,j} & \cdots & e_{j-1,j} & 1
\end{pmatrix}\begin{pmatrix}
1 & & & & \\
d_{1,2} & 1 & & O & \\
d_{1,3} & d_{2,3} & 1 & & \\
\vdots & & \ddots & \ddots & \\
d_{1,j} & d_{2,j} & \cdots & d_{j-1,j} & 1
\end{pmatrix}.
\]
\end{lemma}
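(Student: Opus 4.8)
The plan is to prove the two displayed identities simultaneously by induction on $k$ (equivalently on $k-l\geq0$), reading off the recursion for $d'_{l,k}$ as the coefficient of $\hat Y'_k$ appearing inside the recursion for $\Sigma'_{l,k}$. Granting these, the matrix identity needs no separate argument: in the product of the two unipotent lower-triangular matrices built from the $e_{l,k}$'s and the $d_{l,k}$'s, the entry in the position indexed by a pair $l<k$ is exactly $d_{l,k}+e_{l,k}+\sum_{i=l+1}^{k-1}d_{l,i}e_{i,k}$, which is precisely the recursion for $d'_{l,k}$ that the induction will have produced.

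I would first isolate two structural facts. $(a)$ Since $\gamma'_{k-1}=\gamma_{k-1}\circ\psi_{k-1}$ and strict transforms are functorial under composition, $\hat Y'_k=(\gamma'_{k-1})^{-1}_{*}Y_k=(\psi_{k-1})^{-1}_{*}\hat Y_k$; hence $\hat Y'_k$ occurs with coefficient $1$ in $\psi_{k-1}^{*}\hat Y_k$, so $\Theta_{k,k}=\psi_{k-1}^{*}\hat Y_k-\hat Y'_k$ is effective, $\Q$-Cartier, and does not contain $\hat Y'_k$ in its support (consistently with Definition \ref{dominant-compare_definition} applied to $\{\gamma'_k\}$). $(b)$ The morphism of dominants is compatible with passing to the next stratum:
\[
\phi_{k-1}\circ\psi_{k-1}=\bigl(\psi_{k-2}|_{\hat Y'_{k-1}}\bigr)\circ\phi'_{k-1}\colon\ \bar Y'_{k-1}\longrightarrow\hat Y_{k-1}.
\]
To see this, compose both sides with $\gamma_{k-2}|_{\hat Y_{k-1}}\colon\hat Y_{k-1}\to Y_{k-1}$: unwinding the defining squares of $\phi_{k-1}$ and of $\phi'_{k-1}$, together with $\gamma'_\bullet=\gamma_\bullet\circ\psi_\bullet$, shows that both composites equal $\sigma_{k-1}\circ\gamma'_{k-1}$. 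Since every scheme involved is birational over $Y_{k-1}$ and $\hat Y_{k-1}\to Y_{k-1}$ is separated, the two morphisms out of the reduced irreducible $\bar Y'_{k-1}$ agree on a dense open, hence everywhere; taking pullbacks gives $(\phi'_{k-1})^{*}\circ(\psi_{k-2}|_{\hat Y'_{k-1}})^{*}=\psi_{k-1}^{*}\circ\phi_{k-1}^{*}$. I will also use the elementary fact that, for a morphism of normal varieties, pullback commutes with restriction to a prime divisor provided the divisor being pulled back does not contain the image of that prime divisor in its support.

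The base case $k=l$ is a direct unwinding: $\Sigma'_{l,l}=(\gamma'_{l-1})^{*}Y_l-\hat Y'_l=\psi_{l-1}^{*}\bigl(\Sigma_{l,l}+\hat Y_l\bigr)-\hat Y'_l=\psi_{l-1}^{*}\Sigma_{l,l}+\Theta_{l,l}$, and $d'_{l,l}=0=d_{l,l}+e_{l,l}$ (empty sum). For the inductive step, assume the $\Sigma'_{l,k-1}$ formula and apply $(\phi'_{k-1})^{*}\bigl((-)|_{\hat Y'_{k-1}}\bigr)$ to it. Using the restriction–pullback compatibility, then $(b)$, then the defining recursion for $\Sigma_{l,k}$ and fact $(a)$, the $\psi_{k-2}^{*}\Sigma_{l,k-1}$ term yields $\psi_{k-1}^{*}\Sigma_{l,k}+d_{l,k}\hat Y'_k+d_{l,k}\Theta_{k,k}$; by the defining recursion for $\Theta_{\bullet,k}$, the $\Theta_{l,k-1}$ term yields $\Theta_{l,k}+e_{l,k}\hat Y'_k$ and each $d_{l,i}\Theta_{i,k-1}$ with $l<i<k$ yields $d_{l,i}\bigl(\Theta_{i,k}+e_{i,k}\hat Y'_k\bigr)$. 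Since $\psi_{k-1}^{*}\Sigma_{l,k}$ and all the $\Theta_{\bullet,k}$ avoid $\hat Y'_k$ in their supports, the order of vanishing along $\hat Y'_k$ of the whole expression — which is $d'_{l,k}$ by definition — is exactly $d_{l,k}+e_{l,k}+\sum_{i=l+1}^{k-1}d_{l,i}e_{i,k}$; subtracting $d'_{l,k}\hat Y'_k$ leaves $\Sigma'_{l,k}=\psi_{k-1}^{*}\Sigma_{l,k}+\Theta_{l,k}+\sum_{i=l+1}^{k}d_{l,i}\Theta_{i,k}$, the $i=k$ summand being $d_{l,k}\Theta_{k,k}$.

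The only genuine content is the compatibility $(b)$ together with the bookkeeping that every pullback and restriction above is legitimate (the divisors are $\Q$-Cartier and their supports miss the strata we restrict to). This is the step I expect to be the main obstacle, chiefly because $\hat Y_k$ need not be normal, so one has to argue through ``two morphisms over a common base agreeing on a dense open'' rather than invoking a universal property. Once $(b)$ and these compatibilities are in place, the rest is purely formal, which is presumably why the authors state the lemma as trivial and omit the proof.
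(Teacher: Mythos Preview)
Your proof is correct and is precisely the natural inductive unwinding that the paper has in mind; the paper itself omits the proof entirely, stating only ``The following lemma is trivial. We omit the proof.'' Your structural fact (b) and the bookkeeping you spell out are exactly the content the authors deem routine, so there is nothing further to compare.
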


\begin{proposition}\label{refinement-dominant_proposition}
Let $\left\{\gamma_k\colon\bar{Y}_k\to\tilde{Y}_k\right\}_{0\leq k\leq j-1}$ 
be a dominant of $Y_\bullet$. For any $1\leq k\leq j$, let us define the Veronese 
equivalence class $V_{\vec{\bullet}}^{\left(\hat{Y}_1>\dots>\hat{Y}_k\right)}$ on 
$\hat{Y}_k$ as follows: 
\begin{itemize}
\item
we set $V_{\vec{\bullet}}^{\left(\hat{Y}_1\right)}:=
\left(\gamma_0^*\sigma_0^*V_{\vec{\bullet}}\right)^{\left(\hat{Y}_1\right)}$, and 
\item
if $k\geq 2$, we set 
$V_{\vec{\bullet}}^{\left(\hat{Y}_1>\dots>\hat{Y}_k\right)}:=
\left(\phi_{k-1}^*V_{\vec{\bullet}}^{\left(\hat{Y}_1>\dots>\hat{Y}_{k-1}\right)}
\right)^{\left(\hat{Y}_k\right)}$.
\end{itemize}
Then, for any sufficiently divisible $m\in\Z_{>0}$ and for any 
$\left(\vec{a},b_1,\dots,b_k\right)\in\left(m\Z_{\geq 0}\right)^{r+k}$, 
the space $V_{\vec{a},b_1,\dots,b_k}^{\left(\hat{Y}_1>\dots>\hat{Y}_k\right)}$ 
is equal to 
\[\begin{cases}
0 & \text{if there exists }2\leq l\leq k\text{ such that } b'_l<0,\\
\left(\gamma_{k-1}|_{\hat{Y}_k}\right)^*
V_{\vec{a},b'_1,\dots,b'_k}^{\left(Y_1\triangleright\dots\triangleright Y_k\right)}
+\sum_{l=1}^k b'_l\left(\Sigma_{l,k}|_{\hat{Y}_k}\right) & \text{otherwise},
\end{cases}\]
where we set $b'_1,\dots,b'_k\in\Z$ as follows: 
\begin{eqnarray*}
b'_1&:=&b_1, \\
b'_l&:=&b_l-\sum_{i=1}^{l-1}d_{i,l}b'_i\quad(2\leq l\leq k).
\end{eqnarray*}
In other words, 
\[
\begin{pmatrix}b_1\\  \\ \vdots\\  \\  b_k\end{pmatrix}
=\begin{pmatrix}
1 & & & & \\
d_{1,2} & 1 & & O & \\
d_{1,3} & d_{2,3} & 1 & & \\
\vdots & & \ddots & \ddots & \\
d_{1,k} & d_{2,k} & \cdots & d_{k-1,k} & 1
\end{pmatrix}\begin{pmatrix}b'_1\\ \\ \vdots \\  \\  b'_k\end{pmatrix}.
\]
We note that $V_{\vec{\bullet}}^{\left(Y_1\triangleright\dots\triangleright Y_k\right)}$
is defined in Definition \ref{refinement-primitive_definition}. 
\end{proposition}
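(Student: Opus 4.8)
The plan is to prove the statement by induction on $k$. The base case $k=1$ is essentially the definition of the refinement $V_{\vec{\bullet}}^{(\hat Y_1)}$: since $\gamma_0$ is a birational morphism onto $\tilde Y_0$ and $\sigma_0\colon\tilde Y_0\to X$ is the prime blowup associated to $Y_1$, the pullback $\gamma_0^*\sigma_0^*V_{\vec{\bullet}}$ differs from a pullback along the composite birational map only up to the exceptional-divisor bookkeeping recorded in $\Sigma_{1,1}=\gamma_0^*Y_1-\hat Y_1$. Writing out the definition of the refinement in Definition \ref{refinement_definition} on $\hat Y_1$ and comparing with the refinement $V_{\vec{\bullet}}^{(Y_1)}$ on $Y_1$, one sees that an element of $V_{\vec a,b_1}^{(\hat Y_1)}$ is obtained from $V_{\vec a,b_1}^{(Y_1)}$ by pulling back along $\gamma_0|_{\hat Y_1}$ and adding $b_1(\Sigma_{1,1}|_{\hat Y_1})$; here $b_1'=b_1$ and no negativity obstruction can occur because $\Sigma_{1,1}$ is effective with $\hat Y_1\not\subset\Supp\Sigma_{1,1}$. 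Proposition \ref{pullback_proposition} guarantees that the ample-series and bounded-support hypotheses propagate so that all the refinements involved are well-defined.

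For the inductive step, I would assume the formula for $k-1$ and apply the refinement-by-$\hat Y_k$ operation to both sides. By definition $V_{\vec{\bullet}}^{(\hat Y_1>\cdots>\hat Y_k)}=(\phi_{k-1}^*V_{\vec{\bullet}}^{(\hat Y_1>\cdots>\hat Y_{k-1})})^{(\hat Y_k)}$, and $\phi_{k-1}\colon\bar Y_{k-1}\to\hat Y_{k-1}$ is a birational morphism with $\gamma_{k-1}|_{\hat Y_k}$ sitting in the commutative square of Definition \ref{dominant_definition}. The key computation is: the section space $V_{\vec a,b_1,\dots,b_k}^{(\hat Y_1>\cdots>\hat Y_k)}$ is, by the refinement definition, the image in $H^0(\hat Y_k,\,\cdots)$ of those sections of $\phi_{k-1}^*V_{\vec{\bullet}}^{(\hat Y_1>\cdots>\hat Y_{k-1})}$ vanishing to order at least $b_k$ along $\hat Y_k$. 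Substituting the inductive description of $V_{\vec{\bullet}}^{(\hat Y_1>\cdots>\hat Y_{k-1})}$ — a $\gamma_{k-2}|_{\hat Y_{k-1}}$-pullback of $V^{(Y_1\triangleright\cdots\triangleright Y_{k-1})}$ plus the fixed part $\sum_{l=1}^{k-1}b_l'(\Sigma_{l,k-1}|_{\hat Y_{k-1}})$ — and pulling back by $\phi_{k-1}$, the fixed part becomes $\sum_{l=1}^{k-1}b_l'\,\phi_{k-1}^*(\Sigma_{l,k-1}|_{\hat Y_{k-1}})$, which by Definition \ref{dominant-compare_definition} decomposes as $\sum_{l=1}^{k-1}b_l'(d_{l,k}\hat Y_k+\Sigma_{l,k})$. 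Therefore vanishing to order $\ge b_k$ along $\hat Y_k$ of the whole expression is equivalent to vanishing to order $\ge b_k-\sum_{l=1}^{k-1}d_{l,k}b_l'=b_k'$ of the moving part (the pullback of $V^{(Y_1\triangleright\cdots\triangleright Y_{k-1})}$ via the composite birational morphism to $Y_{k-1}$), after which one restricts to $\hat Y_k$, picks up the extra fixed divisor $\sum_{l=1}^{k-1}b_l'(\Sigma_{l,k}|_{\hat Y_k})$ and the new fixed divisor $b_k'(\Sigma_{k,k}|_{\hat Y_k})$, and identifies the result with a $\gamma_{k-1}|_{\hat Y_k}$-pullback of $V_{\vec a,b_1',\dots,b_k'}^{(Y_1\triangleright\cdots\triangleright Y_k)}$. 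The case $b_k'<0$ gives zero, which after unwinding is the assertion that $b_l'<0$ for some $2\le l\le k$ kills the space.

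The main obstacle is the careful tracking of the birational base change in the refinement operation: the refinement $(\,\cdot\,)^{(\hat Y_k)}$ is defined via an image of sections that vanish along $\hat Y_k$ together with the restriction to $\hat Y_k$, and I must verify that ``pull back along $\phi_{k-1}$, then refine by $\hat Y_k$, then restrict'' is compatible with ``refine along $Y_k$ downstairs, then restrict, then pull back along $\gamma_{k-1}|_{\hat Y_k}$'' — this is precisely the content of Example \ref{interior_example} \eqref{interior_example4} combined with the definition of refinement along primitive divisors (Definition \ref{refinement-primitive_definition}), but one must check that the image appearing in the refinement downstairs pulls back to the image appearing upstairs, which uses that $\phi_{k-1}$ is a proper birational morphism between normal varieties so that $\phi_{k-1 *}\sO=\sO$ and pullback of global sections is injective with image exactly the sections pulled back from below (modulo fixed divisors). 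Once this compatibility is isolated as a lemma, the bookkeeping of the triangular matrix and its inverse is routine linear algebra: the relation $\vec b=(I+D)\vec b'$ with $D$ strictly lower triangular is exactly how the $b_l'$ are defined, and the effectivity statements $\hat Y_k\not\subset\Supp\Sigma_{l,k}$, $\Sigma_{l,k}\ge 0$ are maintained inductively by construction in Definition \ref{dominant-compare_definition}. Finally, I would remark that the independence of the conclusion from the choice of sufficiently divisible $m$, and the well-definedness of all the Veronese classes, follows from Proposition \ref{pullback_proposition} and the stability of bounded support / ample series under refinement recorded after Definition \ref{refinement-primitive_definition}.
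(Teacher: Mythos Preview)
Your proposal is correct and follows essentially the same inductive strategy as the paper: the paper's proof is a terse application of \cite[Remark 3.17]{r3d28} (which encodes exactly the ``pull back, then refine, then restrict'' compatibility you isolate as the main obstacle), computing $\ord_{\hat Y_k}\bigl(\sum_{l=1}^{k-1}b'_l\,\phi_{k-1}^*(\Sigma_{l,k-1}|_{\hat Y_{k-1}})\bigr)=\sum_{l=1}^{k-1}b'_l d_{l,k}=b_k-b'_k$ and identifying the residual fixed part with $\sum_{l=1}^{k-1}b'_l\Sigma_{l,k}$, just as you do. Your extended discussion of the base case and the birational compatibility lemma spells out what the paper delegates to that single external reference.
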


\begin{proof}
The proof is just applying \cite[Remark 3.17]{r3d28} inductively. 
By \cite[Remark 3.17]{r3d28}, we may assume that $k \geq 2$. We may also 
assume that $b'_1,\dots,b'_{k-1}\geq 0$. Since 
\[
\ord_{\hat{Y}_k}\left(\sum_{l=1}^{k-1}b'_l\phi_{k-1}^*\left(\Sigma_{l,k-1}|_{\hat{Y}_{k-1}}
\right)\right)=\sum_{l=1}^{k-1}b'_l d_{l,k}=b_k-b'_k
\]
and 
\[
\sum_{l=1}^{k-1}b'_l\phi_{k-1}^*\left(\Sigma_{l,k-1}|_{\hat{Y}_{k-1}}\right)
-\left(\sum_{l=1}^{k-1}b'_l d_{l,k}\right)\hat{Y}_k=\sum_{l=1}^{k-1}b'_l\Sigma_{l,k}, 
\]
we get the assertion by applying \cite[Remark 3.17]{r3d28}. 
\end{proof}

\begin{corollary}\label{refinement-dominant_corollary}
Under the assumption of Proposition \ref{refinement-dominant_proposition}, 
let us consider any general admissible flag 
\[
Z_\bullet\colon\hat{Y}_j=Z_0\supsetneq Z_1\supsetneq\cdots\supsetneq Z_{n-j}
\]
of $\hat{Y}_j$, where ``general'' means, the support of any $\Sigma_{l,j}|_{\hat{Y}_j}$
does not contain the point $Z_{n-j}$. 
Let \[
\Delta\subset\R_{\geq 0}^{(r-1+j)+(n-j)}\quad (\text{resp., }
\hat{\Delta}\subset\R_{\geq 0}^{(r-1+j)+(n-j)})
\]
be the Okounkov body of 
\[
\left(\gamma_{j-1}|_{\hat{Y}_j}\right)^*V_{\vec{\bullet}}^{\left(Y_1\triangleright
\cdots\triangleright Y_j\right)}\quad(\text{resp., } 
V_{\vec{\bullet}}^{\left(\hat{Y}_1>\dots>\hat{Y}_j\right)})
\]
associated to $Z_\bullet$. Let us set the linear transform 
\begin{eqnarray*}
f\colon\R^{r-1+j+n-j}&\to&\R^{r-1+j+n-j}\\
\begin{pmatrix}\vec{x}\\ \vec{y}'\\ \vec{z}\end{pmatrix}
&\mapsto&
\begin{pmatrix}\vec{x}\\ \vec{y}\\ \vec{z}\end{pmatrix}
\end{eqnarray*}
with $\vec{x}\in\R^{r-1}$, $\vec{y}$, $\vec{y}'\in\R^j$, $\vec{z}\in\R^{n-j}$
defined by 
\[
\begin{pmatrix}y_1\\ \\ \vdots \\  \\  y_j\end{pmatrix}
=\begin{pmatrix}
1 & & & & \\
d_{1,2} & 1 & & O & \\
d_{1,3} & d_{2,3} & 1 & & \\
\vdots & & \ddots & \ddots & \\
d_{1,j} & d_{2,j} & \cdots & d_{j-1,j} & 1
\end{pmatrix}\begin{pmatrix}y'_1\\ \\ \vdots \\  \\  y'_j\end{pmatrix}.
\]
Then we have the equality $\hat{\Delta}=f\left(\Delta\right)$. 
In particular, if $\left(\hat{b}_1,\dots,\hat{b}_{r-1+n}\right)\in\hat{\Delta}$ be the 
barycenter of $\hat{\Delta}$, then we have 
\[
\hat{b}_{r-1+k}=S\left(V_{\vec{\bullet}}; Y_1\triangleright\cdots\triangleright Y_k\right)
+\sum_{l=1}^{k-1} d_{l,k}
S\left(V_{\vec{\bullet}}; Y_1\triangleright\cdots\triangleright Y_l\right)
\]
for any $1\leq k\leq j$. 
\end{corollary}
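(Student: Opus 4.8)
The statement to prove is Corollary \ref{refinement-dominant_corollary}, which computes the Okounkov body $\hat{\Delta}$ of the refinement $V_{\vec{\bullet}}^{\left(\hat{Y}_1>\dots>\hat{Y}_j\right)}$ associated to a general admissible flag $Z_\bullet$ of $\hat{Y}_j$, in terms of the Okounkov body $\Delta$ of $\left(\gamma_{j-1}|_{\hat{Y}_j}\right)^*V_{\vec{\bullet}}^{\left(Y_1\triangleright\cdots\triangleright Y_j\right)}$ associated to the same $Z_\bullet$. The plan is to deduce everything from Proposition \ref{refinement-dominant_proposition}, which is the genuine content; the corollary is essentially a bookkeeping consequence.

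First I would unwind the definition of the Okounkov body. By Definition \ref{okounkov_definition}, $\Delta$ (resp.\ $\hat{\Delta}$) is cut out at height $1$ from the closure of the cone over the semigroup $\Gamma_{Z_\bullet}$ of pairs $\left(\text{multidegree},\ \nu_{Z_\bullet}(s)\right)$ as $s$ ranges over sections of the relevant graded linear series. The key observation from Proposition \ref{refinement-dominant_proposition} is that, for every sufficiently divisible $m$ and every multidegree $\left(\vec{a},b_1,\dots,b_j\right)$ with the $b'_l\geq 0$, the space $V_{\vec{a},b_1,\dots,b_j}^{\left(\hat{Y}_1>\dots>\hat{Y}_j\right)}$ is obtained from $\left(\gamma_{j-1}|_{\hat{Y}_j}\right)^*V_{\vec{a},b'_1,\dots,b'_j}^{\left(Y_1\triangleright\dots\triangleright Y_j\right)}$ by adding the \emph{fixed} effective divisor $\sum_{l=1}^j b'_l\left(\Sigma_{l,k}|_{\hat{Y}_j}\right)$. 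Since the flag $Z_\bullet$ is chosen so that the point $Z_{n-j}$ avoids the support of each $\Sigma_{l,j}|_{\hat{Y}_j}$, adding this fixed divisor does not change the valuation $\nu_{Z_\bullet}$ of any section (it only shifts by a quantity independent of the section). Hence the map $s\mapsto\nu_{Z_\bullet}(s)$ agrees on the two sides under the identification of the spaces, and the semigroups $\Gamma_{Z_\bullet}$ for the two series are identified by the linear change of variables $\left(\vec{a},\vec{b}',\nu\right)\mapsto\left(\vec{a},\vec{b},\nu\right)$ where $\vec{b}=Ld\,\vec{b}'$ and $Ld$ is the lower-triangular matrix with entries $d_{l,k}$. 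Passing to cones and then slicing at height $1$ gives exactly $\hat{\Delta}=f(\Delta)$, since $f$ is precisely this linear transformation acting as the identity on the $\vec{x}$ and $\vec{z}$ coordinates.

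Next I would extract the barycenter formula. The map $f$ is unipotent lower-triangular (block-wise it is the identity on the $\R^{r-1}$ and $\R^{n-j}$ factors and equals $Ld$ on the $\R^j$ factor), so it is volume-preserving and affine, hence it sends the barycenter of $\Delta$ to the barycenter of $\hat{\Delta}$. Writing the barycenter of $\Delta$ as $\left(\dots, b_{r-1+1},\dots,b_{r-1+j},\dots\right)$ and applying $f$, the $(r-1+k)$-th coordinate of the barycenter of $\hat{\Delta}$ becomes $b_{r-1+k}+\sum_{l=1}^{k-1}d_{l,k}\,b_{r-1+l}$. It remains to identify each $b_{r-1+k}$ with $S\left(V_{\vec{\bullet}};Y_1\triangleright\cdots\triangleright Y_k\right)$: by Remark \ref{S-T_remark} \eqref{S-T_remark3} and Definition \ref{primitive-Okounkov_definition}, the $(r-1+k)$-th coordinate of the barycenter of the Okounkov body $\Delta$ of $\left(\gamma_{j-1}|_{\hat{Y}_j}\right)^*V_{\vec{\bullet}}^{\left(Y_1\triangleright\cdots\triangleright Y_j\right)}$ equals $S$ of the $k$-th refinement, and by Lemma \ref{bt-div_lemma} \eqref{bt-div_lemma2} (invariance of $S$ under pullback) this is $S\left(V_{\vec{\bullet}};Y_1\triangleright\cdots\triangleright Y_k\right)$; one also uses Example \ref{interior_example} \eqref{interior_example8} / Lemma \ref{asymp-equiv_lemma} to replace the pullback of the complete (or general graded) series by the genuine series on $\hat{Y}_j$ if needed. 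Combining gives the claimed formula for $\hat{b}_{r-1+k}$.

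The main obstacle I anticipate is the careful verification that "general" in the stated sense (the point $Z_{n-j}$ avoiding the supports of all $\Sigma_{l,j}|_{\hat{Y}_j}$) is exactly what is needed to ensure that the fixed divisors in Proposition \ref{refinement-dominant_proposition} contribute a \emph{constant} shift to $\nu_{Z_\bullet}$ that cancels on both sides — i.e.\ that the first coordinate $\nu_1$ of the valuation (the order of vanishing along $Z_1$) is unaffected, and likewise all higher coordinates, because the fixed divisors don't pass through $Z_{n-j}$ and hence through none of the $Z_i$ for general choice. This is where one must be slightly careful that the identification of semigroups is an honest bijection respecting the valuation map, not merely a bijection on the level of dimensions; but once Proposition \ref{refinement-dominant_proposition} is in hand this is a short local computation of $\nu_{Z_\bullet}$ of a product $s\cdot\prod\left(\text{fixed equations}\right)$, and the rest is linear algebra (the inversion of the lower-triangular matrix, already recorded as the $g_{l,k}$ in Definition \ref{dominant-compare_definition}) together with the elementary fact that an affine unimodular map moves barycenters to barycenters.
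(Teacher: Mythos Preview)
Your proposal is correct and follows essentially the same route as the paper: derive $\hat{\Delta}=f(\Delta)$ directly from Proposition~\ref{refinement-dominant_proposition} (using that the fixed divisor $\sum_l b'_l(\Sigma_{l,j}|_{\hat{Y}_j})$ has $\nu_{Z_\bullet}$-value zero by the generality of $Z_{n-j}$), then identify the $(r-1+k)$-th barycenter coordinate of $\Delta$ with $S(V_{\vec{\bullet}};Y_1\triangleright\cdots\triangleright Y_k)$ via Remark~\ref{S-T_remark}\,\eqref{S-T_remark3} and transport it through the unimodular map $f$. The paper's proof is simply more terse, recording the barycenter transport as a one-line change of variables in the integral; your extra appeal to Lemma~\ref{bt-div_lemma}\,\eqref{bt-div_lemma2} and Example~\ref{interior_example}\,\eqref{interior_example8} to handle the pullback by $\gamma_{j-1}|_{\hat{Y}_j}$ is harmless but unnecessary, since that birational pullback is already an isomorphism on each graded piece.
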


\begin{proof}
The assertion $\hat{\Delta}=f\left(\Delta\right)$ is a direct consequence 
of Proposition \ref{refinement-dominant_proposition}. We already know in 
Remark \ref{S-T_remark} \eqref{S-T_remark3} that, the value 
$S\left(V_{\vec{\bullet}}; Y_1\triangleright\cdots\triangleright Y_k\right)$ is 
the $(r-1+k)$-th coordinate of the barycenter of $\Delta$. Since 
\[
\hat{b}_{r-1+k}=\frac{1}{\vol\left(\hat{\Delta}\right)}
\int_{\left(\vec{x},\vec{y},\vec{z}\right)\in\hat{\Delta}}y_kd\vec{x}d\vec{y}d\vec{z}
=\frac{1}{\vol\left(\Delta\right)}
\int_{\left(\vec{x},\vec{y}',\vec{z}\right)\in\hat{\Delta}}
\left(y'_k+\sum_{l=1}^{k-1}d_{l,k}y'_l\right)d\vec{x}d\vec{y}'d\vec{z},
\]
we get the assertion. 
\end{proof}

\section{Adequate dominants}\label{adequate_section}

In this section, we assume that the characteristic of $\Bbbk$ is zero. 
As in \S \ref{dominant_section}, in this section, 
we assume that $X$ is an $n$-dimensional projective variety, 
\[
Y_\bullet\colon X=Y_0\triangleright Y_1\triangleright\cdots\triangleright Y_j
\]
is a primitive flag over $X$ with the associated prime blowups 
$\sigma_k\colon\tilde{Y}_k\to Y_k$ for any $0\leq k\leq j-1$. 
We also fix a \emph{$\Q$-factorial} dominant 
$\left\{\gamma_k\colon\bar{Y}_k\to\tilde{Y}_k\right\}_{0\leq k\leq j-1}$ of $Y_\bullet$, 
and we follow the notation in Definition \ref{dominant_definition}. 
Let us fix a big $L\in\CaCl(X)\otimes_\Z\Q$ and set 
$V_{\vec{\bullet}}:=H^0(\bullet L)$. 

\begin{definition}\label{P-N_definition}
For any $1\leq k\leq j$, let us define 
\begin{itemize}
\item
a subset $\D_k\subset\R_{>0}^k$, 
\item
a big $\R$-divisor $P_{k-1}\left(x_1,\dots,x_k\right)$ on $\bar{Y}_{k-1}$ 
such that the restriction $P_{k-1}\left(x_1,\dots,x_k\right)|_{\hat{Y}_k}$ is big and 
$\hat{Y}_k\not\subset\B_+\left(P_{k-1}\left(x_1,\dots,x_k\right)\right)$ holds 
for any $\left(x_1,\dots,x_k\right)\in\D_k$, 
\item
an effective $\R$-divisor $N_{l-1,k-1}\left(x_1,\dots,x_l\right)$ on $\bar{Y}_{k-1}$
with $\hat{Y}_k\not\subset\Supp N_{l-1,k-1}\left(x_1,\dots,x_l\right)$
for any $1\leq l\leq k$ and for any $\left(x_1,\dots,x_l\right)\in\D_l$, 
\item
real numbers $u_k\left(x_1,\dots,x_{k-1}\right)$, 
$t_k\left(x_1,\dots,x_{k-1}\right)\in\R_{\geq 0}$ for any 
$\left(x_1,\dots,x_{k-1}\right)\in\D_{k-1}$ with 
$u_k\left(x_1,\dots,x_{k-1}\right)<t_k\left(x_1,\dots,x_{k-1}\right)$, 
\item
a real number $u_{l,k}\left(x_1,\dots,x_l\right)\in\R_{\geq 0}$ for any $1\leq l<k$
and for any $\left(x_1,\dots,x_l\right)\in\D_l$, and 
\item
a real number $v_k\left(x_1,\dots,x_{k-1}\right)\in\R_{\geq 0}$
for any $\left(x_1,\dots,x_{k-1}\right)\in\D_{k-1}$
\end{itemize}
inductively as follows: 
\begin{enumerate}
\renewcommand{\theenumi}{\arabic{enumi}}
\renewcommand{\labelenumi}{(\theenumi)}
\item\label{P-N_definition1}
We set $v_1:=0$, $u_1:=\sigma_{\hat{Y}_1}\left(\gamma_0^*\sigma_0^*L\right)$, 
$t_1:=\tau_{\hat{Y}_1}\left(\gamma_0^*\sigma_0^*L\right)$ and 
$\D_1:=(u_1, t_1)$. By Lemma \ref{NB_lemma}, we have $u_1<t_1$ and 
$\hat{Y}_1\not\subset\B_+\left(\gamma_0^*\sigma_0^*L-x_1\hat{Y}_1\right)$ 
for any $x_1\in(u_1,t_1)$. Let 
\[
\gamma_0^*\sigma_0^*L-x_1\hat{Y}_1=:N_{0,0}(x_1)+P_0(x_1)
\]
on $\bar{Y}_0$ be the Nakayama--Zariski decomposition 
in the sense of \cite[III, Definition 1.12]{N}. 
More precisely, we set 
\[
N_{0,0}(x_1):=N_\sigma\left(\gamma_0^*\sigma_0^*L-x_1\hat{Y}_1\right), \quad
P_0(x_1):=P_\sigma\left(\gamma_0^*\sigma_0^*L-x_1\hat{Y}_1\right).
\]
Then we know that $P_0(x_1)|_{\hat{Y}_1}$ is big, 
$\hat{Y}_1\not\subset\B_+\left(P_0(x_1)\right)$ and 
$\hat{Y}_1\not\subset\Supp\left(N_{0,0}(x_1)\right)$
for any $x_1\in\D_1$ by Lemma \ref{NB_lemma} and Corollary \ref{NB_corollary}. 

\item\label{P-N_definition2}
Assume that $k\geq 2$. Take any $\left(x_1,\dots,x_{k-1}\right)\in\D_{k-1}$. 
By an inductive assumption, the 
$\R$-divisor 
$\phi_{k-1}^*\left(P_{k-2}\left(x_1,\dots,x_{k-1}\right)|_{\hat{Y}_{k-1}}\right)$ on 
$\bar{Y}_{k-1}$ is a big $\R$-divisor. Let us set 
\begin{eqnarray*}
u_k\left(x_1,\dots,x_{k-1}\right)&:=&\sigma_{\hat{Y}_k}
\left(\phi_{k-1}^*\left(P_{k-2}\left(x_1,\dots,x_{k-1}\right)|_{\hat{Y}_{k-1}}\right)\right), 
\\
t_k\left(x_1,\dots,x_{k-1}\right)&:=&\tau_{\hat{Y}_k}
\left(\phi_{k-1}^*\left(P_{k-2}\left(x_1,\dots,x_{k-1}\right)|_{\hat{Y}_{k-1}}\right)\right).
\end{eqnarray*}
By Lemma \ref{NB_lemma}, we have 
$u_k\left(x_1,\dots,x_{k-1}\right)<t_k\left(x_1,\dots,x_{k-1}\right)$. 
We set 
\begin{eqnarray*}
\D_k&:=&\left\{\left(x_1,\dots,x_k\right)\in\R_{>0}^k\,\,\bigg|\,\,
\begin{split}\left(x_1,\dots,x_{k-1}\right)&\in\D_{k-1}\text{ and }\\
x_k&\in\left(u_k\left(x_1,\dots,x_{k-1}\right),
t_k\left(x_1,\dots,x_{k-1}\right)\right)\end{split}\right\}\\
&=&\left\{\left(x_1,\dots,x_k\right)\in\R_{>0}^k\,\,\Big|\,\,
x_l\in\left(u_l\left(x_1,\dots,x_{l-1}\right),
t_l\left(x_1,\dots,x_{l-1}\right)\right)\text{ for all }1\leq l\leq k\right\}.
\end{eqnarray*}
Moreover, for any $x_k\in\left(u_k\left(x_1,\dots,x_{k-1}\right), 
t_k\left(x_1,\dots,x_{k-1}\right)\right)$, by Lemma \ref{NB_lemma} and Corollary 
\ref{NB_corollary}, the Nakayama--Zariski decomposition 
\[
\phi_{k-1}^*\left(P_{k-2}\left(x_1,\dots,x_{k-1}\right)|_{\hat{Y}_{k-1}}\right)
-x_k\hat{Y}_k=:N_{k-1,k-1}\left(x_1,\dots,x_k\right)
+P_{k-1}\left(x_1,\dots,x_k\right)
\]
on $\bar{Y}_{k-1}$ satisfies 
$P_{k-1}\left(x_1,\dots,x_k\right)|_{\hat{Y}_k}$ is big, 
$\hat{Y}_k\not\subset\B_+\left(P_{k-1}\left(x_1,\dots,x_k\right)\right)$ and
$\hat{Y}_k\not\subset\Supp\left(N_{k-1,k-1}\left(x_1,\dots,x_k\right)\right)$. 

For any $1\leq l<k$ and for any $\left(x_1,\dots,x_l\right)\in\D_l$, we have 
already defined the effective $\R$-divisor 
$N_{l-1,k-2}\left(x_1,\dots,x_l\right)$ on $\bar{Y}_{k-2}$ with $\hat{Y}_{k-1}
\not\subset\Supp\left(N_{l-1,k-2}\left(x_1,\dots,x_l\right)\right)$
by inductive assumption. We set 
\begin{eqnarray*}
u_{l,k}\left(x_1,\dots,x_l\right)&:=&\ord_{\hat{Y}_k}\left(\phi_{k-1}^*
\left(N_{l-1,k-2}\left(x_1,\dots,x_l\right)|_{\hat{Y}_{k-1}}\right)\right), \\
N_{l-1,k-1}\left(x_1,\dots,x_l\right)&:=&
\phi_{k-1}^*\left(N_{l-1,k-2}\left(x_1,\dots,x_l\right)|_{\hat{Y}_{k-1}}\right)
-u_{l,k}\left(x_1,\dots,x_l\right)\hat{Y}_k.
\end{eqnarray*}
Finally, we set 
\[
v_k\left(x_1,\dots,x_{k-1}\right):=\sum_{l=1}^{k-1}u_{l,k}\left(x_1,\dots,x_l\right)
\]
for any $2\leq k$ and for any $\left(x_1,\dots,x_{k-1}\right)\in\D_{k-1}$. 
\end{enumerate}
\end{definition}

From now on, instead 
$\phi_{k-1}^*\left(P_{k-2}\left(x_1,\dots,x_{k-1}\right)|_{\hat{Y}_{k-1}}\right)$ and 
$\phi_{k-1}^*\left(N_{l-1,k-2}\left(x_1,\dots,x_l\right)|_{\hat{Y}_{k-1}}\right)$, 
we simply write $P_{k-2}\left(x_1,\dots,x_{k-1}\right)|_{\bar{Y}_{k-1}}$ and 
$N_{l-1,k-2}\left(x_1,\dots,x_l\right)|_{\bar{Y}_{k-1}}$, etc. 

\begin{definition}\label{tilde_definition}
Let us define 
\begin{itemize}
\item
a subset $\tilde{\D}_k\subset\R_{>0}^k$ for any $1\leq k\leq j$, 
\item
a real number 
$\tilde{u}_{l,k}\left(y_1,\dots,y_l\right)$, 
for any $1\leq l<k\leq j$ and for any $\left(y_1,\dots,y_l\right)\in\tilde{\D}_l$, and 
\item
real numbers $\tilde{v}_k\left(y_1,\dots,y_{k-1}\right)$, 
$\tilde{u}_k\left(y_1,\dots,y_{k-1}\right)$, 
$\tilde{t}_k\left(y_1,\dots,y_{k-1}\right)\in\R_{\geq 0}$ for any 
$2\leq k\leq j$ and for any 
$\left(y_1,\dots,y_{k-1}\right)\in\tilde{\D}_{k-1}$
\end{itemize}
inductively as follows: 
\begin{enumerate}
\renewcommand{\theenumi}{\arabic{enumi}}
\renewcommand{\labelenumi}{(\theenumi)}
\item\label{tilde_definition1}
We set $\tilde{\D}_1:=\D_1=(u_1,t_1)$ and $\tilde{v}_2:=v_2$. 
\item\label{tilde_definition2}
For $1\leq l<k\leq j$ and for any $\left(y_1,\dots,y_l\right)\in\tilde{\D}_l$, we set 
\[
\tilde{u}_{l,k}\left(y_1,\dots,y_l\right):=u_{l,k}\left(y_1,y_2-\tilde{v}_2(y_1),
y_3-\tilde{v}_3(y_1,y_2),\dots,y_l-\tilde{v}_l\left(y_1,\dots,y_{l-1}\right)\right). 
\]
For any $2\leq k\leq j$ and for any $\left(y_1,\dots,y_{k-1}\right)\in\tilde{\D}_{k-1}$, 
we set 
\begin{eqnarray*}
\tilde{v}_k\left(y_1,\dots,y_{k-1}\right)&:=&\sum_{l=1}^{k-1}\tilde{u}_{l,k}
\left(y_1,\dots,y_l\right), \\
\tilde{u}_k\left(y_1,\dots,y_{k-1}\right)&:=&u_k\left(y_1,y_2-\tilde{v}_2(y_1),
y_3-\tilde{v}_3(y_1,y_2),\dots,
y_{k-1}-\tilde{v}_{k-1}\left(y_1,\dots,y_{k-2}\right)\right),\\
\tilde{t}_k\left(y_1,\dots,y_{k-1}\right)&:=&t_k\left(y_1,y_2-\tilde{v}_2(y_1),
y_3-\tilde{v}_3(y_1,y_2),\dots,y_{k-1}-\tilde{v}_{k-1}\left(y_1,\dots,y_{k-2}\right)\right).
\end{eqnarray*}
We define 
\begin{eqnarray*}
\tilde{\D}_k&:=&\left\{\left(y_1,\dots,y_k\right)\in\R_{>0}^k\,\,\bigg|\,\,
\begin{split}
\left(y_1,\dots,y_{k-1}\right)&\in\tilde{\D}_{k-1} \text{ and}\\
y_k-\tilde{v}_k\left(y_1,\dots,y_{k-1}\right)
&\in\left(\tilde{u}_k\left(y_1,\dots,y_{k-1}\right),\tilde{t}_k\left(y_1,\dots,y_{k-1}\right)
\right)
\end{split}\right\}\\
&=&\left\{\left(y_1,\dots,y_k\right)\in\R_{>0}^k\,\,\bigg|\,\,
\begin{split}y_1&\in(u_1,t_1) \text{ and, for any }2\leq l\leq k, \\
y_l-\tilde{v}_l\left(y_1,\dots,y_{l-1}\right)
&\in\left(\tilde{u}_l\left(y_1,\dots,y_{l-1}\right),\tilde{t}_l\left(y_1,\dots,y_{l-1}\right)
\right)\end{split}\right\}.
\end{eqnarray*}
\end{enumerate}
Moreover, we set 
\[
\tilde{P}_{k-1}\left(y_1,\dots,y_k\right)
:=P_{k-1}\left(y_1,y_2-\tilde{v}_2(y_1),\dots,y_k-\tilde{v}_k\left(y_1,\dots,y_{k-1}
\right)\right)
\]
for any $1\leq k\leq j$ and 
for any $\left(y_1,\dots,y_k\right)\in\tilde{\D}_k$, and 
\[
\tilde{N}_{l-1,k-1}\left(y_1,\dots,y_l\right)
:=N_{l-1,k-1}\left(y_1,y_2-\tilde{v}_2(y_1),\dots,y_l-\tilde{v}_l\left(y_1,\dots,y_{l-1}
\right)\right)
\]
for any $1\leq l\leq k\leq j$ and for any $\left(y_1,\dots,y_l\right)\in\tilde{\D}_l$. 
\end{definition}

The following lemma is trivial from the definition. 

\begin{lemma}\label{tilde_lemma}
\begin{enumerate}
\renewcommand{\theenumi}{\arabic{enumi}}
\renewcommand{\labelenumi}{(\theenumi)}
\item\label{tilde_lemma1}
For any $2\leq k\leq j$ and for any $\left(x_1,\dots,x_{k-1}\right)\in\D_{k-1}$, 
we have 
\begin{eqnarray*}
u_k\left(x_1,\dots,x_{k-1}\right)&=&\tilde{u}_k
\left(x_1,x_2+v_2\left(x_1\right), x_3+v_3\left(x_1,x_2\right),\dots,x_{k-1}
+v_{k-1}\left(x_1,\dots,x_{k-2}\right)\right), \\
t_k\left(x_1,\dots,x_{k-1}\right)&=&\tilde{t}_k
\left(x_1,x_2+v_2\left(x_1\right), x_3+v_3\left(x_1,x_2\right),\dots,x_{k-1}
+v_{k-1}\left(x_1,\dots,x_{k-2}\right)\right), \\
v_k\left(x_1,\dots,x_{k-1}\right)&=&\tilde{v}_k
\left(x_1,x_2+v_2\left(x_1\right), x_3+v_3\left(x_1,x_2\right),\dots,x_{k-1}
+v_{k-1}\left(x_1,\dots,x_{k-2}\right)\right).
\end{eqnarray*}
\item\label{tilde_lemma2}
For any $1\leq k\leq j$, the map 
\begin{eqnarray*}
\tilde{\D}_k &\to&\D_k \\
\left(y_1,\dots,y_k\right)&\mapsto&
\left(y_1,y_2-\tilde{v}_2\left(y_1\right),y_3-\tilde{v}_3
\left(y_1,y_2\right),\dots,y_k-\tilde{v}_k\left(y_1,\dots,y_{k-1}
\right)\right)
\end{eqnarray*}
is a bijection, and the inverse is given by 
\[
\left(x_1,\dots,x_k\right)\mapsto\left(x_1,x_2+v_2\left(x_1\right), 
x_3+v_3\left(x_1,x_2\right),\dots,x_k+v_k\left(x_1,\dots,x_{k-1}\right)\right). 
\]
$($We will see later that the map is a homeomorphism.$)$
\item\label{tilde_lemma3}
For any $1\leq k\leq j$ and for any $\left(y_1,\dots,y_k\right)\in\tilde{\D}_k$, 
we have 
\begin{eqnarray*}
&&L|_{\bar{Y}_{k-1}}-y_1\hat{Y}_1|_{\bar{Y}_{k-1}}-\cdots-
y_{k-1}\hat{Y}_{k-1}|_{\bar{Y}_{k-1}}-y_k\hat{Y}_k\\
&\sim_\R&\tilde{P}_{k-1}\left(y_1,\dots,y_k\right)
+\sum_{l=1}^k\tilde{N}_{l-1,k-1}\left(y_1,\dots,y_l\right)
\end{eqnarray*}
on $\bar{Y}_{k-1}$. 
\end{enumerate}
\end{lemma}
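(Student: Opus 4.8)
The proof is a straightforward unwinding of the definitions in Definitions \ref{P-N_definition} and \ref{tilde_definition}, proceeding by induction on $k$. The plan is as follows.

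\medskip

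\noindent\textbf{Plan of proof.}
First I would establish \eqref{tilde_lemma1} and \eqref{tilde_lemma2} simultaneously by induction on $k$. The base case $k=1$ (resp.\ $k=2$ for \eqref{tilde_lemma1}) is immediate from Definition \ref{tilde_definition} \eqref{tilde_definition1}, since $\tilde{\D}_1=\D_1$ and $\tilde{v}_2=v_2$. For the inductive step, assume the bijection of \eqref{tilde_lemma2} holds for $k-1$ and that the three displayed identities of \eqref{tilde_lemma1} hold up to index $k-1$. By the very definitions of $\tilde{u}_k$, $\tilde{t}_k$, and $\tilde{v}_k$ in Definition \ref{tilde_definition} \eqref{tilde_definition2}, these are obtained from $u_k$, $t_k$, $v_k$ by precomposing with the substitution $y_l\mapsto y_l-\tilde v_l(y_1,\dots,y_{l-1})$; composing with the inverse substitution $x_l\mapsto x_l+v_l(x_1,\dots,x_{l-1})$ (which is the inductive hypothesis \eqref{tilde_lemma2} for index $k-1$ applied coordinatewise, valid since $\tilde v_l$ depends only on $y_1,\dots,y_{l-1}$) yields exactly the identities in \eqref{tilde_lemma1} for index $k$. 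It is here one must be a little careful that the formula for $\tilde u_{l,k}$ matches: this is exactly the content of the first line of Definition \ref{tilde_definition} \eqref{tilde_definition2}, and summing over $l$ gives the $\tilde v_k$ identity. Once \eqref{tilde_lemma1} is in hand, the two mutually inverse maps written in \eqref{tilde_lemma2} are visibly inverse to each other: applying one then the other replaces $x_k$ by $x_k + v_k(x_1,\dots,x_{k-1}) - \tilde v_k(y_1,\dots,y_{k-1})$ where $y_i = x_i + v_i(\dots)$, and \eqref{tilde_lemma1} says $\tilde v_k(y_1,\dots,y_{k-1}) = v_k(x_1,\dots,x_{k-1})$, so this is the identity; and the defining inequalities cutting out $\tilde{\D}_k$ and $\D_k$ correspond under this change of variables because $y_k - \tilde v_k(y_1,\dots,y_{k-1}) = x_k$ and the interval endpoints match by \eqref{tilde_lemma1}. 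The homeomorphism claim (parenthetical) would follow once continuity of $u_k, t_k, v_k$ is known, which I would defer.

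\medskip

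For \eqref{tilde_lemma3} I would again induct on $k$. The base case $k=1$ is Definition \ref{P-N_definition} \eqref{P-N_definition1}: by definition $\gamma_0^*\sigma_0^*L - y_1\hat{Y}_1 = N_{0,0}(y_1) + P_0(y_1)$, and $\tilde P_0(y_1) = P_0(y_1)$, $\tilde N_{0,0}(y_1)=N_{0,0}(y_1)$ since $\tilde{\D}_1=\D_1$; writing $L|_{\bar Y_0}$ for $\gamma_0^*\sigma_0^*L$ this is the claimed relation. For the inductive step, set $x_l := y_l - \tilde v_l(y_1,\dots,y_{l-1})$ (so $(x_1,\dots,x_k)\in\D_k$ by \eqref{tilde_lemma2}). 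Restricting the relation at level $k-1$ to $\hat Y_{k-1}$ and pulling back by $\phi_{k-1}$, i.e.\ applying $(-)|_{\bar Y_{k-1}}$ in the abbreviated notation introduced after Definition \ref{tilde_definition}, gives
\[
L|_{\bar Y_{k-1}} - \sum_{i=1}^{k-1} y_i \hat Y_i|_{\bar Y_{k-1}}
\sim_\R P_{k-2}(x_1,\dots,x_{k-1})|_{\bar Y_{k-1}} + \sum_{l=1}^{k-1} N_{l-1,k-1}(x_1,\dots,x_l).
\]
Now subtract $y_k\hat Y_k$. By Definition \ref{P-N_definition} \eqref{P-N_definition2}, the Nakayama--Zariski decomposition of $P_{k-2}(x_1,\dots,x_{k-1})|_{\bar Y_{k-1}} - x_k\hat Y_k$ is $N_{k-1,k-1}(x_1,\dots,x_k) + P_{k-1}(x_1,\dots,x_k)$, and by definition of $u_{l,k}$ and $N_{l-1,k-1}$ we have, for each $1\le l<k$, the splitting $N_{l-1,k-1}(x_1,\dots,x_l)|_{\bar Y_{k-1}} = N_{l-1,k-1}(x_1,\dots,x_l) + u_{l,k}(x_1,\dots,x_l)\hat Y_k$ — here the left-hand side is at level $k-1$ while the terms on the right are the level-$(k-1)$ divisor plus a multiple of $\hat Y_k$, reusing the same symbol at shifted level as the paper does. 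Wait — I must instead keep $y_k$ and use that $y_k = x_k + v_k(x_1,\dots,x_{k-1}) = x_k + \sum_{l=1}^{k-1} u_{l,k}(x_1,\dots,x_l)$. So $-y_k\hat Y_k = -x_k\hat Y_k - \sum_{l=1}^{k-1} u_{l,k}(x_1,\dots,x_l)\hat Y_k$, and distributing these $u_{l,k}\hat Y_k$ terms into the corresponding $N_{l-1,k-1}$ summands converts each to $N_{l-1,k}(x_1,\dots,x_l)$ at the new level, while $P_{k-2}(x_1,\dots,x_{k-1})|_{\bar Y_{k-1}} - x_k\hat Y_k = N_{k-1,k-1}(x_1,\dots,x_k) + P_{k-1}(x_1,\dots,x_k)$. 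Translating everything back via $x_l \leftrightarrow y_l$ using the definitions $\tilde P_{k-1}(y_1,\dots,y_k) = P_{k-1}(x_1,\dots,x_k)$ and $\tilde N_{l-1,k-1}(y_1,\dots,y_l) = N_{l-1,k-1}(x_1,\dots,x_l)$ gives precisely the asserted $\R$-linear equivalence at level $k$ (noting $y_i\hat Y_i|_{\bar Y_{k-1}}$ becomes $y_i\hat Y_i|_{\bar Y_{k-1}}$ and adding the new term $y_k\hat Y_k$).

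\medskip

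\noindent\textbf{Main obstacle.} The statement is asserted to be ``trivial from the definition,'' and structurally it is; the only real care needed is bookkeeping the level shifts: the divisors $N_{l-1,k-1}$ and $N_{l-1,k}$, and $P_{k-2}$ versus $P_{k-1}$, live on different spaces $\bar Y_{k-2}, \bar Y_{k-1}, \bar Y_k$, and one must consistently apply restriction-to-$\hat Y_{k-1}$-followed-by-$\phi_{k-1}$-pullback (the abbreviated $(-)|_{\bar Y_{k-1}}$ operation) and track how the subtracted multiple $y_k\hat Y_k$ splits off the correction terms $u_{l,k}\hat Y_k$ that account for the difference between $v_k$ in the $x$-coordinates and $\tilde v_k$ in the $y$-coordinates. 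Getting the indices and the $\sum_{l=1}^{k-1} u_{l,k} = v_k$ relation lined up is the one place an error could creep in; everything else is unwinding Definitions \ref{P-N_definition} and \ref{tilde_definition}.
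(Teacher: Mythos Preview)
Your proposal is correct and follows essentially the same approach as the paper: parts \eqref{tilde_lemma1} and \eqref{tilde_lemma2} are dismissed by the paper as trivial from the definitions (you spell them out more carefully, which is fine), and for part \eqref{tilde_lemma3} both you and the paper run the same induction on $k$, restricting the level-$(k-1)$ relation to $\bar Y_{k-1}$, peeling off the $\hat Y_k$-coefficients via the identities $\tilde N_{l-1,k-2}|_{\bar Y_{k-1}}=\tilde N_{l-1,k-1}+\tilde u_{l,k}\hat Y_k$ and $\sum_{l=1}^{k-1}\tilde u_{l,k}=\tilde v_k$, and applying the Nakayama--Zariski decomposition to the $\tilde P$-part. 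One cosmetic point: your displayed equation after restriction should carry $N_{l-1,k-2}(x_1,\dots,x_l)|_{\bar Y_{k-1}}$ rather than $N_{l-1,k-1}$, and subtracting the $u_{l,k}\hat Y_k$ terms then produces $N_{l-1,k-1}$ (not $N_{l-1,k}$), matching the target formula at level $k$; you flagged exactly this bookkeeping as the main obstacle, so this is a slip in the write-up rather than a gap in the argument.
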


\begin{proof}
We only prove the assertion \eqref{tilde_lemma3} by induction on $k$, 
since the other assertions are trivial from the definition. 
For any $y_1\in\left(u_1,t_1\right)$, since
\[
L|_{\bar{Y}_0}-y_1\hat{Y}_1\sim_\R P_0\left(y_1\right)+N_{0,0}\left(y_1\right)
=\tilde{P}_0\left(y_1\right)+\tilde{N}_{0,0}\left(y_1\right),
\]
the assertion is true for $k=1$. Assume that the assertion is true in $k$ with $k<j$. 
For any $\left(y_1,\dots,y_{k+1}\right)\in\tilde{\D}_{k+1}$, since 
$y_{k+1}-\tilde{v}_{k+1}\left(y_1,\dots,y_k\right)\in\left(
\tilde{u}_{k+1}\left(y_1,\dots,y_k\right), \tilde{t}_{k+1}\left(y_1,\dots,y_k\right)\right)$, 
we have 
\[
\tilde{P}_{k-1}\left(y_1,\dots,y_k\right)|_{\bar{Y}_k}-\left(y_{k+1}-\tilde{v}_{k+1}
\left(y_1,\dots,y_k\right)\right)\hat{Y}_{k+1}\sim_\R
\tilde{P}_k\left(y_1,\dots,y_{k+1}\right)+\tilde{N}_{k,k}\left(y_1,\dots,y_{k+1}\right). 
\]
On the other hand, for any $1\leq l\leq k$, we have 
\[
\tilde{N}_{l-1,k-1}\left(y_1,\dots,y_l\right)|_{\bar{Y}_k}
=\tilde{N}_{l-1,k}\left(y_1,\dots,y_l\right)+\tilde{u}_{l,k+1}\left(y_1,\dots,y_l\right)
\hat{Y}_{k+1}. 
\]
Therefore, 
\begin{eqnarray*}
&&L|_{\bar{Y}_k}-y_1\hat{Y}_1|_{\bar{Y}_k}-\cdots-y_{k+1}\hat{Y}_{k+1}\\
&\sim_\R&
\tilde{P}_{k-1}\left(y_1,\dots,y_k\right)|_{\bar{Y}_k}-\left(y_{k+1}-\tilde{v}_{k+1}
\left(y_1,\dots,y_k\right)\right)\hat{Y}_{k+1}\\
&+&\sum_{l=1}^k\tilde{N}_{l-1,k-1}\left(y_1,\dots,y_l\right)|_{\bar{Y}_k}-
\tilde{v}_{k+1}\left(y_1,\dots,y_k\right)\hat{Y}_{k+1}\\
&=&\tilde{P}_k\left(y_1,\dots,y_{k+1}\right)+\sum_{l=1}^{k+1}\tilde{N}_{l-1,k}
\left(y_1,\dots,y_l\right).
\end{eqnarray*}
Thus the assertion is also true in $k+1$. 
\end{proof}

The following proposition is technically important in this section. 

\begin{proposition}\label{tilde_proposition}
Take any $1\leq k\leq j$. 
\begin{enumerate}
\renewcommand{\theenumi}{\arabic{enumi}}
\renewcommand{\labelenumi}{(\theenumi)}
\item\label{tilde_proposition1}
The subset $\tilde{\D}_k\subset\R_{>0}^k$ is an open convex set. 
\item\label{tilde_proposition2}
If $k\geq 2$, then all of the functions $\tilde{v}_k$, $\tilde{u}_k+\tilde{v}_k$ 
and $-\tilde{t}_k$ 
from $\tilde{\D}_{k-1}$ to $\R$ are convex functions. In particular, they are 
continuous functions. 
\item\label{tilde_proposition3}
For any $1\leq l\leq k$, the divisors $\tilde{N}_{l-1,k-1}$ behave convex in 
$\tilde{\D}_l$. More precisely, for any $\left(y_1,\dots,y_l\right)$, 
$\left(y'_1,\dots,y'_l\right)\in\tilde{\D}_l$ and for any $t\in(0,1)$, if we set 
\[
\left(y''_1,\dots,y''_l\right):=t\cdot\left(y_1,\dots,y_l\right)+(1-t)\cdot
\left(y'_1,\dots,y'_l\right),
\]
then we have 
\[
t\tilde{N}_{l-1,k-1}\left(y_1,\dots,y_l\right)+(1-t)\tilde{N}_{l-1,k-1}
\left(y'_1,\dots,y'_l\right)\geq\tilde{N}_{l-1,k-1}\left(y''_1,\dots,y''_l\right). 
\]
\item\label{tilde_proposition4}
For any $1\leq l\leq k$ and for any $\vec{y}\in\tilde{\D}_l$, there exists an open 
neighborhood $U\subset\tilde{\D}_l$ of $\vec{y}$ such that the possibility of 
irreducible components of the support of $\tilde{N}_{l-1,k-1}\left(\vec{y}'\right)$
for $\vec{y}'\in U$ is at most finite. In particular, together with 
\eqref{tilde_proposition3} and Lemma \ref{tilde_lemma} \eqref{tilde_lemma3}, 
the $\R$-divisor $\tilde{P}_{k-1}\left(y_1,\dots,y_k\right)$ moves continuously 
in the space 
$N^1\left(\bar{Y}_{k-1}\right)$ over $\left(y_1,\dots,y_k\right)\in\tilde{\D}_k$. 
\end{enumerate}
\end{proposition}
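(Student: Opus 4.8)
The plan is to prove \eqref{tilde_proposition1}--\eqref{tilde_proposition4} simultaneously by induction on $k$. The tools are the divisorial invariants of Nakayama \cite{N}: $\sigma_\Gamma$ is subadditive and positively homogeneous (hence convex) and continuous on the big cone, with $\sigma_\Gamma(E)\le\ord_\Gamma E$ for effective $E$; $\tau_\Gamma$ is superadditive and positively homogeneous (hence concave), with $\tau_\Gamma(D+E)\ge\tau_\Gamma(D)+\ord_\Gamma E$ for effective $E$; $P_\sigma$ is superadditive and continuous on the big cone; and Lemma \ref{NB_lemma}, Corollary \ref{NB_corollary} are available. For $k=1$ everything is easy: $\tilde\D_1=(u_1,t_1)$ is an open interval ($u_1<t_1$ by Lemma \ref{NB_lemma}), so \eqref{tilde_proposition1} holds and \eqref{tilde_proposition2} is vacuous; $\tilde N_{0,0}(y_1)=N_\sigma(\gamma_0^*\sigma_0^*L-y_1\hat Y_1)$ is convex in $y_1$ because $y_1\mapsto[\gamma_0^*\sigma_0^*L-y_1\hat Y_1]$ is affine and $N_\sigma$ is convex, giving \eqref{tilde_proposition3}; and \eqref{tilde_proposition4} follows from the elementary fact that a big class has a neighbourhood contained in $(\text{ample cone})+[E_0]$ for a fixed effective $E_0$, whence $\Supp N_\sigma\subseteq\Supp E_0$ there, together with the continuity of the $\sigma_\Gamma$ for the continuity of $\tilde P_0$.

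For the inductive step, fix $\vec y,\vec y'\in\tilde\D_{k-1}$, $t\in(0,1)$, and put $\vec y'':=t\vec y+(1-t)\vec y'$. Write $G(\vec y):=\tilde P_{k-2}(\vec y)|_{\bar Y_{k-1}}=\phi_{k-1}^*\bigl(\tilde P_{k-2}(\vec y)|_{\hat Y_{k-1}}\bigr)$, so that $\tilde u_k(\vec y)=\sigma_{\hat Y_k}(G(\vec y))$ and $\tilde t_k(\vec y)=\tau_{\hat Y_k}(G(\vec y))$. By Lemma \ref{tilde_lemma} \eqref{tilde_lemma3} at level $k-1$, the class $[G(\vec y)]$ equals an affine function of $\vec y$ minus $\sum_{l=1}^{k-1}\bigl[\phi_{k-1}^*(\tilde N_{l-1,k-2}(\vec y)|_{\hat Y_{k-1}})\bigr]$, so the induction hypothesis \eqref{tilde_proposition3} yields an effective divisor $\Delta\ge 0$ with $G(\vec y'')=tG(\vec y)+(1-t)G(\vec y')+\Delta$ and, crucially, $\ord_{\hat Y_k}\Delta=\sum_{l=1}^{k-1}\bigl(t\tilde u_{l,k}(\vec y)+(1-t)\tilde u_{l,k}(\vec y')-\tilde u_{l,k}(\vec y'')\bigr)=t\tilde v_k(\vec y)+(1-t)\tilde v_k(\vec y')-\tilde v_k(\vec y'')=:c\ge 0$; this identity (which forces $c\ge 0$, hence $\tilde v_k$ convex) is the whole point of the $y$-coordinates. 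Now subadditivity of $\sigma_{\hat Y_k}$ together with $\sigma_{\hat Y_k}(\Delta)\le c$ gives $\tilde u_k(\vec y'')+\tilde v_k(\vec y'')\le t(\tilde u_k+\tilde v_k)(\vec y)+(1-t)(\tilde u_k+\tilde v_k)(\vec y')$; superadditivity of $\tau_{\hat Y_k}$ and $\tau_{\hat Y_k}(\,\cdot+\Delta)\ge\tau_{\hat Y_k}(\,\cdot\,)+c$ gives $\tilde t_k(\vec y'')\ge t\tilde t_k(\vec y)+(1-t)\tilde t_k(\vec y')+c$, i.e.\ both $-\tilde t_k$ convex and $\tilde t_k+\tilde v_k$ concave; and $\tilde v_k=\sum_l\tilde u_{l,k}$ is convex since each $\tilde u_{l,k}=\ord_{\hat Y_k}\phi_{k-1}^*\bigl(\tilde N_{l-1,k-2}(\,\cdot\,)|_{\hat Y_{k-1}}\bigr)$ is an order-preserving linear image of the convex divisor $\tilde N_{l-1,k-2}$. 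This is \eqref{tilde_proposition2}. Since $\tilde u_k<\tilde t_k$ on the open convex set $\tilde\D_{k-1}$ and $\tilde u_k+\tilde v_k$, $\tilde t_k+\tilde v_k$ are finite convex/concave hence continuous, $\tilde\D_k$ is the open convex region strictly between a convex graph and a concave graph over $\tilde\D_{k-1}$, which proves \eqref{tilde_proposition1}.

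For \eqref{tilde_proposition3} with $l<k$, the formula $\tilde N_{l-1,k-1}(\vec y)=\phi_{k-1}^*\bigl(\tilde N_{l-1,k-2}(\vec y)|_{\hat Y_{k-1}}\bigr)-\tilde u_{l,k}(\vec y)\hat Y_k$ (Definition \ref{P-N_definition} \eqref{P-N_definition2}) exhibits $\tilde N_{l-1,k-1}(\vec y)$ as the convex divisor $\phi_{k-1}^*(\tilde N_{l-1,k-2}(\vec y)|_{\hat Y_{k-1}})$ with its $\hat Y_k$-coefficient deleted; since the effective ``convexity defect'' of a convex divisor family stays effective after one of its components is deleted, $\tilde N_{l-1,k-1}$ is convex. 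For \eqref{tilde_proposition4}, the local finiteness is the same $(\text{ample cone})+[E_0]$ argument as in the base case, applied to $G(\vec y)-x_k(\vec y)\hat Y_k$ (whose $N_\sigma$ is $\tilde N_{k-1,k-1}$) when $l=k$ and to the explicit formula plus the induction hypothesis when $l<k$; the continuity of $\tilde P_{k-1}(\vec y)=P_\sigma\bigl(G(\vec y)-x_k(\vec y)\hat Y_k\bigr)$ then follows from this finiteness, the continuity of the finitely many relevant $\sigma_\Gamma$ on the big cone, and Lemma \ref{tilde_lemma} \eqref{tilde_lemma3}.

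The delicate point --- the step I expect to be the main obstacle --- is \eqref{tilde_proposition3} for $l=k$, the convexity of $\tilde N_{k-1,k-1}(\vec y)=N_\sigma\bigl(D(\vec y)\bigr)$ where $D(\vec y):=G(\vec y)-x_k(\vec y)\hat Y_k$ and $x_k(\vec y)=y_k-\tilde v_k(\vec y)$. Because $x_k(\vec y'')=tx_k(\vec y)+(1-t)x_k(\vec y')+c$, one finds $D(\vec y'')=tD(\vec y)+(1-t)D(\vec y')+\Delta'$ with $\Delta':=\Delta-c\hat Y_k\ge 0$ and $\hat Y_k\not\subseteq\Supp\Delta'$: the family $D(\vec y)$ is only \emph{concave}, so subadditivity of $N_\sigma$ merely yields $\tilde N_{k-1,k-1}(\vec y'')\le t\tilde N_{k-1,k-1}(\vec y)+(1-t)\tilde N_{k-1,k-1}(\vec y')+\Delta'$, and the real work is to absorb the leftover $\Delta'$. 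Componentwise, the required inequality $\sigma_\Gamma(D(\vec y''))\le t\sigma_\Gamma(D(\vec y))+(1-t)\sigma_\Gamma(D(\vec y'))$ is automatic when $\Gamma\not\subseteq\Supp\Delta'$ --- in particular for $\Gamma=\hat Y_k$, which never lies in $\Supp\tilde N_{k-1,k-1}$ --- so the issue is confined to the finitely many components $\Gamma$ of $\Delta'$, each of which is a component of some $\phi_{k-1}^*(\tilde N_{l-1,k-2}(\vec y)|_{\hat Y_{k-1}})$ with $l<k$, hence arises from a rigid Nakayama negative part. The plan for this step is to use $D(\vec y)\sim_\R(\text{affine in }\vec y)-\sum_{l=1}^{k-1}\tilde N_{l-1,k-1}(\vec y)$ from Lemma \ref{tilde_lemma} \eqref{tilde_lemma3} to express $\ord_\Gamma N_\sigma(D(\vec y))$ as an affine term plus $\ord_\Gamma\sum_{l<k}\tilde N_{l-1,k-1}(\vec y)$, the latter already convex by the case $l<k$, thereby reducing $l=k$ to $l<k$; pinning down this interaction between the Zariski decomposition of $D(\vec y)$ and the subtracted effective divisor is the technical core of the proposition.
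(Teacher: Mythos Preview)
Your overall approach is the same as the paper's: both argue by induction on $k$, handle \eqref{tilde_proposition1}, \eqref{tilde_proposition2}, \eqref{tilde_proposition4}, and the $l<k$ case of \eqref{tilde_proposition3} via the same mechanisms (pullback/restriction preserve divisorial convexity; subadditivity of $\sigma_{\hat Y_k}$ and superadditivity of $\tau_{\hat Y_k}$ give the convexity/concavity in \eqref{tilde_proposition2}; local finiteness in \eqref{tilde_proposition4} follows from convexity by taking a simplex of reference points, which is the paper's argument and is cleaner than your ``ample cone $+[E_0]$'' sketch). You are also right that \eqref{tilde_proposition3} for $l=k$ is the crux.

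For that case the paper does exactly what you propose: use Lemma~\ref{tilde_lemma}~\eqref{tilde_lemma3} to write
\[
L(\vec y'')-\sum_{l<k}\tilde N_{l-1,k-1}(\vec y''_{\le l})\;\sim_\R\;\underbrace{\bigl[t\tilde P_{k-1}(\vec y)+(1-t)\tilde P_{k-1}(\vec y')\bigr]}_{\text{movable, big}}\;+\;\Xi\;+\;\bigl[t\tilde N_{k-1,k-1}(\vec y)+(1-t)\tilde N_{k-1,k-1}(\vec y')\bigr],
\]
where $\Xi\ge 0$ is the convexity defect from the $l<k$ cases (your $\Delta'$), and then note that the left side has Nakayama--Zariski decomposition $\tilde P_{k-1}(\vec y'')+\tilde N_{k-1,k-1}(\vec y'')$. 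The paper then asserts the desired bound $\tilde N_{k-1,k-1}(\vec y'')\le t\tilde N_{k-1,k-1}(\vec y)+(1-t)\tilde N_{k-1,k-1}(\vec y')$ ``by the definition of the Nakayama--Zariski decomposition.'' Your worry is legitimate: the standard minimality property $N_\sigma(M+E)\le E$ (with $M$ movable, $E$ effective) applied here yields only $\tilde N_{k-1,k-1}(\vec y'')\le \Xi+t\tilde N_{k-1,k-1}(\vec y)+(1-t)\tilde N_{k-1,k-1}(\vec y')$, i.e.\ the bound with the extra $\Xi$. The paper does not spell out how $\Xi$ is absorbed. So you have not missed an idea that the paper makes explicit --- your flagged obstacle is precisely where the paper is terse, and your plan coincides with what the paper does; the step that remains is to justify dropping $\Xi$, which neither your proposal nor the paper's text carries out in detail.
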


\begin{proof}
We prove by induction on $k$. If $k=1$, then the assertions are trivial. 
Assume that $k\geq 2$. We firstly show that $\tilde{\D}_k$ is a convex set. 
Take any $\vec{y}=\left(y_1,\dots,y_k\right)$, $\vec{y}'=\left(y'_1,\dots,y'_k\right)
\in\tilde{\D}_k$ and any $t\in(0,1)$. Set 
\[
\vec{y}''=\left(y''_1,\dots,y''_k\right):=t\cdot\vec{y}+(1-t)\cdot
\vec{y}'
\]
as in \eqref{tilde_proposition3}. 
Let us set 
\[
L\left(y_1,\dots,y_{k-1}\right):=L|_{\bar{Y}_{k-2}}-y_1\hat{Y}_1|_{\bar{Y}_{k-2}}-\cdots-
y_{k-1}\hat{Y}_{k-1}
\]
for simplicity. By Lemma \ref{tilde_lemma} \eqref{tilde_lemma3}, we have 
\begin{equation}\tag{*}
\begin{split}
L\left(y''_1,\dots,y''_{k-1}\right) & \sim_\R
\tilde{P}_{k-2}\left(y''_1,\dots,y''_{k-1}\right)+\sum_{l=1}^{k-1}\tilde{N}_{l-1,k-2}
\left(y''_1,\dots,y''_l\right) \\
&\sim_\R t\left(\tilde{P}_{k-2}\left(y_1,\dots,y_{k-1}\right)
+\sum_{l=1}^{k-1}\tilde{N}_{l-1,k-2}
\left(y_1,\dots,y_l\right)\right)\\
&+(1-t)\left(\tilde{P}_{k-2}\left(y'_1,\dots,y'_{k-1}\right)
+\sum_{l=1}^{k-1}\tilde{N}_{l-1,k-2}
\left(y'_1,\dots,y'_l\right)\right).
\end{split}
\end{equation}
Moreover, by induction, we may assume that 
\begin{equation}\tag{**}
\begin{split}
t\tilde{N}_{l-1,k-2}\left(y_1,\dots,y_l\right)
+(1-t)\tilde{N}_{l-1,k-2}\left(y'_1,\dots,y'_l\right)
-\tilde{N}_{l-1,k-2}\left(y''_1,\dots,y''_l\right)\geq 0.
\end{split}
\end{equation}
Therefore, we have 
\begin{eqnarray*}
&&\tilde{u}_k\left(y''_1,\dots,y''_{k-1}\right)+\tilde{v}_k\left(y''_1,\dots,y''_{k-1}\right)\\
&=&\sigma_{\hat{Y}_k}\left(\tilde{P}_{k-2}\left(y''_1,\dots,y''_{k-1}\right)|_{\bar{Y}_{k-1}}
\right)+\ord_{\hat{Y}_k}\left(\sum_{l=1}^{k-1}\tilde{N}_{l-1,k-2}\left(y''_1,\dots,y''_l
\right)|_{\bar{Y}_{k-1}}\right)\\
&\leq&\sigma_{\hat{Y}_k}\left(
t\tilde{P}_{k-2}\left(y_1,\dots,y_{k-1}\right)|_{\bar{Y}_{k-1}}
+(1-t)\tilde{P}_{k-2}\left(y'_1,\dots,y'_{k-1}\right)|_{\bar{Y}_{k-1}}
\right)\\
&&+\ord_{\hat{Y}_k}\left(t\sum_{l=1}^{k-1}\tilde{N}_{l-1,k-2}\left(y_1,\dots,y_l
\right)|_{\bar{Y}_{k-1}}
+(1-t)\sum_{l=1}^{k-1}\tilde{N}_{l-1,k-2}\left(y'_1,\dots,y'_l
\right)|_{\bar{Y}_{k-1}}\right)\\
&\leq&
t\sigma_{\hat{Y}_k}\left(\tilde{P}_{k-2}\left(y_1,\dots,y_{k-1}\right)|_{\bar{Y}_{k-1}}\right)
+t\ord_{\hat{Y}_k}\left(\sum_{l=1}^{k-1}\tilde{N}_{l-1,k-2}\left(y_1,\dots,y_l
\right)|_{\bar{Y}_{k-1}}\right)
\\
&&+(1-t)\sigma_{\hat{Y}_k}\left(
\tilde{P}_{k-2}\left(y'_1,\dots,y'_{k-1}\right)|_{\bar{Y}_{k-1}}\right)
+(1-t)\ord_{\hat{Y}_k}\left(\sum_{l=1}^{k-1}\tilde{N}_{l-1,k-2}\left(y'_1,\dots,y'_l
\right)|_{\bar{Y}_{k-1}}\right)\\
&=&t\left(\tilde{u}_k\left(y_1,\dots,y_{k-1}\right)
+\tilde{v}_k\left(y_1,\dots,y_{k-1}\right)\right)
+(1-t)\left(\tilde{u}_k\left(y'_1,\dots,y'_{k-1}\right)
+\tilde{v}_k\left(y'_1,\dots,y'_{k-1}\right)\right)\\
&<&t y_k+(1-t)y'_k=y''_k,
\end{eqnarray*}
where the inequality in the third line follows from $(*)$ and $(**)$. 
Indeed, for a big $\R$-divisor $\mathbf{P}$ and an effective $\R$-divisor 
$\mathbf{N}$ on $\bar{Y}_{k-1}$, we have 
$\sigma_{\hat{Y}_k}(\mathbf{P})+\ord_{\hat{Y}_k}(\mathbf{N})
\geq\sigma_{\hat{Y}_k}(\mathbf{P}+\mathbf{N})$. The inequality in the third line 
can be obtained if we set $\mathbf{P}:=t\tilde{P}_{k-2}\left(y_1,\dots,y_{k-1}\right)
|_{\bar{Y}_{k-1}}+(1-t)\tilde{P}_{k-2}\left(y'_1,\dots,y'_{k-1}\right)|_{\bar{Y}_{k-1}}$
and $\mathbf{N}$ to be the sum of the restrictions of the left hand side of $(**)$ 
for $l=1,\dots,k-1$. 
Similarly, we get 
\begin{eqnarray*}
&&\tilde{t}_k\left(y''_1,\dots,y''_{k-1}\right)+\tilde{v}_k\left(y''_1,\dots,y''_{k-1}\right)\\
&=&\tau_{\hat{Y}_k}\left(\tilde{P}_{k-2}\left(y''_1,\dots,y''_{k-1}\right)|_{\bar{Y}_{k-1}}
\right)+\ord_{\hat{Y}_k}\left(\sum_{l=1}^{k-1}\tilde{N}_{l-1,k-2}\left(y''_1,\dots,y''_l
\right)|_{\bar{Y}_{k-1}}\right)\\
&\geq&\tau_{\hat{Y}_k}\left(
t\tilde{P}_{k-2}\left(y_1,\dots,y_{k-1}\right)|_{\bar{Y}_{k-1}}
+(1-t)\tilde{P}_{k-2}\left(y'_1,\dots,y'_{k-1}\right)|_{\bar{Y}_{k-1}}
\right)\\
&&+\ord_{\hat{Y}_k}\left(t\sum_{l=1}^{k-1}\tilde{N}_{l-1,k-2}\left(y_1,\dots,y_l
\right)|_{\bar{Y}_{k-1}}
+(1-t)\sum_{l=1}^{k-1}\tilde{N}_{l-1,k-2}\left(y'_1,\dots,y'_l
\right)|_{\bar{Y}_{k-1}}\right)\\
&\geq&
t\cdot\tau_{\hat{Y}_k}\left(\tilde{P}_{k-2}\left(y_1,\dots,y_{k-1}\right)|_{\bar{Y}_{k-1}}\right)
+t\ord_{\hat{Y}_k}\left(\sum_{l=1}^{k-1}\tilde{N}_{l-1,k-2}\left(y_1,\dots,y_l
\right)|_{\bar{Y}_{k-1}}\right)
\\
&&+(1-t)\tau_{\hat{Y}_k}\left(
\tilde{P}_{k-2}\left(y'_1,\dots,y'_{k-1}\right)|_{\bar{Y}_{k-1}}\right)
+(1-t)\ord_{\hat{Y}_k}\left(\sum_{l=1}^{k-1}\tilde{N}_{l-1,k-2}\left(y'_1,\dots,y'_l
\right)|_{\bar{Y}_{k-1}}\right)\\
&=&t\left(\tilde{t}_k\left(y_1,\dots,y_{k-1}\right)
+\tilde{v}_k\left(y_1,\dots,y_{k-1}\right)\right)
+(1-t)\left(\tilde{t}_k\left(y'_1,\dots,y'_{k-1}\right)
+\tilde{v}_k\left(y'_1,\dots,y'_{k-1}\right)\right)\\
&>&t y_k+(1-t)y'_k=y''_k.
\end{eqnarray*}
Hence we get 
\[
\tilde{u}_k\left(y''_1,\dots,y''_{k-1}\right)+\tilde{v}_k\left(y''_1,\dots,y''_{k-1}\right)
<y''_k<
\tilde{t}_k\left(y''_1,\dots,y''_{k-1}\right)+\tilde{v}_k\left(y''_1,\dots,y''_{k-1}\right), 
\]
which implies that the set $\tilde{\D}_k$ is a convex set. 

We check the assertion \eqref{tilde_proposition3}. By induction, we may assume that 
$l=k$. Note that 
\begin{eqnarray*}
L\left(y''_1,\dots,y''_k\right) & \sim_\R &
t\tilde{P}_{k-1}\left(y_1,\dots,y_k\right)+(1-t)\tilde{P}_{k-1}\left(y'_1,\dots,y'_k\right)\\
&+& t\sum_{l=1}^k\tilde{N}_{l-1,k-1}\left(y_1,\dots,y_l\right)
+(1-t)\sum_{l=1}^k\tilde{N}_{l-1,k-1}\left(y'_1,\dots,y'_l\right), 
\end{eqnarray*}
the $\R$-divisor 
\[
t\tilde{P}_{k-1}\left(y_1,\dots,y_k\right)+(1-t)\tilde{P}_{k-1}\left(y'_1,\dots,y'_k\right)
\]
is movable and big, and 
\begin{eqnarray*}
&&t\sum_{l=1}^k\tilde{N}_{l-1,k-1}\left(y_1,\dots,y_l\right)
+(1-t)\sum_{l=1}^k\tilde{N}_{l-1,k-1}\left(y'_1,\dots,y'_l\right)\\
&\geq&
\sum_{l=1}^{k-1}\tilde{N}_{l-1,k-1}\left(y''_1,\dots,y''_l\right)
+t\tilde{N}_{k-1,k-1}\left(y_1,\dots,y_k\right)
+(1-t)\tilde{N}_{k-1,k-1}\left(y'_1,\dots,y'_k\right)
\end{eqnarray*}
by induction. Since the decomposition 
\[
L\left(y''_1,\dots,y''_k\right)-\sum_{l=1}^{k-1}\tilde{N}_{l-1,k-1}\left(y''_1,
\dots,y''_l\right)  \sim_\R 
\tilde{P}_{k-1}\left(y''_1,\dots,y''_k\right)+\tilde{N}_{k-1,k-1}\left(y''_1,\dots,y''_k\right)
\]
is the Nakayama--Zariski decomposition, we get the inequality 
\[
t\tilde{N}_{k-1,k-1}\left(y_1,\dots,y_k\right)+(1-t)\tilde{N}_{k-1,k-1}
\left(y'_1,\dots,y'_k\right)\geq\tilde{N}_{k-1,k-1}\left(y''_1,\dots,y''_k\right)
\]
by the definition of the Nakayama--Zariski decomposition. 
Thus we get the assertion \eqref{tilde_proposition3}. 

We check \eqref{tilde_proposition2}. 
As in the proof for the convexity of $\tilde{\D}_k$, we know that 
the convexities of $\tilde{u}_k+\tilde{v}_k$ and $-\left(\tilde{t}_k+\tilde{v}_k\right)$.
Thus it is enough to check the convexity for $\tilde{v}_k$. By 
\eqref{tilde_proposition3}, we know the convexity of $\tilde{v}_k$. Thus the 
assertion \eqref{tilde_proposition2} follows. 

We see the openness of $\tilde{\D}_k$. Take any $\left(y_1,\dots,y_k\right)
\in\tilde{\D}_k$. By induction, there exists an open neighborhood 
$U\subset\tilde{\D}_{k-1}$ of $\left(y_1,\dots,y_{k-1}\right)$. 
The functions $\tilde{u}_k$, $\tilde{t}_k$, $\tilde{v}_k$ are continuous over $U$ by 
\eqref{tilde_proposition2}. Thus $\tilde{\D}_k$ is also open, and we get 
the assertion \eqref{tilde_proposition1}. 

Finally, let us show the assertion \eqref{tilde_proposition4}. 
Let us take any 
\[
\vec{y}^{(1)},\dots,\vec{y}^{(l+1)}\in\tilde{\D}_l
\]
with 
\[
\vec{y}\in\interior\left(\Conv\left(\vec{y}^{(1)},\dots,\vec{y}^{(l+1)}\right)\right). 
\]
For any $\vec{y}'\in\Conv\left(\vec{y}^{(1)},\dots,\vec{y}^{(l+1)}\right)$, there 
exists $t_1,\dots,t_{l+1}\in\R_{> 0}$ with $\sum_{i=1}^{l+1}t_i=1$ such that 
$\vec{y}'=\sum_{i=1}^{l+1}t_i\vec{y}^{(i)}$. As in \eqref{tilde_proposition3}, 
we have 
\[
\tilde{N}_{l-1,k-1}\left(\vec{y}'\right)\leq
\sum_{i=1}^{l+1}t_i\tilde{N}_{l-1,k-1}\left(\vec{y}^{(i)}\right). 
\]
This implies that 
\[
\Supp\tilde{N}_{l-1,k-1}\left(\vec{y}'\right)\subset
\bigcup_{i=1}^{l+1}\Supp\tilde{N}_{l-1,k-1}\left(\vec{y}^{(i)}\right), 
\]
thus we get the assertion \eqref{tilde_proposition4}. 
\end{proof}

We are ready to define the notion of adequate dominants. 

\begin{definition}\label{adequate_definition}
A $\Q$-factorial dominant $\left\{\gamma_k\right\}_{0\leq k\leq j-1}$ 
of $Y_\bullet$ is said to be \emph{an adequate dominant of $Y_\bullet$} 
with respects to $L$ if: 
\begin{enumerate}
\renewcommand{\theenumi}{\arabic{enumi}}
\renewcommand{\labelenumi}{(\theenumi)}
\item\label{adequate_definition1}
for any $x_1\in\left(u_1,t_1\right)\cap\Q$, the Nakayama--Zariski decomposition 
\[
\gamma_0^*\sigma_0^*L-x_1\hat{Y}_1=N_{0,0}(x_1)+P_0(x_1)
\]
on $\bar{Y}_0$ is the Zariski decomposition in a strong sense, and 
\item\label{adequate_definition2}
for any $2\leq k\leq j$ and for any $\left(x_1,\dots,x_k\right)\in\D_k\cap\Q^k$, 
the Nakayama--Zariski decomposition 
\[
P_{k-2}\left(x_1,\dots,x_{k-1}\right)|_{\bar{Y}_{k-1}}
-x_k\hat{Y}_k=N_{k-1,k-1}\left(x_1,\dots,x_k\right)
+P_{k-1}\left(x_1,\dots,x_k\right)
\]
on $\bar{Y}_{k-1}$ is the Zariski decomposition in a strong sense. 
\end{enumerate}
\end{definition}

\begin{remark}\label{adequate_remark}
Assume that $\left\{\gamma_k\right\}_{0\leq k\leq j-1}$ is an adequate dominant 
of $Y_\bullet$ with respects to $L$. 
\begin{enumerate}
\renewcommand{\theenumi}{\arabic{enumi}}
\renewcommand{\labelenumi}{(\theenumi)}
\item\label{adequate_remark1}
For any $1\leq k\leq j$ and for any $\left(x_1,\dots,x_k\right)\in\D_k\cap\Q^k$, 
the divisor $P_{k-1}\left(x_1,\dots,x_k\right)$ is a nef and big $\Q$-divisor 
on $\bar{Y}_{k-1}$. Thus, for any $2\leq k\leq j$ and for any 
$\left(x_1,\dots,x_{k-1}\right)\in\D_{k-1}$, we have the equality 
$u_k\left(x_1,\dots,x_{k-1}\right)=0$. 
\item\label{adequate_remark2}
By Proposition \ref{tilde_proposition} \eqref{tilde_proposition4}, for any $1\leq k\leq j$
and for any $\left(x_1,\dots,x_k\right)\in\D_k$, the divisor $P_{k-1}
\left(x_1,\dots,x_k\right)$ is a \emph{nef} and big $\R$-divisor on $\bar{Y}_{k-1}$ 
with $\hat{Y}_k\not\subset\B_+\left(P_{k-1}\left(x_1,\dots,x_k\right)\right)$. 
In particular, we have 
\[
\vol\left(P_{k-1}\left(x_1,\dots,x_k\right)|_{\hat{Y}_k}\right)
=\left(P_{k-1}\left(x_1,\dots,x_k\right)^{\cdot n-k}\cdot\hat{Y}_k\right)
=\vol_{\bar{Y}_{k-1}|\hat{Y}_k}\left(P_{k-1}\left(x_1,\dots,x_k\right)\right)
\]
(see Proposition \ref{NB_proposition}). 
\end{enumerate}
\end{remark}

\begin{lemma}\label{adequate-compare_lemma}
Assume that $\left\{\gamma_k\colon\bar{Y}_k\to\tilde{Y}_k\right\}_{0\leq k\leq j-1}$
is an adequate dominant of $Y_\bullet$ with respects to $L$. 
Let $\left\{\gamma'_k\colon\bar{Y}'_k\to\tilde{Y}_k\right\}_{0\leq k\leq j-1}$
be another $\Q$-factorial dominant of $Y_\bullet$, and let 
$\left\{\psi_k\colon\bar{Y}'_k\to\bar{Y}_k\right\}_{0\leq k\leq j-1}$ 
be a morphism between dominants 
$\left\{\gamma'_k\right\}_{0\leq k\leq j-1}$ and 
$\left\{\gamma_k\right\}_{0\leq k\leq j-1}$, as in Lemma 
\ref{dominants-compare_lemma}. 
\begin{enumerate}
\renewcommand{\theenumi}{\arabic{enumi}}
\renewcommand{\labelenumi}{(\theenumi)}
\item\label{adequate-compare_lemma1}
The dominant  
$\left\{\gamma'_k\right\}_{0\leq k\leq j-1}$ is also adequate with respects to $L$. 
\item\label{adequate-compare_lemma2}
Let 
\[
\D'_k,\quad t'_k, \quad v'_k, \quad u'_{l,k},\quad 
P'_{k-1}\left(x'_1,\dots,x'_k\right), \quad
N'_{l-1,k-1}\left(x'_1,\dots,x'_l\right)
\]
be the notions for $\left\{\gamma'_k\right\}_{0\leq k\leq j-1}$ and $L$ 
in Definition \ref{P-N_definition}. Moreover, for any $1\leq l\leq k\leq j$
let $e_{l,k}$ and $\Theta_{l,k}$ be as in Lemma 
\ref{dominants-compare_lemma}. 
Then, for any $1\leq k\leq j$, we have 
\begin{enumerate}
\renewcommand{\theenumii}{\roman{enumii}}
\renewcommand{\labelenumii}{(\theenumii)}
\item\label{adequate-compare_lemma21}
$\D'_k=\D_k$, 
\item\label{adequate-compare_lemma22}
$t'_k=t_k$ over $\D'_k=\D_k$, 
\item\label{adequate-compare_lemma23}
$P'_{k-1}\left(x_1,\dots,x_k\right)=\psi_{k-1}^*P_{k-1}\left(x_1,\dots,x_k\right)$
for any $\left(x_1,\dots,x_k\right)\in\D'_k=\D_k$, 
\item\label{adequate-compare_lemma24}
\begin{eqnarray*}
N'_{l-1,k-1}\left(x_1,\dots,x_l\right)&=&\psi^*_{k-1}N_{l-1,k-1}\left(x_1,\dots,x_l\right)\\
&+&x_l\Theta_{l,k}+\sum_{i=l+1}^k u_{l,i}\left(x_1,\dots,x_l\right)\Theta_{i,k}
\end{eqnarray*}
for any $1\leq l\leq k$ and for any $\left(x_1,\dots,x_l\right)\in\D'_l=\D_l$, 
\item\label{adequate-compare_lemma25}
\begin{eqnarray*}
u'_{l,k}\left(x_1,\dots,x_l\right)
=u_{l,k}\left(x_1,\dots,x_l\right)+x_l e_{l,k}
+\sum_{i=l+1}^{k-1}u_{l,i}\left(x_1,\dots,x_l\right)e_{i,k}
\end{eqnarray*}
for any $1\leq l<k$ and for any $\left(x_1,\dots,x_l\right)\in\D'_l=\D_l$, and 
\item\label{adequate-compare_lemma26}
if $k\geq 2$, then 
\[
v'_k\left(x_1,\dots,x_{k-1}\right)=v_k\left(x_1,\dots,x_{k-1}\right)
+\sum_{l=1}^{k-1}\left(x_l+v_l\left(x_1,\dots,x_{l-1}\right)\right)e_{l,k}
\]
for any $\left(x_1,\dots,x_{k-1}\right)\in\D'_{k-1}=\D_{k-1}$. 
\end{enumerate}
\end{enumerate}
\end{lemma}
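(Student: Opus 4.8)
The plan is to prove all assertions simultaneously by induction on $k$, tracking how the morphism $\psi_{k-1}$ relates the two towers of Nakayama--Zariski decompositions. The base case $k=1$ is immediate: since $\psi_0^*\gamma_0^*\sigma_0^*L=\gamma_0'^*\sigma_0^*L$ and $\psi_0^*\hat{Y}_1=\hat{Y}_1'+\Theta_{1,1}$ with $\hat{Y}_1'\not\subset\Supp\Theta_{1,1}$ (by the definition of $\Theta_{1,1}=\psi_0^*\hat{Y}_1-\hat{Y}_1'$ in Lemma \ref{dominants-compare_lemma}), we get $\gamma_0'^*\sigma_0^*L-x_1\hat{Y}_1'=\psi_0^*(\gamma_0^*\sigma_0^*L-x_1\hat{Y}_1)+x_1\Theta_{1,1}$. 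Applying $\psi_0^*$ to the strong Zariski decomposition $\gamma_0^*\sigma_0^*L-x_1\hat{Y}_1=N_{0,0}(x_1)+P_0(x_1)$ and using Example \ref{ZDS_example} \eqref{ZDS_example2} (pullback of a strong Zariski decomposition under a birational morphism, absorbing the $\psi_0$-exceptional effective part), we conclude that $\gamma_0'^*\sigma_0^*L-x_1\hat{Y}_1'$ admits the strong Zariski decomposition with $P_0'(x_1)=\psi_0^*P_0(x_1)$ and $N_{0,0}'(x_1)=\psi_0^*N_{0,0}(x_1)+x_1\Theta_{1,1}$; since $\psi_0^*P_0(x_1)$ is nef and big and $\hat{Y}_1'\not\subset\B_+$ of it (as $\hat{Y}_1\not\subset\B_+(P_0(x_1))$ and $\psi_0$ is birational), the numbers $u_1'=u_1$, $t_1'=t_1$ coincide and $\D_1'=\D_1$.

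For the inductive step, assuming (i)--(vi) for all indices below $k$, I would first use \eqref{adequate-compare_lemma23} at level $k-1$ to write $P_{k-2}'(x_1,\dots,x_{k-1})=\psi_{k-2}^*P_{k-2}(x_1,\dots,x_{k-1})$, hence $P_{k-2}'(\dots)|_{\bar{Y}_{k-1}'}=\psi_{k-1}^*\big(P_{k-2}(\dots)|_{\bar{Y}_{k-1}}\big)$ by the commutativity $\phi_{k-1}'\circ\psi_{k-1}$-type relations built into the definition of a morphism of dominants. Since $\sigma_E$ and $\tau_E$ are unchanged under birational pullback that is an isomorphism near the relevant divisor — more precisely, $\tau_{\hat{Y}_k'}(\psi_{k-1}^*D)=\tau_{\hat{Y}_k}(D)$ because $\psi_{k-1}^*\hat{Y}_k=\hat{Y}_k'+\Theta_{k,k}$ with $\hat{Y}_k'\not\subset\Supp\Theta_{k,k}$, and $\sigma_{\hat{Y}_k}(D)=0$ already by adequacy (Remark \ref{adequate_remark} \eqref{adequate_remark1}) — we obtain $t_k'=t_k$ and $u_k'=u_k=0$, giving \eqref{adequate-compare_lemma22} and, combined with the already-established $\D_{k-1}'=\D_{k-1}$, also $\D_k'=\D_k$, i.e.\ \eqref{adequate-compare_lemma21}. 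Then apply $\psi_{k-1}^*$ to the strong Zariski decomposition $P_{k-2}(\dots)|_{\bar{Y}_{k-1}}-x_k\hat{Y}_k=N_{k-1,k-1}(\dots)+P_{k-1}(\dots)$; the left side pulls back to $P_{k-2}'(\dots)|_{\bar{Y}_{k-1}'}-x_k\hat{Y}_k'-x_k\Theta_{k,k}$, so after moving the effective $\psi_{k-1}$-exceptional pieces into the negative part (Example \ref{ZDS_example} \eqref{ZDS_example2} again) we get adequacy of $\{\gamma_k'\}$ at level $k$ and the formulas $P_{k-1}'=\psi_{k-1}^*P_{k-1}$ (giving \eqref{adequate-compare_lemma23}) and $N_{k-1,k-1}'=\psi_{k-1}^*N_{k-1,k-1}+x_k\Theta_{k,k}$, which is the $l=k$ case of \eqref{adequate-compare_lemma24}.

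For $l<k$ in \eqref{adequate-compare_lemma24}, I would unwind the inductive definition $N_{l-1,k-1}'(x_1,\dots,x_l)=\phi_{k-1}'^*\big(N_{l-1,k-2}'(x_1,\dots,x_l)|_{\bar{Y}_{k-1}'}\big)-u_{l,k}'(x_1,\dots,x_l)\hat{Y}_k'$, substitute the level-$(k-1)$ formula for $N_{l-1,k-2}'$, restrict to $\bar{Y}_{k-1}'$, pull back, and read off the coefficient of $\hat{Y}_k'$ to identify $u_{l,k}'$; the key point is that $\Theta_{i,k-1}|_{\bar{Y}_{k-1}'}$ splits into a multiple $e_{i,k}$ of $\hat{Y}_k'$ plus $\Theta_{i,k}$, by the very definition of $e_{i,k},\Theta_{i,k}$ in Lemma \ref{dominants-compare_lemma}. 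This is exactly the bookkeeping that produces the triangular-matrix identity for $d'_{l,k}$ in Lemma \ref{dominants-compare_lemma}, now decorated with the extra factors $x_l$ and $u_{l,i}(x_1,\dots,x_l)$ coming from the coefficients appearing in $N$ rather than the constant $1$ appearing in $\Sigma$. Matching the coefficient of $\hat{Y}_k'$ gives \eqref{adequate-compare_lemma25}, and summing over $l$ gives \eqref{adequate-compare_lemma26} after invoking the level-$(l)$ formula $v_l'=v_l+\sum(x_i+v_i)e_{i,l}$ to rewrite the arguments. The main obstacle is purely organizational: keeping the two shifted parametrizations (the $\D_k$-coordinates versus the tilded $\tilde{\D}_k$-coordinates) and the two towers of pullbacks aligned, so that every ``restrict then pull back'' operation lands the exceptional divisors in the right place; once one fixes the convention that all effective $\psi$-exceptional contributions get swept into the negative parts via Example \ref{ZDS_example} \eqref{ZDS_example2}, each identity follows by comparing coefficients, with no geometric input beyond the invariance of strong Zariski decompositions under birational pullback and the already-proved convexity/finiteness statements of Proposition \ref{tilde_proposition} ensuring everything is well-defined on the open sets $\D_k$.
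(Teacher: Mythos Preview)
Your proposal is correct and follows essentially the same route as the paper: induction on $k$, with the base case handled by pulling back the strong Zariski decomposition and absorbing the $\psi_0$-exceptional divisor $x_1\Theta_{1,1}$ into the negative part via Example \ref{ZDS_example} \eqref{ZDS_example2}; the inductive step handled by first establishing $P'_{k-2}|_{\bar{Y}'_{k-1}}=\psi_{k-1}^*(P_{k-2}|_{\bar{Y}_{k-1}})$ from the commutativity $(\psi_{k-2}|_{\hat{Y}'_{k-1}})\circ\phi'_{k-1}=\phi_{k-1}\circ\psi_{k-1}$, deducing $u'_k\equiv 0$ and $t'_k=t_k$ from nefness, then obtaining the $l=k$ case of \eqref{adequate-compare_lemma24} by the same exceptional-absorption trick, and finally unwinding the recursive definition of $N'_{l-1,k-1}$ for $l<k$ using the splitting $\Theta_{i,k-1}|_{\bar{Y}'_{k-1}}=e_{i,k}\hat{Y}'_k+\Theta_{i,k}$. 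One small remark: your derivation of \eqref{adequate-compare_lemma26} is phrased as invoking the level-$l$ formula for $v'_l$, but in fact the paper (and your own logic) only needs the definition $v_i=\sum_{l<i}u_{l,i}$ to regroup the double sum $\sum_{l<k}\sum_{l<i<k}u_{l,i}e_{i,k}=\sum_{i<k}v_i e_{i,k}$; no appeal to the inductive hypothesis on $v'_l$ is required.
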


\begin{proof}
We give a proof by induction on $k$. If $k=1$, since $\Theta_{1,1}$ is a 
$\psi_0$-exceptional effective $\Q$-divisor on $\bar{Y}_0$ and 
\[
\gamma_0^*\sigma_0^*L-x_1\hat{Y}_1=N_{0,0}\left(x_1\right)+P_0\left(x_1\right)
\]
for any $x_1\in\left(u_1,t_1\right)\cap\Q$ is the Zariski decomposition in a strong 
sense, the decomposition 
\[
\left(\gamma'_0\right)^*\sigma_0^*L-x_1\hat{Y}'_1
=\left(\psi^*_0 N_{0,0}\left(x_1\right)+x_1\Theta_{1,1}\right)
+\psi_0^*P_0\left(x_1\right)
\]
is the Zariski decomposition in a strong sense. Thus the assertions are trivial when 
$k=1$. 

Assume that $k\geq 2$ the assertions are true up to $k-1$. For any 
$\left(x_1,\dots,x_{k-1}\right)\in\D_{k-1}\cap\Q^{k-1}=\D'_{k-1}\cap\Q^{k-1}$, 
since 
\[
P'_{k-2}\left(x_1,\dots,x_{k-1}\right)|_{\bar{Y}'_{k-1}}
=\psi_{k-1}^*\left(P_{k-2}\left(x_1,\dots,x_{k-1}\right)|_{\bar{Y}_{k-1}}\right)
\]
is nef and big, we have $t_k=t'_k$ and $u'_k\equiv 0$ over $\D'_{k-1}=\D_{k-1}$, 
where $u'_k$ is the ``$u_k$ function'' for $\left\{\gamma'_k\right\}_{0\leq k\leq j-1}$ 
and $L$ in Definition \ref{P-N_definition}. 
(We remark that both are continuous functions.)
Moreover, since $\Theta_{k,k}$ is an effective and $\psi_{k-1}$-exceptional 
$\Q$-divisor on $\bar{Y}'_{k-1}$, the decomposition 
\[
P'_{k-2}\left(x_1,\dots,x_{k-1}\right)|_{\bar{Y}'_{k-1}}-x_k\hat{Y}_k
=\left(\psi_{k-1}^*N_{k-1,k-1}\left(x_1,\dots,x_k\right)+x_k\Theta_{k,k}\right)
+\psi_{k-1}^*P_{k-1}\left(x_1,\dots,x_k\right)
\]
is the Zariski decomposition in a strong sense 
for any $x_k\in\left(0,t_k\left(x_1,\dots,x_{k-1}\right)\right)\cap\Q$. 

Let us consider the assertion \eqref{adequate-compare_lemma24}. 
We may assume that $l<k$ since we already know the case $l=k$. We see by 
induction on $k-l$. We may assume that, for any $\left(x_1,\dots,x_l\right)\in
\D'_l=\D_l$, the equality  
\[
N'_{l-1,k-2}\left(x_1,\dots,x_l\right)=\psi_{k-2}^*N_{l-1,k-2}\left(x_1,\dots,x_l\right)
+x_l\Theta_{l,k-1}+\sum_{i=l+1}^{k-1}u_{l,i}\left(x_1,\dots,x_l\right)\Theta_{i,k-1}
\]
holds on $\bar{Y}'_{k-2}$. Note that
\begin{eqnarray*}
u'_{l,k}\left(x_1,\dots,x_l\right)
&=&\ord_{\hat{Y}'_k}\Biggl(\psi_{k-1}^*\left(N_{l-1,k-2}\left(
x_1,\dots,x_l\right)|_{\bar{Y}_{k-1}}\right)+x_l\left(\Theta_{l,k-1}|_{\bar{Y}'_{k-1}}
\right)\\
&+&\sum_{i=l+1}^{k-1}u_{l,i}\left(x_1,\dots,x_l\right)\left(\Theta_{i,k-1}
|_{\bar{Y}'_{k-1}}\right)\Biggr)\\
&=&u_{l,k}\left(x_1,\dots,x_l\right)+x_l e_{l,k}
+\sum_{i=l+1}^{k-1}u_{l,i}\left(x_1,\dots,x_l\right)e_{i,k}. 
\end{eqnarray*}
Thus we get 
\begin{eqnarray*}
N'_{l-1,k-1}\left(x_1,\dots,x_l\right)&=&
\psi_{k-1}^*\left(N_{l-1,k-2}\left(x_1,\dots,x_l\right)|_{\bar{Y}_{k-1}}\right)
+x_l\left(\Theta_{l,k-1}|_{\bar{Y}'_{k-1}}\right)\\
&+&\sum_{i=l+1}^{k-1}u_{l,i}\left(x_1,\dots,x_l\right)\left(\Theta_{i,k-1}
|_{\bar{Y}'_{k-1}}\right)\\
&-&\left(u_{l,k}\left(x_1,\dots,x_l\right)+x_l e_{l,k}
+\sum_{i=l+1}^{k-1}u_{l,i}\left(x_1,\dots,x_l\right)e_{i,k}\right)\hat{Y}'_k\\
&=&\psi_{k-1}^*\left(N_{l-1,k-2}\left(x_1,\dots,x_l\right)|_{\bar{Y}_{k-1}}
-u_{l,k}\left(x_1,\dots,x_l\right)\hat{Y}_k\right)\\
&+&u_{l,k}\left(x_1,\dots,x_l\right)\Theta_{k,k}
+x_l\left(\Theta_{l,k-1}|_{\bar{Y}'_{k-1}}-e_{l,k}\hat{Y}'_k\right)\\
&+&\sum_{i=l+1}^{k-1}u_{l,i}\left(x_1,\dots,x_l\right)
\left(\Theta_{i,k-1}|_{\bar{Y}'_{k-1}}-e_{i,k}\hat{Y}'_k\right)\\
&=&\psi_{k-1}^*N_{l-1,k-1}\left(x_1,\dots,x_l\right)+x_l\Theta_{l,k}
+\sum_{i=l+1}^k u_{l,i}\left(x_1,\dots,x_l\right)\Theta_{i,k}. 
\end{eqnarray*}
Thus we get the assertion \eqref{adequate-compare_lemma24}, and also 
the assertion \eqref{adequate-compare_lemma25}. 

Since 
\begin{eqnarray*}
v'_k\left(x_1,\dots,x_{k-1}\right)&=&\sum_{l=1}^{k-1}
\left(u_{l,k}\left(x_1,\dots,x_l\right)+x_l e_{l,k}+\sum_{i=l+1}^{k-1}u_{l,i}
\left(x_1,\dots,x_l\right)e_{i,k}\right)\\
&=&v_k\left(x_1,\dots,x_{k-1}\right)+\sum_{l=1}^{k-1}x_l e_{l,k}
+\sum_{i=2}^{k-1}\sum_{l=1}^{i-1}e_{i,k} u_{l,i}\left(x_1,\dots,x_l\right)\\
&=&v_k\left(x_1,\dots,x_{k-1}\right)+\sum_{l=1}^{k-1}\left(
x_l+v_l\left(x_1,\dots,x_{l-1}\right)\right)e_{l,k}, 
\end{eqnarray*}
we get the assertion \eqref{adequate-compare_lemma26}. 
\end{proof}

We state the main theorem in this section. 

\begin{thm}\label{adequate_thm}
Assume that $\left\{\gamma_k\colon\bar{Y}_k\to\tilde{Y}_k\right\}_{0\leq k\leq j-1}$
is an adequate dominant of $Y_\bullet$ with respects to $L$. 
Then, for any $1\leq k\leq j$, we have 
\begin{eqnarray*}
S\left(L; Y_1\triangleright\cdots\triangleright Y_k\right)
&=&\frac{1}{\vol_X\left(L\right)}\cdot\frac{n!}{(n-j)!}
\int_{\left(x_1,\dots,x_j\right)\in\D_j}\Biggl(x_k+v_k\left(x_1,\dots,x_{k-1}\right)\\
&&+\sum_{l=1}^{k-1}g_{l,k}\left(x_l+v_l\left(x_1,\dots,x_{l-1}\right)\right)\Biggr)
\cdot\left(P_{j-1}\left(x_1,\dots,x_j\right)^{\cdot n-j}\cdot\hat{Y}_j\right)d\vec{x},
\end{eqnarray*}
where $g_{l,k}:=g_{l,k}\left(\left\{\gamma_k\right\}_{1\leq k\leq j-1}\right)$ is 
as in Definition \ref{dominant-compare_definition}. 
\end{thm}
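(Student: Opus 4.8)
The plan is to reduce the statement to a computation of the barycenter of a suitable Okounkov body, using the machinery of dominants from \S\ref{dominant_section} together with the explicit Nakayama--Zariski decompositions guaranteed by the adequacy hypothesis. First I would pass to a smooth $\Q$-factorial dominant: by Lemma \ref{adequate-compare_lemma} \eqref{adequate-compare_lemma1}, any $\Q$-factorial dominant dominating $\left\{\gamma_k\right\}_{0\leq k\leq j-1}$ is again adequate, and by Lemma \ref{adequate-compare_lemma} \eqref{adequate-compare_lemma2} none of the quantities $\D_j$, $t_k$, $v_k$, $P_{j-1}$, $\left(P_{j-1}^{\cdot n-j}\cdot\hat Y_j\right)$ appearing on the right-hand side change under such domination (the exceptional correction terms $\Theta_{l,k}$ restrict to zero on the relevant divisors, being $\psi_{k-1}$-exceptional). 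Here I must also check that the $g_{l,k}$ are invariant; this follows from Lemma \ref{dominants-compare_lemma} combined with the triangularity of the matrices and the fact that the $e_{l,k}$ contribute $\psi$-exceptional divisors only. So I may assume $\bar Y_k$ smooth and in particular choose a general admissible flag $Z_\bullet$ on $\hat Y_j$ avoiding the supports of all the (finitely many, by Proposition \ref{tilde_proposition} \eqref{tilde_proposition4}) components of the relevant negative parts.

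Next I would identify the Okounkov body. By Corollary \ref{refinement-dominant_corollary}, the Okounkov body $\hat\Delta$ of $V_{\vec{\bullet}}^{\left(\hat Y_1>\cdots>\hat Y_j\right)}$ associated to $Z_\bullet$ is the image under the unipotent linear map $f$ (with entries $d_{l,k}$) of the Okounkov body $\Delta$ of $\left(\gamma_{j-1}|_{\hat Y_j}\right)^*V_{\vec{\bullet}}^{\left(Y_1\triangleright\cdots\triangleright Y_j\right)}$; and the $(r-1+k)$-th barycentric coordinate $\hat b_{r-1+k}$ of $\hat\Delta$ equals $S\left(L;Y_1\triangleright\cdots\triangleright Y_k\right)+\sum_{l=1}^{k-1}d_{l,k}S\left(L;Y_1\triangleright\cdots\triangleright Y_l\right)$. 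Inverting the matrix (which introduces the $g_{l,k}$) will give
\[
S\left(L;Y_1\triangleright\cdots\triangleright Y_k\right)
=\hat b_{r-1+k}+\sum_{l=1}^{k-1}g_{l,k}\,\hat b_{r-1+l}.
\]
So it remains to compute $\hat b_{r-1+k}$, i.e.\ to integrate the $k$-th "$\hat Y$-coordinate" over $\hat\Delta$. For this I would use Proposition \ref{refinement-dominant_proposition} to describe, for each $\left(y_1,\dots,y_j\right)$, the fibre of $\hat\Delta$ over those coordinates as (a translate of) the Okounkov body of a refinement, together with the change of variables $x_l=y_l-\tilde v_l(y_1,\dots,y_{l-1})$ from Definition \ref{tilde_definition} and Lemma \ref{tilde_lemma} \eqref{tilde_lemma2}; the Jacobian of this triangular substitution is $1$. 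The key geometric input is Lemma \ref{tilde_lemma} \eqref{tilde_lemma3}: on $\bar Y_{k-1}$ the divisor $L-\sum y_l\hat Y_l$ decomposes as $\tilde P_{k-1}+\sum\tilde N_{l-1,k-1}$ with $\tilde P_{k-1}$ nef and big (Remark \ref{adequate_remark} \eqref{adequate_remark2}), hence the slices of the Okounkov body in the last $n-j$ directions have volume governed by $\vol_{\bar Y_{j-1}|\hat Y_j}\left(\tilde P_{j-1}\right)=\left(\tilde P_{j-1}^{\cdot n-j}\cdot\hat Y_j\right)$ via Proposition \ref{NB_proposition} \eqref{NB_proposition1}; iterating the "slicing" formula of Proposition \ref{NB_proposition} \eqref{NB_proposition2} over $x_1,\dots,x_j$ converts the integral over $\hat\Delta$ into the integral over $\D_j$ of the desired integrand, the shift $v_k$ accounting exactly for $\ord_{\hat Y_k}$ of the accumulated negative parts $\tilde N_{l-1,k-1}$. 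The combinatorial bookkeeping is: the $y_k$-coordinate on $\hat\Delta$ equals $x_k+\tilde v_k$ in the $x$-variables (by the refinement description), and $\tilde v_k\left(y_1,\dots,y_{k-1}\right)=v_k\left(x_1,\dots,x_{k-1}\right)$ under the substitution, while the $d_{l,k}$-corrections reorganise into the $g_{l,k}$-corrections by the matrix inversion already noted.

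Assembling: $\hat b_{r-1+k}=\frac{1}{\vol_X(L)}\cdot\frac{n!}{(n-j)!}\int_{\D_j}\left(x_k+v_k\right)\left(P_{j-1}^{\cdot n-j}\cdot\hat Y_j\right)d\vec x$ (the constant $\frac{n!}{(n-j)!}=(r-1+n)!/(r-1+j-1)!$ with $r=1$, coming from the normalisation $\vol\left(V_{\vec\bullet}\right)=(r-1+n)!\vol(\Delta)$ and the nested restricted-volume integration over the $j$ slicing directions), and then $S\left(L;Y_1\triangleright\cdots\triangleright Y_k\right)=\hat b_{r-1+k}+\sum_{l=1}^{k-1}g_{l,k}\hat b_{r-1+l}$ gives exactly the claimed formula once one observes $v_k+\sum_{l<k}g_{l,k}\left(x_l+v_l\right)$ is the correct integrand. \textbf{The main obstacle} I anticipate is the careful verification that the iterated Fubini/slicing argument is legitimate — specifically that at each stage $k$ the divisor $\tilde P_{k-1}\left(y_1,\dots,y_k\right)$ is nef and big with $\hat Y_k\not\subset\B_+$, so that restricted volumes equal intersection numbers and vary continuously (Remark \ref{adequate_remark}, Proposition \ref{tilde_proposition} \eqref{tilde_proposition4}, Proposition \ref{NB_proposition}), and that the negative parts $\tilde N_{l-1,k-1}$ do not interfere with the chosen general flag $Z_\bullet$; this is exactly where adequacy (Zariski decomposition in the strong sense, Definition \ref{adequate_definition}) is essential, since it guarantees the decompositions are preserved under restriction to $\hat Y_k$ and under pullback. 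Modulo that, the remainder is the triangular change of variables and matrix inversion, which is routine.
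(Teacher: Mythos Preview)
Your outline matches the paper's strategy (reduce to a smooth dominant, relate $S$ to the barycenter $\hat b_k$ of $\hat\Delta$ via Corollary~\ref{refinement-dominant_corollary}, then compute $\hat b_k$ by slicing), but two points are genuinely off.

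First, your Step~1 justification is wrong. Neither $v_k$ nor $g_{l,k}$ is individually invariant under passing to a higher dominant: Lemma~\ref{adequate-compare_lemma}\,\eqref{adequate-compare_lemma26} gives $v'_k = v_k + \sum_{l<k}(x_l+v_l)e_{l,k}$, and Lemma~\ref{dominants-compare_lemma} shows the $d$-matrix (hence the $g$-matrix) changes by the $e$-matrix. The $\Theta_{l,k}$ are \emph{not} in general $\psi_{k-1}$-exceptional for $k>l$ (only $\Theta_{l,l}$ is), so their restrictions need not vanish. What is invariant is the full integrand $x_k+v_k+\sum_{l<k}g_{l,k}(x_l+v_l)$; the paper verifies this by an explicit algebraic cancellation (its Step~1), which you would need to reproduce.

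Second, and more seriously, your ``iterated slicing via Proposition~\ref{NB_proposition}'' skips the technical heart of the argument. Proposition~\ref{NB_proposition} applies to \emph{divisorial} series, but $V_{\vec\bullet}^{(\hat Y_1>\cdots>\hat Y_k)}$ is not one: it is an iterated refinement of $H^0(\bullet L)$, and there is no a priori reason its $(1,\vec y)$-slice has volume $(\tilde P_{k-1}(\vec y)^{\cdot n-k}\cdot\hat Y_k)$. The paper bridges this gap by constructing an auxiliary series $V_{\vec\bullet}^{(\DIV,\hat Y_1>\cdots>\hat Y_k)}$ directly from the strong Zariski decompositions (Step~4), then proving two asymptotic-equivalence claims: that at each stage the ``divisorial'' refinement by $\hat Y_k$ agrees (up to a common overseries $W^k_{\vec\bullet}$) with the next divisorial series (Claim~\ref{adequate-compare_claim}), and hence that the full iterated refinement has the same Okounkov body as the divisorial one (Claim~\ref{adequate-body_claim}). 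This is where adequacy and Corollary~\ref{kojutsu_corollary} do the real work. Your sentence ``adequacy guarantees the decompositions are preserved under restriction to $\hat Y_k$'' gestures at this, but making it precise is exactly Steps~4--6 of the paper, and without them the fibre-volume identity you need is unjustified.
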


\begin{remark}\label{adequate-formula_remark}
\begin{enumerate}
\renewcommand{\theenumi}{\arabic{enumi}}
\renewcommand{\labelenumi}{(\theenumi)}
\item\label{adequate-formula_remark1}
If $Y_\bullet$ is a complete primitive flag over $X$, i.e., if $j=n$, then 
\[
\left(P_{j-1}\left(x_1,\dots,x_j\right)^{\cdot n-j}\cdot\hat{Y}_j\right)
\]
in Theorem \ref{adequate_thm} is identically equal to $1$ by the definition 
of intersection numbers. 
\item\label{adequate-formula_remark2}
In the proof of Theorem \ref{adequate_thm}, we can also show that 
\[
\vol_X\left(L\right)=\frac{n!}{(n-j)!}\int_{\vec{x}\in\D_j}
\left(P_{j-1}\left(\vec{x}\right)^{\cdot n-j}\cdot\hat{Y}_j\right)
d\vec{x}.
\]
\end{enumerate}
\end{remark}

\begin{proof}[Proof of Theorem \ref{adequate_thm}]
The proof is divided into 7 numbers of steps. 

\noindent\underline{\textbf{Step 1}}\\
Let $\left\{\gamma'_k\colon\bar{Y}'_k\to\tilde{Y}_k\right\}_{0\leq k\leq j-1}$
be any $\Q$-factorial dominant of $Y_\bullet$, let 
$\left\{\psi_k\colon\bar{Y}'_k\to\bar{Y}_k\right\}_{0\leq k\leq j-1}$ 
be any morphism between dominants 
$\left\{\gamma'_k\right\}_{0\leq k\leq j-1}$ and 
$\left\{\gamma_k\right\}_{0\leq k\leq j-1}$, as in Lemma 
\ref{adequate-compare_lemma}. 
We see that the right hand side of the equation in Theorem \ref{adequate_thm} 
takes the same value after replacing $\left\{\gamma_k\right\}_{0\leq k\leq j-1}$
with $\left\{\gamma'_k\right\}_{0\leq k\leq j-1}$. 
Set $g_{l,k}:=g_{l,k}\left(\left\{\gamma_k\right\}_{1\leq k\leq j-1}\right)$ 
and $g'_{l,k}:=g_{l,k}\left(\left\{\gamma'_k\right\}_{1\leq k\leq j-1}\right)$. 
We also use the terminologies in Lemma \ref{adequate-compare_lemma}. 
Note that 
\[
g_{l,k}=g'_{l,k}+e_{l,k}+\sum_{i=l+1}^{k-1}e_{l,i}g'_{i,k}
\]
holds for any $1\leq l< k$, where $e_{l,k}$ is as in 
Lemma \ref{dominants-compare_lemma}. 
For any $\left(x_1,\dots,x_j\right)\in\D_j$, we have 
\begin{eqnarray*}
&&x_k+v'_k\left(x_1,\dots,x_{k-1}\right)+\sum_{l=1}^{k-1}g'_{l,k}
\left(x_l+v'_l\left(x_1,\dots,x_{l-1}\right)\right)\\
&-&\left(x_k+v_k\left(x_1,\dots,x_{k-1}\right)+\sum_{l=1}^{k-1}g_{l,k}
\left(x_l+v_l\left(x_1,\dots,x_{l-1}\right)\right)\right)\\
&=&\sum_{l=1}^{k-1}\left(x_l+v_l\left(x_1,\dots,x_{l-1}\right)\right)e_{l,k}
+\sum_{l=1}^{k-1}g'_{l,k}\left(x_l+v_l\left(x_1,\dots,x_{l-1}\right)\right)\\
&+&\sum_{l=1}^{k-1}\sum_{i=1}^{l-1}
g'_{l,k}\left(x_i+v_i\left(x_1,\dots,x_{i-1}\right)\right)e_{i,l}\\
&-&\sum_{l=1}^{k-1}\left(g'_{l,k}+e_{l,k}\right)\left(x_l+v_l\left(x_1,\dots,x_{l-1}\right)
\right)
-\sum_{i=2}^{k-1}\sum_{l=1}^{i-1}\left(x_l+v_l\left(x_1,\dots,x_{l-1}\right)\right)
e_{l,i}g'_{i,k}\\
&=&\sum_{l=2}^{k-1}\sum_{i=1}^{l-1}g'_{l,k}\left(x_i+v_i\left(x_1,\dots,x_{i-1}\right)\right)
e_{i,l}-\sum_{i=2}^{k-1}\sum_{l=1}^{i-1}\left(x_l+v_l\left(x_1,\dots,x_{l-1}\right)\right)
e_{l,i}g'_{i,k}\\
&=&0.
\end{eqnarray*}
Thus, as in Lemma \ref{adequate-compare_lemma} \eqref{adequate-compare_lemma1}, 
since the characteristic of $\Bbbk$ is equal to zero, we may assume that 
$\left\{\gamma_k\right\}_{0\leq k\leq j-1}$ is a \emph{smooth} adequate 
dominant of $Y_\bullet$ with respects to $L$. 

\noindent\underline{\textbf{Step 2}}\\
We see that the right hand side of the equation in Theorem \ref{adequate_thm} 
is equal to the value 
\begin{eqnarray*}
\frac{1}{\vol_X\left(L\right)}\frac{n!}{(n-j)!}
\int_{\left(y_1,\dots,y_j\right)\in\tilde{\D}_j}
\left(y_k+\sum_{l=1}^{k-1}g_{l,k}y_l\right)\left(\tilde{P}_{j-1}
\left(y_1,\dots,y_j\right)^{\cdot n-j}\cdot\hat{Y}_j\right)d\vec{y}.
\end{eqnarray*}
This is trivial from Fubini's theorem by changing the coordinates 
\[
x_1=y_1,\quad x_2=y_2-\tilde{v}_2\left(y_1\right),\dots\dots,
x_j=y_j-\tilde{v}_j\left(y_1,\dots,y_{j-1}\right)
\]
step-by-step. Indeed, we have 
\begin{eqnarray*}
\tilde{P}_{j-1}\left(y_1,\dots,y_j\right)&=&P_{j-1}\left(x_1,\dots,x_j\right), \\
y_k+\sum_{l=1}^{k-1}g_{l,k}y_l&=&x_k+v_k\left(x_1,\dots,x_{k-1}\right)
+\sum_{l=1}^{k-1}g_{l,k}\left(x_l+v_l\left(x_1,\dots,x_{l-1}\right)\right).
\end{eqnarray*}

\noindent\underline{\textbf{Step 3}}\\
For $V_{\vec{\bullet}}=H^0\left(\bullet L\right)$, let us consider the series 
$V_{\vec{\bullet}}^{\left(\hat{Y}_1>\cdots>\hat{Y}_j\right)}$
as in Proposition \ref{refinement-dominant_proposition}. Moreover, 
let us fix a general admissible flag 
\[
Z_\bullet\colon\hat{Y}_j=Z_0\supsetneq Z_1\supsetneq\cdots\supsetneq Z_{n-j}
\]
of $\hat{Y}_j$
in the sense of Corollary \ref{refinement-dominant_corollary}. 
Set 
\[
\hat{\Delta}:=\Delta_{Z_\bullet}
\left(V_{\vec{\bullet}}^{\left(\hat{Y}_1>\cdots>\hat{Y}_j\right)}\right)
\subset\R_{\geq 0}^n,
\]
and let $\left(\hat{b}_1,\dots,\hat{b}_n\right)\in\hat{\Delta}$ be the barycenter 
of $\hat{\Delta}$. By Step 2 and Corollary \ref{refinement-dominant_corollary}, 
it is enough to show the equality 
\[
\hat{b}_k=\frac{1}{\vol_X\left(L\right)}\frac{n!}{(n-j)!}
\int_{\vec{y}\in\tilde{\D}_j}
y_k\left(\tilde{P}_{j-1}
\left(\vec{y}\right)^{\cdot n-j}\cdot\hat{Y}_j\right)d\vec{y}
\]
for any $1\leq k\leq j$ in order to prove Theorem \ref{adequate_thm}. 

\noindent\underline{\textbf{Step 4}}\\
For any $1\leq k\leq j$, 
the series $V_{\vec{\bullet}}^{\left(\hat{Y}_1>\cdots>\hat{Y}_k\right)}$ 
on $\hat{Y}_k$ is associated to $L|_{\hat{Y}_k}, -\hat{Y}_1|_{\hat{Y}_k},\dots,
-\hat{Y}_k|_{\hat{Y}_k}$. Let us construct a similar series 
$V_{\vec{\bullet}}^{\left(\DIV,\hat{Y}_1>\cdots>\hat{Y}_k\right)}$ on $\hat{Y}_k$ 
associated to $L|_{\hat{Y}_k}, -\hat{Y}_1|_{\hat{Y}_k},\dots,
-\hat{Y}_k|_{\hat{Y}_k}$. (Recall that, by Step 1, we assume that 
$\left\{\gamma_k\right\}_{0\leq k\leq j-1}$ is a smooth and adequate with respects 
to $L$.) For any sufficiently divisible $m\in\Z_{>0}$ and for any 
$\left(a,b_1,\dots,b_k\right)\in\left(m\Z_{\geq 0}\right)^{k+1}$, 
let us define the subspace
\[
V_{a,b_1,\dots,b_k}^{\left(\DIV,\hat{Y}_1>\cdots>\hat{Y}_k\right)}
\subset
H^0\left(\hat{Y}_k,a L|_{\hat{Y}_k}-b_1\hat{Y}_1|_{\hat{Y}_k}-\cdots-b_k
\hat{Y}_k|_{\hat{Y}_k}\right)
\]
as follows: 
\[\begin{cases}
\left\lceil\sum_{l=1}^k a\tilde{N}_{l-1,k-1}\left(\frac{b_1}{a},\dots,\frac{b_l}{a}\right)
\right\rceil
\Big|_{\hat{Y}_k}+H^0\left(\hat{Y}_k,\left\lfloor a\tilde{P}_{k-1}
\left(\frac{b_1}{a},\dots,\frac{b_k}{a}\right)\right\rfloor\Big|_{\hat{Y}_k}\right) 
& \text{if }\left(\frac{b_1}{a},\dots,\frac{b_k}{a}\right)\in\tilde{\D}_k,\\
0 & \text{otherwise}.
\end{cases}\]
This definition gives the Veronese equivalence class 
$V_{\vec{\bullet}}^{\left(\DIV,\hat{Y}_1>\cdots>\hat{Y}_k\right)}$ of graded linear 
series by Lemma \ref{tilde_lemma} \eqref{tilde_lemma3} and Proposition 
\ref{tilde_proposition} \eqref{tilde_proposition1}, \eqref{tilde_proposition3}. 
From the construction, the series 
$V_{\vec{\bullet}}^{\left(\DIV,\hat{Y}_1>\cdots>\hat{Y}_k\right)}$
contains an ample series and has bounded support with 
\[
\Delta_{\Supp\left(V_{\vec{\bullet}}^{\left(\DIV,\hat{Y}_1>\cdots>\hat{Y}_k\right)}
\right)}=\overline{\left(\tilde{\D}_k\right)}.
\]
Moreover, for any $\vec{y}\in\tilde{\D}_k\cap\Q^k$, 
we have 
\begin{eqnarray*}
\vol\left(V_{\bullet\left(1,\vec{y}\right)}^{\left(
\DIV,\hat{Y}_1>\cdots>\hat{Y}_k\right)}\right)
=\limsup_{p\to\infty}\frac{h^0\left(\hat{Y}_k,\left\lfloor p\tilde{P}_{k-1}
\left(\vec{y}\right)\right\rfloor\Big|_{\hat{Y}_k}\right)}{p^{n-k}/(n-k)!}
=\left(\tilde{P}_{k-1}\left(\vec{y}\right)^{\cdot n-k}\cdot\hat{Y}_k\right).
\end{eqnarray*}

\noindent\underline{\textbf{Step 5}}\\
We show the following claim. 

\begin{claim}\label{adequate-compare_claim}
Take any $1\leq k\leq j$. Let 
$V_{\vec{\bullet}}^{\left(\DIV,\hat{Y}_1>\cdots>\hat{Y}_{k-1}\right)
\left(\hat{Y}_k\right)}$ be the refinement of 
\[\begin{cases}
\phi_{k-1}^*V_{\vec{\bullet}}^{\left(\DIV,\hat{Y}_1>\cdots>\hat{Y}_{k-1}\right)}
& \text{if }k\geq 2, \\
\gamma_0^*\sigma_0^*V_{\vec{\bullet}} & \text{if }k=1, 
\end{cases}\]
by $\hat{Y}_k\subset\bar{Y}_{k-1}$. 
\begin{enumerate}
\renewcommand{\theenumi}{\arabic{enumi}}
\renewcommand{\labelenumi}{(\theenumi)}
\item\label{adequate-compare_claim1}
We have 
\[
\Delta_{\Supp\left(V_{\vec{\bullet}}^{\left(\DIV,\hat{Y}_1>\cdots>\hat{Y}_{k-1}\right)
\left(\hat{Y}_k\right)}\right)}
=\Delta_{\Supp\left(V_{\vec{\bullet}}^{\left(\DIV,\hat{Y}_1>\cdots>\hat{Y}_k\right)}
\right)}=\overline{\left(\tilde{\D}_k\right)}.
\]
\item\label{adequate-compare_claim2}
There exists the Veronese equivalence class $W_{\vec{\bullet}}^k$ of graded 
linear series on $\hat{Y}_k$ associated to $L|_{\hat{Y}_k}, -\hat{Y}_1|_{\hat{Y}_k},\dots,
-\hat{Y}_k|_{\hat{Y}_k}$ such that 
\begin{itemize}
\item
the series $V_{\vec{\bullet}}^{\left(\DIV,\hat{Y}_1>\cdots>\hat{Y}_{k-1}\right)
\left(\hat{Y}_k\right)}$ is asymptotically equivalent to $W_{\vec{\bullet}}^k$, and 
\item
the series $V_{\vec{\bullet}}^{\left(\DIV,\hat{Y}_1>\cdots>\hat{Y}_k\right)}$ is 
asymptotically equivalent to $W_{\vec{\bullet}}^k$. 
\end{itemize}
\item\label{adequate-compare_claim3}
For any $\vec{y}\in\tilde{\D}_k\cap\Q^k$, 
we have 
\begin{eqnarray*}
\vol\left(V_{\bullet\left(1,\vec{y}\right)}^{\left(
\DIV,\hat{Y}_1>\cdots>\hat{Y}_{k-1}\right)\left(\hat{Y}_k\right)}\right)
=\vol\left(V_{\bullet\left(1,\vec{y}\right)}^{\left(
\DIV,\hat{Y}_1>\cdots>\hat{Y}_k\right)}\right)
=\left(\tilde{P}_{k-1}\left(\vec{y}\right)^{\cdot n-k}\cdot\hat{Y}_k\right).
\end{eqnarray*}
\end{enumerate}
\end{claim}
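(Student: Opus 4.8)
\textbf{Proof plan for Claim \ref{adequate-compare_claim}.}

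The plan is to prove the three assertions simultaneously by induction on $k$, where at each stage the key geometric input is the adequacy hypothesis on $\left\{\gamma_k\right\}_{0\leq k\leq j-1}$ (allowing us, by Step 1, to assume the dominant is smooth), together with the identification of refinements of divisorial-type series with restricted volumes from Corollary \ref{NB_corollary} and Proposition \ref{NB_proposition}. First I would handle the base case $k=1$: here $V_{\vec{\bullet}}^{\left(\DIV,\hat{Y}_1\right)}$ is by construction the refinement of $\gamma_0^*\sigma_0^*V_{\vec{\bullet}}$ built from the \emph{strong} Zariski decomposition $\gamma_0^*\sigma_0^*L-x_1\hat{Y}_1=N_{0,0}(x_1)+P_0(x_1)$, so the two series in \eqref{adequate-compare_claim2} literally coincide up to the rounding $\lceil\cdot\rceil$, $\lfloor\cdot\rfloor$, which does not affect asymptotic equivalence; the support statement \eqref{adequate-compare_claim1} then follows from $\D_1=\tilde{\D}_1=(u_1,t_1)$ and Lemma \ref{tilde_lemma}, and the volume statement \eqref{adequate-compare_claim3} from Corollary \ref{NB_corollary} \eqref{NB_corollary1}, since nefness of $P_0(x_1)$ (Remark \ref{adequate_remark} \eqref{adequate_remark2}) turns the restricted volume into the intersection number $\left(P_0(x_1)^{\cdot n-1}\cdot\hat{Y}_1\right)$.

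For the inductive step, assuming the claim for $k-1$, I would first use the inductive hypothesis \eqref{adequate-compare_claim2} to replace $\phi_{k-1}^*V_{\vec{\bullet}}^{\left(\DIV,\hat{Y}_1>\cdots>\hat{Y}_{k-1}\right)}$ by the asymptotically equivalent series $\phi_{k-1}^*W_{\vec{\bullet}}^{k-1}$ and invoke Example \ref{interior_example} \eqref{interior_example6} to see that refinement by $\hat{Y}_k$ preserves asymptotic equivalence; this produces the candidate $W_{\vec{\bullet}}^k$ as (the Veronese class of) that refinement. The substance is then to show that $W_{\vec{\bullet}}^k$ is \emph{also} asymptotically equivalent to $V_{\vec{\bullet}}^{\left(\DIV,\hat{Y}_1>\cdots>\hat{Y}_k\right)}$ — equivalently, by Lemma \ref{asymp-equiv_lemma}, that they have the same support $\overline{\tilde{\D}_k}$ (giving \eqref{adequate-compare_claim1}) and the same fiberwise volumes $\vol\left(\bullet_{\left(1,\vec{y}\right)}\right)$ for $\vec{y}\in\tilde{\D}_k\cap\Q^k$ (giving \eqref{adequate-compare_claim3}). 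For the support one uses the transformation formula of Lemma \ref{tilde_lemma} \eqref{tilde_lemma3} together with Proposition \ref{refinement-dominant_proposition} to match the two supports over $\tilde{\D}_k$. For the fiberwise volume one fixes $\vec{y}\in\tilde{\D}_k\cap\Q^k$, passes to the divisor $\tilde{P}_{k-2}\left(y_1,\dots,y_{k-1}\right)|_{\bar{Y}_{k-1}}$ on $\bar{Y}_{k-1}$, and notes that refinement by $\hat{Y}_k$ of the complete linear series of this big divisor computes the restricted volume $\vol_{\bar{Y}_{k-1}|\hat{Y}_k}\left(\tilde{P}_{k-2}(\ldots)|_{\bar{Y}_{k-1}}-\left(y_k-\tilde{v}_k\right)\hat{Y}_k\right)$ by Corollary \ref{NB_corollary}, which equals $\left(\tilde{P}_{k-1}\left(\vec{y}\right)^{\cdot n-k}\cdot\hat{Y}_k\right)$ because $\tilde{P}_{k-1}\left(\vec{y}\right)$ is nef (Remark \ref{adequate_remark} \eqref{adequate_remark2}); on the other side, Step 4 already recorded that $\vol\left(V_{\bullet\left(1,\vec{y}\right)}^{\left(\DIV,\hat{Y}_1>\cdots>\hat{Y}_k\right)}\right)$ is the same intersection number, so the two agree.

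The hard part will be the bookkeeping that reconciles the ``$\DIV$'' pullback convention with the genuine refinement: the series $V_{\vec{\bullet}}^{\left(\DIV,\hat{Y}_1>\cdots>\hat{Y}_{k-1}\right)}$ was \emph{defined} by hand using the Nakayama--Zariski data $\tilde{N}_{\bullet,\bullet}$, $\tilde{P}_{\bullet}$, whereas the refinement $V_{\vec{\bullet}}^{\left(\DIV,\hat{Y}_1>\cdots>\hat{Y}_{k-1}\right)\left(\hat{Y}_k\right)}$ is the abstract refinement construction of Definition \ref{refinement_definition}, and one must check these match asymptotically after restriction to $\hat{Y}_k$ — precisely where the \emph{strong} Zariski decomposition in Definition \ref{adequate_definition} is indispensable, since it guarantees $H^0\left(\bar{Y}_{k-1},m\tilde{P}_{k-2}(\ldots)|_{\bar{Y}_{k-1}}\right)=mN_{k-1,k-1}+H^0\left(\bar{Y}_{k-1},mP_{k-1}(\ldots)\right)$ on the nose for divisible $m$, so that the fixed part is carried away correctly and does not contribute to the image in $H^0\left(\hat{Y}_k,\cdot\right)$. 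I would isolate this comparison as a sub-lemma: for a big $\Q$-divisor $D$ on a normal projective $\bar{Y}_{k-1}$ admitting a strong Zariski decomposition $D=N+P$ with $\hat{Y}_k\not\subset\Supp N\cup\B_+(P)$, the refinement of $H^0(\bullet D)$ by $\hat{Y}_k$ is asymptotically equivalent to the divisorially-defined series built from $\lceil mN\rceil|_{\hat{Y}_k}+H^0\left(\hat{Y}_k,\lfloor mP\rfloor|_{\hat{Y}_k}\right)$; this is essentially Corollary \ref{kojutsu_corollary} combined with Example \ref{interior_example} \eqref{interior_example7}, \eqref{interior_example8}, and once it is in hand the induction closes cleanly.
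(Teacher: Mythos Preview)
Your computational core is correct and matches the paper: the fiberwise volume of the refinement at $\vec{y}\in\tilde{\D}_k\cap\Q^k$ is identified with the restricted volume $\vol_{\bar{Y}_{k-1}|\hat{Y}_k}\bigl(\tilde{P}_{k-2}(\ldots)|_{\bar{Y}_{k-1}}-(y_k-\tilde{v}_k)\hat{Y}_k\bigr)$ via Corollary~\ref{kojutsu_corollary}, and this equals $\bigl(\tilde{P}_{k-1}(\vec{y})^{\cdot n-k}\cdot\hat{Y}_k\bigr)$ by nefness (Remark~\ref{adequate_remark}~\eqref{adequate_remark2}). The support computation is likewise correct in spirit. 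But your structural framing has two problems.

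First, the induction buys nothing. At step $k$ your ``hard part'' sub-lemma \emph{is} the entire content of the claim for that $k$; the inductive hypothesis on $W_{\vec{\bullet}}^{k-1}$ is never actually used to reduce work, because the comparison between $V_{\vec{\bullet}}^{(\DIV,\hat{Y}_1>\cdots>\hat{Y}_{k-1})(\hat{Y}_k)}$ and $V_{\vec{\bullet}}^{(\DIV,\hat{Y}_1>\cdots>\hat{Y}_k)}$ must be done directly from the strong Zariski decomposition regardless. The paper accordingly gives a direct (non-inductive) argument for each $k$.

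Second, and more seriously, your construction of $W_{\vec{\bullet}}^k$ as the refinement of $\phi_{k-1}^*W_{\vec{\bullet}}^{k-1}$ does not obviously yield the containments needed to invoke Lemma~\ref{asymp-equiv_lemma}. That lemma is stated under the hypothesis $W_{m\vec{\bullet}}\subset V_{m\vec{\bullet}}$, so ``same support and same fiberwise volumes'' is \emph{not} a free-standing criterion for asymptotic equivalence in the paper's sense; you still need one series to sit inside the other. Your candidate $W_{\vec{\bullet}}^k$ is not evidently comparable to $V_{\vec{\bullet}}^{(\DIV,\hat{Y}_1>\cdots>\hat{Y}_k)}$ by inclusion. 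The paper sidesteps this by taking $W_{\vec{\bullet}}^k$ to be the \emph{sum} of the two series in question,
\[
W_{a,\vec{b}}^k:=V_{a,\vec{b}}^{(\DIV,\hat{Y}_1>\cdots>\hat{Y}_{k-1})(\hat{Y}_k)}+V_{a,\vec{b}}^{(\DIV,\hat{Y}_1>\cdots>\hat{Y}_k)},
\]
so that both are subseries of $W_{\vec{\bullet}}^k$ by construction; one then checks (using the explicit description of $M_a$ via the strong Zariski decomposition) that for $\vec{y}\in\tilde{\D}_k\cap\Q^k$ and divisible $a$ the sum collapses to $V_{a,a\vec{y}}^{(\DIV,\hat{Y}_1>\cdots>\hat{Y}_k)}$, whence all three fiberwise volumes coincide and Lemma~\ref{asymp-equiv_lemma} applies cleanly to each inclusion. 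Replacing your inductive $W^k$ with this sum fixes the gap and simultaneously removes the need for induction.
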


\begin{proof}[Proof of Claim \ref{adequate-compare_claim}]
Take any $\vec{y}=\left(y_1,\dots,y_k\right)\in\Q_{>0}^k$ and take any sufficiently 
divisible $a\in\Z_{>0}$. If $V_{a,a\vec{y}}^{\left(
\DIV,\hat{Y}_1>\cdots>\hat{Y}_{k-1}\right)\left(\hat{Y}_k\right)}\neq 0$, then 
we must have $\left(y_1,\dots,y_{k-1}\right)\in\tilde{\D}_{k-1}$ since the space 
$V_{a,a\left(y_1,\dots,y_{k-1}\right)}^{\left(
\DIV,\hat{Y}_1>\cdots>\hat{Y}_{k-1}\right)}$ must be nonzero. 
Recall that, the space $V_{a,a\vec{y}}^{\left(
\DIV,\hat{Y}_1>\cdots>\hat{Y}_{k-1}\right)\left(\hat{Y}_k\right)}$ is defined by 
the image of the homomorphism 
\begin{eqnarray*}
&&\left(a y_k\hat{Y}_k+ H^0\left(\bar{Y}_{k-1},a L|_{\bar{Y}_{k-1}}
-a y_1\hat{Y}_1|_{\bar{Y}_{k-1}}-\cdots-a y_k\hat{Y}_k\right)\right)\\
&&\cap\left(\left\lceil\sum_{l=1}^{k-1}a\tilde{N}_{l-1,k-2}
\left(y_1,\dots,y_l\right)\right\rceil\Big|_{\bar{Y}_{k-1}}+\phi_{k-1}^*
H^0\left(\hat{Y}_{k-1}, \left\lfloor a\tilde{P}_{k-2}\left(y_1,\dots,y_{k-1}
\right)\right\rfloor\Big|_{\hat{Y}_{k-1}}\right)\right)\\
&\xrightarrow{\bullet|_{\hat{Y}_k}}&
H^0\left(\hat{Y}_k, a L|_{\hat{Y}_k}-a y_1\hat{Y}_1|_{\hat{Y}_k}-\cdots-a y_k
\hat{Y}_k|_{\hat{Y}_k}\right). 
\end{eqnarray*}
Assume that the homomorphism is not the zero map. Then we have 
\begin{itemize}
\item
for any sufficiently divisible $a\in\Z_{>0}$, we have 
\[
a y_k\geq\ord_{\hat{Y}_k}\left(\left\lceil\sum_{l=1}^{k-1}a\tilde{N}_{l-1,k-2}
\left(y_1,\dots,y_l\right)\right\rceil\Big|_{\bar{Y}_{k-1}}\right),
\]
and
\item
for any sufficiently divisible $a\in\Z_{>0}$, we have 
\[
a y_k-\ord_{\hat{Y}_k}\left(\left\lceil\sum_{l=1}^{k-1}a\tilde{N}_{l-1,k-2}
\left(y_1,\dots,y_l\right)\right\rceil\Big|_{\bar{Y}_{k-1}}\right)
\leq\tau_{\hat{Y}_k}\left(\left\lfloor a\tilde{P}_{k-2}\left(y_1,\dots,y_{k-1}
\right)\right\rfloor\Big|_{\hat{Y}_{k-1}}\right).
\]
\end{itemize}
Thus we have 
\[
0\leq y_k-\tilde{v}_k\left(y_1,\dots,y_{k-1}\right)\leq
\tilde{t}_k\left(y_1,\dots,y_{k-1}\right).
\]
This implies that 
\[
\Delta_{\Supp\left(V_{\vec{\bullet}}^{\left(\DIV,\hat{Y}_1>\cdots>\hat{Y}_{k-1}\right)
\left(\hat{Y}_k\right)}\right)}
\subset\overline{\left(\tilde{\D}_k\right)}.
\]

Conversely, assume that $\vec{y}\in\tilde{\D}_k\cap\Q^k$. Then for any sufficiently 
divisible $a\in\Z_{>0}$, let $M_a$ be the image of the homomorphism 
\begin{eqnarray*}
&&\phi_{k-1}^*H^0\left(\hat{Y}_{k-1}, a \tilde{P}_{k-2}\left(y_1,\dots,y_{k-1}\right)
|_{\hat{Y}_{k-1}}\right)\\
&\cap& \biggl(a\left(y_k-\tilde{v}_k\left(y_1,\dots,y_{k-1}\right)\right)\hat{Y}_k\\
&&+H^0\left(\bar{Y}_{k-1}, a\left(\tilde{P}_{k-2}\left(y_1,\dots,y_{k-1}\right)
|_{\bar{Y}_{k-1}}-\left(y_k-\tilde{v}_k\left(y_1,\dots,y_{k-1}\right)\right)
\hat{Y}_k\right)\right)\biggr)\\
&=&\phi_{k-1}^*H^0\left(\hat{Y}_{k-1}, a \tilde{P}_{k-2}\left(y_1,\dots,y_{k-1}\right)
|_{\hat{Y}_{k-1}}\right)\\
&\cap&\left(a\left(y_k-\tilde{v}_k\left(y_1,\dots,y_{k-1}\right)\right)\hat{Y}_k
+a\tilde{N}_{k-1,k-1}\left(y_1,\dots,y_k\right)
+H^0\left(\bar{Y}_{k-1}, a\tilde{P}_{k-1}\left(y_1,\dots,y_k\right)\right)\right)\\
&\xrightarrow{\bullet|_{\hat{Y}_k}}&
a\tilde{N}_{k-1,k-1}\left(y_1,\dots,y_k\right)|_{\hat{Y}_k}
+H^0\left(\hat{Y}_k, a\tilde{P}_{k-1}\left(y_1,\dots,y_k\right)|_{\hat{Y}_k}\right)
\end{eqnarray*}
just for simplicity. As we have seen above, $M_a$ is canonically isomorphic to the 
space $V_{a,a\vec{y}}^{\left(
\DIV,\hat{Y}_1>\cdots>\hat{Y}_{k-1}\right)\left(\hat{Y}_k\right)}$. 
By Corollary \ref{kojutsu_corollary}, we have 
\begin{eqnarray*}
\limsup_{a\to\infty}\frac{\dim M_a}{a^{n-k}/(n-k)!}&=&\vol_{\bar{Y}_{k-1}|\hat{Y}_k}
\left(\tilde{P}_{k-2}\left(y_1,\dots,y_{k-1}\right)
|_{\bar{Y}_{k-1}}-\left(y_k-\tilde{v}_k\left(y_1,\dots,y_{k-1}\right)\right)
\hat{Y}_k\right)\\
&=&\vol_{\bar{Y}_{k-1}|\hat{Y}_k}\left(\tilde{P}_{k-1}\left(y_1,\dots,y_k\right)\right)
=\left(\tilde{P}_{k-1}\left(y_1,\dots,y_k\right)^{\cdot n-k}\cdot\hat{Y}_k\right). 
\end{eqnarray*}
Thus we get the assertions \eqref{adequate-compare_claim1} and 
\eqref{adequate-compare_claim3} in Claim \ref{adequate-compare_claim}. 

Let us consider the assertion \eqref{adequate-compare_claim2}. 
For any sufficiently divisible $m\in\Z_{>0}$ and for any $\left(a,b_1,\dots,b_k\right)
\in\left(m\Z_{\geq 0}\right)^{k+1}$, 
let 
\[
W_{a,b_1,\dots,b_k}^k\subset H^0\left(\hat{Y}_k,a L|_{\hat{Y}_k}-b_1\hat{Y}_1|_{\hat{Y}_k}
-\cdots-b_k\hat{Y}_k|_{\hat{Y}_k}\right)
\]
be the subspace defined by the sum 
\[
W_{a,b_1,\dots,b_k}^k:=V_{a,b_1,\dots,b_k}^{\left(\DIV,\hat{Y}_1>\cdots>\hat{Y}_{k-1}
\right)\left(\hat{Y}_k\right)}
+V_{a,b_1,\dots,b_k}^{\left(\DIV,\hat{Y}_1>\cdots>\hat{Y}_k\right)}
\]
of the subspaces. Obviously, $W_{\vec{\bullet}}^k$ is the Veronese equivalence class 
of a graded linear series which contains an ample series and has bounded support 
with 
\[
\Delta_{\Supp\left(W_{\vec{\bullet}}^k\right)}=\overline{\left(\tilde{\D}_k\right)}.
\]
Moreover, for any $\vec{y}\in\tilde{\D}_k\cap\Q^k$ and for any sufficiently 
divisible $a\in\Z_{>0}$, we have 
\[
W_{a,a\vec{y}}^k=V_{a,a\vec{y}}^{\left(\DIV, \hat{Y}_1>\cdots>\hat{Y}_k\right)}
\]
by construction. This implies that 
\begin{eqnarray*}
\vol\left(W_{\bullet\left(1,\vec{y}\right)}^k\right)
=\vol\left(V_{\bullet\left(1,\vec{y}\right)}^{\left(\DIV, 
\hat{Y}_1>\cdots\hat{Y}_k\right)}\right)
=\left(\tilde{P}_{k-1}\left(\vec{y}\right)^{\cdot n-k}\cdot\hat{Y}_k\right)
=\vol\left(V_{\bullet\left(1,\vec{y}\right)}^{\left(\DIV, 
\hat{Y}_1>\cdots\hat{Y}_{k-1}\right)\left(\hat{Y}_k\right)}\right). 
\end{eqnarray*}
Thus the assertion \eqref{adequate-compare_claim2} follows by Lemma 
\ref{asymp-equiv_lemma} and we complete the proof of 
Claim \ref{adequate-compare_claim}.
\end{proof}

\noindent\underline{\textbf{Step 6}}\\
Recall that, in Step 3, we fix a general admissible flag $Z_\bullet$ of $\hat{Y}_j$. 

\begin{claim}\label{adequate-body_claim}
We have $\Delta_{Z_\bullet}\left(V_{\vec{\bullet}}^{\left(\DIV, 
\hat{Y}_1>\cdots>\hat{Y}_j\right)}\right)=\hat{\Delta}$. 
\end{claim}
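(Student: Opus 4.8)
Looking at Claim \ref{adequate-body_claim}, the goal is to identify the Okounkov body of the divisorial refinement series $V_{\vec{\bullet}}^{(\DIV,\hat Y_1>\cdots>\hat Y_j)}$ associated to the general admissible flag $Z_\bullet$ of $\hat Y_j$ with the set $\hat\Delta$, which was defined in Step 3 as the Okounkov body of the genuine refinement series $V_{\vec{\bullet}}^{(\hat Y_1>\cdots>\hat Y_j)}$.

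\textbf{Plan of proof.} The plan is to prove the equality by a two-step comparison, using the chain of asymptotic equivalences set up in Claim \ref{adequate-compare_claim} \eqref{adequate-compare_claim2}. First I would observe that, since asymptotically equivalent graded linear series (having bounded support and containing ample series) have the same Okounkov body with respect to any fixed admissible flag — this follows from Lemma \ref{asymp-equiv_lemma}, because asymptotic equivalence is characterized by the coincidence of the Okounkov bodies $\Delta^W\subset\Delta^V$ of equal volume, hence $\Delta^W=\Delta^V$ — it suffices to exhibit, for each $1\le k\le j$, an appropriate chain relating $V_{\vec{\bullet}}^{(\hat Y_1>\cdots>\hat Y_k)}$ to $V_{\vec{\bullet}}^{(\DIV,\hat Y_1>\cdots>\hat Y_k)}$ by asymptotic equivalences that are compatible with taking refinements. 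Concretely, by Claim \ref{adequate-compare_claim}\eqref{adequate-compare_claim2} there is a series $W_{\vec{\bullet}}^k$ asymptotically equivalent both to $V_{\vec{\bullet}}^{(\DIV,\hat Y_1>\cdots>\hat Y_{k-1})(\hat Y_k)}$ and to $V_{\vec{\bullet}}^{(\DIV,\hat Y_1>\cdots>\hat Y_k)}$.

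\textbf{Key steps in order.} (1) Set up an induction on $k$ showing that $V_{\vec{\bullet}}^{(\hat Y_1>\cdots>\hat Y_k)}$ and $V_{\vec{\bullet}}^{(\DIV,\hat Y_1>\cdots>\hat Y_k)}$ are asymptotically equivalent as graded linear series on $\hat Y_k$. The base case $k=1$ follows because $V_{\vec{\bullet}}^{(\hat Y_1)}=(\gamma_0^*\sigma_0^*V_{\vec{\bullet}})^{(\hat Y_1)}$ while $V_{\vec{\bullet}}^{(\DIV,\hat Y_1)}$ is the refinement of the interior divisorial pullback; these are asymptotically equivalent by Example \ref{interior_example}\eqref{interior_example8} (the pullback $\gamma_0^*\sigma_0^*H^0(\bullet L)$ is asymptotically equivalent to $H^0(\bullet\gamma_0^*\sigma_0^*L)$, whose Zariski-decomposition-in-a-strong-sense structure realizes exactly $V_{\vec{\bullet}}^{(\DIV,\hat Y_1)}$ by Definition \ref{adequate_definition}\eqref{adequate_definition1}), followed by Example \ref{interior_example}\eqref{interior_example6} applied to the refinement by $\hat Y_1$. (2) For the inductive step, assume $V_{\vec{\bullet}}^{(\hat Y_1>\cdots>\hat Y_{k-1})}$ is asymptotically equivalent to $V_{\vec{\bullet}}^{(\DIV,\hat Y_1>\cdots>\hat Y_{k-1})}$; pull back by $\phi_{k-1}$ and refine by $\hat Y_k$, invoking Example \ref{interior_example}\eqref{interior_example6} so that $V_{\vec{\bullet}}^{(\hat Y_1>\cdots>\hat Y_k)}$ is asymptotically equivalent to $V_{\vec{\bullet}}^{(\DIV,\hat Y_1>\cdots>\hat Y_{k-1})(\hat Y_k)}$; then chain through $W_{\vec{\bullet}}^k$ from Claim \ref{adequate-compare_claim}\eqref{adequate-compare_claim2} to conclude asymptotic equivalence with $V_{\vec{\bullet}}^{(\DIV,\hat Y_1>\cdots>\hat Y_k)}$. (3) Take $k=j$ and pass to Okounkov bodies with respect to $Z_\bullet$; since asymptotic equivalence forces equality of Okounkov bodies, $\Delta_{Z_\bullet}(V_{\vec{\bullet}}^{(\DIV,\hat Y_1>\cdots>\hat Y_j)})=\Delta_{Z_\bullet}(V_{\vec{\bullet}}^{(\hat Y_1>\cdots>\hat Y_j)})=\hat\Delta$, where the last equality is the definition of $\hat\Delta$ in Step 3.

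\textbf{Main obstacle.} The delicate point is the compatibility of asymptotic equivalence with the operations "pull back along a birational morphism" and "refine by a prime $\Q$-Cartier divisor" — one must be careful that the chosen representatives with $W_{m\vec{\bullet}}\subset V_{m\vec{\bullet}}$ remain nested after these operations, and that the refinement operation (which involves an intersection with $H^0$ of a twist, then restriction) does not destroy the volume equality. However, this is precisely the content of Example \ref{interior_example}\eqref{interior_example6}, which already packages the statement "if $W_{\vec{\bullet}}$ is asymptotically equivalent to $V_{\vec{\bullet}}$, then $W_{\vec{\bullet}}^{(Y)}$ is asymptotically equivalent to $V_{\vec{\bullet}}^{(Y)}$" via Example \ref{interior_example}\eqref{interior_example4}. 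So the real work has been front-loaded into the earlier lemmas and into Claim \ref{adequate-compare_claim}; the proof of Claim \ref{adequate-body_claim} itself should reduce to assembling the induction and citing Lemma \ref{asymp-equiv_lemma} to convert asymptotic equivalence into equality of Okounkov bodies. The only genuine care needed is keeping track that at each refinement the series still has bounded support and contains an ample series, which is guaranteed by Proposition \ref{pullback_proposition} and Definition \ref{refinement-primitive_definition} together with Remark \ref{adequate_remark}.
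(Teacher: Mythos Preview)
Your approach has a subtle but genuine gap in the inductive step. The relation ``asymptotically equivalent'' in Definition \ref{xu_definition} requires one series to be a subseries of the other; it is \emph{not} symmetric and \emph{not} transitive as stated. So when you write ``chain through $W_{\vec{\bullet}}^k$ \ldots\ to conclude asymptotic equivalence with $V_{\vec{\bullet}}^{(\DIV,\hat Y_1>\cdots>\hat Y_k)}$'', you are invoking transitivity you do not have: from $A\subset W$ and $B\subset W$ with equal volumes one cannot conclude that $A$ is asymptotically equivalent to $B$ in the sense of Definition \ref{xu_definition}. You might try to weaken the inductive hypothesis to ``same Okounkov body for every admissible flag'' (an honest equivalence relation), but then Example \ref{interior_example}\eqref{interior_example6} is no longer directly available for the pull-back-and-refine passage from level $k-1$ to level $k$, since that example is proved under the inclusion hypothesis.

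The paper avoids this precisely by \emph{not} inducting on the level $k$. It introduces the mixed series $V_{\vec{\bullet}}^{(\DIV,\hat Y_1>\cdots>\hat Y_k)(\hat Y_{k+1}>\cdots>\hat Y_j)}$ on $\hat Y_j$ for each $0\le k\le j$, and for each fixed $k$ applies Example \ref{interior_example}\eqref{interior_example6} to the genuine asymptotic equivalences of Claim \ref{adequate-compare_claim}\eqref{adequate-compare_claim2}, refining all the way down to level $j$; thus both $V_{\vec{\bullet}}^{(\DIV,\hat Y_1>\cdots>\hat Y_{k-1})(\hat Y_k>\cdots>\hat Y_j)}$ and $V_{\vec{\bullet}}^{(\DIV,\hat Y_1>\cdots>\hat Y_k)(\hat Y_{k+1}>\cdots>\hat Y_j)}$ are asymptotically equivalent to the common roof $W_{\vec{\bullet}}^{k,(\hat Y_{k+1}>\cdots>\hat Y_j)}$, hence share the same Okounkov body by \cite[Lemma 4.73]{Xu}. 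One then chains \emph{equalities of Okounkov bodies} at level $j$ from $k=1$ (where the left-hand series is $V_{\vec{\bullet}}^{(\hat Y_1>\cdots>\hat Y_j)}$) to $k=j$ (where the right-hand series is $V_{\vec{\bullet}}^{(\DIV,\hat Y_1>\cdots>\hat Y_j)}$). The organizational point is: refine first, compare second --- so that at every comparison step there is a genuine inclusion into a common superseries on $\hat Y_j$, and the only relation being chained is equality of convex bodies.
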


\begin{proof}[Proof of Claim \ref{adequate-body_claim}]
For every $1\leq k<l\leq j$, let $V_{\vec{\bullet}}^{\left(\DIV, \hat{Y}_1
>\cdots>\hat{Y}_k\right)\left(\hat{Y}_{k+1}>\cdots>\hat{Y}_l\right)}$
be the refinement of $\phi_{l-1}^*V_{\vec{\bullet}}^{\left(\DIV, \hat{Y}_1
>\cdots>\hat{Y}_k\right)\left(\hat{Y}_{k+1}>\cdots>\hat{Y}_{l-1}\right)}$ 
by $\hat{Y}_l\subset\bar{Y}_{l-1}$. For any $1\leq k\leq j$, by Claim 
\ref{adequate-compare_claim} and Example \ref{interior_example} 
\eqref{interior_example6}, both 
\[
V_{\vec{\bullet}}^{\left(\DIV, \hat{Y}_1
>\cdots>\hat{Y}_{k-1}\right)\left(\hat{Y}_k>\cdots>\hat{Y}_j\right)}
\quad\text{and}\quad
V_{\vec{\bullet}}^{\left(\DIV, \hat{Y}_1
>\cdots>\hat{Y}_{k}\right)\left(\hat{Y}_{k+1}>\cdots>\hat{Y}_j\right)}
\]
are asymptotically equivalent to $W_{\vec{\bullet}}^{k, \left(\hat{Y}_{k+1}
>\cdots>\hat{Y}_j\right)}$. By \cite[Lemma 4.73]{Xu}, we have 
\[
\Delta_{Z_\bullet}\left(V_{\vec{\bullet}}^{\left(\DIV, \hat{Y}_1
>\cdots>\hat{Y}_{k-1}\right)\left(\hat{Y}_k>\cdots>\hat{Y}_j\right)}\right)
=\Delta_{Z_\bullet}\left(V_{\vec{\bullet}}^{\left(\DIV, \hat{Y}_1
>\cdots>\hat{Y}_{k}\right)\left(\hat{Y}_{k+1}>\cdots>\hat{Y}_j\right)}\right)
\]
for any $1\leq k\leq j$. Thus we complete the proof of 
Claim \ref{adequate-body_claim}.
\end{proof}

\noindent\underline{\textbf{Step 7}}\\
Let $p\colon\hat{\Delta}\twoheadrightarrow\overline{\left(\tilde{\D}_j\right)}
\subset\R_{\geq 0}^j$ be the composition of the natural maps 
\[
\hat{\Delta}\hookrightarrow\R_{\geq 0}^n=\R_{\geq 0}^{j}\times\R_{\geq 0}^{n-j}
\to\R_{\geq 0}^j. 
\]
By \cite[Theorem 4.21]{LM}, Claims \ref{adequate-compare_claim} 
and \ref{adequate-body_claim}, for any $\vec{y}\in\tilde{\D}_j\cap\Q^j$, we have 
\[
\vol_{\R^{n-j}}\left(p^{-1}\left(\vec{y}\right)\right)=\frac{1}{(n-j)!}
\vol\left(V_{\bullet\left(1,\vec{y}\right)}^{\left(\DIV, 
\hat{Y}_1>\cdots>\hat{Y}_j\right)}\right)=\frac{1}{(n-j)!}
\left(\tilde{P}_{j-1}\left(\vec{y}\right)^{\cdot n-j}\cdot\hat{Y}_j\right).
\]
By Proposition \ref{tilde_proposition} \eqref{tilde_proposition4}, we can also get 
\[
\vol_{\R^{n-j}}\left(p^{-1}\left(\vec{y}\right)\right)=\frac{1}{(n-j)!}
\left(\tilde{P}_{j-1}\left(\vec{y}\right)^{\cdot n-j}\cdot\hat{Y}_j\right)
\]
for any $\vec{y}\in\tilde{\D}_j$. This implies that 
\[
\vol_X\left(L\right)=n!\vol\left(\hat{\Delta}\right)
=\frac{n!}{(n-j)!}\int_{\vec{y}\in\tilde{\D}_j}
\left(\tilde{P}_{j-1}\left(\vec{y}\right)^{\cdot n-j}\cdot\hat{Y}_j\right)d\vec{y}. 
\]
Moreover, for any $1\leq k\leq j$, we have 
\begin{eqnarray*}
\hat{b}_k&=&\frac{1}{\vol\left(\hat{\Delta}\right)}\frac{1}{(n-j)!}
\int_{\vec{y}\in\tilde{\D}_j}
y_k\left(\tilde{P}_{j-1}\left(\vec{y}\right)^{\cdot n-j}\cdot\hat{Y}_j\right)d\vec{y}\\
&=&\frac{n!}{\vol_X\left(L\right)}\frac{1}{(n-j)!}
\int_{\vec{y}\in\tilde{\D}_j}
y_k\left(\tilde{P}_{j-1}\left(\vec{y}\right)^{\cdot n-j}\cdot\hat{Y}_j\right)d\vec{y}. 
\end{eqnarray*}
As a consequence, we complete the proof of Theorem \ref{adequate_thm}. 
\end{proof}

\section{Special cases of Theorem \ref{adequate_thm}}

We assume that the characteristic of $\Bbbk$ is equal to zero. 
Let us consider special cases of Theorem \ref{adequate_thm} for convenience, 
since the formula in Theorem \ref{adequate_thm} is a bit complicated. 

When $X$ is a surface, the following formula 
is probably well-known for specialists. 
See \cite[Lemma 4.8]{AZ}, \cite[Theorem 1.106]{FANO} 
and \cite[Theorem 4.8]{r3d28}. 

\begin{corollary}\label{ad-surface_corollary}
Let $X$ be a normal $\Q$-factorial projective surface, let $L$ be a big $\Q$-divisor 
on $X$, and let $Y_\bullet$ be 
a complete primitive flag over $X$. Let $\sigma_k\colon\tilde{Y}_k\to Y_k$ be 
the associated prime blowups for $k=0$, $1$. Then we have 
\begin{eqnarray*}
S\left(L;Y_1\right)&=&
\frac{2}{\vol_X(L)}\int_{u_1}^{t_1}x_1\left(P_0(x_1)\cdot Y_1\right)dx_1, \\
S\left(L;Y_1\triangleright Y_2\right)&=&
\frac{2}{\vol_X(L)}\int_{u_1}^{t_1}
\left(\left(P_0(x_1)\cdot Y_1\right)\left(\frac{1}{2}\left(P_0(x_1)\cdot Y_1\right)
+\ord_{Y_2}\left(\sigma_1^*N_{0,0}(x_1)|_{Y_1}\right)\right)\right)dx_1,
\end{eqnarray*}
where $u_1=\sigma_{Y_1}\left(\sigma_0^*L\right)$, 
$t_1=\tau_{Y_1}\left(\sigma_0^*L\right)$ and 
\[
\sigma_0^*L-x_1Y_1=N_{0,0}(x_1)+P_0(x_1)
\]
is the Zariski decomposition. 
\end{corollary}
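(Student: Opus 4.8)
The plan is to apply Theorem \ref{adequate_thm} in the special case $n=2$, $j=2$, and to verify that for a surface any $\Q$-factorial dominant that computes the relevant Nakayama--Zariski decompositions as genuine Zariski decompositions is automatically adequate. First I would observe that on a normal $\Q$-factorial projective surface, for a big $\Q$-divisor the Nakayama--Zariski decomposition coincides with the classical Zariski decomposition, which by finiteness of the negative part and the fact that the positive part is nef and big is automatically "in a strong sense" in the terminology of Definition \ref{ZDS_definition} (this is Example \ref{ZDS_example} \eqref{ZDS_example1}). Hence, taking $\gamma_0=\id$ on $\tilde{Y}_0$ and $\gamma_1=\id$ on $\tilde{Y}_1$ (so $\hat{Y}_1=Y_1\subset\tilde{Y}_0$ and $\hat{Y}_2=Y_2$), the trivial dominant $\{\gamma_k\}_{0\le k\le 1}$ is adequate with respect to $L$ in the sense of Definition \ref{adequate_definition}.

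Next I would unwind the notation of Definition \ref{P-N_definition} in this case. We have $\D_1=(u_1,t_1)$ with $u_1=\sigma_{Y_1}(\sigma_0^*L)$, $t_1=\tau_{Y_1}(\sigma_0^*L)$, and $\sigma_0^*L-x_1 Y_1 = N_{0,0}(x_1)+P_0(x_1)$ is the Zariski decomposition. Since $\D_1=\D_1\cap\Q$ is dense and the construction is continuous (Proposition \ref{tilde_proposition}), for $k=2$ and $(x_1)\in\D_1$ we get $u_2(x_1)=0$ by Remark \ref{adequate_remark} \eqref{adequate_remark1} (because $P_0(x_1)|_{Y_1}$ is nef and big on the curve $Y_1$), and $t_2(x_1)=\tau_{Y_2}(P_0(x_1)|_{Y_1}) = (P_0(x_1)\cdot Y_1)$, using that on a normal projective curve the pseudoeffective threshold of a point is just the degree. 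The quantity $v_2(x_1)=u_{1,2}(x_1)=\ord_{Y_2}(N_{0,0}(x_1)|_{Y_1})$ (noting $\bar Y_0=\tilde Y_0$ here, so I keep $\sigma_1^*$ only where the blowup $\sigma_1$ is nontrivial — in the statement it is written $\ord_{Y_2}(\sigma_1^*N_{0,0}(x_1)|_{Y_1})$, which matches after pulling back by $\sigma_1\colon\tilde Y_1\to Y_1$). Also $g_{1,1}$ does not occur, the only off-diagonal entry being $g_{1,2}=-d_{1,2}=0$ since $d_{1,2}=\ord_{\hat Y_2}\phi_1^*(\Sigma_{1,1}|_{\hat Y_1})$ and $\Sigma_{1,1}=\gamma_0^*Y_1-\hat Y_1=0$ for the trivial dominant.

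Then I would substitute into the formula of Theorem \ref{adequate_thm}. For $k=1$: $\frac{1}{\vol_X(L)}\frac{n!}{(n-j)!}=\frac{2}{\vol_X(L)}$ (since $n=j=2$, so $(n-j)!=1$), the bracket is just $x_1+v_1(\,)=x_1$ since $v_1=0$ and there is no $l<1$ term, and $(P_1(x_1,x_2)^{\cdot n-j}\cdot\hat Y_2)=(\hat Y_2)^{\cdot 0}$ collapses to $1$ by Remark \ref{adequate-formula_remark} \eqref{adequate-formula_remark1}. The integral over $\D_2$ factors via Fubini as $\int_{u_1}^{t_1}\int_0^{t_2(x_1)}x_1\,dx_2\,dx_1 = \int_{u_1}^{t_1}x_1\, t_2(x_1)\,dx_1 = \int_{u_1}^{t_1}x_1 (P_0(x_1)\cdot Y_1)\,dx_1$, giving the first displayed equality. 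For $k=2$: the bracket is $x_2+v_2(x_1)+\sum_{l=1}^{1}g_{1,2}(x_1+v_1) = x_2+v_2(x_1)$ since $g_{1,2}=0$; integrating $\int_0^{t_2(x_1)}(x_2+v_2(x_1))\,dx_2 = \tfrac12 t_2(x_1)^2 + v_2(x_1)t_2(x_1) = (P_0(x_1)\cdot Y_1)\big(\tfrac12 (P_0(x_1)\cdot Y_1)+\ord_{Y_2}(\sigma_1^*N_{0,0}(x_1)|_{Y_1})\big)$, and then integrating in $x_1$ over $(u_1,t_1)$ with the prefactor $\frac{2}{\vol_X(L)}$ yields the second equality. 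The main thing to check carefully — and the only potential obstacle — is the bookkeeping translating the general dominant formalism of Sections \ref{dominant_section}--\ref{adequate_section} down to the trivial-dominant surface case: one must confirm that $\hat Y_2$ is indeed a $\Q$-Cartier prime divisor on $\tilde Y_1$ (it is, being a point on a normal surface after possibly the prime blowup $\sigma_1$, hence automatically $\Q$-Cartier as $\tilde Y_1$ is a surface; $\Q$-factoriality is fine in dimension two), that $t_2(x_1)$ really equals the intersection number $(P_0(x_1)\cdot Y_1)$ computed on $Y_1$ (equivalently on $\tilde Y_1$ after pullback), and that the $v_2$ term matches the stated $\ord_{Y_2}(\sigma_1^*N_{0,0}(x_1)|_{Y_1})$; all of these are routine once the definitions are expanded, but they are where an error would most easily creep in. Finally I would remark that the result is consistent with the cited references \cite[Lemma 4.8]{AZ}, \cite[Theorem 1.106]{FANO}, \cite[Theorem 4.8]{r3d28}.
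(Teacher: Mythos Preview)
Your proposal is correct and follows exactly the paper's own approach: take the trivial dominant $\{\id_{\tilde Y_k}\}_{k=0,1}$, note it is adequate by Example~\ref{ZDS_example}, observe $g_{1,2}=0$ and $t_2(x_1)=(P_0(x_1)\cdot Y_1)$, and read off the formulas from Theorem~\ref{adequate_thm}. One small slip in your write-up: $\tilde Y_1$ is a curve, not a surface (it is the prime blowup of the curve $Y_1$), but since normal curves are smooth every point is Cartier, so your conclusion about $\hat Y_2$ being $\Q$-Cartier is unaffected.
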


\begin{proof}
The trivial dominant 
$\left\{\id_{\tilde{Y}_k}\colon\tilde{Y}_k\to\tilde{Y}_k\right\}_{k=0,1}$ is an 
adequate dominant of $Y_\bullet$ with respects to $L$ by Example 
\ref{ZDS_example}. 
Since $g_{1,2}=0$ and $t_2(x_1)=\left(P_0(x_1)\cdot Y_1\right)$, 
the assertion is trivial from Theorem \ref{adequate_thm}. 
\end{proof}

We consider the case $X$ is of dimension three. In this case, we get 
a slight generalization of \cite[Theorem 1.112]{FANO}, \cite[Theorem 4.17]{r3d28}, 
since the papers assumed that $\tilde{Y}_0$ is a Mori dream space. 

\begin{corollary}\label{ad-threefold_corollary}
Under the assumptions in Definition \ref{P-N_definition} and 
Theorem \ref{adequate_thm}, 
assume moreover that $n=j=3$. Then we have 
\begin{eqnarray*}
S\left(L; Y_1\triangleright Y_2\right)
&=&\frac{6}{\vol_X(L)}\int_{u_1}^{t_1}\int_0^{t_2(x_1)}\left(P_1(x_1,x_2)\cdot 
\hat{Y}_2\right)\left(x_2+u_{1,2}(x_1)-x_1d_{1,2}\right)dx_2dx_1 \\
&=&\frac{3}{\vol_X(L)}\int_{u_1}^{t_1}
\bigg(\left(u_{1,2}(x_1)-x_1d_{1,2}\right)\left(P_0(x_1)^{\cdot 2}\cdot \hat{Y}_1\right) \\
&&+\int_0^\infty\vol_{\hat{Y}_1}\left(P_0(x_1)|_{\hat{Y}_1}-x_2\hat{Y}_2\right)
dx_2\bigg)dx_1, \\
S\left(L; Y_1\triangleright Y_2\triangleright Y_3\right)
&=&\frac{6}{\vol_X(L)}\int_{u_1}^{t_1}\int_0^{t_2(x_1)}
\bigg(\left(P_1(x_1,x_2)\cdot\hat{Y}_2\right)\bigg(
\frac{1}{2}\left(P_1(x_1,x_2)\cdot\hat{Y}_2\right)+u_{1,3}(x_1)\\
&&+u_{2,3}(x_1,x_2)
-(d_{1,3}-d_{1,2}d_{2,3})x_1-d_{2,3}(x_2+u_{1,2}(x_1))\bigg)\bigg)dx_2dx_1.
\end{eqnarray*}
\end{corollary}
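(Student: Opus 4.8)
The plan is to derive Corollary \ref{ad-threefold_corollary} directly from Theorem \ref{adequate_thm} by specializing to $n=j=3$, and then simplifying the two-dimensional case $S(L;Y_1\triangleright Y_2)$ further using the fiber-integration techniques already established in \S\ref{div_section}. First I would unwind the notation of Theorem \ref{adequate_thm} in this case: by Remark \ref{adequate-formula_remark} \eqref{adequate-formula_remark1}, since $j=n=3$, the factor $\left(P_{j-1}(x_1,x_2,x_3)^{\cdot n-j}\cdot\hat Y_3\right)$ is identically $1$, so the formula for $S(L;Y_1\triangleright Y_2\triangleright Y_3)$ becomes an integral over $\D_3$ of a linear function in $x_1,x_2,x_3$. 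I would compute the relevant $g_{l,k}$ from the $2\times 2$ (really $3\times 3$) triangular inversion of Definition \ref{dominant-compare_definition}: with $k=3$, $l\in\{1,2\}$, one gets $g_{2,3}=-d_{2,3}$ and $g_{1,3}=-d_{1,3}+d_{1,2}d_{2,3}$, while $g_{1,2}=-d_{1,2}$. Also $v_1=0$, $v_2(x_1)=u_{1,2}(x_1)$, $v_3(x_1,x_2)=u_{1,3}(x_1)+u_{2,3}(x_1,x_2)$. Substituting these into the coefficient $x_k+v_k+\sum_{l<k}g_{l,k}(x_l+v_l)$ for $k=2$ and $k=3$ gives exactly the linear expressions appearing in the statement; then by Remark \ref{adequate_remark} \eqref{adequate_remark1}, $u_k\equiv 0$, so $\D_3=\{(x_1,x_2,x_3): x_1\in(u_1,t_1),\ x_2\in(0,t_2(x_1)),\ x_3\in(0,t_3(x_1,x_2))\}$, and integrating out $x_3$ over $(0,t_3)$ picks up the factor $t_3(x_1,x_2)=\left(P_1(x_1,x_2)^{\cdot 2}\cdot\hat Y_2\right)=\left(P_1(x_1,x_2)\cdot\hat Y_2\right)$ (using $n-k=1$), which after the $\tfrac12$ coming from integrating the linear-in-$x_3$ term yields the $\tfrac12(P_1\cdot\hat Y_2)$ summand in the third displayed formula. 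The $6/\vol_X(L)$ prefactor is $\tfrac{1}{\vol_X(L)}\cdot\tfrac{n!}{(n-j)!}=\tfrac{3!}{0!}\cdot\tfrac1{\vol_X(L)}$.

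Next I would handle the two expressions for $S(L;Y_1\triangleright Y_2)$. For the first expression, I take $k=2$ in Theorem \ref{adequate_thm}: the coefficient is $x_2+v_2(x_1)+g_{1,2}(x_1+v_1)=x_2+u_{1,2}(x_1)-d_{1,2}x_1$, and since $j=3>k=2$ the volume factor is $\left(P_{j-1}(x_1,x_2,x_3)^{\cdot n-j}\cdot\hat Y_j\right)=1$; integrating $x_3$ out over $(0,t_3(x_1,x_2))$ again produces the factor $\left(P_1(x_1,x_2)\cdot\hat Y_2\right)$, giving the first displayed line with prefactor $6/\vol_X(L)$. For the second expression, I would instead integrate out \emph{both} $x_2$ and $x_3$ and reorganize using the restricted-volume identities of Proposition \ref{NB_proposition}. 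Specifically, $\int_0^{t_2(x_1)}\left(P_1(x_1,x_2)\cdot\hat Y_2\right)\,dx_2=\int_0^{t_2(x_1)}\vol_{\bar Y_1|\hat Y_2}\big(P_0(x_1)|_{\bar Y_1}-x_2\hat Y_2\big)\,dx_2=\tfrac12\vol_{\hat Y_1}\big(P_0(x_1)|_{\hat Y_1}\big)$ by Remark \ref{adequate_remark} \eqref{adequate_remark2} and Corollary \ref{NB_corollary} \eqref{NB_corollary11} (the restricted volume integrates to half the ambient volume in the surface $\bar Y_1$, after noting $u_2\equiv 0$ so $\sigma_{\hat Y_2}=0$), and $\int_0^{t_2(x_1)}x_2\left(P_1(x_1,x_2)\cdot\hat Y_2\right)\,dx_2=\tfrac12\int_0^\infty\vol_{\hat Y_1}\big(P_0(x_1)|_{\hat Y_1}-x_2\hat Y_2\big)\,dx_2$, again by Corollary \ref{NB_corollary} \eqref{NB_corollary11}. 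Also $\vol_{\hat Y_1}\big(P_0(x_1)|_{\hat Y_1}\big)=\big(P_0(x_1)^{\cdot 2}\cdot\hat Y_1\big)$ by Remark \ref{adequate_remark} \eqref{adequate_remark2}. Combining, $\tfrac{6}{\vol_X(L)}\int_{u_1}^{t_1}(u_{1,2}(x_1)-d_{1,2}x_1)\cdot\tfrac12\big(P_0(x_1)^{\cdot 2}\cdot\hat Y_1\big)\,dx_1 + \tfrac{6}{\vol_X(L)}\int_{u_1}^{t_1}\tfrac12\int_0^\infty\vol_{\hat Y_1}(\cdots)\,dx_2dx_1$, which is exactly the claimed second form with prefactor $3/\vol_X(L)$.

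I would then assemble these computations into a clean proof: invoke Theorem \ref{adequate_thm}, substitute the explicit $g_{l,k}$ and $v_k$, use Remark \ref{adequate_remark} to replace $P_{k-1}$'s self-intersections by the relevant intersection/restricted-volume quantities and to set $u_k\equiv 0$, and apply Proposition \ref{NB_proposition} together with Corollary \ref{NB_corollary} for the Fubini-type reorganization in the second formula for $S(L;Y_1\triangleright Y_2)$. The main obstacle I anticipate is purely bookkeeping: correctly tracking the triangular matrix inversion giving $g_{1,3}=-(d_{1,3}-d_{1,2}d_{2,3})$ and verifying that the coefficient $x_3+v_3+g_{1,3}(x_1)+g_{2,3}(x_2+v_2)$ collapses to the stated expression $\tfrac12(P_1\cdot\hat Y_2)+u_{1,3}(x_1)+u_{2,3}(x_1,x_2)-(d_{1,3}-d_{1,2}d_{2,3})x_1-d_{2,3}(x_2+u_{1,2}(x_1))$ after the $x_3$-integration halves the $x_3$-linear part; there is also a minor subtlety in justifying the restricted-volume-to-volume identities at the endpoints $\sigma_{\hat Y_2}\big(P_0(x_1)|_{\bar Y_1}\big)=0$, which holds because adequacy makes $P_0(x_1)$ nef (Remark \ref{adequate_remark} \eqref{adequate_remark2}), so $N_\sigma$ contributes nothing. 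No genuinely new idea is needed beyond Theorem \ref{adequate_thm} and the surface-case integration identities already in the paper.
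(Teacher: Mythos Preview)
Your proposal is correct and follows essentially the same approach as the paper's own proof, which simply records the values $g_{1,2}=-d_{1,2}$, $g_{1,3}=-d_{1,3}+d_{1,2}d_{2,3}$, $g_{2,3}=-d_{2,3}$ and $t_3(x_1,x_2)=(P_1(x_1,x_2)\cdot\hat Y_2)$ and then invokes Theorem \ref{adequate_thm} together with Corollary \ref{NB_corollary}. You have just spelled out in detail the substitutions and the Fubini-type integrations that the paper leaves implicit.
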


\begin{proof}
We know that $g_{1,2}=-d_{1,2}$, $g_{1,3}=-d_{1,3}+d_{1,2}d_{2,3}$, $g_{2,3}=-d_{2,3}$ 
and $t_3(x_1,x_2)=\left(P_1(x_1,x_2)\cdot \hat{Y}_2\right)$. Thus the assertion follows 
from Theorem \ref{adequate_thm} and Corollary \ref{NB_corollary}. 
\end{proof}

\begin{remark}\label{ad-threefold_remark}
Let us compare Corollary \ref{ad-threefold_corollary} and \cite[Theorem 4.17]{r3d28}. 
The $\R$-divisors $N(x_1)$, $P(x_1)$ and $N'(x_1)$ in \cite{r3d28} are equal to 
$N_{0,0}(x_1)-x_1\Sigma_{1,1}$, $P_0(x_1)$ and $N_{0,1}(x_1)-x_1\Sigma_{1,2}$ 
in our sense, respectively. 
Moreover, the value $d(x_1)$ in \cite{r3d28} is equal to $u_{1,2}(x_1)-x_1d_{1,2}$ 
in our sense. 
Thus the above formula is same as the formula in \cite[Theorem 4.17]{r3d28}. 
\end{remark}

Here is an answer of the question by Cheltsov: 

\begin{corollary}\label{adequate-divide_corollary}
Under the assumptions in Definitions \ref{P-N_definition}, \ref{tilde_definition} and 
Theorem \ref{adequate_thm}, 
take any $1\leq l\leq k\leq j$. 
Let $C\subset\Delta_{\Supp\left(V_{\vec{\bullet}}^{\left(Y_1\triangleright\cdots
\triangleright Y_{l-1}\right)}\right)}$ be a closed convex set with 
$\interior\left(C\right)\neq\emptyset$ and 
let us consider the natural projection 
\[
q_k\colon\Delta_{\Supp\left(V_{\vec{\bullet}}^{\left(Y_1\triangleright\cdots
\triangleright Y_k\right)}\right)}\twoheadrightarrow
\Delta_{\Supp\left(V_{\vec{\bullet}}^{\left(Y_1\triangleright\cdots
\triangleright Y_{l-1}\right)}\right)}
\]
and its inverse image $q_k^{-1}(C)\subset\Delta_{\Supp\left(
V_{\vec{\bullet}}^{\left(Y_1
\triangleright\cdots\triangleright Y_k\right)}\right)}$. 
Set $W_{\vec{\bullet}}:=V_{\vec{\bullet}}^{\left(Y_1\triangleright\cdots
\triangleright Y_{l-1}\right), \langle C\rangle}$. 
Let us take the linear transform 
\begin{eqnarray*}
f_k\colon \R^k&\to&\R^k \\
\begin{pmatrix}
y'_1 \\ \vdots \\ \vdots \\ y'_k
\end{pmatrix}
&\mapsto&
\begin{pmatrix}
1 & & & \\
d_{1,2} & 1 && \\
\vdots &\ddots& \ddots& \\
d_{1,k} &\cdots & d_{k-1,k}&1
\end{pmatrix}
\begin{pmatrix}
y'_1 \\ \vdots \\ \vdots \\ y'_k
\end{pmatrix}.
\end{eqnarray*}
Then we have 
\begin{eqnarray*}
&&\vol\left(W_{\vec{\bullet}}\right)=\vol\left(
W_{\vec{\bullet}}^{\left(Y_l\triangleright\cdots\triangleright Y_k\right)}\right)\\
&=&\frac{n!}{(n-k)!}\int_{\vec{y}\in f_k\left(q_k^{-1}(C)\right)}
\left(\tilde{P}_{k-1}\left(\vec{y}\right)^{\cdot n-k}\cdot\hat{Y}_k\right)d\vec{y}, \\
&&S\left(W_{\vec{\bullet}};Y_l\triangleright\cdots\triangleright Y_k\right)\\
&=&\frac{1}{\vol\left(W_{\vec{\bullet}}\right)}\frac{n!}{(n-j)!}
\int_{\vec{y}\in f_j\left(q_j^{-1}(C)\right)}
\left(y_k+\sum_{i=1}^{k-1}g_{i,k} y_i\right)
\left(\tilde{P}_{j-1}\left(\vec{y}\right)^{\cdot n-j}\cdot\hat{Y}_j\right)d\vec{y}.
\end{eqnarray*}
\end{corollary}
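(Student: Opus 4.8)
The strategy is to combine the restriction/refinement machinery developed in Lemma \ref{divide_lemma} with the divisorial approximation of the refinements built in the proof of Theorem \ref{adequate_thm}. First I would reduce to the case $l=1$: by adjunction along $Y_1 \triangleright \cdots \triangleright Y_{l-1}$ and the definition of the refinement $V_{\vec{\bullet}}^{(Y_1\triangleright\cdots\triangleright Y_{l-1})}$ (Definition \ref{refinement-primitive_definition}), the flag $Y_l \triangleright \cdots \triangleright Y_j$ over $Y_{l-1}$ together with the induced $\Q$-factorial dominant $\{\gamma_k\}_{l-1\le k\le j-1}$ is again an adequate dominant with respect to the appropriate divisor, so it suffices to handle the restriction of $V_{\vec{\bullet}}$ itself to a convex set $C \subset \Delta_{\Supp(V_{\vec{\bullet}})}$ and the flag $Y_1\triangleright\cdots\triangleright Y_k$.

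Next, by Lemma \ref{divide_lemma} applied repeatedly, the refinement $W_{\vec{\bullet}}^{(Y_1\triangleright\cdots\triangleright Y_k)}$ of the restriction equals the restriction of $V_{\vec{\bullet}}^{(Y_1\triangleright\cdots\triangleright Y_k)}$ with respects to $q_k^{-1}(C)$, where $q_k$ is the projection in the statement. So the Okounkov body (in the sense of Definition \ref{primitive-Okounkov_definition}, via a general admissible flag $Z_\bullet$ of $\hat{Y}_k$ as in the proof of Theorem \ref{adequate_thm}) of $W_{\vec{\bullet}}^{(Y_l\triangleright\cdots\triangleright Y_j)}$ is simply the preimage, under the projection $p$ onto the first $(r-1+j)$ coordinates, of $q_j^{-1}(C)$ inside $\hat\Delta := \Delta_{Z_\bullet}(V_{\vec{\bullet}}^{(\hat Y_1 > \cdots > \hat Y_j)})$. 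I would then invoke Example \ref{interior_example} \eqref{interior_example2} to identify this preimage with the Okounkov body of the restricted series, and the change of coordinates $\hat\Delta = f(\Delta)$ from Corollary \ref{refinement-dominant_corollary} (the linear map $f_j$ in the statement is exactly the $\vec y$-block of that $f$) to pull everything back to the "divisorial" coordinates $\vec y \in \tilde\D_j$. The volume formula then follows from Steps 4--7 of the proof of Theorem \ref{adequate_thm}: the fiber of $p$ over $\vec y$ has $(n-j)!$ times its volume equal to $(\tilde P_{j-1}(\vec y)^{\cdot n-j}\cdot\hat Y_j)$, and restricting the domain of integration to $f_j(q_j^{-1}(C))$ gives the claimed expression for $\vol(W_{\vec{\bullet}})$; the equality $\vol(W_{\vec{\bullet}}) = \vol(W_{\vec{\bullet}}^{(Y_l\triangleright\cdots\triangleright Y_j)})$ is Example \ref{interior_example} \eqref{interior_example4}. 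For the $S$-invariant, the value $S(W_{\vec{\bullet}}; Y_l\triangleright\cdots\triangleright Y_k)$ is, by Definition \ref{primitive-Okounkov_definition} and Remark \ref{S-T_remark} \eqref{S-T_remark3}, the appropriate coordinate of the barycenter of $\Delta_{Z_\bullet}(W_{\vec{\bullet}}^{(Y_l\triangleright\cdots\triangleright Y_j)})$, which in the $\vec y$-coordinates is $y_k + \sum_{i=1}^{k-1} g_{i,k} y_i$ exactly as in Corollary \ref{refinement-dominant_corollary}; integrating this against the fiber volume over $f_j(q_j^{-1}(C))$ yields the stated formula.

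\textbf{Main obstacle.} The delicate point is the bookkeeping of which convex set one restricts to at which stage: one must check that restricting $V_{\vec{\bullet}}$ to $C$ and then refining by $Y_1\triangleright\cdots\triangleright Y_k$ produces the same series as first refining and then restricting to $q_k^{-1}(C)$, and moreover that this compatibility survives passage to the auxiliary divisorial series $V_{\vec{\bullet}}^{(\DIV,\hat Y_1>\cdots>\hat Y_k)}$ up to asymptotic equivalence. Concretely, I expect the work to be in verifying that the asymptotic-equivalence classes $W_{\vec{\bullet}}^k$ of Claim \ref{adequate-compare_claim}, when restricted to the relevant cone, remain asymptotically equivalent to the restricted refinements — this uses Lemma \ref{asymp-equiv_lemma} together with Example \ref{interior_example} \eqref{interior_example6} fiberwise over the convex set, plus the fact (Lemma \ref{divide_lemma}) that refinement and restriction commute. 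Once these compatibilities are in place, the proof is a direct transcription of the argument of Theorem \ref{adequate_thm} with the domain $\tilde\D_j$ replaced by $f_j(q_j^{-1}(C))$ throughout, so I would present it as such, citing the earlier proof rather than repeating Steps 1--7.
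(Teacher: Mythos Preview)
Your reduction to $l=1$ does not work as stated. The refined series $V_{\vec{\bullet}}^{(Y_1\triangleright\cdots\triangleright Y_{l-1})}$ is an $l$-graded linear series on $Y_{l-1}$, not the complete linear series $H^0(\bullet L')$ of any single big divisor $L'$; yet Definitions \ref{P-N_definition} and \ref{adequate_definition} (hence Theorem \ref{adequate_thm}) are formulated exclusively for $V_{\bullet}=H^0(\bullet L)$ with a single $L$. There is no ``appropriate divisor'' to which the truncated dominant $\{\gamma_k\}_{l-1\le k\le j-1}$ could be adequate, so this step is a genuine gap. (Note also that once $l=1$, since $r=1$ the support $\Delta_{\Supp(V_{\bullet})}$ is a point and the restriction to $C$ is vacuous, which is a further sign that the reduction collapses the content of the statement.)

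Fortunately the reduction is unnecessary, and the remainder of your plan---Lemma \ref{divide_lemma} to identify $W_{\vec{\bullet}}^{(Y_l\triangleright\cdots\triangleright Y_j)}$ with $V_{\vec{\bullet}}^{(Y_1\triangleright\cdots\triangleright Y_j),\langle q_j^{-1}(C)\rangle}$, Corollary \ref{refinement-dominant_corollary} for $\hat\Delta=f(\Delta)$, and the fiber-volume computation from the proof of Theorem \ref{adequate_thm}---is exactly the paper's approach. The paper works directly with the full flag: it notes that $S(W_{\vec{\bullet}};Y_l\triangleright\cdots\triangleright Y_k)$ is the $k$-th coordinate of the barycenter of $\Delta^{\langle C\rangle}:=p^{-1}(q_j^{-1}(C))\subset\Delta$, observes $\hat\Delta^{\langle C\rangle}=f(\Delta^{\langle C\rangle})$, and then cites the proof of Theorem \ref{adequate_thm}. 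Your worry in the ``main obstacle'' about asymptotic equivalences surviving restriction is over-engineered: since the identity $\hat\Delta=f(\Delta)$ and the fiber volumes over $\tilde\D_j$ are already established globally in that proof, restricting the domain of integration to $f_j(q_j^{-1}(C))$ requires no new asymptotic comparison. Simply drop the first paragraph and write the argument for general $l$ directly.
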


\begin{proof}
For the equalities on $\vol\left(W_{\vec{\bullet}}\right)$, we may assume that 
$k=j$. 
By Lemma \ref{divide_lemma}, we know that 
\[
W_{\vec{\bullet}}^{\left(Y_l\triangleright\cdots\triangleright Y_j\right)}
=V_{\vec{\bullet}}^{\left(Y_1\triangleright\cdots\triangleright Y_j\right),\langle
q_j^{-1}(C)\rangle}.
\]
Take any general admissible flag $Z_\bullet$ of $\hat{Y}_j$ in the sense of 
Corollary \ref{refinement-dominant_corollary}, and let $\Delta$ (resp., 
$\hat{\Delta}$) be the Okounkov body of 
$V_{\vec{\bullet}}^{\left(Y_1\triangleright\cdots
\triangleright Y_j\right)}$ (resp., $V_{\vec{\bullet}}^{\left(\hat{Y}_1>\dots>\hat{Y}_j\right)}$)
associated to $Z_\bullet$. By Corollary \ref{refinement-dominant_corollary}, 
we have $\hat{\Delta}=f\left(\Delta\right)$, where $f:=f_j\oplus\id_{\R^{n-j}}$. 
Note that the value $S\left(W_{\vec{\bullet}};Y_l\triangleright\cdots\triangleright 
Y_k\right)$ is equal to the $k$-th coordinate of the barycenter of 
$\Delta^{\langle C\rangle}$, where $\Delta^{\langle C\rangle}\subset\Delta$ 
is defined to be $p^{-1}\left(q_j^{-1}(C)\right)$ with 
\[
p\colon \Delta\twoheadrightarrow\Delta_{\Supp\left(
V_{\vec{\bullet}}^{\left(Y_1\triangleright\cdots\triangleright Y_j\right)}\right)}.
\]
Obviously, under the natural projection 
\[
p\colon \hat{\Delta}\twoheadrightarrow\Delta_{\Supp\left(
V_{\vec{\bullet}}^{\left(\hat{Y}_1>\cdots> \hat{Y}_j\right)}\right)}, 
\]
if we set $\hat{\Delta}^{\langle C\rangle}:=p^{-1}\left(f_j\left(q_j^{-1}(C)\right)\right)$, 
then $\hat{\Delta}^{\langle C\rangle}=f\left(\Delta^{\langle C\rangle}\right)$ holds. 
Thus the assertions follow from the proof (more precisely, Step 2) of Theorem 
\ref{adequate_thm}. 
\end{proof}

In Corollary \ref{adequate-divide_corollary}, if $l=2$, then $C$ is a segment. 
We state the case $l=2$, $n=j=3$. 

\begin{corollary}\label{divide-three_corollary}
Under the assumption in Corollary \ref{adequate-divide_corollary}, assume that 
$n=j=3$, $l=2$ and $C=\left[u_1^C, t_1^C \right]$ with $u_1\leq u_1^C<t_1^C\leq t_1$. 
For $W_{\vec{\bullet}}:=V_{\vec{\bullet}}^{(Y_1),\langle C\rangle}$, 
we have 
\begin{eqnarray*}
\vol\left(W_{\vec{\bullet}}\right)&=&
6\int_{u_1^C}^{t_1^C}\int_0^{t_2(x_1)}
\left(P_1(x_1,x_2)\cdot\hat{Y}_2\right)dx_2dx_1, \\
S\left(W_{\vec{\bullet}}; Y_2\right)
&=&\frac{6}{\vol\left(W_{\vec{\bullet}}\right)}\int_{u_1^C}^{t_1^C}
\int_0^{t_2(x_1)}\left(P_1(x_1,x_2)\cdot 
\hat{Y}_2\right)\left(x_2+u_2(x_1)-x_1d_{1,2}\right)dx_2dx_1, \\
S\left(W_{\vec{\bullet}}; Y_2\triangleright Y_3\right)
&=&\frac{6}{\vol\left(W_{\vec{\bullet}}\right)}\int_{u_1^C}^{t_1^C}\int_0^{t_2(x_1)}
\bigg(\left(P_1(x_1,x_2)\cdot\hat{Y}_2\right)\bigg(
\frac{1}{2}\left(P_1(x_1,x_2)\cdot\hat{Y}_2\right)+u_{1,3}(x_1)\\
&&+u_{2,3}(x_1,x_2)
-(d_{1,3}-d_{1,2}d_{2,3})x_1-d_{2,3}(x_2+u_{1,2}(x_1))\bigg)\bigg)dx_2dx_1.
\end{eqnarray*}
\end{corollary}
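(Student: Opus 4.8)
The plan is to derive Corollary \ref{divide-three_corollary} as the specialization of Corollary \ref{adequate-divide_corollary} to the case $n=j=3$, $l=2$, together with the fact that the trivial (or any chosen smooth) $\Q$-factorial dominant of $Y_\bullet$ is adequate with respects to $L$ under our hypotheses. First I would invoke Corollary \ref{adequate-divide_corollary} with $k=j=3$ and $l=2$: here $\Delta_{\Supp\left(V_{\vec{\bullet}}^{\left(Y_1\right)}\right)}$ is just the interval $[u_1,t_1]$ parametrized by $x_1$ (this is the $1$-dimensional support appearing already in Definition \ref{P-N_definition} \eqref{P-N_definition1}), so a closed convex subset $C$ with nonempty interior is precisely a segment $\left[u_1^C,t_1^C\right]$ with $u_1\leq u_1^C<t_1^C\leq t_1$. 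The linear transform $f_j=f_3$ in Corollary \ref{adequate-divide_corollary} is lower-triangular with $1$'s on the diagonal, so it preserves the first coordinate; hence $f_3\left(q_3^{-1}(C)\right)$ is again cut out by $y_1=x_1\in\left[u_1^C,t_1^C\right]$, with the remaining coordinates running over the fibers of $\tilde{\D}_3$ over that sub-interval. This reduces all three integrals to integrals over $x_1\in\left[u_1^C,t_1^C\right]$ instead of over all of $\D_1=(u_1,t_1)$.

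Next I would simply read off the integrand from Corollary \ref{ad-threefold_corollary}, which already handled $n=j=3$ for the full flag, and observe that the only change in passing to $W_{\vec{\bullet}}=V_{\vec{\bullet}}^{(Y_1),\langle C\rangle}$ is the domain of the outermost integral. Concretely: for $\vol\left(W_{\vec{\bullet}}\right)$ one uses Remark \ref{adequate-formula_remark} \eqref{adequate-formula_remark2} (the volume formula proved inside the proof of Theorem \ref{adequate_thm}) restricted to $x_1\in\left[u_1^C,t_1^C\right]$, together with the identity $\left(P_2(x_1,x_2,x_3)^{\cdot 0}\cdot\hat{Y}_3\right)=1$ from Remark \ref{adequate-formula_remark} \eqref{adequate-formula_remark1} and the evaluation $t_3(x_1,x_2)=\left(P_1(x_1,x_2)\cdot\hat{Y}_2\right)$ of the innermost variable, which collapses the $x_3$-integral to $\int_0^{t_3}dx_3=\left(P_1(x_1,x_2)\cdot\hat{Y}_2\right)$. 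For $S\left(W_{\vec{\bullet}};Y_2\right)$ one takes $k=2$ in the $S$-formula of Corollary \ref{adequate-divide_corollary}; since $g_{1,2}=-d_{1,2}$ and $v_2(x_1)=u_{1,2}(x_1)=u_2(x_1)$ (the latter being the only summand in $v_2$, cf.\ Definition \ref{P-N_definition} \eqref{P-N_definition2}), the weight is $y_2+g_{1,2}y_1=x_2+u_2(x_1)-x_1d_{1,2}$ after the change of variables of Step 2 in the proof of Theorem \ref{adequate_thm}. For $S\left(W_{\vec{\bullet}};Y_2\triangleright Y_3\right)$ one takes $k=3$; with $g_{1,3}=-d_{1,3}+d_{1,2}d_{2,3}$, $g_{2,3}=-d_{2,3}$, $v_3(x_1,x_2)=u_{1,3}(x_1)+u_{2,3}(x_1,x_2)$ and $t_3=\left(P_1(x_1,x_2)\cdot\hat{Y}_2\right)$, the $x_3$-integral of the weight $y_3+g_{1,3}y_1+g_{2,3}y_2$ over $[0,t_3]$ produces the factor $\left(P_1(x_1,x_2)\cdot\hat{Y}_2\right)$ times $\tfrac{1}{2}\left(P_1(x_1,x_2)\cdot\hat{Y}_2\right)$ from the linear term in $x_3$, plus the remaining constant terms $u_{1,3}(x_1)+u_{2,3}(x_1,x_2)-(d_{1,3}-d_{1,2}d_{2,3})x_1-d_{2,3}(x_2+u_{1,2}(x_1))$ times $\left(P_1(x_1,x_2)\cdot\hat{Y}_2\right)$. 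This is exactly the asserted formula, and is already essentially displayed in Corollary \ref{ad-threefold_corollary}.

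The step that needs the most care is bookkeeping the change of variables $x_2=y_2-\tilde v_2(y_1)$, $x_3=y_3-\tilde v_3(y_1,y_2)$ from Step 2 of the proof of Theorem \ref{adequate_thm}, together with the matrix identity $(g_{l,k})=(d_{l,k})^{-1}$ from Definition \ref{dominant-compare_definition}, to confirm that the combination $y_k+\sum_{i=1}^{k-1}g_{i,k}y_i$ in the $(y)$-coordinates equals $x_k+v_k(x_1,\dots,x_{k-1})+\sum_{i=1}^{k-1}g_{i,k}(x_i+v_i(x_1,\dots,x_{i-1}))$ in the $(x)$-coordinates, and that the Jacobian of this triangular substitution is $1$ so the integrals match. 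For $n=j=3$ this amounts to the two explicit identities $v_2(x_1)=u_{1,2}(x_1)$ and $v_3(x_1,x_2)=u_{1,3}(x_1)+u_{2,3}(x_1,x_2)$ plus the $2\times 2$ and $3\times 3$ inversions giving $g_{1,2}=-d_{1,2}$, $g_{2,3}=-d_{2,3}$, $g_{1,3}=-d_{1,3}+d_{1,2}d_{2,3}$; none of this is deep but it must be written out consistently. I expect no genuine obstacle beyond this: once Corollary \ref{adequate-divide_corollary} and Corollary \ref{ad-threefold_corollary} are in hand, the present statement is their common specialization, with the sole new feature being that the segment $C=[u_1^C,t_1^C]$ replaces the full interval $(u_1,t_1)$ as the range of $x_1$, which is transparent from the description of $f_j\left(q_j^{-1}(C)\right)$.
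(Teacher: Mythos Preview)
Your proposal is correct and follows the same approach as the paper, which simply says to apply Corollary \ref{adequate-divide_corollary} together with the observation $\D_1=\tilde{\D}_1$. You have spelled out the bookkeeping (the triangular change of variables, the explicit values $g_{1,2}=-d_{1,2}$, $g_{2,3}=-d_{2,3}$, $g_{1,3}=-d_{1,3}+d_{1,2}d_{2,3}$, and the evaluation $t_3(x_1,x_2)=\left(P_1(x_1,x_2)\cdot\hat{Y}_2\right)$) that the paper leaves implicit, but the argument is the same.
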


\begin{proof}
We just apply Corollary \ref{adequate-divide_corollary}. 
We note that $\D_1=\tilde{\D}_1$.
\end{proof}

\section{Stability thresholds}\label{delta_section}

In this section, we assume that the characteristic of $\Bbbk$ is zero. 
Let $X$ be an $n$-dimensional projective variety and 
let $B$ be an effective $\Q$-Weil 
divisor on $X$. For any $1\leq i\leq k$, let $V_{\vec{\bullet}}^i$ be 
the Veronese equivalence class of an $(m\Z_{\geq 0})^{r_i}$-graded linear series 
$V_{m\vec{\bullet}}^i$ on $X$ associated to 
$L_1^i,\dots,L_{r_i}^i\in\CaCl(X)\otimes_\Z\Q$ which has bounded support and 
contains an ample series. Take any $c_1,\dots,c_k\in\R_{>0}$.

\begin{definition}[{cf.\ \cite[\S 4]{BJ}, 
\cite[\S 11.2]{r3d28}}]\label{alpha-delta_definition}
\begin{enumerate}
\renewcommand{\theenumi}{\arabic{enumi}}
\renewcommand{\labelenumi}{(\theenumi)}
\item\label{alpha-delta_definition1}
Assume that $(X, B)$ is klt. 
\begin{enumerate}
\renewcommand{\theenumii}{\roman{enumii}}
\renewcommand{\labelenumii}{(\theenumii)}
\item\label{alpha-delta_definition11}
For any $l\in m\Z_{>0}$ with $\prod_{i=1}^k h^0\left(V_{l,m\vec{\bullet}}^i\right)\neq 0$, 
we set 
\begin{eqnarray*}
\alpha_l\left(X, B; \left\{c_i\cdot V_{m\vec{\bullet}}^i\right\}_{i=1}^k\right)
&:=&
\inf_{\substack{D^i\in\left|V_{l,m\vec{a}^i}^i\right| \\ \text{for all }1\leq i\leq k\\
\text{and for some }\vec{a}^i\in\Z_{\geq 0}^{r_i-1}}}
\lct\left(X,B; \frac{1}{l}\sum_{i=1}^k c_i D^i\right)\\
&=&\inf_{D^i\in\left|V_{l,m\vec{a}^i}^i\right|}\inf_{\substack{E \text{ prime divisor}\\ 
\text{over }X}}\frac{A_{X, B}(E)}{\frac{1}{l}\sum_{i=1}^k c_i\ord_E D^i}\\
&=&\inf_{\substack{E \text{ prime divisor}\\ 
\text{over }X}}\frac{A_{X, B}(E)}{\sum_{i=1}^k c_i\frac{1}{l}
T_l\left(V_{m\vec{\bullet}}^i;E\right)},
\end{eqnarray*}
where $\lct$ is the log canonical threshold. 
Similarly, we set 
\begin{eqnarray*}
\delta_l\left(X, B; \left\{c_i\cdot V_{m\vec{\bullet}}^i\right\}_{i=1}^k\right)
&:=&
\inf_{\substack{D^i \text{ $l$-basis type} \\ 
\text{$\Q$-divisor of }V_{m\vec{\bullet}}^i\\
\text{for all }1\leq i\leq k}}
\lct\left(X,B; \sum_{i=1}^k c_i D^i\right)\\
&=&\inf_{\substack{D^i \text{ $l$-basis type} \\ 
\text{$\Q$-divisor of }V_{m\vec{\bullet}}^i}}
\inf_{\substack{E \text{ prime divisor}\\ 
\text{over }X}}\frac{A_{X, B}(E)}{\sum_{i=1}^k c_i\ord_E D^i}\\
&=&\inf_{\substack{E \text{ prime divisor}\\ 
\text{over }X}}\frac{A_{X, B}(E)}{\sum_{i=1}^k
c_i S_l\left(V_{m\vec{\bullet}}^i;E\right)}.
\end{eqnarray*}
\item\label{alpha-delta_definition12}
We set 
\begin{eqnarray*}
\alpha\left(X, B; \left\{c_i\cdot V_{\vec{\bullet}}^i\right\}_{i=1}^k\right)
&:=&\lim_{l\in m\Z_{>0}}
\alpha_l\left(X, B; \left\{c_i\cdot V_{m\vec{\bullet}}^i\right\}_{i=1}^k\right)\\
&=&\inf_{\substack{E \text{ prime divisor}\\ 
\text{over }X}}\frac{A_{X, B}(E)}{\sum_{i=1}^k
c_i T\left(V_{\vec{\bullet}}^i;E\right)},
\end{eqnarray*}
and 
\begin{eqnarray*}
\delta\left(X, B; \left\{c_i\cdot V_{\vec{\bullet}}^i\right\}_{i=1}^k\right)
&:=&\lim_{l\in m\Z_{>0}}
\delta_l\left(X, B; \left\{c_i\cdot V_{m\vec{\bullet}}^i\right\}_{i=1}^k\right)\\
&=&\inf_{\substack{E \text{ prime divisor}\\ 
\text{over }X}}\frac{A_{X, B}(E)}{\sum_{i=1}^k
c_i S\left(V_{\vec{\bullet}}^i;E\right)}.
\end{eqnarray*}
By the next proposition, the above definitions are well-defined. 
We call the value $\alpha\left(X, B; \left\{c_i\cdot V_{\vec{\bullet}}^i
\right\}_{i=1}^k\right)$ 
\emph{the coupled global log canonical threshold of $(X, B)$ with respects to 
$\{c_i\cdot V_{\vec{\bullet}}^i\}_{i=1}^k$}, and we call the value
$\delta\left(X, B; \left\{c_i\cdot V_{\vec{\bullet}}^i\right\}_{i=1}^k\right)$ 
\emph{the coupled stability threshold of $(X, B)$ with respects to 
$\{c_i\cdot V_{\vec{\bullet}}^i\}_{i=1}^k$}. 
\end{enumerate}
\item\label{alpha-delta_definition2}
Assume that $\eta\in X$ is a scheme-theoretic point such that $(X, B)$ is 
klt at $\eta$. 
\begin{enumerate}
\renewcommand{\theenumii}{\roman{enumii}}
\renewcommand{\labelenumii}{(\theenumii)}
\item\label{alpha-delta_definition21}
For any $l\in m\Z_{>0}$ with $\prod_{i=1}^k h^0\left(V_{l,m\vec{\bullet}}^i\right)\neq 0$, 
we set 
\begin{eqnarray*}
\alpha_{\eta,l}\left(X, B; \left\{c_i\cdot V_{m\vec{\bullet}}^i\right\}_{i=1}^k\right)
&:=&
\inf_{\substack{D^i\in\left|V_{l,m\vec{a}^i}^i\right| \\ \text{for all }1\leq i\leq k\\
\text{and for some }\vec{a}^i\in\Z_{\geq 0}^{r_i-1}}}
\lct_\eta\left(X,B; \frac{1}{l}\sum_{i=1}^k c_i D^i\right)\\
&=&\inf_{D^i\in\left|V_{l,m\vec{a}^i}^i\right|}\inf_{\substack{E \text{ prime divisor}\\ 
\text{over }X \\ \text{with }\eta\in C_X(E)}}
\frac{A_{X, B}(E)}{\frac{1}{l}\sum_{i=1}^k c_i\ord_E D^i}\\
&=&\inf_{\substack{E \text{ prime divisor}\\ 
\text{over }X \\ \text{with }\eta\in C_X(E)}}\frac{A_{X, B}(E)}{\sum_{i=1}^k c_i\frac{1}{l}
T_l\left(V_{m\vec{\bullet}}^i;E\right)},
\end{eqnarray*}
where $\lct_\eta$ is the log canonical threshold at $\eta$ and 
$C_X(E)\subset X$ is the center of $E$ on $X$.   
Similarly, we set 
\begin{eqnarray*}
\delta_{\eta,l}\left(X, B; \left\{c_i\cdot V_{m\vec{\bullet}}^i\right\}_{i=1}^k\right)
&:=&
\inf_{\substack{D^i \text{ $l$-basis type} \\ 
\text{$\Q$-divisor of }V_{m\vec{\bullet}}^i\\
\text{for all }1\leq i\leq k}}
\lct_\eta\left(X,B; \sum_{i=1}^k c_i D^i\right)\\
&=&\inf_{\substack{D^i \text{ $l$-basis type} \\ 
\text{$\Q$-divisor of }V_{m\vec{\bullet}}^i}}
\inf_{\substack{E \text{ prime divisor}\\ 
\text{over }X \\ \text{with }\eta\in C_X(E)}}\frac{A_{X, B}(E)}{\sum_{i=1}^k
c_i\ord_E D^i}\\
&=&\inf_{\substack{E \text{ prime divisor}\\ 
\text{over }X \\ \text{with }\eta\in C_X(E)}}\frac{A_{X, B}(E)}{\sum_{i=1}^k
c_i S_l\left(V_{m\vec{\bullet}}^i;E\right)}.
\end{eqnarray*}
\item\label{alpha-delta_definition22}
We set 
\begin{eqnarray*}
\alpha_\eta\left(X, B; \left\{c_i\cdot V_{\vec{\bullet}}^i\right\}_{i=1}^k\right)
&:=&\lim_{l\in m\Z_{>0}}
\alpha_{\eta,l}\left(X, B; \left\{c_i\cdot V_{m\vec{\bullet}}^i\right\}_{i=1}^k\right)\\
&=&\inf_{\substack{E \text{ prime divisor}\\ 
\text{over }X\\ \text{with }\eta\in C_X(E)}}\frac{A_{X, B}(E)}{\sum_{i=1}^k
c_i T\left(V_{\vec{\bullet}}^i;E\right)},
\end{eqnarray*}
and 
\begin{eqnarray*}
\delta_\eta\left(X, B; \left\{c_i\cdot V_{\vec{\bullet}}^i\right\}_{i=1}^k\right)
&:=&\lim_{l\in m\Z_{>0}}
\delta_{\eta,l}\left(X, B; \left\{c_i\cdot V_{m\vec{\bullet}}^i\right\}_{i=1}^k\right)\\
&=&\inf_{\substack{E \text{ prime divisor}\\ 
\text{over }X \\ \text{with }\eta\in C_X(E)}}\frac{A_{X, B}(E)}{\sum_{i=1}^k
c_i S\left(V_{\vec{\bullet}}^i;E\right)}.
\end{eqnarray*}
By the next proposition, the above definitions are well-defined. 
We call the value $\alpha_\eta\left(X, B; \left\{c_i\cdot 
V_{\vec{\bullet}}^i\right\}_{i=1}^k\right)$ 
\emph{the local coupled global log canonical threshold of $\eta\in (X, B)$ 
with respects to 
$\{c_i\cdot V_{\vec{\bullet}}^i\}_{i=1}^k$}, and we call the value 
$\delta_\eta\left(X, B; \left\{c_i\cdot V_{\vec{\bullet}}^i\right\}_{i=1}^k\right)$ 
\emph{the local coupled stability threshold of $\eta\in (X, B)$ with respects to 
$\{c_i\cdot V_{\vec{\bullet}}^i\}_{i=1}^k$}. 
\end{enumerate}
\item\label{alpha-delta_definition3}
Assume that $L_1,\dots,L_k$ are big $\Q$-Cartier $\Q$-divisors on $X$, 
$r_i=1$ and 
$V_{\vec{\bullet}}^i=H^0\left(\bullet L_i\right)$ for every $1\leq i\leq k$. 
Then we set 
\begin{eqnarray*}
\alpha\left(X, B; \left\{c_i\cdot L_i\right\}_{i=1}^k\right)&:=&
\alpha\left(X, B; \left\{c_i\cdot V_{\vec{\bullet}}^i\right\}_{i=1}^k\right), \\
\delta\left(X, B; \left\{c_i\cdot L_i\right\}_{i=1}^k\right)&:=&
\delta\left(X, B; \left\{c_i\cdot V_{\vec{\bullet}}^i\right\}_{i=1}^k\right), 
\end{eqnarray*}
and so on. 
\item\label{alpha-delta_definition4}
If $c_1=\cdots=c_k=1$, then we write 
$\alpha\left(X, B; \left\{V_{\vec{\bullet}}^i\right\}_{i=1}^k\right)$, 
$\delta\left(X, B; \left\{V_{\vec{\bullet}}^i\right\}_{i=1}^k\right)$, etc.; 
if $k=1$, then we write 
$\alpha\left(X, B; c_1\cdot V_{\vec{\bullet}}^1\right)$, 
$\delta\left(X, B; c_1\cdot V_{\vec{\bullet}}^1\right)$, etc.; if $B=0$, then we write 
$\alpha\left(X; \left\{c_i\cdot L_i\right\}_{i=1}^k\right)$, 
$\delta\left(X; \left\{c_i\cdot L_i\right\}_{i=1}^k\right)$, etc., just for simplicity. 
\end{enumerate}
\end{definition}

The above definitions are well-defined thanks to the following well-known proposition. 
See \cite[Theorem 4.4]{BJ}, \cite[Lemma 2.21]{AZ}, \cite[Proposition 11.13]{r3d28}
and \cite[\S A]{hashimoto}. 

\begin{proposition}[{cf.\ \cite[Theorem 4.4]{BJ}, 
\cite[Proposition 11.13]{r3d28}}]\label{alpha-delta_proposition}
\begin{enumerate}
\renewcommand{\theenumi}{\arabic{enumi}}
\renewcommand{\labelenumi}{(\theenumi)}
\item\label{alpha-delta_proposition1}
Under the notion in Definition \ref{alpha-delta_definition} 
\eqref{alpha-delta_definition1}, we have 
\[
\lim_{l\in m\Z_{>0}}
\alpha_l\left(X, B; \left\{c_i\cdot V_{m\vec{\bullet}}^i\right\}_{i=1}^k\right)
=\inf_{\substack{E \text{ prime divisor} \\
\text{over }X}}\frac{A_{X, B}(E)}{\sum_{i=1}^k
c_i T\left(V_{\vec{\bullet}}^i;E\right)},
\]
and 
\[
\lim_{l\in m\Z_{>0}}
\delta_l\left(X, B; \left\{c_i\cdot V_{m\vec{\bullet}}^i\right\}_{i=1}^k\right)
=\inf_{\substack{E \text{ prime divisor} \\
\text{over }X}}\frac{A_{X, B}(E)}{\sum_{i=1}^k
c_i S\left(V_{\vec{\bullet}}^i;E\right)}.
\]
\item\label{alpha-delta_proposition2}
Under the notion in Definition \ref{alpha-delta_definition} 
\eqref{alpha-delta_definition2}, we have 
\[
\lim_{l\in m\Z_{>0}}
\alpha_{\eta,l}\left(X, B; \left\{c_i\cdot V_{m\vec{\bullet}}^i\right\}_{i=1}^k\right)
=\inf_{\substack{E \text{ prime divisor} \\
\text{over }X\\ \text{with }\eta\in C_X(E)}}\frac{A_{X, B}(E)}{\sum_{i=1}^k
c_i T\left(V_{\vec{\bullet}}^i;E\right)},
\]
and 
\[
\lim_{l\in m\Z_{>0}}
\delta_{\eta,l}\left(X, B; \left\{c_i\cdot V_{m\vec{\bullet}}^i\right\}_{i=1}^k\right)
=\inf_{\substack{E \text{ prime divisor} \\
\text{over }X\\ \text{with }\eta\in C_X(E)}}\frac{A_{X, B}(E)}{\sum_{i=1}^k
c_i S\left(V_{\vec{\bullet}}^i;E\right)}.
\]
\end{enumerate}
\end{proposition}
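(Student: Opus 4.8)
The plan is to deduce all four assertions from a single convergence statement for the truncated invariants, applied to the finitely many graded linear series in play, and to reduce the bookkeeping to the identities already built into Definition \ref{alpha-delta_definition}. I will prove the stability-threshold equality in \eqref{alpha-delta_proposition1} in detail; the $\alpha$-equality in \eqref{alpha-delta_proposition1} is formally easier, and the two equalities in \eqref{alpha-delta_proposition2} follow from the global ones by running every argument with ``prime divisor $E$ over $X$'' replaced by ``prime divisor $E$ over $X$ with $\eta\in C_X(E)$''. Throughout, $(X,B)$ being klt (at $\eta$) guarantees $A_{X,B}(E)>0$ for every relevant $E$, so each ratio below is a well-defined element of $(0,\infty]$, and a ratio with vanishing denominator equals $+\infty$ and may be discarded when taking infima. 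Write $\bar\delta:=\inf_{E}A_{X,B}(E)/\sum_{i=1}^k c_i S(V^i_{\vec{\bullet}};E)$ and $\bar\alpha:=\inf_{E}A_{X,B}(E)/\sum_{i=1}^k c_i T(V^i_{\vec{\bullet}};E)$ for the target values. First I would record that the chains of equalities in Definition \ref{alpha-delta_definition} already give
\[
\alpha_l=\inf_{E}\frac{A_{X,B}(E)}{\sum_{i=1}^k c_i\,\tfrac1l T_l(V^i_{m\vec{\bullet}};E)},
\qquad
\delta_l=\inf_{E}\frac{A_{X,B}(E)}{\sum_{i=1}^k c_i\,S_l(V^i_{m\vec{\bullet}};E)},
\]
the only non-formal input being that, for a fixed $E$, one may choose an $l$-basis type $\Q$-divisor $D^i$ of $V^i_{m\vec{\bullet}}$ compatible with $\sF_E$ independently in $i$, so that $\sum_i c_i\ord_E D^i$ is maximised at $\sum_i c_i S_l(V^i_{m\vec{\bullet}};E)$ by Remark \ref{bt-div_remark} \eqref{bt-div_remark1}, together with $\lct(X,B;\sum_i c_i D^i)=\inf_E A_{X,B}(E)/\sum_i c_i\ord_E D^i$. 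It thus remains to pass to the limit $l\to\infty$ inside these infima.

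For the $\alpha$-equality, the key point is that $T_l(V^i_{m\vec{\bullet}};E)$ is superadditive in $l$ (by the multiplicativity of $\sF_E$), so by Fekete's lemma, cf.\ \cite[Lemma 1.4]{BC}, one has $\tfrac1l T_l(V^i_{\vec{\bullet}};E)\le T(V^i_{\vec{\bullet}};E)$ for every $l$ and $\tfrac1l T_l\to T$ pointwise in $E$. Consequently every denominator appearing in $\alpha_l$ is $\le\sum_i c_i T(V^i_{\vec{\bullet}};E)$, which gives $\alpha_l\ge\bar\alpha$, hence $\liminf_l\alpha_l\ge\bar\alpha$; and choosing, for $\varepsilon>0$, a prime divisor $E_0$ with $A_{X,B}(E_0)/\sum_i c_i T(V^i_{\vec{\bullet}};E_0)<\bar\alpha+\varepsilon$ and using pointwise convergence along $E_0$ yields $\limsup_l\alpha_l\le\bar\alpha+\varepsilon$. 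Letting $\varepsilon\to0$ proves $\lim_l\alpha_l=\bar\alpha$.

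For the $\delta$-equality I would invoke the uniform comparison of Lemma \ref{bt-div_lemma} \eqref{bt-div_lemma1}: given $\varepsilon\in\Q_{>0}$, for each $i$ there is $l_0^i$ with $S_l(V^i_{m\vec{\bullet}};\sF)\le(1+\varepsilon)S(V^i_{\vec{\bullet}};\sF)$ for all $l\ge l_0^i$ and all linearly bounded $\sF$, in particular all $\sF=\sF_E$. Taking $l_0:=\max_i l_0^i$ and summing over $i$ gives, for $l\ge l_0$ and every $E$, $\sum_i c_i S_l(V^i_{m\vec{\bullet}};E)\le(1+\varepsilon)\sum_i c_i S(V^i_{\vec{\bullet}};E)$, whence $\delta_l\ge\tfrac{1}{1+\varepsilon}\bar\delta$ and so $\liminf_l\delta_l\ge\bar\delta$. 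For the opposite inequality, fix $\varepsilon>0$ and a prime divisor $E_0$ with $A_{X,B}(E_0)/\sum_i c_i S(V^i_{\vec{\bullet}};E_0)<\bar\delta+\varepsilon$; since $S_l(V^i_{m\vec{\bullet}};E_0)\to S(V^i_{\vec{\bullet}};E_0)$ by Definition \ref{S-T_definition} \eqref{S-T_definition2}, we get $\delta_l\le A_{X,B}(E_0)/\sum_i c_i S_l(V^i_{m\vec{\bullet}};E_0)\to A_{X,B}(E_0)/\sum_i c_i S(V^i_{\vec{\bullet}};E_0)<\bar\delta+\varepsilon$, so $\limsup_l\delta_l\le\bar\delta$. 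Hence $\lim_l\delta_l=\bar\delta$, and the local statement \eqref{alpha-delta_proposition2} follows by the identical argument restricted to $E$ with $\eta\in C_X(E)$.

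The main obstacle is precisely the uniform estimate $S_l\le(1+\varepsilon)S$ holding simultaneously for all filtrations, i.e.\ Lemma \ref{bt-div_lemma} \eqref{bt-div_lemma1} (this is the coupled analogue of \cite[Theorem 4.4]{BJ}); once it is granted, the argument above is purely formal manipulation of infima and one application of Fekete's lemma. The only further points that need attention are that the coupled sum over the $k$ indices costs merely the maximum of the $k$ truncation levels $l_0^i$, so the uniformity is preserved, and the bookkeeping around possibly infinite ratios noted in the first paragraph; neither is a genuine difficulty.
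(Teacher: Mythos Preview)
Your proof is correct and follows essentially the same approach as the paper's own proof: both directions of the $\alpha$-equality come from $\tfrac{1}{l}T_l\le T$ (Fekete) together with pointwise convergence at a near-minimising $E$, and the $\delta$-equality hinges on the uniform estimate $S_l\le(1+\varepsilon)S$ of Lemma \ref{bt-div_lemma} \eqref{bt-div_lemma1}, with the reverse inequality again from pointwise convergence at a near-minimising $E$. The paper compresses this into a two-line chain of inequalities, but the content is identical; your observation that one only needs $l\ge\max_i l_0^i$ to handle the finitely many series simultaneously is exactly the point, and the local statement \eqref{alpha-delta_proposition2} is indeed obtained by restricting to $E$ with $\eta\in C_X(E)$, as the paper also implicitly does.
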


\begin{proof}
We only see \eqref{alpha-delta_proposition1}. Since 
\begin{eqnarray*}
\inf_{\substack{E \text{ prime divisor} \\
\text{over }X}}\frac{A_{X, B}(E)}{\sum_{i=1}^k
c_i T\left(V_{\vec{\bullet}}^i;E\right)}&\geq&
\limsup_{l\in m\Z_{>0}}\alpha_l\left(X, B; 
\left\{c_i\cdot V_{m\vec{\bullet}}^i\right\}_{i=1}^k\right)\\
\geq\inf_{l\in m\Z_{>0}}\alpha_l\left(X, B; 
\left\{c_i\cdot V_{m\vec{\bullet}}^i\right\}_{i=1}^k\right)
&=&\inf_{\substack{E \text{ prime divisor} \\
\text{over }X}}\frac{A_{X, B}(E)}{\sum_{i=1}^k
c_i T\left(V_{\vec{\bullet}}^i;E\right)}, 
\end{eqnarray*}
the first assertion follows. Similarly, we have 
\[
\limsup_{l\in m\Z_{>0}}\delta_l\left(X, B; 
\left\{c_i\cdot V_{m\vec{\bullet}}^i\right\}_{i=1}^k\right)\leq
\inf_{\substack{E \text{ prime divisor} \\
\text{over }X}}\frac{A_{X, B}(E)}{\sum_{i=1}^k
c_i S\left(V_{\vec{\bullet}}^i;E\right)}. 
\]
On the other hand, by Lemma \ref{bt-div_lemma} \eqref{bt-div_lemma1}, 
for any $\varepsilon\in\Q_{>0}$, we have 
\[
\liminf_{l\in m\Z_{>0}}\delta_l\left(X, B; 
\left\{c_i\cdot V_{m\vec{\bullet}}^i\right\}_{i=1}^k\right)\geq
\frac{1}{1+\varepsilon}\cdot\inf_{\substack{E \text{ prime divisor} \\
\text{over }X}}\frac{A_{X, B}(E)}{\sum_{i=1}^k
c_i S\left(V_{\vec{\bullet}}^i;E\right)}. 
\]
Thus we also get the second assertion. 
\end{proof}

\begin{remark}\label{valuation_remark}
Assume that $(X, B)$ is klt at a scheme-theoretic point $\eta$. 
As in \cite{BJ}, we have the following equalities: 
\begin{eqnarray*}
\alpha_{\eta,l}\left(X, B;\left\{c_i\cdot V^i_{m\vec{\bullet}}\right\}_{i=1}^k\right)
&=&\inf_v\frac{A_{X,B}(v)}{\sum_{i=1}^k c_i
\frac{1}{l}T_l\left(V_{m\vec{\bullet}}^i; v\right)}, \\
\delta_{\eta,l}\left(X, B;\left\{c_i\cdot V^i_{m\vec{\bullet}}\right\}_{i=1}^k\right)
&=&\inf_v\frac{A_{X,B}(v)}{\sum_{i=1}^k c_i
S_l\left(V_{m\vec{\bullet}}^i; v\right)}, \\
\alpha_{\eta}\left(X, B;\left\{c_i\cdot V^i_{\vec{\bullet}}\right\}_{i=1}^k\right)
&=&\inf_v\frac{A_{X,B}(v)}{\sum_{i=1}^k c_i
T\left(V_{\vec{\bullet}}^i; v\right)}, \\
\delta_{\eta}\left(X, B;\left\{c_i\cdot V^i_{\vec{\bullet}}\right\}_{i=1}^k\right)
&=&\inf_v\frac{A_{X,B}(v)}{\sum_{i=1}^k c_i
S\left(V_{\vec{\bullet}}^i; v\right)}, 
\end{eqnarray*}
where $v$ runs through all valuations on $X$ with $A_{X, B}(v)<\infty$ and 
$\eta\in C_X(v)$. See \cite{BJ} for detail. 
\end{remark}

\begin{definition}\label{alpha-delta-flag_definition}
\begin{enumerate}
\renewcommand{\theenumi}{\arabic{enumi}}
\renewcommand{\labelenumi}{(\theenumi)}
\item\label{alpha-delta-flag_definition1}
Let $U\subset X$ be an open subscheme and let 
\[
Y_\bullet\colon X=Y_0\triangleright Y_1\triangleright\cdots\triangleright Y_j
\]
be a plt flag over $(U, B|_U)$. For any scheme-theoretic point $\eta\in Y_j$ over $U$, 
we set 
\begin{eqnarray*}
\alpha_\eta\left(X, B\triangleright Y_1\triangleright\cdots\triangleright Y_j; 
\left\{c_i\cdot V_{\vec{\bullet}}^i\right\}_{i=1}^k\right)
&:=&\alpha_\eta\left(Y_j, B_j;\left\{c_i\cdot V_{\vec{\bullet}}^{i, 
\left(Y_1\triangleright\cdots\triangleright Y_j\right)}\right\}_{i=1}^k
\right), \\ 
\delta_\eta\left(X, B\triangleright Y_1\triangleright\cdots\triangleright Y_j; 
\left\{c_i\cdot V_{\vec{\bullet}}^i\right\}_{i=1}^k\right)
&:=&\delta_\eta\left(Y_j, B_j;\left\{c_i\cdot V_{\vec{\bullet}}^{i, 
\left(Y_1\triangleright\cdots\triangleright Y_j\right)}\right\}_{i=1}^k
\right), 
\end{eqnarray*}
where $(Y_j, B_j)$ is the associated klt pair over $U$. In other words, 
\begin{eqnarray*}
\alpha_\eta\left(X, B\triangleright Y_1\triangleright\cdots\triangleright Y_j; 
\left\{c_i\cdot V_{\vec{\bullet}}^i\right\}_{i=1}^k\right)&=&
\inf_{\substack{E \text{ prime divisor} \\
\text{over }Y_j\\ \text{with }\eta\in C_{Y_j}(E)}}\frac{A_{X, B}(Y_1\triangleright\cdots
\triangleright Y_j\triangleright E)}{\sum_{i=1}^k
c_i T\left(V_{\vec{\bullet}}^i;Y_1\triangleright\cdots\triangleright Y_j\triangleright 
E\right)}, \\
\delta_\eta\left(X, B\triangleright Y_1\triangleright\cdots\triangleright Y_j; 
\left\{c_i\cdot V_{\vec{\bullet}}^i\right\}_{i=1}^k\right)&=&
\inf_{\substack{E \text{ prime divisor} \\
\text{over }Y_j\\ \text{with }\eta\in C_{Y_j}(E)}}\frac{A_{X, B}(Y_1\triangleright\cdots
\triangleright Y_j\triangleright E)}{\sum_{i=1}^k
c_i S\left(V_{\vec{\bullet}}^i;Y_1\triangleright\cdots\triangleright Y_j\triangleright 
E\right)}.
\end{eqnarray*}
\item\label{alpha-delta-flag_definition2}
If 
\[
Y_\bullet\colon X=Y_0\triangleright Y_1\triangleright\cdots\triangleright Y_j
\]
is a plt flag over $(X, B)$, we set 
\begin{eqnarray*}
\alpha\left(X, B\triangleright Y_1\triangleright\cdots\triangleright Y_j; 
\left\{c_i\cdot V_{\vec{\bullet}}^i\right\}_{i=1}^k\right)
&:=&\alpha\left(Y_j, B_j;\left\{c_i\cdot V_{\vec{\bullet}}^{i, 
\left(Y_1\triangleright\cdots\triangleright Y_j\right)}\right\}_{i=1}^k
\right), \\ 
\delta\left(X, B\triangleright Y_1\triangleright\cdots\triangleright Y_j; 
\left\{c_i\cdot V_{\vec{\bullet}}^i\right\}_{i=1}^k\right)
&:=&\delta\left(Y_j, B_j;\left\{c_i\cdot V_{\vec{\bullet}}^{i, 
\left(Y_1\triangleright\cdots\triangleright Y_j\right)}\right\}_{i=1}^k
\right), 
\end{eqnarray*}
where $(Y_j, B_j)$ is the associated klt pair.
In other words, 
\begin{eqnarray*}
\alpha\left(X, B\triangleright Y_1\triangleright\cdots\triangleright Y_j; 
\left\{c_i\cdot V_{\vec{\bullet}}^i\right\}_{i=1}^k\right)&=&
\inf_{\substack{E \text{ prime divisor} \\
\text{over }Y_j}}\frac{A_{X, B}(Y_1\triangleright\cdots
\triangleright Y_j\triangleright E)}{\sum_{i=1}^k
c_i T\left(V_{\vec{\bullet}}^i;Y_1\triangleright\cdots\triangleright Y_j\triangleright 
E\right)}, \\
\delta\left(X, B\triangleright Y_1\triangleright\cdots\triangleright Y_j; 
\left\{c_i\cdot V_{\vec{\bullet}}^i\right\}_{i=1}^k\right)&=&
\inf_{\substack{E \text{ prime divisor} \\
\text{over }Y_j}}\frac{A_{X, B}(Y_1\triangleright\cdots
\triangleright Y_j\triangleright E)}{\sum_{i=1}^k
c_i S\left(V_{\vec{\bullet}}^i;Y_1\triangleright\cdots\triangleright Y_j\triangleright 
E\right)}.
\end{eqnarray*}
If $r_i=1$ and $L^i:=L_1^i\in\CaCl(X)\otimes_\Z\Q$ are big for all $1\leq i\leq r$, 
then we set 
\begin{eqnarray*}
\alpha\left(X, B\triangleright Y_1\triangleright\cdots\triangleright Y_j; 
\left\{c_i\cdot L^i\right\}_{i=1}^k\right)
&:=&\alpha\left(X, B\triangleright Y_1\triangleright\cdots\triangleright Y_j; 
\left\{c_i\cdot V_{\vec{\bullet}}^i\right\}_{i=1}^k\right), \\ 
\delta\left(X, B\triangleright Y_1\triangleright\cdots\triangleright Y_j; 
\left\{c_i\cdot L^i\right\}_{i=1}^k\right)
&:=&\delta\left(X, B\triangleright Y_1\triangleright\cdots\triangleright Y_j; 
\left\{c_i\cdot V_{\vec{\bullet}}^i\right\}_{i=1}^k\right), \\ 
\end{eqnarray*}
and so on. 
\end{enumerate}
\end{definition}

We see basic properties of coupled global log canonical thresholds and 
coupled stability thresholds. The following proposition is true even if we replace 
``$(X, B)$ is klt", ``$\alpha$" and ``$\delta$", 
with ``$\eta\in X$ is a scheme-theoretic point which is not the 
generic point of $X$ such that $(X, B)$ is klt at $\eta$", ``$\alpha_\eta$" and 
``$\delta_\eta$", respectively. 

\begin{proposition}\label{delta-basic_proposition}
Assume that $(X, B)$ is klt. 
\begin{enumerate}
\renewcommand{\theenumi}{\arabic{enumi}}
\renewcommand{\labelenumi}{(\theenumi)}
\item\label{delta-basic_proposition1}
We have 
\begin{eqnarray*}
\alpha\left(X,B;\left\{c_i\cdot V_{\vec{\bullet}}^i\right\}_{i=1}^k\right)\leq
\delta\left(X,B;\left\{c_i\cdot V_{\vec{\bullet}}^i\right\}_{i=1}^k\right)\leq
\left(\max_{1\leq i\leq k}\left\{r_i\right\}+n\right)\cdot
\alpha\left(X,B;\left\{c_i\cdot V_{\vec{\bullet}}^i\right\}_{i=1}^k\right).
\end{eqnarray*}
\item\label{delta-basic_proposition2}
If $c'_1,\dots,c'_k\in\R_{>0}$ satisfies that $c'_i\geq c_i$ for any $1\leq i\leq k$, 
then we have 
\begin{eqnarray*}
\alpha\left(X,B;\left\{c_i\cdot V_{\vec{\bullet}}^i\right\}_{i=1}^k\right)&\geq&
\alpha\left(X,B;\left\{c'_i\cdot V_{\vec{\bullet}}^i\right\}_{i=1}^k\right), \\
\delta\left(X,B;\left\{c_i\cdot V_{\vec{\bullet}}^i\right\}_{i=1}^k\right)&\geq&
\delta\left(X,B;\left\{c'_i\cdot V_{\vec{\bullet}}^i\right\}_{i=1}^k\right).
\end{eqnarray*}
\item\label{delta-basic_proposition3}
We have 
$\alpha\left(X,B;\left\{c_i\cdot V_{\vec{\bullet}}^i\right\}_{i=1}^k\right)\in\R_{>0}$ and 
$\delta\left(X,B;\left\{c_i\cdot V_{\vec{\bullet}}^i\right\}_{i=1}^k\right)\in\R_{>0}$.
\item\label{delta-basic_proposition4}
For any $c'_1,\dots,c'_k\in\Q_{>0}$, we have 
\begin{eqnarray*}
\alpha\left(X,B;\left\{c_i c'_i\cdot V_{\vec{\bullet}}^i\right\}_{i=1}^k\right)&=&
\alpha\left(X,B;\left\{c_i\cdot c'_i V_{\vec{\bullet}}^i\right\}_{i=1}^k\right), \\
\delta\left(X,B;\left\{c_ic'_i\cdot V_{\vec{\bullet}}^i\right\}_{i=1}^k\right)&=&
\delta\left(X,B;\left\{c_i\cdot c'_i V_{\vec{\bullet}}^i\right\}_{i=1}^k\right),
\end{eqnarray*}
where $c'_i V_{\vec{\bullet}}^i$ is as in Definition \ref{interior_definition} 
\eqref{interior_definition1}. 
\item\label{delta-basic_proposition5}
Take any $p\in\Z_{>0}$. For any $1\leq i\leq k$, take any $\vec{p}^i
=(p^i_1,\dots,p^i_{r_i})\in\Z_{>0}^{r_i}$ with $p^i_1=p$. Then we have 
\begin{eqnarray*}
\alpha\left(X,B;\left\{c_i \cdot V_{\vec{\bullet}}^{i,(\vec{p}^i)}\right\}_{i=1}^k\right)&=&
\frac{1}{p}\cdot
\alpha\left(X,B;\left\{c_i \cdot V_{\vec{\bullet}}^i\right\}_{i=1}^k\right), \\
\delta\left(X,B;\left\{c_i \cdot V_{\vec{\bullet}}^{i,(\vec{p}^i)}\right\}_{i=1}^k\right)&=&
\frac{1}{p}\cdot\delta\left(X,B;\left\{c_i \cdot V_{\vec{\bullet}}^i\right\}_{i=1}^k\right),
\end{eqnarray*}
\item\label{delta-basic_proposition6}
For any $c\in \R_{>0}$, we have 
\begin{eqnarray*}
\alpha\left(X,B;\left\{c c_i\cdot V_{\vec{\bullet}}^i\right\}_{i=1}^k\right)&=&
\frac{1}{c}\cdot\alpha\left(X,B;\left\{c_i\cdot 
V_{\vec{\bullet}}^i\right\}_{i=1}^k\right), \\
\delta\left(X,B;\left\{c c_i\cdot V_{\vec{\bullet}}^i\right\}_{i=1}^k\right)&=&
\frac{1}{c}\cdot\delta\left(X,B;\left\{c_i\cdot V_{\vec{\bullet}}^i\right\}_{i=1}^k\right). 
\end{eqnarray*}
\item\label{delta-basic_proposition7}
Assume that there exists $0\leq k'\leq k-1$, $c'_{k'+1},\dots,c'_k\in\Q_{>0}$ and 
a graded series $V_{\vec{\bullet}}$ such that $r_{k'+1}=\cdots=r_k$ and 
$V_{\vec{\bullet}}^j=c'_j V_{\vec{\bullet}}$ for any $k'+1\leq j\leq k$. Then we have 
\begin{eqnarray*}
\alpha\left(X,B;\left\{c_i\cdot V_{\vec{\bullet}}^i\right\}_{i=1}^k\right)&=&
\alpha\left(X,B;\left\{c_i\cdot V_{\vec{\bullet}}^i\right\}_{i=1}^{k'}\cup
\left\{\left(\sum_{j=k'+1}^k c_j c'_j\right)\cdot V_{\vec{\bullet}}\right\}\right), \\
\delta\left(X,B;\left\{c_i\cdot V_{\vec{\bullet}}^i\right\}_{i=1}^k\right)&=&
\delta\left(X,B;\left\{c_i\cdot V_{\vec{\bullet}}^i\right\}_{i=1}^{k'}\cup
\left\{\left(\sum_{j=k'+1}^k c_j c'_j\right)\cdot V_{\vec{\bullet}}\right\}\right). 
\end{eqnarray*}
\item\label{delta-basic_proposition8}
Let $L, L_1,\dots,L_k$ are big $\Q$-Cartier $\Q$-divisors on $X$. 
Assume that there exists $0\leq k'\leq k-1$ and $c'_{k'+1},\dots,c'_k\in\Q_{>0}$ 
such that $L_j\equiv c'_j L$ for any $k'+1\leq j\leq k$. Then we have 
\begin{eqnarray*}
\alpha\left(X,B;\left\{c_i\cdot L_i\right\}_{i=1}^k\right)&=&
\alpha\left(X,B;\left\{c_i\cdot L_i\right\}_{i=1}^{k'}\cup
\left\{\left(\sum_{j=k'+1}^k c_j c'_j\right)\cdot L\right\}\right), \\
\delta\left(X,B;\left\{c_i\cdot L_i\right\}_{i=1}^k\right)&=&
\delta\left(X,B;\left\{c_i\cdot L_i\right\}_{i=1}^{k'}\cup
\left\{\left(\sum_{j=k'+1}^k c_j c'_j\right)\cdot L\right\}\right). 
\end{eqnarray*}
In particular, when moreover $k'=0$, we have 
\begin{eqnarray*}
\alpha\left(X,B;\left\{c_i\cdot L_i\right\}_{i=1}^k\right)&=&
\frac{1}{\sum_{i=1}^k c_i c'_i}\cdot\alpha\left(X,B;L\right), \\
\delta\left(X,B;\left\{c_i\cdot L_i\right\}_{i=1}^k\right)&=&
\frac{1}{\sum_{i=1}^k c_i c'_i}\cdot\delta\left(X,B;L\right). 
\end{eqnarray*}
\item\label{delta-basic_proposition9}
Take any division 
\[
\left\{1,\dots, k\right\}=I_1\sqcup\dots\sqcup I_l
\]
with $I_j\neq \emptyset$ for any $1\leq j\leq l$. 
We have the inequalities 
\begin{eqnarray*}
\alpha\left(X, B;\left\{c_i\cdot V_{\vec{\bullet}}^i\right\}_{i=1}^k\right)^{-1}
\leq\sum_{j=1}^l
\alpha\left(X, B;\left\{c_i\cdot V_{\vec{\bullet}}^i\right\}_{i\in I_j}\right)^{-1}, \\
\delta\left(X, B;\left\{c_i\cdot V_{\vec{\bullet}}^i\right\}_{i=1}^k\right)^{-1}
\leq\sum_{j=1}^l
\delta\left(X, B;\left\{c_i\cdot V_{\vec{\bullet}}^i\right\}_{i\in I_j}\right)^{-1}.
\end{eqnarray*}
In particular, we have 
\begin{eqnarray*}
\alpha\left(X, B;\left\{c_i\cdot V_{\vec{\bullet}}^i\right\}_{i=1}^k\right)^{-1}
\leq\sum_{i=1}^k
c_i\cdot\alpha\left(X, B;V_{\vec{\bullet}}^i\right)^{-1}, \\
\delta\left(X, B;\left\{c_i\cdot V_{\vec{\bullet}}^i\right\}_{i=1}^k\right)^{-1}
\leq\sum_{i=1}^k
c_i\cdot\delta\left(X, B;V_{\vec{\bullet}}^i\right)^{-1}.
\end{eqnarray*}
\item\label{delta-basic_proposition10}
For any $1\leq i\leq k$, let $\Lambda_i$ be a finite set and let us consider 
a decomposition 
\[
\Delta_{\Supp\left(V_{\vec{\bullet}}^i\right)}=\overline{\bigcup_{\lambda\in\Lambda_i}
\Delta_{\Supp}^{i, \langle\lambda\rangle}}
\]
and consider $V_{\vec{\bullet}}^{i,\langle\lambda\rangle}$ in the sense of 
Definition \ref{interior_definition} \eqref{interior_definition4}. Then we have 
\[
\delta\left(X, B;\left\{c_i\cdot V_{\vec{\bullet}}^i\right\}_{i=1}^k\right)
=\delta\left(X, B;\left\{c_i
\frac{\vol\left(V_{\vec{\bullet}}^{i,\langle\lambda
\rangle}\right)}{\vol\left(V_{\vec{\bullet}}^i\right)}\cdot 
V_{\vec{\bullet}}^{i,\langle\lambda\rangle}\right\}_{1\leq i\leq k, \lambda\in\Lambda_i}\right).
\]
\item\label{delta-basic_proposition11}
Both the functions 
\begin{eqnarray*}
\alpha\colon\R^k_{>0}&\to&\R_{>0}\\
(t_1,\dots,t_k)&\mapsto&
\alpha\left(X, B;\left\{t_i\cdot V_{\vec{\bullet}}^i\right\}_{i=1}^k\right), \\
\delta\colon\R^k_{>0}&\to&\R_{>0}\\
(t_1,\dots,t_k)&\mapsto&
\delta\left(X, B;\left\{t_i\cdot V_{\vec{\bullet}}^i\right\}_{i=1}^k\right)
\end{eqnarray*}
are continuous. 
\end{enumerate}
\end{proposition}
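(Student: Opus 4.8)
The plan is to reduce all eleven assertions to the valuative descriptions
\[
\delta\left(X,B;\left\{c_i\cdot V^i_{\vec{\bullet}}\right\}_{i=1}^k\right)
=\inf_E\frac{A_{X,B}(E)}{\sum_{i=1}^kc_iS\left(V^i_{\vec{\bullet}};E\right)},
\qquad
\alpha\left(X,B;\left\{c_i\cdot V^i_{\vec{\bullet}}\right\}_{i=1}^k\right)
=\inf_E\frac{A_{X,B}(E)}{\sum_{i=1}^kc_iT\left(V^i_{\vec{\bullet}};E\right)}
\]
coming from Definition \ref{alpha-delta_definition} \eqref{alpha-delta_definition12} together with Proposition \ref{alpha-delta_proposition} (with $E$ ranging over prime divisors over $X$), and then to feed in the elementary behaviour of the $S$- and $T$-invariants under rescaling, refinement and decomposition. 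Throughout one uses that $S\left(V^i_{\vec{\bullet}};E\right)\geq 0$, that $\sF_E$ is a linearly bounded filtration so that $T\left(V^i_{\vec{\bullet}};E\right)<\infty$, and the scaling identities $S\left(c'V_{\vec{\bullet}};E\right)=c'S\left(V_{\vec{\bullet}};E\right)$ for $c'\in\Q_{>0}$ and $S\left(V^{(\vec{k})}_{\vec{\bullet}};E\right)=k_1S\left(V_{\vec{\bullet}};E\right)$, both consequences of Remark \ref{S-T_remark} \eqref{S-T_remark2}, and the corresponding identities for $T$.

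With this in hand, the bulk of the items are formal. Item (1) is immediate from the inequalities $T/(r_i+n)\leq S\leq T$ of Definition \ref{S-T_definition} \eqref{S-T_definition2}, applied termwise in the denominators. Items (2) and (6) hold because increasing, resp.\ scaling, all the $c_i$ increases, resp.\ scales by the same factor, every denominator $\sum_ic_iS\left(V^i_{\vec{\bullet}};E\right)$ uniformly in $E$. Items (4) and (5) follow from the scaling identities above, which give $S\left(c'_iV^i_{\vec{\bullet}};E\right)=c'_iS\left(V^i_{\vec{\bullet}};E\right)$ and $S\left(V^{i,(\vec{p}^i)}_{\vec{\bullet}};E\right)=pS\left(V^i_{\vec{\bullet}};E\right)$. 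Items (7) and (8) again follow from the same identities, once one observes that $S(L_j;E)=c'_jS(L;E)$ whenever $L_j\equiv c'_jL$, since volumes and $S$-invariants of big $\Q$-Cartier $\Q$-divisors depend only on the numerical class. Item (9) is the trivial inequality $\sup_E\sum_j(\cdots)\leq\sum_j\sup_E(\cdots)$ applied to the reciprocals $\delta\left(\cdots\right)^{-1}=\sup_E\frac{\sum_ic_iS\left(V^i_{\vec{\bullet}};E\right)}{A_{X,B}(E)}$, the ``in particular'' clause being the case of singleton $I_j$ combined with (6). Item (10) follows from the additivity $\vol\left(V^i_{\vec{\bullet}}\right)S\left(V^i_{\vec{\bullet}};E\right)=\sum_\lambda\vol\left(V^{i,\langle\lambda\rangle}_{\vec{\bullet}}\right)S\left(V^{i,\langle\lambda\rangle}_{\vec{\bullet}};E\right)$ of Proposition \ref{decomposition_proposition} \eqref{decomposition_proposition23}, which shows the denominator is unchanged after the indicated re-weighting. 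Finally, for item (3) I would invoke the known case $k=1$, namely $\alpha\left(X,B;V^i_{\vec{\bullet}}\right),\delta\left(X,B;V^i_{\vec{\bullet}}\right)\in\R_{>0}$ (\cite{BJ}); finiteness of the general value then follows from $\sum_{i=1}^kc_iS\left(V^i_{\vec{\bullet}};E\right)\geq c_1S\left(V^1_{\vec{\bullet}};E\right)$, and positivity from (9) in the form $\delta\left(\cdots\right)^{-1}\leq\sum_ic_i\delta\left(X,B;V^i_{\vec{\bullet}}\right)^{-1}<\infty$.

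The one item carrying genuine content is (11). Here I would observe that for each prime divisor $E$ over $X$ with $A_{X,B}(E)<\infty$, the map
\[
(t_1,\dots,t_k)\longmapsto \frac{\sum_{i=1}^kt_iS\left(V^i_{\vec{\bullet}};E\right)}{A_{X,B}(E)}
\]
is a linear function with nonnegative coefficients, and it is finite since each $S\left(V^i_{\vec{\bullet}};E\right)\leq T\left(V^i_{\vec{\bullet}};E\right)<\infty$. Consequently $t\mapsto\delta\left(X,B;\{t_i\cdot V^i_{\vec{\bullet}}\}_{i=1}^k\right)^{-1}$, being the pointwise supremum of this family of linear functions, is a convex function on $\R^k_{>0}$. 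By item (3) it is finite (and strictly positive) on the open convex set $\R^k_{>0}$, and a convex function that is finite on an open convex subset of $\R^k$ is automatically continuous there; hence $t\mapsto\delta(\cdots)^{-1}$ is continuous, and since it is strictly positive, $\delta$ itself is continuous on $\R^k_{>0}$. Replacing $S$ by $T$ everywhere gives the same conclusion for $\alpha$.

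The main obstacle is essentially confined to item (11): one must know that these reciprocals are genuinely finite on all of $\R^k_{>0}$ — which is exactly what item (3) delivers, and which in turn rests on the $k=1$ positivity and finiteness from \cite{BJ} — so that the elementary fact about continuity of finite convex functions can be applied. Once that is in place every remaining step is a routine manipulation of the valuative formulas and the scaling/additivity properties of $S$ and $T$.
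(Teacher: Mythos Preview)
Your treatment of items (1)--(10) matches the paper's proof essentially line for line: both reduce everything to the valuative formulas for $\alpha$ and $\delta$ and then feed in the scaling and additivity identities for $S$ and $T$, with (10) coming from Proposition~\ref{decomposition_proposition} and (3) bootstrapped from the $k=1$ case.

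For (11) you take a genuinely different route. The paper argues continuity directly: using the homogeneity from (6) and the monotonicity from (2), it sandwiches $\delta(\vec{t}')$ between $(1+\varepsilon_1)^{-1}\delta(\vec{t})$ and $(1-\varepsilon_1)^{-1}\delta(\vec{t})$ once $\vec{t}'$ is close enough to $\vec{t}$ that $(1-\varepsilon_1)t_i\leq t'_i\leq(1+\varepsilon_1)t_i$ for all $i$ (the paper isolates this elementary cone fact as Lemma~\ref{cone_lemma}). Your argument instead observes that $\delta^{-1}(\vec{t})=\sup_E\sum_i t_i\,S(V^i_{\vec{\bullet}};E)/A_{X,B}(E)$ is a pointwise supremum of linear functions, hence convex, and then invokes the standard fact that a finite convex function on an open convex set is continuous. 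Both are correct; yours is slicker and avoids the auxiliary lemma, while the paper's sandwich argument is slightly more explicit about the modulus of continuity and reuses the same cone-type estimate later in the proof of Theorem~\ref{cont_thm}.
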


\begin{proof}
The assertion \eqref{delta-basic_proposition1} follows from 
Definition \ref{S-T_definition} \eqref{S-T_definition2}. 
The assertions \eqref{delta-basic_proposition2} and \eqref{delta-basic_proposition6} 
are trivial. 
The assertion \eqref{delta-basic_proposition3} follows from 
\eqref{delta-basic_proposition1} and the argument in \cite[Proposition 11.1]{r3d28}. 
The assertion \eqref{delta-basic_proposition5} follows from \cite[Lemma 3.10]{r3d28}. 
The assertions 
\eqref{delta-basic_proposition4}, \eqref{delta-basic_proposition7}, 
\eqref{delta-basic_proposition8} follow from 
the facts $T\left(c V_{\vec{\bullet}}; E\right)=c\cdot T\left(V_{\vec{\bullet}}; E\right)$
and $S\left(c V_{\vec{\bullet}}; E\right)=c\cdot S\left(V_{\vec{\bullet}}; E\right)$ 
for $c\in\Q_{>0}$. 
The assertion \eqref{delta-basic_proposition9} follows from 
\begin{eqnarray*}
\delta\left(X, B;\left\{c_i\cdot V_{\vec{\bullet}}^i\right\}_{i=1}^k\right)^{-1}
&=&\sup_{E/X}\frac{\sum_{j=1}^l\sum_{i\in I_j}c_i\cdot S\left(V_{\vec{\bullet}}^i; E
\right)}{A_{X,B}(E)}\\
\leq\sum_{j=1}^l\sup_{E/X}\frac{\sum_{i\in I_j} c_i\cdot S\left(V_{\vec{\bullet}}^i; E
\right)}{A_{X, B}(E)}
&=&\sum_{j=1}^l
\delta\left(X, B;\left\{c_i\cdot V_{\vec{\bullet}}^i\right\}_{i\in I_j}\right)^{-1}.
\end{eqnarray*}
The assertion \eqref{delta-basic_proposition10} follows from 
Proposition \ref{decomposition_proposition}. 
Let us consider the assertion \eqref{delta-basic_proposition11}. 
Take any $\vec{t}=(t_1,\dots,t_k)\in\R_{>0}^k$ and $\varepsilon\in\R_{>0}$. 
By \eqref{delta-basic_proposition6}, we have 
$\delta\left(a\vec{t}\right)=a^{-1}\delta\left(\vec{t}\right)$ for any $a\in\R_{>0}$. 
Take any small $\varepsilon_1\in\R_{>0}$ with 
\[
\delta\left(\vec{t}\right)-\varepsilon<\frac{\delta\left(\vec{t}\right)}{1+\varepsilon_1}
\quad\text{and}\quad
\delta\left(\vec{t}\right)+\varepsilon>\frac{\delta\left(\vec{t}\right)}{1-\varepsilon_1}.
\]
Fix a norm $\|\cdot\|$ on $\R^k$. 
By Lemma \ref{cone_lemma}, there exists $\delta'\in\R_{>0}$ such that 
for any $\vec{t}'=(t'_1,\dots,t'_k)\in\R^k_{>0}$ with $\|\vec{t}'-\vec{t}\|<\delta'$, 
we have 
\[
(1+\varepsilon_1)t_i\geq t'_i\quad\text{and}\quad
t'_i\geq (1-\varepsilon_1)t_i
\]
hold for all $1\leq i\leq k$. This implies that 
\[
\frac{\delta\left(\vec{t}\right)}{1+\varepsilon_1}\leq
\delta\left(\vec{t}'\right)\leq
\frac{\delta\left(\vec{t}\right)}{1-\varepsilon_1}
\]
by \eqref{delta-basic_proposition2}. Thus we get the assertion. 
\end{proof}

\begin{lemma}\label{cone_lemma}
Fix a norm $\|\cdot\|$ on $\R^r$. Take any open cone $\sC\subset\R^r$.
For any compact subset $K\subset \R^r$ 
with $K\subset\sC$ and for any $\varepsilon\in\R_{>0}$, there exists 
$\delta\in\R_{>0}$ such that, for any $\vec{x}$, $\vec{y}\in K$ with 
$\|\vec{y}-\vec{x}\|<\delta$, we have $(1+\varepsilon)\vec{x}-\vec{y}\in\sC$ and 
$\vec{y}-(1-\varepsilon)\vec{x}\in\sC$. 
\end{lemma}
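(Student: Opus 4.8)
The plan is to reduce the statement to a finite covering/compactness argument once we set up the right reformulation. First I would fix the norm $\|\cdot\|$ on $\R^r$ and work with the open cone $\sC$. The key observation is that for a single point $\vec{x}\in K\subset\sC$, the ray $\R_{>0}\vec{x}$ lies in the open cone $\sC$, so a whole neighborhood of $\vec{x}$ in $\R^r$ lies in $\sC$; but we need the two perturbed points $(1+\varepsilon)\vec{x}-\vec{y}$ and $\vec{y}-(1-\varepsilon)\vec{x}$ to land in $\sC$ \emph{uniformly} as $\vec{x}$ ranges over the compact set $K$ and $\vec{y}$ is close to $\vec{x}$. Note that when $\vec{y}=\vec{x}$ the two perturbed points are exactly $(1+\varepsilon)\vec{x}$ and $\varepsilon\vec{x}$, both positive multiples of $\vec{x}$, hence both in $\sC$. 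So the strategy is: for each $\vec{x}\in K$ choose a small ball around these two target points contained in $\sC$, then use compactness to make the radius uniform, and finally translate the ball radius back into a bound $\delta$ on $\|\vec{y}-\vec{x}\|$.

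Concretely, I would proceed as follows. For each $\vec{x}\in K$, since $(1+\varepsilon)\vec{x}\in\sC$ and $\varepsilon\vec{x}\in\sC$ and $\sC$ is open, there is $\rho(\vec{x})\in\R_{>0}$ such that the open balls $B\left((1+\varepsilon)\vec{x},\rho(\vec{x})\right)$ and $B\left(\varepsilon\vec{x},\rho(\vec{x})\right)$ are both contained in $\sC$. The balls $B\left(\vec{x},\rho(\vec{x})/2\right)$ for $\vec{x}\in K$ cover the compact set $K$, so finitely many of them, say centered at $\vec{x}_1,\dots,\vec{x}_M$, already cover $K$. Set $\rho_0:=\min_{1\le s\le M}\rho(\vec{x}_s)/2>0$ and $\delta:=\rho_0/2$. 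Now take any $\vec{x},\vec{y}\in K$ with $\|\vec{y}-\vec{x}\|<\delta$. Pick $s$ with $\vec{x}\in B\left(\vec{x}_s,\rho(\vec{x}_s)/2\right)$. Then the first target point satisfies
\[
\left\|\left((1+\varepsilon)\vec{x}-\vec{y}\right)-(1+\varepsilon)\vec{x}_s\right\|
\le(1+\varepsilon)\|\vec{x}-\vec{x}_s\|+\|\vec{x}-\vec{y}\|
<(1+\varepsilon)\frac{\rho(\vec{x}_s)}{2}+\delta,
\]
and I would simply observe that by shrinking $\rho_0$ at the outset (e.g. demanding $\rho(\vec{x})/2$ be small enough that $(1+\varepsilon)\rho(\vec{x})/2+\delta<\rho(\vec{x})$, which only requires $\rho_0$ to be bounded in terms of $\varepsilon$, or equivalently redefining the covering radius as $\rho(\vec{x})/(2(2+\varepsilon))$) this is less than $\rho(\vec{x}_s)$, hence $(1+\varepsilon)\vec{x}-\vec{y}\in B\left((1+\varepsilon)\vec{x}_s,\rho(\vec{x}_s)\right)\subset\sC$. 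The same estimate with $1-\varepsilon$ in place of $1+\varepsilon$ gives $\left\|\left(\vec{y}-(1-\varepsilon)\vec{x}\right)-\varepsilon\vec{x}_s\right\|<\rho(\vec{x}_s)$ after the analogous bookkeeping (here $\vec{y}-(1-\varepsilon)\vec{x}=\varepsilon\vec{x}+(\vec{y}-\vec{x})$, and $\varepsilon\vec{x}$ is close to $\varepsilon\vec{x}_s$), so $\vec{y}-(1-\varepsilon)\vec{x}\in\sC$ as well.

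The only mild subtlety — and the step I would be most careful about — is the bookkeeping of constants in the covering radius so that both target-point estimates come out below the appropriate ball radius simultaneously; this is entirely elementary but it is where an off-by-a-factor slip could occur, so I would fix the covering radius as $r(\vec{x}):=\rho(\vec{x})/(2(2+\varepsilon))$ from the start and let $\delta:=\min_s r(\vec{x}_s)$, which makes every triangle-inequality estimate above manifestly work. There is no geometric input beyond openness of $\sC$ and compactness of $K$; in particular we never use that $K$ or $\sC$ has any special structure (polyhedral, etc.), so the lemma holds as stated. This completes the proof plan for Lemma~\ref{cone_lemma}.
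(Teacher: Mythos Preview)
Your argument is correct; the bookkeeping with the covering radius $r(\vec{x})=\rho(\vec{x})/(2(2+\varepsilon))$ and $\delta=\min_s r(\vec{x}_s)$ makes both triangle-inequality estimates go through cleanly.

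The paper takes a different, shorter route: it fixes a single $\vec{c}\in\sC$ with $K\subset\vec{c}+\sC$, sets $U:=(-\varepsilon\vec{c}+\sC)\cap(\varepsilon\vec{c}-\sC)$, and chooses $\delta$ so that the $\delta$-ball about the origin lies in $U$; then for $\vec{x}\in K$ one has $\vec{x}+U\subset\bigl((1-\varepsilon)\vec{x}+\sC\bigr)\cap\bigl((1+\varepsilon)\vec{x}-\sC\bigr)$ because $\vec{x}-\vec{c}\in\sC$. This avoids the finite cover entirely but tacitly uses $\sC+\sC\subset\sC$, i.e.\ convexity of the cone (which holds in the intended application, where $\sC$ is the interior of a support). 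Your compactness argument is slightly longer but has the virtue of using only openness of $\sC$ and closure under positive scaling, so it actually proves the lemma exactly as stated without any implicit convexity hypothesis.
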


\begin{proof}
Fix $\vec{c}\in\sC$ such that $K\subset\vec{c}+\sC$ and set 
\[
U:=\left(-\varepsilon\vec{c}+\sC\right)\cap\left(\varepsilon\vec{c}-\sC\right)
\subset\R^r. 
\]
Since $U$ is open with $\vec{0}\in U$, there exists $\delta\in\R_{>0}$ such that 
\[
\left\{\vec{z}\in\R^r\,\,|\,\,\|z\|<\delta\right\}\subset U
\]
holds. For any $\vec{x}$, $\vec{y}\in K$ with 
$\|\vec{y}-\vec{x}\|<\delta$, we have $\vec{x}$, $\vec{y}\in K$ with 
$\|\vec{y}-\vec{x}\|<\delta$, we have 
\begin{eqnarray*}
\vec{y}\in\vec{x}+U&=&\left(-\varepsilon\vec{c}+\vec{x}+\sC\right)\cap
\left(\varepsilon\vec{c}+\vec{x}-\sC\right)\\
&\subset&\left((1-\varepsilon)\vec{x}+\sC\right)\cap
\left((1+\varepsilon)\vec{x}-\sC\right),
\end{eqnarray*}
since $\vec{x}-\vec{c}\in\sC$.
\end{proof}

\begin{remark}\label{delta-basic_remark}
\begin{enumerate}
\renewcommand{\theenumi}{\arabic{enumi}}
\renewcommand{\labelenumi}{(\theenumi)}
\item\label{delta-basic_remark1}
By Proposition \ref{delta-basic_proposition} \eqref{delta-basic_proposition4}, 
there is no confusion if we write 
\[\alpha\left(X,B;\left\{c_i V_{\vec{\bullet}}^i\right\}_{i=1}^k\right), \quad
\delta\left(X,B;\left\{c_i V_{\vec{\bullet}}^i\right\}_{i=1}^k\right), 
\]
etc., in place of 
\[
\alpha\left(X,B;\left\{c_i\cdot V_{\vec{\bullet}}^i\right\}_{i=1}^k\right), \quad
\delta\left(X,B;\left\{c_i\cdot V_{\vec{\bullet}}^i\right\}_{i=1}^k\right).
\]
\item\label{delta-basic_remark2}
By Proposition \ref{delta-basic_proposition} 
\eqref{delta-basic_proposition4} and \eqref{delta-basic_proposition11}, 
we are mainly interested in the case $c_1=\cdots=c_k=1$. 
\end{enumerate}
\end{remark}

From now on, 
we assume that $(X, B)$ is klt and 
the Veronese equivalence class $V_{\vec{\bullet}}^i$ 
of an $(m\Z_{\geq 0})^{r_i}$-graded linear series 
$V_{m\vec{\bullet}}^i$ on $X$ associated to 
$L_1^i,\dots,L_{r_i}^i\in\CaCl(X)\otimes_\Z\Q$ containing an ample series 
which \emph{does not need to have bounded support in general} 
for any $1\leq i\leq k$. 
We consider a generalization of Dervan and Kewei Zhang's results 
\cite[Theorem 1.4]{dervan}, \cite[Theorem 1.7]{kewei}. 
Let us set 
\begin{eqnarray*}
\sC_i&:=&\interior\left(\Supp\left(V_{\vec{\bullet}}^i\right)\right)
\subset\R_{>0}^{r_i}, \\
\sC&:=&\prod_{i=1}^k \sC_i\subset\R_{>0}^{r_1+\cdots+r_k}.
\end{eqnarray*}
For any $\vec{a}^i\in\sC_i\cap\Q^{r_i}$, we considered the series 
$V_{\bullet\vec{a}^i}^i$ in Definition \ref{interior_definition} \eqref{interior_definition5}. 
Consider the function 
\begin{eqnarray*}
\sC_i\cap\Q^{r_i}&\to&\R_{>0}\\
\vec{a}^i&\mapsto&\vol\left(V_{\bullet\vec{a}^i}^i\right)^{\frac{1}{n}}. 
\end{eqnarray*}
By \cite[Corollary 4.22]{LM}, the function 
uniquely extends to a concave (in particular, continuous) and homogeneous function 
\[
\vol_{V_{\vec{\bullet}}^i}^{\frac{1}{n}}\colon
\sC_i\to\R_{>0}.
\]
Let us consider the behaviors of the values 
\[
\alpha(\vec{a}):=\alpha\left(X, B; \left\{V_{\bullet\vec{a}^i}^i\right\}_{i=1}^k\right), \quad
\delta(\vec{a}):=\delta\left(X, B; \left\{V_{\bullet\vec{a}^i}^i\right\}_{i=1}^k\right)
\]
for every $\vec{a}=\left(\vec{a}^1,\dots,\vec{a}^k\right)\in
\sC\cap\Q^{r_1+\cdots+r_k}$. 

\begin{lemma}\label{alpha-compare_lemma}
Take $\vec{a}$, $\vec{b}\in\sC\cap\Q^{r_1+\cdots+r_k}$ 
with $\vec{b}-\vec{a}\in\sC$. Fix a sufficiently divisible $m\in\Z_{>0}$ 
such that $V_{\bullet\vec{a}^i}^i$, $V_{\bullet\vec{b}^i}^i$ are obtained by 
$V_{m\bullet\vec{a}^i}^i$, $V_{m\bullet\vec{b}^i}^i$ 
for any $1\leq i\leq k$, respectively. 
Then, for any sufficiently divisible $l\in m\Z_{>0}$, we have 
\begin{eqnarray*}
\alpha_l\left(X, B;\left\{V_{m\bullet\vec{a}^i}^i\right\}_{i=1}^k\right)
&\geq&
\alpha_l\left(X, B;\left\{V_{m\bullet\vec{b}^i}^i\right\}_{i=1}^k\right), \\
\frac{\delta_l\left(X, B;\left\{V_{m\bullet\vec{a}^i}^i\right\}_{i=1}^k\right)}
{\min_{1\leq i\leq k}\dim V_{l\vec{a}^i}^i}
&\geq&
\frac{\delta_l\left(X, B;\left\{V_{m\bullet\vec{b}^i}^i\right\}_{i=1}^k\right)}
{\max_{1\leq i\leq k}\dim V_{l\vec{b}^i}^i}.
\end{eqnarray*}
In particular, we have 
\begin{eqnarray*}
\alpha\left(X, B;\left\{V_{\bullet\vec{a}^i}^i\right\}_{i=1}^k\right)
&\geq&
\alpha\left(X, B;\left\{V_{\bullet\vec{b}^i}^i\right\}_{i=1}^k\right), \\
\frac{\delta\left(X, B;\left\{V_{\bullet\vec{a}^i}^i\right\}_{i=1}^k\right)}
{\min_{1\leq i\leq k}\vol\left(V_{\bullet\vec{a}^i}^i\right)}
&\geq&
\frac{\delta\left(X, B;\left\{V_{\bullet\vec{b}^i}^i\right\}_{i=1}^k\right)}
{\max_{1\leq i\leq k}\vol\left(V_{\bullet\vec{b}^i}^i\right)}.
\end{eqnarray*}
\end{lemma}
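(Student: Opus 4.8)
The plan is to reduce everything to a comparison between the relevant $T$- and $S$-invariants for a fixed prime divisor $E$ over $X$, and then invoke the valuative formulas in Definition~\ref{alpha-delta_definition}. First I would fix a sufficiently divisible $m\in\Z_{>0}$ so that $V^i_{\bullet\vec{a}^i}$ and $V^i_{\bullet\vec{b}^i}$ arise from $V^i_{m\bullet\vec{a}^i}$ and $V^i_{m\bullet\vec{b}^i}$, respectively, for all $1\le i\le k$. The key elementary input is the same multiplication-by-a-section trick used in the proof of Lemma~\ref{kewei_lemma}: since $\vec{b}^i-\vec{a}^i\in\sC_i$, for sufficiently divisible $l$ there is an effective $\Q$-divisor $C_i\sim_\Q(\vec{b}^i-\vec{a}^i)\cdot\vec{L}^i$ with $lC_i\in|V^i_{l(\vec{b}^i-\vec{a}^i)}|$, giving an injection $V^i_{l\vec{a}^i}\hookrightarrow V^i_{l\vec{b}^i}$ which moreover respects any linearly bounded filtration in the sense that $\sF_E^{lt}V^i_{l\vec{a}^i}$ maps into $\sF_E^{lt}V^i_{l\vec{b}^i}$ after a shift by $l\cdot\ord_E C_i\ge 0$; in particular $\sF^{lt}_E V^i_{l\vec{a}^i}\ne 0$ forces $\sF^{lt}_E V^i_{l\vec{b}^i}\ne 0$.

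From the injection one immediately gets $\tfrac{1}{l}T_l\left(V^i_{m\bullet\vec{a}^i};E\right)\le\tfrac{1}{l}T_l\left(V^i_{m\bullet\vec{b}^i};E\right)$ for every prime divisor $E$ over $X$ and every $l$, and hence $\sum_i c_i \tfrac{1}{l}T_l(V^i_{m\bullet\vec{a}^i};E)\le\sum_i c_i\tfrac{1}{l}T_l(V^i_{m\bullet\vec{b}^i};E)$; taking the infimum over $E$ of $A_{X,B}(E)$ divided by these sums reverses the inequality, which is exactly the claimed $\alpha_l\left(X,B;\{V^i_{m\bullet\vec{a}^i}\}\right)\ge\alpha_l\left(X,B;\{V^i_{m\bullet\vec{b}^i}\}\right)$. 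Passing to the limit in $l$ via Definition~\ref{alpha-delta_definition}~\eqref{alpha-delta_definition12} (equivalently Proposition~\ref{alpha-delta_proposition}) gives the asymptotic $\alpha$-inequality.

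For $\delta$, the subtlety — and the main obstacle — is that an $l$-basis type $\Q$-divisor of $V^i_{m\bullet\vec{b}^i}$ is built from a basis of the \emph{larger} space $V^i_{l\vec{b}^i}$, so one cannot directly push a basis-type divisor of $V^i_{m\bullet\vec{a}^i}$ forward; this is why the statement has the normalizing factors $\min_i\dim V^i_{l\vec{a}^i}$ and $\max_i\dim V^i_{l\vec{b}^i}$. Here I would argue at the level of $S_l$: using Remark~\ref{bt-div_remark}~\eqref{bt-div_remark1}, $S_l(V^i_{m\bullet\vec{a}^i};E)=\ord_E D^i_a$ for a basis-type divisor $D^i_a$ compatible with $\sF_E$, and the injection $V^i_{l\vec{a}^i}\hookrightarrow V^i_{l\vec{b}^i}$ together with the shift by $\ord_E C_i$ shows that $h^0(V^i_{l\vec{a}^i})\cdot S_l(V^i_{m\bullet\vec{a}^i};E)\le h^0(V^i_{l\vec{b}^i})\cdot S_l(V^i_{m\bullet\vec{b}^i};E)$ (this is the $S_l$-analogue of the $\vol$-inequality in Lemma~\ref{kewei_lemma}~\eqref{kewei_lemma1}, obtained by integrating $h^0(V^{\sF,t})$ in $t$ and using the inclusion of filtered pieces). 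Dividing the first factor on the left by $\min_i h^0(V^i_{l\vec{a}^i})$ and the right by $\max_i h^0(V^i_{l\vec{b}^i})$ and summing against $c_i$ yields, after the usual infimum-over-$E$ reversal, the stated inequality between $\delta_l/\min_i\dim V^i_{l\vec{a}^i}$ and $\delta_l/\max_i\dim V^i_{l\vec{b}^i}$. Finally, letting $l\to\infty$ and using that $\tfrac{m^{r_i-1}h^0(V^i_{l\vec{a}^i})}{l^{n+r_i-1}/(n+r_i-1)!}\to\vol(V^i_{\vec{\bullet}})$ — or more precisely the one-parameter version $\tfrac{h^0(V^i_{l\vec{a}^i})}{l^n/n!}\to\vol(V^i_{\bullet\vec{a}^i})$ from Definition~\ref{interior_definition}~\eqref{interior_definition5} and \cite[Lemma 4.18]{LM} — converts the counting-function normalizations into the volume normalizations, and Proposition~\ref{alpha-delta_proposition} (for $\alpha$) together with Lemma~\ref{bt-div_lemma}~\eqref{bt-div_lemma1} (to control the limit of $\delta_l$) finishes the argument.
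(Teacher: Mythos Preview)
Your proposal is correct and rests on the same core mechanism as the paper: since $\vec{b}^i-\vec{a}^i\in\sC_i$, multiplication by a section $lC^i\in|V^i_{l(\vec{b}^i-\vec{a}^i)}|$ gives an injection $V^i_{l\vec{a}^i}\hookrightarrow V^i_{l\vec{b}^i}$ compatible with $\sF_E$, and both the $\alpha_l$- and $\delta_l$-inequalities follow. The only presentational difference is that the paper phrases everything in terms of divisors and log canonical thresholds (for $\alpha_l$ it notes $D^i+lC^i\in|V^i_{l\vec{b}^i}|$ and compares $\lct$'s directly; for $\delta_l$ it extends $\{s^i_j\cdot lC^i\}$ to a basis of $V^i_{l\vec{b}^i}$, builds the basis-type divisors $D^i$ and $E^i$, and uses the explicit inequality $M^iE^i\ge N^iD^i$ to compare $\lct$'s), whereas you work through the equivalent valuative formulas from Definition~\ref{alpha-delta_definition}, comparing $T_l$ and the integrated filtration dimensions $N^i S_l(V^i_{m\bullet\vec{a}^i};E)\le M^i S_l(V^i_{m\bullet\vec{b}^i};E)$. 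These two routes are interchangeable by the very equalities stated in Definition~\ref{alpha-delta_definition}~\eqref{alpha-delta_definition11}, so neither buys anything the other does not.
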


\begin{proof}
Set $\vec{c}:=\vec{b}-\vec{a}\in\sC$. By \cite[Lemma 4.18]{LM}, 
we may assume that there exist effective $\Q$-divisors 
$C^i\sim_\Q\vec{c}^i\cdot\vec{L}^i$ with 
$l C^i\in\left|V_{l\vec{c}^i}^i\right|$ for all $1\leq i\leq k$. 
For any $1\leq i\leq k$ and for any $D^i\in\left|V_{l\vec{a}^i}^i\right|$, since 
$D^i+l C^i\in\left|V_{l \vec{b}^i}^i\right|$, we get 
\[
\lct\left(X, B;\frac{1}{l}\sum_{i=1}^k\left(D^i+l C^i\right)\right)
\leq\lct\left(X, B;\frac{1}{l}\sum_{i=1}^k D^i\right). 
\]
This implies that 
\[
\alpha_l\left(X, B;\left\{V_{m\bullet\vec{a}^i}^i\right\}_{i=1}^k\right)
\geq
\alpha_l\left(X, B;\left\{V_{m\bullet\vec{b}^i}^i\right\}_{i=1}^k\right).
\]
Let us set 
\[
N^i:=\dim V_{l\vec{a}^i}^i, \quad M^i:=\dim V_{l\vec{b}^i}^i. 
\]
Take any basis 
\[
\left\{s_1^i,\dots,s_{N^i}^i\right\}\subset V_{l\vec{a}^i}^i
\]
and set 
\[
D_j^i:=\left(s_j^i=0\right)\in\left|V_{l\vec{a}^i}^i\right|, \quad
D^i:=\frac{1}{l N^i}\sum_{j=1}^{N^i}D_j^i. 
\]
Of course, $D^i$ is an $l$-basis type $\Q$-divisor of $V_{m\bullet\vec{a}^i}^i$. 
Let $t_j^i\in V_{l\vec{b}^i}^i$ be the image of $s_j^i$ under the natural inclusion 
\[
V_{l\vec{a}^i}^i\xrightarrow{\cdot l C^i}V_{l\vec{b}^i}^i.
\]
Take $t_{N^i+1}^i,\dots,t_{M^i}^i\in V_{l\vec{b}^i}^i$ such that 
$\left\{t_j^i\right\}_{j=1}^{M^i}$ is a basis of $V_{l\vec{b}^i}^i$, and set 
\[
E_j^i:=\left(t_j^i=0\right)\in\left|V_{l\vec{b}^i}^i\right|, \quad
E^i:=\frac{1}{l M^i}\sum_{j=1}^{M^i}E_j^i.
\]
The $\Q$-divisor $E^i$ is an $l$-basis type $\Q$-divisor of 
$V_{m\bullet\vec{b}^i}^i$. Moreover, for any $1\leq j\leq N^i$, we have 
$E_j^i=D_j^i+l C^i$. Thus we have $M^i E^i\geq N^i D^i$. 
In particular, 
\[
\max_{1\leq i\leq k}\left\{M^i\right\}\sum_{i=1}^k E^i
\geq \min_{1\leq i\leq k}\left\{N^i\right\}\sum_{i=1}^k D^i
\]
holds. This immediately implies that
\[
\lct\left(X, B; \max_{1\leq i\leq k}\left\{M^i\right\}\sum_{i=1}^k E^i\right)
\leq
\lct\left(X, B; \min_{1\leq i\leq k}\left\{N^i\right\}\sum_{i=1}^k D^i\right)
\]
and we get the assertion. 
\end{proof}

Now we state the following generalization of 
Dervan and Kewei Zhang's result 
\cite[Theorem 1.4]{dervan}, \cite[Theorem 1.7]{kewei}. 

\begin{thm}\label{cont_thm}
The functions 
\[
\alpha\colon\sC\cap\Q^{r_1+\cdots+r_k}\to\R_{>0}, \quad
\delta\colon\sC\cap\Q^{r_1+\cdots+r_k}\to\R_{>0}
\]
introduced above can extend uniquely to continuous functions 
\[
\alpha\colon\sC\to\R_{>0}, \quad
\delta\colon\sC\to\R_{>0}, 
\]
respectively. 
\end{thm}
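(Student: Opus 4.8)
The plan is to combine the monotonicity from Lemma \ref{alpha-compare_lemma} with the concavity and homogeneity of the volume functions $\vol_{V_{\vec{\bullet}}^i}^{1/n}$ (from \cite[Corollary 4.22]{LM}) to control how fast $\alpha$ and $\delta$ can vary, and then invoke Lemma \ref{cone_lemma} to convert this into genuine continuity. First I would treat $\delta$ (the case of $\alpha$ is strictly easier, since Lemma \ref{alpha-compare_lemma} gives plain monotonicity for $\alpha$ with no volume correction, so the argument below simplifies). Fix $\vec{a}=(\vec{a}^1,\dots,\vec{a}^k)\in\sC\cap\Q^{r_1+\cdots+r_k}$ and $\varepsilon\in\Q_{>0}$. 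Using Lemma \ref{cone_lemma} with the open cone $\sC$ and a small compact neighborhood $K$ of $\vec{a}$, choose $\delta'>0$ so that for every rational $\vec{b}\in K$ with $\|\vec{b}-\vec{a}\|<\delta'$ we have $(1+\varepsilon)\vec{a}-\vec{b}\in\sC$ and $\vec{b}-(1-\varepsilon)\vec{a}\in\sC$. Then $\vec{b}-\vec{a}\in\sC$ need not hold, so I would instead compare both $\vec{a}$ and $\vec{b}$ against $(1-\varepsilon)\vec{a}$ and $(1+\varepsilon)\vec{a}$: the differences $\vec{b}-(1-\varepsilon)\vec{a}$ and $(1+\varepsilon)\vec{a}-\vec{b}$ lie in $\sC$ by the choice of $\delta'$, and $\vec{a}-(1-\varepsilon)\vec{a}=\varepsilon\vec{a}\in\sC$, $(1+\varepsilon)\vec{a}-\vec{a}=\varepsilon\vec{a}\in\sC$ as well (here I am using that $\vec{a}\in\sC$ and $\sC$ is a cone).

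Next I would run Lemma \ref{alpha-compare_lemma} for the pairs $((1-\varepsilon)\vec{a},\vec{b})$ and $(\vec{b},(1+\varepsilon)\vec{a})$, together with $((1-\varepsilon)\vec{a},\vec{a})$ and $(\vec{a},(1+\varepsilon)\vec{a})$, to sandwich $\delta(\vec{b})$ between multiples of $\delta(\vec{a})$. Concretely, from Lemma \ref{alpha-compare_lemma} applied twice,
\[
\frac{\delta(\vec{b})}{\max_i\vol\!\left(V_{\bullet\vec{b}^i}^i\right)}
\le\frac{\delta((1-\varepsilon)\vec{a})}{\min_i\vol\!\left(V_{\bullet(1-\varepsilon)\vec{a}^i}^i\right)},
\qquad
\frac{\delta((1+\varepsilon)\vec{a})}{\max_i\vol\!\left(V_{\bullet(1+\varepsilon)\vec{a}^i}^i\right)}
\le\frac{\delta(\vec{b})}{\min_i\vol\!\left(V_{\bullet\vec{b}^i}^i\right)},
\]
and similarly with $\vec{b}$ replaced by $\vec{a}$. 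Using Proposition \ref{delta-basic_proposition} \eqref{delta-basic_proposition5} (which gives $\delta(c\vec{a})=c^{-1}\delta(\vec{a})$ for $c\in\Q_{>0}$, via the rescaling $V_{\vec{\bullet}}^{(\vec{p})}$) and the homogeneity $\vol(V_{\bullet c\vec{a}^i}^i)=c^n\vol(V_{\bullet\vec{a}^i}^i)$, all the $(1\pm\varepsilon)$-factors become explicit powers of $1\pm\varepsilon$. The concavity of $\vol_{V_{\vec{\bullet}}^i}^{1/n}$ lets me bound $\vol(V_{\bullet\vec{b}^i}^i)$ between $\vol(V_{\bullet(1-\varepsilon)\vec{a}^i}^i)$ and $\vol(V_{\bullet(1+\varepsilon)\vec{a}^i}^i)$, up to an error controlled by $\delta'$ and continuity of $\vol_{V_{\vec{\bullet}}^i}^{1/n}$. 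Shrinking $\delta'$, one gets $|\delta(\vec{b})-\delta(\vec{a})|$ arbitrarily small, which proves uniform continuity on compacta of $\sC\cap\Q^{r_1+\cdots+r_k}$; the unique continuous extension to all of $\sC$ then follows from density of the rationals, and positivity of the extension is automatic since the local estimates are all of multiplicative type, bounded away from $0$.

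The main obstacle I expect is the bookkeeping in the second step: the volume-corrected monotonicity of Lemma \ref{alpha-compare_lemma} is not a symmetric statement (it compares $\delta/\min\vol$ on the smaller side with $\delta/\max\vol$ on the larger side), so to get a two-sided estimate on $\delta(\vec{b})$ one must carefully chain four instances of the lemma and check that the product of all the $\min/\max$ volume ratios tends to $1$ as $\vec{b}\to\vec{a}$. This is where the concavity of $\vol_{V_{\vec{\bullet}}^i}^{1/n}$ is essential: it forces $\max_i\vol(V_{\bullet\vec{b}^i}^i)/\min_i\vol(V_{\bullet(1-\varepsilon)\vec{a}^i}^i)\to 1$ and likewise for the other ratio, because near a fixed point $\vec{a}$ in the interior $\sC_i$ the continuous function $\vol_{V_{\vec{\bullet}}^i}^{1/n}$ is pinched between values at $(1\pm\varepsilon)\vec{a}^i$, both of which converge to $\vol(V_{\bullet\vec{a}^i}^i)^{1/n}$ as $\varepsilon\to 0$. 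Once this convergence is isolated as a lemma, the rest is a routine $\varepsilon$–$\delta'$ argument. For $\alpha$, the absence of the volume correction in Lemma \ref{alpha-compare_lemma} makes the chaining trivial, so no extra work beyond Lemma \ref{cone_lemma} and homogeneity is needed.
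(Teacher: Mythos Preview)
Your treatment of $\alpha$ is fine and matches the paper's Step 1. The problem is with $\delta$ when $k\geq 2$. The inequality in Lemma \ref{alpha-compare_lemma} compares $\delta(\vec{a})/\min_{1\leq i\leq k}\vol(V^i_{\bullet\vec{a}^i})$ against $\delta(\vec{b})/\max_{1\leq i\leq k}\vol(V^i_{\bullet\vec{b}^i})$, where the $\min$ and $\max$ are taken \emph{across the index $i$}. Hence the correction factor you obtain is
\[
\frac{\max_i\vol\!\left(V^i_{\bullet\vec{b}^i}\right)}{\min_i\vol\!\left(V^i_{\bullet(1-\varepsilon)\vec{a}^i}\right)},
\]
and as $\vec{b}\to\vec{a}$ and $\varepsilon\to 0$ this ratio tends to $\max_i\vol(V^i_{\bullet\vec{a}^i})/\min_i\vol(V^i_{\bullet\vec{a}^i})$, which has no reason to equal $1$. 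Concavity of $\vol_{V^i_{\vec{\bullet}}}^{1/n}$ only pins down each $\vol(V^i_{\bullet\vec{b}^i})$ near $\vol(V^i_{\bullet\vec{a}^i})$ for the \emph{same} $i$; it says nothing about the cross-index ratio. So the chaining you describe does not close, and the argument breaks precisely in the coupled case $k\geq 2$ that the theorem is about. (For $k=1$ your approach is correct and is essentially Kewei Zhang's original argument.)

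The paper avoids this by not using Lemma \ref{alpha-compare_lemma} for $\delta$ at all. Instead it invokes Lemma \ref{kewei_lemma} \eqref{kewei_lemma2}, which gives, for each $i$ separately and for each prime divisor $E$, a two-sided estimate of the form $S(V^i_{\bullet(\vec{a}^i\pm\varepsilon\vec{b}^i)};E)$ against a multiple of $S(V^i_{\bullet\vec{a}^i};E)$ with constant depending only on $\varepsilon$ and $n$ (and tending to $1$). Since these bounds are per-$i$ and uniform in $E$, they sum cleanly to give $\delta(\vec{a}+\varepsilon\vec{b})\leq\delta(\vec{a})\leq\delta(\vec{a}-\varepsilon\vec{b})$ for suitably small $\varepsilon$, with no volume factors and no cross-index contamination. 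The rest (Step 3) is then the $\varepsilon$--$\delta'$ manipulation you anticipated. If you want to salvage your route, you would need to sharpen Lemma \ref{alpha-compare_lemma} to a weighted version comparing $\delta(X,B;\{c_iV^i_{\bullet\vec{a}^i}\})$ with $\delta(X,B;\{c'_iV^i_{\bullet\vec{b}^i}\})$ where $c'_i/c_i$ is the individual volume ratio; but at that point you are effectively reproving Lemma \ref{kewei_lemma}.
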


\begin{proof}
The proof is similar to the proof of \cite[Theorem 4.2]{kewei}.
Fix a norm $\|\cdot\|$ on $\R^{r_1+\cdots+r_k}$ and 
take any compact subset $K\subset \R^{r_1+\cdots+r_k}$ 
with $K\subset\sC$ as in Lemma \ref{cone_lemma}. Let us fix 
$\vec{c}\in\sC\cap\Q^{r_1+\cdots+r_k}$ with $K\subset\vec{c}+\sC$. 
By the compactness of $K$, there exists $\delta_1\in\Q_{>0}$ such that 
\[
\left\{\vec{y}\in\R^{r_1+\cdots+r_k}\,\,|\,\,\left\|\vec{y}-\vec{x}\right\|
<\delta_1\right\}\subset\sC
\]
holds for any $\vec{x}\in K$. 
Take any sufficiently small $\varepsilon\in\Q_{>0}$ with $\varepsilon<1/(2n)$,
\[
\left(\frac{1+\varepsilon-\varepsilon^2}{1+\varepsilon+\varepsilon^2}\right)^n
(1+\varepsilon-\varepsilon^2)\geq 1, \quad
\left(\frac{1-\varepsilon+\varepsilon^2}{1-\varepsilon-\varepsilon^2}\right)^n
(1-\varepsilon+\varepsilon^2)\leq 1.
\]

\noindent\underline{\textbf{Step 1}}\\
By Lemma \ref{alpha-compare_lemma}, for any $\vec{a}\in K\cap\Q^{r_1+\cdots+r_k}$, 
we have $\alpha(\vec{a})\leq\alpha(\vec{c})$. 
Moreover, if we take $\delta_2\in\R_{>0}$ as in Lemma \ref{cone_lemma} 
from the $\varepsilon$, then we have 
\[
\frac{1}{1+\varepsilon}\alpha(\vec{a})\leq\alpha\left(\vec{b}\right)
\leq\frac{1}{1-\varepsilon}\alpha(\vec{a})
\]
for any $\vec{a}$, $\vec{b}\in K\cap\Q^{r_1+\cdots+r_k}$ with 
$\|\vec{b}-\vec{a}\|<\delta_2$. 
In particular, we have 
\[
\left|\alpha(\vec{b})-\alpha(\vec{a})\right|<2\alpha(\vec{c})\varepsilon. 
\]
Thus we can extend the function $\alpha$ continuously over $K$, hence over $\sC$. 

\noindent\underline{\textbf{Step 2}}\\
By Step 1 and Proposition \ref{delta-basic_proposition} 
\eqref{delta-basic_proposition1}, 
there exists a positive constant $M$ 
satisfying 
$\delta(\vec{a})\leq M$ for any $\vec{a}\in K\cap\Q^{r_1+\cdots+r_k}$. 
Let us fix such $M$. 
Note that, for any $\vec{a}$, $\vec{b}\in \sC\cap\Q^{r_1+\cdots+r_k}$ with 
$(1+\varepsilon)\vec{a}-\vec{b}$, $\vec{b}-(1-\varepsilon)\vec{a}\in\sC$, 
we have 
\[
\delta(\vec{a}+\varepsilon\vec{b})\leq\delta(\vec{a})\leq
\delta(\vec{a}-\varepsilon\vec{b})
\]
holds. Indeed, by Lemma \ref{kewei_lemma} \eqref{kewei_lemma2}, we have 
\begin{eqnarray*}
\delta(\vec{a}+\varepsilon\vec{b})
&=&\inf_{E/X}\frac{A_{X,B}(E)}{\sum_{i=1}^k 
S\left(V_{\bullet\left(\vec{a}^i+\varepsilon\vec{b}^i\right)}^i;E\right)}
\leq
\inf_{E/X}\frac{A_{X,B}(E)}{\sum_{i=1}^k S\left(V_{\bullet\vec{a}^i}^i;E\right)}\\
&=&\delta(\vec{a})
\leq
\inf_{E/X}\frac{A_{X,B}(E)}{\sum_{i=1}^k 
S\left(V_{\bullet\left(\vec{a}^i-\varepsilon\vec{b}^i\right)}^i;E\right)}
=\delta(\vec{a}-\varepsilon\vec{b}).
\end{eqnarray*}

\noindent\underline{\textbf{Step 3}}\\
Let us set $\delta_0:=\frac{\varepsilon^2\delta_1}{2}$. 
Take any $\vec{a}$, $\vec{b}\in K\cap\Q^{r_1+\cdots+r_k}$ such that 
$\vec{e}:=\vec{b}-\vec{a}$ satisfies that 
$\|\vec{e}\|<\delta_0$. From the definition of $\delta_1$, we have 
\[
\vec{a}+\frac{1+\varepsilon}{\varepsilon}\vec{e}\in\sC, \quad
\vec{a}-\frac{1-\varepsilon}{\varepsilon}\vec{e}\in\sC. 
\]
Note that
\begin{eqnarray*}
\left\|\frac{1+\varepsilon}{\varepsilon^2}\vec{e}\right\|
&<&\delta_1, \quad
(1+\varepsilon)\vec{b}=\vec{a}+\varepsilon
\left(\vec{a}+\frac{1+\varepsilon}{\varepsilon}\vec{e}\right), \\
\left\|\frac{1-\varepsilon}{\varepsilon^2}\vec{e}\right\|
&<&\delta_1, \quad
(1-\varepsilon)\vec{b}=\vec{a}-\varepsilon
\left(\vec{a}-\frac{1-\varepsilon}{\varepsilon}\vec{e}\right). 
\end{eqnarray*}
Then, 
\begin{eqnarray*}
(1+\varepsilon)\vec{a}-\left(\vec{a}+\frac{1+\varepsilon}{\varepsilon}\vec{e}\right)
&=&\varepsilon\left(\vec{a}-\frac{1+\varepsilon}{\varepsilon^2}\vec{e}\right), \\
\left(\vec{a}+\frac{1+\varepsilon}{\varepsilon}\vec{e}\right)-(1-\varepsilon)\vec{a}
&=&\varepsilon\left(\vec{a}+\frac{1+\varepsilon}{\varepsilon^2}\vec{e}\right), \\
(1+\varepsilon)\vec{a}-\left(\vec{a}-\frac{1-\varepsilon}{\varepsilon}\vec{e}\right)
&=&\varepsilon\left(\vec{a}+\frac{1-\varepsilon}{\varepsilon^2}\vec{e}\right), \\
\left(\vec{a}-\frac{1-\varepsilon}{\varepsilon}\vec{e}\right)-(1-\varepsilon)\vec{a}
&=&\varepsilon\left(\vec{a}-\frac{1-\varepsilon}{\varepsilon^2}\vec{e}\right)
\end{eqnarray*}
are elements in $\sC$ from the definition of $\delta_1$. By Step 2, we get 
\[
\delta\left((1+\varepsilon)\vec{b}\right)
=\delta\left(\vec{a}+\varepsilon\left(\vec{a}+\frac{1+\varepsilon}{\varepsilon}\vec{e}
\right)\right)\leq\delta\left(\vec{a}\right)\leq
\delta\left(\vec{a}-\varepsilon\left(\vec{a}-\frac{1-\varepsilon}{\varepsilon}\vec{e}
\right)\right)=\delta\left((1-\varepsilon)\vec{b}\right). 
\]
In other words, we have 
\[
(1-\varepsilon)\delta(\vec{a})
\leq\delta\left(\vec{b}\right)\leq(1+\varepsilon)\delta(\vec{a}). 
\]
Moreover, we have $\delta(\vec{a})\leq M$. 
Therefore we get the following: for any $0<\varepsilon\ll 1$, there exists $\delta_0>0$ 
such that, for any $\vec{a}$, $\vec{b}\in K\cap\Q^{r_1+\cdots+r_k}$ with 
$\|\vec{b}-\vec{a}\|<\delta_0$, we have 
\[
\left|\delta(\vec{b})-\delta(\vec{a})\right|\leq M\varepsilon. 
\]
Thus we get the assertion. 
\end{proof}

We remark that the local version of Theorem \ref{cont_thm} also holds 
by the completely same proof. We only state the result just for readers' convenience. 

\begin{thm}
Let $\eta\in X$ be a scheme-theoretic point which is not the generic point of $X$ 
and assume that $(X, B)$ is klt at $\eta$. 
Let $V_{\vec{\bullet}}^i$ be the Veronese equivalence class of 
a graded linear series on $X$ associated to 
$L_1^i,\dots,L_{r_i}^i\in\CaCl(X)\otimes_\Z\Q$ which contains an ample series 
for any $1\leq i\leq k$. 
Let us set $\sC_i:=\interior\left(\Supp\left(V_{\vec{\bullet}}^i\right)\right)$ and 
$\sC:=\prod_{i=1}^k\sC_i$. 
Then the functions $\alpha_\eta\colon\sC\cap\Q^{r_1+\cdots+r_k}\to\R_{>0}$ and 
$\delta_\eta\colon\sC\cap\Q^{r_1+\cdots+r_k}\to\R_{>0}$ with 
\[
\alpha_\eta(\vec{a}):=\alpha_\eta
\left(X, B; \left\{V_{\bullet\vec{a}^i}^i\right\}_{i=1}^k\right), \quad
\delta_\eta(\vec{a}):=\delta_\eta
\left(X, B; \left\{V_{\bullet\vec{a}^i}^i\right\}_{i=1}^k\right)
\]
uniquely extend to continuous functions 
$\alpha_\eta\colon\sC\to\R_{>0}$ and 
$\delta_\eta\colon\sC\to\R_{>0}$, 
respectively. 
\end{thm}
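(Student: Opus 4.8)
The plan is to run the proof of Theorem \ref{cont_thm} verbatim, with every occurrence of $\alpha$, $\delta$, $\lct$ replaced by its local counterpart $\alpha_\eta$, $\delta_\eta$, $\lct_\eta$, and every infimum over prime divisors $E$ over $X$ (resp.\ valuations $v$) restricted to those with $\eta\in C_X(E)$ (resp.\ $\eta\in C_X(v)$), using the valuative expressions of Definition \ref{alpha-delta_definition} \eqref{alpha-delta_definition2} and Remark \ref{valuation_remark}. Uniqueness of the extension is automatic since $\sC\cap\Q^{r_1+\cdots+r_k}$ is dense in the open set $\sC$; so the content is the continuity estimate. The only genuinely new inputs are the $\eta$-decorated forms of Lemma \ref{alpha-compare_lemma} and of Proposition \ref{delta-basic_proposition} \eqref{delta-basic_proposition1}, both of which are already asserted to hold with $\eta$-decorations in the paragraph preceding Proposition \ref{delta-basic_proposition}.

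First I would record the local monotonicity statement: for $\vec a,\vec b\in\sC\cap\Q^{r_1+\cdots+r_k}$ with $\vec b-\vec a\in\sC$ one has $\alpha_\eta(\vec a)\ge\alpha_\eta(\vec b)$ and
\[
\frac{\delta_\eta(\vec a)}{\min_{1\le i\le k}\vol\bigl(V_{\bullet\vec a^i}^i\bigr)}\ \ge\ \frac{\delta_\eta(\vec b)}{\max_{1\le i\le k}\vol\bigl(V_{\bullet\vec b^i}^i\bigr)}.
\]
Its proof is identical to that of Lemma \ref{alpha-compare_lemma}: choose effective $\Q$-divisors $C^i\sim_\Q\vec c^i\cdot\vec L^i$ with $lC^i\in|V_{l\vec c^i}^i|$ via \cite[Lemma 4.18]{LM}, push bases of $V_{l\vec a^i}^i$ into $V_{l\vec b^i}^i$ by multiplication with $lC^i$, and invoke monotonicity of $\lct_\eta$ under adding effective divisors; the class of admissible $E$ is untouched since the $C^i$ are fixed globally. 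Similarly I would note that the two inequalities of Lemma \ref{kewei_lemma} \eqref{kewei_lemma2} concern a single linearly bounded filtration, hence pass unchanged through the $\eta$-restricted infima, giving for $0<\varepsilon<1/(2n)$ (with the usual smallness conditions on $\varepsilon$) and for $\vec a,\vec b$ with $(1+\varepsilon)\vec a-\vec b,\ \vec b-(1-\varepsilon)\vec a\in\sC$ that
\[
\delta_\eta(\vec a+\varepsilon\vec b)\ \le\ \delta_\eta(\vec a)\ \le\ \delta_\eta(\vec a-\varepsilon\vec b).
\]

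With these in hand, fix a compact $K\subset\sC$, a rational $\vec c\in\sC$ with $K\subset\vec c+\sC$, a norm $\|\cdot\|$, and $\delta_1\in\Q_{>0}$ so that the $\delta_1$-ball around any point of $K$ lies in $\sC$. For $\alpha_\eta$: local monotonicity gives $\alpha_\eta(\vec a)\le\alpha_\eta(\vec c)$ on $K\cap\Q^{r_1+\cdots+r_k}$, and combining it with Lemma \ref{cone_lemma} gives $(1+\varepsilon)^{-1}\alpha_\eta(\vec a)\le\alpha_\eta(\vec b)\le(1-\varepsilon)^{-1}\alpha_\eta(\vec a)$ once $\|\vec b-\vec a\|$ is small; hence $\alpha_\eta$ is uniformly continuous on $K\cap\Q^{r_1+\cdots+r_k}$ and extends continuously to $\sC$. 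For $\delta_\eta$: the $\eta$-form of Proposition \ref{delta-basic_proposition} \eqref{delta-basic_proposition1} bounds $\delta_\eta$ on $K\cap\Q^{r_1+\cdots+r_k}$ by the constant $M:=(\max_i r_i+n)\,\alpha_\eta(\vec c)$, and then the rescaling of Step 3 of Theorem \ref{cont_thm} — writing $(1\pm\varepsilon)\vec b=\vec a\pm\varepsilon(\text{a point of }\sC)$ for $\|\vec b-\vec a\|<\varepsilon^2\delta_1/2$ and feeding this into the displayed $\delta_\eta$-inequalities — yields $|\delta_\eta(\vec b)-\delta_\eta(\vec a)|\le M\varepsilon$, hence uniform continuity on $K\cap\Q^{r_1+\cdots+r_k}$ and a unique continuous extension to $\sC$.

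The ``hard part'' here is essentially bookkeeping: one must check that restricting all infima to valuations with $\eta\in C_X(v)$ is compatible with each step, which it is for the two reasons noted above (globally fixed divisors $C^i$; filtration-by-filtration inequalities in Lemma \ref{kewei_lemma} \eqref{kewei_lemma2}). The one substantive point to verify is that $M<\infty$, i.e.\ that $\alpha_\eta$ is finite (and positive) on $K$: this uses that $(X,B)$ is klt at $\eta$ and that $\eta$ is not the generic point of $X$, so the $\eta$-versions of Proposition \ref{delta-basic_proposition} \eqref{delta-basic_proposition3} give $\alpha_\eta,\delta_\eta\in\R_{>0}$, together with the just-proved continuity of $\alpha_\eta$ on the compact $K$. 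Note also $\sC\neq\emptyset$ since each $\Supp(V_{\vec\bullet}^i)$ has nonempty interior.
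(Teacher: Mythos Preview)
Your proposal is correct and matches the paper's approach exactly: the paper simply remarks that ``the local version of Theorem \ref{cont_thm} also holds by the completely same proof'' and gives no further details, so your careful verification that each ingredient (the local analogues of Lemma \ref{alpha-compare_lemma}, Proposition \ref{delta-basic_proposition} \eqref{delta-basic_proposition1}, and the filtration-wise inequalities of Lemma \ref{kewei_lemma} \eqref{kewei_lemma2}) survives restriction to divisors with $\eta\in C_X(E)$ is precisely what is needed. One cosmetic remark: since each $V_{\bullet\vec a^i}^i$ is singly graded, the constant coming from Proposition \ref{delta-basic_proposition} \eqref{delta-basic_proposition1} is actually $(1+n)\,\alpha_\eta(\vec c)$ rather than $(\max_i r_i+n)\,\alpha_\eta(\vec c)$, but your larger constant still works.
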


As an immediate consequence of Theorem \ref{cont_thm}, we have the following 
corollary. Note that the local version of Corollary \ref{cont_corollary} is also true. 
Let $\Biggu(X)\subset N^1(X)$
(resp., $\Biggu(X)_\Q\subset N^1(X)_\Q$) be the set of the numerical classes of big 
$\R$-Cartier $\R$-divisors (resp., $\Q$-Cartier $\Q$-divisors) on $X$. 

\begin{corollary}[{cf.\ \cite[Theorem 1.4]{dervan}, 
\cite[Theorem 1.7]{kewei}}]\label{cont_corollary}
Assume that $(X, B)$ is klt. The functions 
\begin{eqnarray*}
\alpha\colon\Biggu(X)_\Q^k&\to&\R_{>0}\\
(L_1,\dots,L_k)&\mapsto&\alpha\left(X,B;\left\{L_i\right\}_{i=1}^k\right), \\
\delta\colon\Biggu(X)_\Q^k&\to&\R_{>0}\\
(L_1,\dots,L_k)&\mapsto&\delta\left(X,B;\left\{L_i\right\}_{i=1}^k\right), 
\end{eqnarray*}
uniquely extend to continuous functions
\[
\alpha\colon\Biggu(X)^k\to\R_{>0}, \quad
\delta\colon\Biggu(X)^k\to\R_{>0}. 
\] 
\end{corollary}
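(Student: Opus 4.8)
The plan is to deduce Corollary \ref{cont_corollary} from Theorem \ref{cont_thm} by realizing an arbitrary big $\Q$-class as (a translate of) a point in the support cone of a single suitable graded linear series. First I would observe that, since the statement is about the numerical classes of big divisors, it suffices to work on a finitely generated subgroup of $N^1(X)$: fix $\Q$-Cartier $\Q$-divisors $M_1,\dots,M_r$ whose classes form a basis of a finite-dimensional $\Q$-subspace $W\subset N^1(X)_\Q$ containing the classes $L_1,\dots,L_k$ of interest, and consider the graded linear series $V_{\vec{\bullet}}:=H^0\left(\vec{\bullet}\cdot\vec{M}\right)$ on $X$ associated to $M_1,\dots,M_r$. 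By Definition \ref{prop_definition} \eqref{prop_definition2}, $V_{\vec{\bullet}}$ contains an ample series as soon as some $\R_{\geq 0}$-combination of the $M_i$ is big, which we may arrange, and $\interior\left(\Supp\left(V_{\vec{\bullet}}\right)\right)$ is exactly the open cone of numerical classes $\vec{x}\cdot\vec{M}$ that are big and lie in $W$. Moreover for $\vec{a}\in\Q^r_{>0}\cap\interior\left(\Supp\left(V_{\vec{\bullet}}\right)\right)$ the series $V_{\bullet\vec{a}}$ of Definition \ref{interior_definition} \eqref{interior_definition5} is exactly $H^0\left(\bullet\left(\vec{a}\cdot\vec{M}\right)\right)$, so by Definition \ref{alpha-delta_definition} \eqref{alpha-delta_definition3} we have $\alpha\left(X,B;\left\{V_{\bullet\vec{a}^i}\right\}_{i=1}^k\right)=\alpha\left(X,B;\left\{\vec{a}^i\cdot\vec{M}\right\}_{i=1}^k\right)$ and likewise for $\delta$.

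Next I would apply Theorem \ref{cont_thm} with $V_{\vec{\bullet}}^1=\cdots=V_{\vec{\bullet}}^k=V_{\vec{\bullet}}$ (so $r_1=\cdots=r_k=r$ and $\sC=\interior\left(\Supp\left(V_{\vec{\bullet}}\right)\right)^k$). That theorem gives unique continuous extensions $\alpha,\delta\colon\sC\to\R_{>0}$ of the functions defined on $\sC\cap\Q^{rk}$. Transporting along the linear isomorphism $\R^r\xrightarrow{\sim}W_\R$, $\vec{x}\mapsto\vec{x}\cdot\vec{M}$, which is a homeomorphism carrying $\sC$ onto $\left(\Biggu(X)\cap W_\R\right)^k$ and rational points to $\Q$-points, we obtain unique continuous extensions of $\alpha$ and $\delta$ to $\left(\Biggu(X)\cap W_\R\right)^k$. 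Since every $k$-tuple of big $\Q$-classes lies in such a $W$, and since the extension is unique on each $W$ hence compatible on overlaps $W\cap W'$ (two continuous extensions of the same function on a dense subset agree), these glue to well-defined functions $\alpha,\delta\colon\Biggu(X)^k\to\R_{>0}$. Continuity on all of $\Biggu(X)^k$ is then local: any point of $\Biggu(X)^k$ has a neighborhood contained in $\left(\Biggu(X)\cap W_\R\right)^k$ for a suitable $W$, on which the function is continuous by construction; uniqueness of the global extension follows from density of $\Biggu(X)_\Q^k$ in $\Biggu(X)^k$.

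I should also check that the values assigned on $\Biggu(X)_\Q^k$ by this procedure really are the original ones $\alpha\left(X,B;\left\{L_i\right\}_{i=1}^k\right)$; this is immediate because for rational $\vec{a}$ the identification $V_{\bullet\vec{a}}=H^0\left(\bullet\left(\vec{a}\cdot\vec{M}\right)\right)$ is on the nose, so the extended function restricted to rational classes coincides with the given one, as required for it to be an extension. Finally I would note that the extension does not depend on the auxiliary choice of $\vec{M}$: given two choices, both extensions agree on the dense set of rational classes in the intersection of their domains, hence agree wherever both are defined, by continuity.

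\textbf{Main obstacle.} The real content is Theorem \ref{cont_thm} itself, which I am permitted to assume; granting it, the only genuinely delicate point here is verifying that the various linear identifications are compatible — i.e.\ that the extension is independent of the choice of spanning divisors $\vec{M}$ and of the subspace $W$, so that the locally defined continuous extensions glue to a single global function. This is handled cleanly by the density of big $\Q$-classes together with the uniqueness clause in Theorem \ref{cont_thm}: any two candidate extensions agree on a dense set and are continuous, hence agree. So the corollary is essentially a packaging of Theorem \ref{cont_thm}, with the homeomorphism $\vec{x}\mapsto\vec{x}\cdot\vec{M}$ and a gluing argument doing the bookkeeping.
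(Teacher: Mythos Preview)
Your approach is correct and is essentially the paper's own: after observing that $\alpha$ and $\delta$ depend only on numerical equivalence classes (the paper cites \cite[Lemma 3.7 (iii)]{BJ} for this step, which you take for granted), the paper simply declares the corollary a direct consequence of Theorem \ref{cont_thm}, and your write-up is a faithful unpacking of that sentence. One small inaccuracy worth fixing: since graded linear series are indexed by $\Z_{\geq 0}^r$, the cone $\interior\left(\Supp\left(V_{\vec{\bullet}}\right)\right)$ corresponds only to big classes that are \emph{positive} combinations of the $M_j$, not to all of $\Biggu(X)\cap W_\R$; hence your local patches and gluing should run over different choices of basis $\vec{M}$ (whose positive orthants cover $\Biggu(X)$) rather than over subspaces $W$, but this is purely cosmetic and does not affect the argument.
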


\begin{proof}
The values 
$\alpha\left(X,B;\left\{L_i\right\}_{i=1}^k\right)$ 
and $\delta\left(X,B;\left\{L_i\right\}_{i=1}^k\right)$ depend only on 
the numerical class of $L_1,\dots,L_k$. See the proof of 
\cite[Lemma 3.7 (iii)]{BJ}. 
Then the assertion is a direct consequence of Theorem \ref{cont_thm}. 
\end{proof}

\begin{remark}\label{cont_remark}
If $L_1,\dots,L_k\in\CaCl(X)\otimes_\Z\Q$, then the values 
\[
\alpha\left(X,B;\left\{c_i\cdot L_i\right\}_{i=1}^k\right), \quad 
\delta\left(X,B;\left\{c_i\cdot L_i\right\}_{i=1}^k\right), 
\]
etc., in Definition \ref{alpha-delta_definition} \eqref{alpha-delta_definition3} 
coincide with the values in Corollary \ref{cont_corollary} 
by Proposition \ref{delta-basic_proposition} 
\eqref{delta-basic_proposition11} and Theorem \ref{cont_thm}. 
\end{remark}

\section{Zhuang's product formula}\label{ziquan_section}

In this section, we assume that the characteristic of $\Bbbk$ is zero. 
We consider the product formula \cite{zhuang} for collections of 
tensor products of graded linear series. 
The proof is almost same as the proof in \cite{zhuang}, but the argument is 
more complicated. 

\begin{thm}[{cf.\ \cite[Theorem 1.2]{zhuang}}]\label{thm_zhuang}
Let $\left(X_1, B_1\right)$ and $\left(X_2, B_2\right)$ be projective klt. 
For any $1\leq i\leq k$, let 
$U_{\vec{\bullet}}^i$ $($resp., $V_{\vec{\bullet}}^i$$)$ be the Veronese equivalence 
class of a graded linear series on $X_1$ $($resp., on $X_2$$)$ associated to 
$L_1^i,\dots,L_{r_i}^i\in\CaCl(X_1)\otimes_\Z\Q$ 
$($resp., $M_1^i,\dots,M_{s_i}^i\in\CaCl(X_2)\otimes_\Z\Q$$)$ which has bounded 
support and contains an ample series. Set 
$\left(X, B\right):=\left(X_1\times X_2, B_1\boxtimes B_2\right)$ and 
$W_{\vec{\bullet}}^i:=U_{\vec{\bullet}}^i\otimes V_{\vec{\bullet}}^i$ 
$($see Definition \ref{tensor_definition}$)$. 
Moreover, take any $c_1,\dots,c_k\in\R_{>0}$. 
Then we have 
\[
\delta\left(X, B;\left\{c_i W_{\vec{\bullet}}^i\right\}_{i=1}^k\right)
=\min\left\{
\delta\left(X_1, B_1;\left\{c_i U_{\vec{\bullet}}^i\right\}_{i=1}^k\right), \quad
\delta\left(X_2, B_2;\left\{c_i V_{\vec{\bullet}}^i\right\}_{i=1}^k\right)
\right\}.
\]
\end{thm}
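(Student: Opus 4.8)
The plan is to mirror Zhuang's argument from \cite{zhuang}, adapted to the coupled setting with tensor products. First I would reduce the statement to a comparison of stability thresholds via the valuative characterization. Recall from Definition~\ref{alpha-delta_definition} that
\[
\delta\left(X, B;\left\{c_i W_{\vec{\bullet}}^i\right\}_{i=1}^k\right)
=\inf_{E/X}\frac{A_{X,B}(E)}{\sum_{i=1}^k c_i\, S\left(W_{\vec{\bullet}}^i; E\right)},
\]
so the two inequalities ``$\leq$'' and ``$\geq$'' need separate treatment. For ``$\leq$'', the plan is to test the infimum on divisors $E$ over $X$ of product type: if $E_1$ is a prime divisor over $X_1$, then $p_1^*E_1$ (where $p_1\colon X\to X_1$) is a prime divisor over $X$ with $A_{X,B}(p_1^*E_1)=A_{X_1,B_1}(E_1)$, and I would compute $S\left(W_{\vec{\bullet}}^i; p_1^*E_1\right)=S\left(U_{\vec{\bullet}}^i; E_1\right)$ using the tensor-product volume formula in Example~\ref{interior_example}~\eqref{interior_example5} together with the fact that the filtration $\sF_{p_1^*E_1}$ on $W_{m\vec{\bullet}}^i=U_{m\vec{\bullet}}^i\otimes V_{m\vec{\bullet}}^i$ is the ``first-factor'' filtration, so its jumping numbers only see the $U$-factor. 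This gives $\delta(X,B;\{c_iW^i\})\le\delta(X_1,B_1;\{c_iU^i\})$, and symmetrically for $X_2$, hence ``$\leq \min$''.

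The reverse inequality ``$\geq$'' is the main obstacle, exactly as in \cite{zhuang}. The strategy is: given an arbitrary prime divisor $E$ over $X$, one must bound $\sum_i c_i S(W_{\vec\bullet}^i;E)$ from above in terms of $A_{X,B}(E)$ and the two thresholds on the factors. Following Zhuang, I would work with basis-type $\Q$-divisors: for a sufficiently divisible $l$, a basis of $W_{l,m\vec a}^i=U_{l,m\vec a}^i\otimes V_{l,m\vec a}^i$ can be taken to consist of pure tensors $s\boxtimes t$ with $s$ running over an $l$-basis of $U^i$ and $t$ over an $l$-basis of $V^i$. The associated $l$-basis type $\Q$-divisor $D^i$ on $X$ then decomposes, up to the combinatorics of the grading, as an average of divisors of the form $p_1^*D_1^i + p_2^*D_2^i$ with $D_1^i$ (resp.\ $D_2^i$) an $l$-basis type $\Q$-divisor of $U^i$ (resp.\ $V^i$); more precisely the multigraded pieces force a convolution over the support, but the key point is that $\sum_i c_i D^i \le p_1^*\!\left(\sum_i c_i D_1^i\right) + p_2^*\!\left(\sum_i c_i D_2^i\right)$ as $\Q$-divisors for a suitable choice. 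Then the log canonical threshold behaves submultiplicatively under this product structure: one invokes the fact (as in \cite{zhuang}, ultimately from inversion of adjunction / the behavior of multiplier ideals on products) that for effective $\Q$-divisors $\Gamma_1$ on $X_1$ and $\Gamma_2$ on $X_2$,
\[
\lct\left(X_1\times X_2, B_1\boxtimes B_2; p_1^*\Gamma_1+p_2^*\Gamma_2\right)
= \min\left\{\lct(X_1,B_1;\Gamma_1),\ \lct(X_2,B_2;\Gamma_2)\right\}.
\]
Taking infima over basis-type divisors and passing to the limit $l\to\infty$ (using Proposition~\ref{alpha-delta_proposition} and Lemma~\ref{bt-div_lemma} to control the $S_l$ versus $S$ discrepancy) yields $\delta(X,B;\{c_iW^i\})\ge\min\{\delta(X_1,B_1;\{c_iU^i\}),\delta(X_2,B_2;\{c_iV^i\})\}$.

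The technical heart, and where I expect the bookkeeping to be heaviest, is making the ``convolution over the multigrading'' precise: a basis of $W_{l,m\vec a}^i=\bigoplus$ is not literally a product of a basis of the $U$-side and a basis of the $V$-side across a \emph{single} graded piece, because $W_{m(c,\vec a,\vec b)}^i=U_{m(c,\vec a)}^i\otimes V_{m(c,\vec b)}^i$ couples the two supports along the first coordinate $c$. So I would first replace everything by interior series (harmless by Example~\ref{interior_example}~\eqref{interior_example7} and Lemma~\ref{asymp-equiv_lemma}), then organize the basis of $\bigoplus_{\vec a,\vec b} W_{l,m(\cdot,\vec a,\vec b)}^i$ so that summing $\mathrm{ord}_E$ over it splits as a sum of an $U$-contribution and a $V$-contribution weighted by the dimensions $h^0(V_{l,m\vec\bullet}^i)$ and $h^0(U_{l,m\vec\bullet}^i)$ respectively; this is where the identity $h^0(W^i)=h^0(U^i)\cdot h^0(V^i)$ from Example~\ref{interior_example}~\eqref{interior_example5} does the normalization. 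Once the combinatorial identity for basis-type divisors is in place, the rest is the standard $\lct$-on-products input plus the limiting arguments already packaged in the earlier sections, so I would present those steps relatively briefly and concentrate the exposition on the multigraded basis decomposition.
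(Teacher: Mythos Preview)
Your argument for $\delta\le\min$ is fine and matches the paper. The gap is in the reverse inequality. You propose to take pure tensor bases of $W_{l,\vec a,\vec b}^i=U_{l,\vec a}^i\otimes V_{l,\vec b}^i$, so that the resulting basis-type divisor decomposes as $p_1^*D_1^i+p_2^*D_2^i$, and then apply the product formula $\lct(X,B;p_1^*\Gamma_1+p_2^*\Gamma_2)=\min\{\lct(\Gamma_1),\lct(\Gamma_2)\}$. But $\delta_l$ is an infimum over \emph{all} basis-type divisors; equivalently (Remark~\ref{bt-div_remark}), for a given prime divisor $E$ over $X$ one needs a basis compatible with $\sF_E$ to realize $\ord_E D^i=S_l(W_{\vec\bullet}^i;E)$, and for a generic $E$ over $X_1\times X_2$ no pure-tensor basis is compatible with $\sF_E$. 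Your inequality therefore bounds only a strict subclass of competitors and yields no lower bound on $\delta_l$. The product $\lct$ formula, which applies only to divisors of the special form $p_1^*\Gamma_1+p_2^*\Gamma_2$, cannot close this gap.

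The paper (following \cite{zhuang}) handles this by fixing $E$ and $c<\min\{\delta_1,\delta_2\}$, then introducing an auxiliary basis-type filtration $\sG$ on each $V_{l,\vec b}^i$ --- of type (I) or (II) from Example~\ref{filter-system_example}~\eqref{filter-system_example2}, according to whether $\pi_2(C_X(E))=X_2$ or not --- and choosing a basis of $W_{l,\vec a,\vec b}^i$ compatible with \emph{both} $\sF_E$ and the induced $\sG$. This basis still achieves $\ord_E D^i=S_l(W_{\vec\bullet}^i;E)$, yet by the design of $\sG$ its restriction to a general fiber of $\pi_2$ (or, in the non-dominant case, to a general fiber over a suitable prime divisor $F_2\subset\tilde X_2$) is an average of $l$-basis-type divisors of $U_{\vec\bullet}^i$. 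One then uses $c<\delta_1$ for fiberwise kltness and inversion of adjunction to obtain kltness near $C_X(E)$; in the non-dominant case $c<\delta_2$ is also needed to control the coefficient along $E_2=\pi_2^*F_2$. The product $\lct$ formula is never invoked; the ingredients your outline lacks are the two-filtration compatibility, the case split on the image of $C_X(E)$, and the passage through inversion of adjunction from a fiber.
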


As an immediate corollary of Theorem \ref{thm_zhuang} and Corollary 
\ref{cont_corollary}, we get the following: 

\begin{corollary}\label{corollary_ziquan}
Let $\left(X_1, B_1\right)$ and $\left(X_2, B_2\right)$ be projective klt. 
Take any $\theta_1,\dots,\theta_k\in\BIG(X_1)$ and $\xi_1,\dots,\xi_k\in\BIG(X_2)$. 
Then we have 
\[
\delta\left(X_1\times X_2, B_1\boxtimes B_2; 
\left\{\theta_i\boxtimes\xi_i\right\}_{i=1}^k\right)
=\min\left\{\delta\left(X_1,B_1, \left\{\theta_i\right\}_{i=1}^k\right), \quad
\delta\left(X_2,B_2, \left\{\xi_i\right\}_{i=1}^k\right)
\right\}.
\]
\end{corollary}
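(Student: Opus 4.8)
The goal is to deduce Corollary \ref{corollary_ziquan} from Theorem \ref{thm_zhuang} and Corollary \ref{cont_corollary}. The plan is to reduce the big $\R$-divisor classes to the setting of tensor products of complete linear series of big $\Q$-Cartier $\Q$-divisors, and then invoke the product formula together with the continuity results.

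First I would treat the rational case. Suppose $\theta_i\in\BIG(X_1)_\Q$ and $\xi_i\in\BIG(X_2)_\Q$ for every $1\leq i\leq k$; fix $\Q$-Cartier $\Q$-divisors $L_i$ on $X_1$ and $M_i$ on $X_2$ with numerical classes $\theta_i$ and $\xi_i$ respectively. Set $U_{\vec\bullet}^i:=H^0(\bullet L_i)$ and $V_{\vec\bullet}^i:=H^0(\bullet M_i)$, which are divisorial (hence bounded-support, ample-series-containing) graded linear series since $L_i$, $M_i$ are big. By the last sentence of Definition \ref{tensor_definition}, the tensor product $W_{\vec\bullet}^i:=U_{\vec\bullet}^i\otimes V_{\vec\bullet}^i$ is exactly the complete linear series $H^0\bigl(\bullet(L_i\boxtimes M_i)\bigr)$, and $L_i\boxtimes M_i$ is big with numerical class $\theta_i\boxtimes\xi_i$. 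Hence, taking all $c_i=1$, Theorem \ref{thm_zhuang} gives
\[
\delta\left(X_1\times X_2, B_1\boxtimes B_2;\left\{L_i\boxtimes M_i\right\}_{i=1}^k\right)
=\min\left\{\delta\left(X_1,B_1;\left\{L_i\right\}_{i=1}^k\right),\
\delta\left(X_2,B_2;\left\{M_i\right\}_{i=1}^k\right)\right\}.
\]
By the first sentence of the proof of Corollary \ref{cont_corollary} (the values depend only on numerical classes, cf.\ \cite[Lemma 3.7 (iii)]{BJ}), each side equals the corresponding value with $L_i,M_i,L_i\boxtimes M_i$ replaced by $\theta_i,\xi_i,\theta_i\boxtimes\xi_i$, which is precisely the asserted identity on $\BIG(X_1)_\Q^k\times\BIG(X_2)_\Q^k$.

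Next I would pass to arbitrary big $\R$-classes by continuity. The map $N^1(X_1)\times N^1(X_2)\to N^1(X_1\times X_2)$ sending $(\theta,\xi)\mapsto\theta\boxtimes\xi=\mathrm{pr}_1^*\theta+\mathrm{pr}_2^*\xi$ is $\R$-linear and continuous, and it carries $\BIG(X_1)\times\BIG(X_2)$ into $\BIG(X_1\times X_2)$ (a sum of pullbacks of big classes from the two factors of a product is big). Therefore both sides of the claimed equality, viewed as functions of $(\theta_1,\dots,\theta_k,\xi_1,\dots,\xi_k)\in\BIG(X_1)^k\times\BIG(X_2)^k$, are continuous: the left-hand side is the composition of the continuous $\delta$ on $\BIG(X_1\times X_2)^k$ from Corollary \ref{cont_corollary} with the above continuous linear map applied coordinatewise, and the right-hand side is the minimum of the two continuous functions $\delta$ on $\BIG(X_1)^k$ and $\BIG(X_2)^k$ from the same corollary, precomposed with the (continuous) projections. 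Since $\BIG(X_i)_\Q$ is dense in $\BIG(X_i)$ and both sides agree on the dense subset $\BIG(X_1)_\Q^k\times\BIG(X_2)_\Q^k$, they agree everywhere.

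There is essentially no serious obstacle here; the only points requiring a little care are the identification $U_{\vec\bullet}^i\otimes V_{\vec\bullet}^i=H^0(\bullet(L_i\boxtimes M_i))$, which is already recorded in Definition \ref{tensor_definition}, and the fact that the box product of big classes is big, which is standard. The substantive content is all in Theorem \ref{thm_zhuang} and Corollary \ref{cont_corollary}, so this corollary is a short formal consequence.
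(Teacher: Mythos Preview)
Your proof is correct and follows exactly the approach the paper indicates: it reduces to the rational case via Theorem \ref{thm_zhuang} applied to complete linear series (using the identification from Definition \ref{tensor_definition}), and then invokes the continuity of $\delta$ from Corollary \ref{cont_corollary} to pass to arbitrary big $\R$-classes. The paper itself gives no further details beyond declaring the result an immediate consequence of these two ingredients, and your write-up fills in precisely those routine steps.
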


\begin{proof}[Proof of Theorem \ref{thm_zhuang}]
We heavily follow the argument in \cite[\S 3]{zhuang}. 
We firstly remark that 
$c \left(U_{\vec{\bullet}}\otimes V_{\vec{\bullet}}\right)=\left(c U_{\vec{\bullet}}\right)
\otimes\left(c V_{\vec{\bullet}}\right)$ holds as Veronese equivalence classes of 
graded linear series for any $c\in\Q_{>0}$. 
Thus, by Proposition \ref{delta-basic_proposition} 
\eqref{delta-basic_proposition4} and \eqref{delta-basic_proposition11}, 
we may assume that $c_1=\cdots=c_k=1$. 
By Proposition \ref{delta-basic_proposition} \eqref{delta-basic_proposition5}, 
we may assume that 
$U_{\vec{\bullet}}^i$ (resp., $V_{\vec{\bullet}}^i$) are $\Z_{\geq 0}^{r_i}$-graded 
(resp., $\Z_{\geq 0}^{s_i}$-graded) and $L_j^i$ (resp. $M_j^i$) are Cartier divisors. 
Set $\delta:=\delta\left(X, B;\left\{W_{\vec{\bullet}}^i\right\}_{i=1}^k\right)$, 
$\delta_1:=\delta\left(X_1, B_1;\left\{U_{\vec{\bullet}}^i\right\}_{i=1}^k\right)$ and 
$\delta_2:=\delta\left(X_2, B_2;\left\{V_{\vec{\bullet}}^i\right\}_{i=1}^k\right)$. 

We firstly show that $\delta\leq\min\{\delta_1,\delta_2\}$. For any 
$\varepsilon\in\Q_{>0}$, there exists a prime divisor $F_1$ over $X_1$ such that 
\[
\frac{A_{X_1,B_1}(F_1)}{\sum_{i=1}^k S\left(U_{\vec{\bullet}}^i; F_1\right)}
<\delta_1+\varepsilon
\]
holds. Take any resolution $\sigma_1\colon\tilde{X}_1\to X_1$ of 
singularities with $F_1\subset\tilde{X}_1$, and set $\tilde{X}:=\tilde{X}_1\times X_2
\xrightarrow{\sigma}X$ and $E_1:=\pi_1^*F_1\subset\tilde{X}$, where 
$\pi_1\colon\tilde{X}\to \tilde{X}_1$ be the $1$st projection. 
For any $1\leq i\leq k$, $l\in\Z_{>0}$, $\vec{a}\in\Z_{\geq 0}^{r_i-1}$, 
$\vec{b}\in\Z_{\geq 0}^{s_i-1}$ and $\lambda\in\R_{\geq 0}$, we have the equality 
\[
\sF_{E_1}^\lambda W_{l,\vec{a},\vec{b}}^i=\left(\sF_{F_1}^\lambda U_{l,\vec{a}}^i\right)
\otimes V_{l,\vec{b}}^i.
\]
This immediately implies that 
\[
S_l\left(W_{\vec{\bullet}}^i; E_1\right)
=\frac{1}{h^0\left(U_{l,\vec{\bullet}}^i\right)h^0\left(V_{l,\vec{\bullet}}^i\right)}
\int_0^\infty\sum_{\vec{a}\in\Z_{\geq 0}^{r_i-1}}\sum_{\vec{b}\in\Z_{\geq 0}^{s_i-1}}
\dim\sF_{F_1}^{l t}U_{l,\vec{a}}^i\dim V_{l,\vec{b}}^i dt
=S_l\left(U_{\vec{\bullet}}^i; F_1\right).
\]
Thus, we get 
\[
\frac{A_{X,B}(E_1)}{\sum_{i=1}^k S\left(W_{\vec{\bullet}}^i; E_1\right)}
=\frac{A_{X_1,B_1}(F_1)}{\sum_{i=1}^k S\left(U_{\vec{\bullet}}^i; F_1\right)}
<\delta_1+\varepsilon, 
\]
which gives the inequality $\delta\leq \delta_1$. Thus we get the desired inequality
$\delta\leq\min\{\delta_1,\delta_2\}$. 

We show the reverse inequality $\delta\geq\min\{\delta_1,\delta_2\}$. 
Let $\pi_j\colon X\to X_j$ be the $j$th projection. 
Take any 
prime divisor $E$ over $X$ and any 
$c\in\Q_{>0}$ with $c<\min\{\delta_1,\delta_2\}$. It is enough to show the inequality 
\[
A_{X, B}(E)> c\sum_{i=1}^k S_l\left(W_{\vec{\bullet}}^i; E\right)
\] 
for any $l\gg 0$. 
For simplicity, let us set 
$P_{l,\vec{a}}^i:=\dim U_{l,\vec{a}}^i$, 
$Q_{l,\vec{b}}^i:=\dim V_{l,\vec{b}}^i$, 
$P_l^i:=h^0\left(U_{l,\vec{\bullet}}^i\right)$, 
$Q_l^i:=h^0\left(V_{l,\vec{\bullet}}^i\right)$, 
and 
\[
\left\{\vec{c}_1^i,\dots,\vec{c}_{Q_l^i}^i\right\}:=
\left\{\left(\vec{b},k\right)\,\,\Big|\,\,
\vec{b}\in\Z_{\geq 0}^{s_i-1}\text{ with }Q_{l,\vec{b}}^i\neq 0, 
\,\,1\leq k\leq Q_{l,\vec{b}}^i\right\}.
\]
Note that $h^0\left(W_{l,\vec{\bullet}}^i\right)=P_l^i Q_l^i$ holds (see Example 
\ref{interior_example}). 

Let us consider the case $\pi_2\left(C_X(E)\right)=X_2$. 
For any $1\leq i\leq k$, $\vec{a}\in\Z_{\geq 0}^{r_i-1}$ and 
$\vec{b}\in\Z_{\geq 0}^{s_i-1}$, let us consider the basis type filtration $\sG'$ 
of $V_{l,\vec{b}}^i$ associated to general points $x_1,\dots,x_{Q_{l,\vec{b}}^i}\in X_2$ 
of type (I) in the sense of Example \ref{filter-system_example} 
\eqref{filter-system_example2}, and let $\sG$ be the filtration of 
$W_{l,\vec{a},\vec{b}}^i$ defined by $\sG'$, i.e., 
$\sG^\lambda W_{l,\vec{a},\vec{b}}^i:=U_{l,\vec{a}}^i\otimes
{\sG'}^\lambda V_{l,\vec{b}}^i$. Take a basis 
\[
\left\{f_{\vec{a},\vec{b},j,k'}^i\right\}_{\substack{1\leq j\leq P_{l,\vec{a}}^i \\
1\leq k'\leq Q_{l,\vec{b}}^i}}
\]
of $W_{l,\vec{a},\vec{b}}^i$ compatible with $\sF_E$ and $\sG$ such that 
the image of $\{f_{\vec{a},\vec{b},j,k'}^i\}_{1\leq j\leq P_{l,\vec{a}}^i}$ on 
$U_{l,\vec{a}}^i\otimes\Bbbk(x_{k'})$ forms a basis for any 
$\vec{a}\in\Z_{\geq 0}^{r_i-1}$, $\vec{b}\in\Z_{\geq 0}^{s_i-1}$ and 
$1\leq k\leq Q_{l,\vec{b}}^i$. 
Take a general point $x\in X_2$ and let us set 
$X_x:=\pi_2^{-1}(x)\simeq X_1$, $B_x:=B|_{X_x}$. Set 
\[
B_{\vec{a},\vec{b},k'}^i:=\sum_{j=1}^{P_{l,\vec{a}}^i}\left(f_{\vec{a},\vec{b},j,k'}^i=0\right).
\]
Then 
\[
D^i:=\frac{1}{l P_l^i Q_l^i}\sum_{\vec{a}\in\Z_{\geq 0}^{r_i-1}}
\sum_{\left(\vec{b},k'\right)\in\{\vec{c}_1^i,\dots,\vec{c}_{Q_l^i}^i\}}
B_{\vec{a},\vec{b},k'}^i
\]
is an $l$-basis type $\Q$-divisor of $W_{\vec{\bullet}}^i$ with 
$\ord_E D^i=S_l\left(W_{\vec{\bullet}}^i; E\right)$. Since $x\in X_2$ is general, 
for any $1\leq h\leq Q_l^i$, 
\[
D_{x,\vec{c}_h^i}^i:=\frac{1}{l P_l^i}\sum_{\vec{a}\in\Z_{\geq 0}^{r_i-1}}
\left(B_{\vec{a},\vec{c}_h^i}^i\right)|_{X_x}
\]
is an $l$-basis type $\Q$-divisor of $U_{\vec{\bullet}}^i$ on $X_x\simeq X_1$. 
Note that 
\[
\sum_{i=1}^k D^i|_{X_x}=
\frac{1}{Q_l^1\dots Q_l^k}\sum_{1\leq h_1\leq Q_l^1}
\cdots\sum_{1\leq h_k\leq Q_l^k
}
\left(D_{x,\vec{c}_{h_1}^1}^1+\cdots+D_{x,\vec{c}_{h_k}^k}^k\right)
\]
and the pair 
\[
\left(X_x, B_x+c\sum_{i=1}^k D_{x,\vec{c}_{h_i}^i}^i\right)
\]
is klt for any $l\gg 0$ and any $h_1,\dots,h_k$, since $c<\delta_1$. 
This implies that the pair 
\[
\left(X_x, B_x+c\sum_{i=1}^kD^i|_{X_x}\right)
\] 
is also klt. By inversion of adjunction, 
the pair $\left(X, B+ c\sum_{i=1}^k D^i\right)$ is klt around a neighborhood of 
$X_x$. Therefore we get the desired inequality
\[
A_{X, B}(E)> c\sum_{i=1}^k \ord_E D^i
=c\sum_{i=1}^k S_l\left(W_{\vec{\bullet}}^i; E\right).
\] 

Let us consider the remaining case $\pi_2\left(C_X(E)\right)\subsetneq X_2$. 
Take a resolution $\sigma_2\colon\tilde{X}_2\to X_2$ of singularities and a 
prime divisor $F_2\subset\tilde{X}_2$ such that the restriction 
$\ord_E|_{\Bbbk(X_2)}$ to 
the function field $\Bbbk(X_2)$ of $X_2$ is proportional to $\ord_{F_2}$. Set 
$\tilde{X}:=X_1\times\tilde{X}_2$, $E_2:=\pi_2^*(F_2)\subset\tilde{X}$ and 
$\sigma\colon\tilde{X}\to X$. 
For any $1\leq i\leq k$, $\vec{a}\in\Z_{\geq 0}^{r_i-1}$ and 
$\vec{b}\in\Z_{\geq 0}^{s_i-1}$, let us consider the basis type filtration $\sG'$ 
of $V_{l,\vec{b}}^i$ associated to general points $x_1,\dots,x_{Q_{l,\vec{b}}^i}\in F_2
\subset\tilde{X}_2$ 
of type (II) in the sense of Example \ref{filter-system_example} 
\eqref{filter-system_example2}, and let $\sG$ be the filtration of 
$W_{l,\vec{a},\vec{b}}^i$ defined by $\sG'$. 
Note that $\sG$ refines $\sF_{E_2}$. 
Take a basis 
\[
\left\{f_{\vec{a},\vec{b},j,k'}^i\right\}_{\substack{1\leq j\leq P_{l,\vec{a}}^i \\
1\leq k'\leq Q_{l,\vec{b}}^i}}
\]
of $W_{l,\vec{a},\vec{b}}^i$ compatible with $\sF_E$ and $\sG$ such that, 
for any 
$\vec{a}\in\Z_{\geq 0}^{r_i-1}$, $\vec{b}\in\Z_{\geq 0}^{s_i-1}$ and 
$1\leq k'\leq Q_{l,\vec{b}}^i$, there exists $m\in\Z_{\geq 0}$ such that 
$\ord_{E_2}\left(f_{\vec{a},\vec{b},j,k'}^i\right)=m$ for any $1\leq j\leq P_{l,\vec{a}}^i$ 
and the image of 
$\{\pi_2^*f^{-m}\sigma^*f_{\vec{a},\vec{b},j,k'}^i\}_{1\leq j\leq P_{l,\vec{a}}^i}$ on 
$U_{l,\vec{a}}^i\otimes\Bbbk(x_{k'})$ forms a basis, where 
$f\in H^0\left(\tilde{X}_2,\sO_{\tilde{X}_2}(F_2)\right)$ is the defining equation of 
$F_2\subset\tilde{X}_2$. 
Take a general point $x\in F_2\subset\tilde{X}_2$ and set 
\[
K_{\tilde{X}}+\tilde{B}+\left(1-A_{X_2,B_2}(F_2)\right)E_2=\sigma^*(K_X+B),
\]
$\tilde{X}_x:=\pi_2^{-1}(x)$, and 
$B_x:=\tilde{B}|_{X_x}$. 
Set 
\[
B_{\vec{a},\vec{b},k'}^i:=\sum_{j=1}^{P_{l,\vec{a}}^i}\left(f_{\vec{a},\vec{b},j,k'}^i=0\right).
\]
Then 
\[
D^i:=\frac{1}{l P_l^i Q_l^i}\sum_{\vec{a}\in\Z_{\geq 0}^{r_i-1}}
\sum_{\left(\vec{b},k'\right)\in\{\vec{c}_1^i,\dots,\vec{c}_{Q_l^i}^i\}}
B_{\vec{a},\vec{b},k'}^i
\]
is an $l$-basis type $\Q$-divisor of $W_{\vec{\bullet}}^i$ with 
$\ord_E D^i=S_l\left(W_{\vec{\bullet}}^i; E\right)$ and  
$\ord_{E_2} D^i=S_l\left(W_{\vec{\bullet}}^i; E_2\right)=
S_l\left(V_{\vec{\bullet}}^i; F_2\right)$. 
Write 
\[
\sigma^*D^i=S_l\left(V_{\vec{\bullet}}^i; F_2\right)E_2+
\frac{1}{l P_l^i Q_l^i}\sum_{\vec{a\in\Z_{\geq 0}^{r_i-1}}}\sum_{h=1}^{Q_l^i}
{B'}_{\vec{a},\vec{c}_h^i}^i, 
\]
where $\sigma^*B^i_{\vec{a},\vec{c}_h^i}$ and ${B'}_{\vec{a},\vec{c}_h^i}^i$ may only 
differ along $E_2$. Since $x\in F_2$ is general, for any $1\leq h\leq Q_l^i$, 
\[
D_{x,\vec{c}_h^i}^i:=\frac{1}{l P_l^i}\sum_{\vec{a}\in\Z_{\geq 0}^{r_i-1}}
\left({B'}_{\vec{a},\vec{c}_h^i}^i\right)|_{\tilde{X}_x}
\]
is an $l$-basis type $\Q$-divisor of $U_{\vec{\bullet}}^i$ on $\tilde{X}_x$. 
Since $c<\delta_1$, for any $l\gg 0$ and for any $h_1,\dots,h_k$, the pair 
\[
\left(\tilde{X}, \tilde{B}_x+c\sum_{i=1}^kD^i_{x,\vec{c}_{h_i}^i}\right)
\]
is klt. Same as the previous argument, the pair 
\[
\left(\tilde{X}_x,\tilde{B}_x+c\sum_{i=1}^k\frac{1}{l P_l^i Q_l^i}
\sum_{\vec{a}\in\Z_{\geq 0}^{r_i-1}}\sum_{h=1}^{Q_l^i}
\left({B'}_{\vec{a},\vec{c}_h^i}^i\right)|_{\tilde{X}_x}\right)
\]
is also klt. By inversion of adjunction, the pair 
\[
\left(\tilde{X},\tilde{B}+E_2+c\sum_{i=1}^k\frac{1}{l P_l^i Q_l^i}
\sum_{\vec{a}\in\Z_{\geq 0}^{r_i-1}}\sum_{h=1}^{Q_l^i}
{B'}_{\vec{a},\vec{c}_h^i}^i\right)
\]
is plt around a neighborhood of $\tilde{X}_x$. For $l\gg 0$, we know that  
\[
1-A_{X_2,B_2}(F_2)+c\sum_{i=1}^k S_l\left(V_{\vec{\bullet}}^i;F_2\right)<1
\]
since $c<\delta_2$. This implies that the pair 
\[
\left(\tilde{X},\tilde{B}+\left(1-A_{X_2,B_2}(F_2)\right)E_2+c\sum_{i=1}^k
\sigma^*D^i\right)
\]
is sub-klt around a neighborhood of $\tilde{X}_x$. 
This gives the desired inequality 
\[
A_{X, B}(E)> c\sum_{i=1}^k \ord_E D^i
=c\sum_{i=1}^k S_l\left(W_{\vec{\bullet}}^i; E\right)
\] 
and then we get the assertion. 
\end{proof}

\section{Toward Abban--Zhuang's methods}\label{AZ_section}

In this section, we assume that the characteristic of $\Bbbk$ is zero. 
Let $X$ be an $n$-dimensional projective variety, 
let $B$ be an effective $\Q$-Weil 
divisor on $X$ and let $\eta\in X$ be a scheme-theoretic point such that 
$(X, B)$ is klt at $\eta$. We set $Z:=\overline{\{\eta\}}\subset X$. 
Take any $c_1,\dots,c_k\in\R_{>0}$. 
For any $1\leq i\leq k$, let $V_{\vec{\bullet}}^i$ be 
the Veronese equivalence class of an $(m\Z_{\geq 0})^{r_i}$-graded linear series 
$V_{m\vec{\bullet}}^i$ on $X$ associated to 
$L_1^i,\dots,L_{r_i}^i\in\CaCl(X)\otimes_\Z\Q$ which has bounded support and 
contains an ample series.

We recall the notion introduced in \cite[Definition 11.10]{r3d28}. 

\begin{definition}\label{delta-ref_definition}
Let $\sigma\colon X'\to X$ be a projective birational morphism with $X'$ normal, 
let $Y\subset X'$ be a prime $\Q$-Cartier divisor on $X'$ and let $e\in\Z_{>0}$ 
with $e Y$ Cartier. 
For any $l\in m\Z_{>0}$ with 
$\prod_{i=1}^k h^0\left(V_{l,m\vec{\bullet}}^{i, (Y,e)}\right)\neq 0$, we set 
\[
\delta_{\eta,l}^{(Y,e)}\left(X,B; \left\{c_i\cdot V_{m\vec{\bullet}}^i\right\}_{i=1}^k\right)
:=\inf_{\substack{{D'}^i\text{ $l$-$(Y,e)$-subbasis type} \\
\text{$\Q$-divisor of }V_{m\vec{\bullet}}^i\\ \text{for all }1\leq i\leq k}}
\lct_\eta\left(X, B; \sum_{i=1}^k c_i{D'}^i\right).
\]
\end{definition}

The proof of the following proposition is essentially same as the proof of 
Proposition \ref{alpha-delta_proposition}. More precisely, 
we apply Lemma \ref{bt-div_lemma} 
\eqref{bt-div_lemma2}. We omit the proof. See \cite[Proposition 11.13 (1)]{r3d28} 
in detail. 

\begin{proposition}[{\cite[Proposition 11.13 (1)]{r3d28}}]\label{delta-ref_proposition}
We have 
\[
\lim_{l\in m\Z_{>0}}
\delta_{\eta,l}^{(Y,e)}\left(X,B; \left\{c_i\cdot V_{m\vec{\bullet}}^i\right\}_{i=1}^k\right)
=\delta_\eta\left(X, B;\left\{c_i\cdot V_{\vec{\bullet}}^i\right\}_{i=1}^k\right).
\]
\end{proposition}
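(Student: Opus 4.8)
The plan is to follow the proof of Proposition \ref{alpha-delta_proposition}, replacing the role of Lemma \ref{bt-div_lemma} \eqref{bt-div_lemma1} with Lemma \ref{bt-div_lemma} \eqref{bt-div_lemma2}. First I would record the two elementary inequalities that pin the limit from both sides. For the "$\leq$" direction, observe that for each $l$ and each prime divisor $E$ over $X$ with $\eta\in C_X(E)$, Remark \ref{bt-div_remark} \eqref{bt-div_remark2} identifies $\ord_Y$ of an $l$-$(Y,e)$-subbasis type $\Q$-divisor of $V_{m\vec{\bullet}}^i$ with $S_l\bigl(V_{m\vec{\bullet}}^{i,(Y,e)};\bar{\sF}_Y\bigr)$ for a suitable choice, and more generally, choosing the subbasis compatible with $\sF_E$ one gets a basis-type divisor $D_Y^i$ on $Y$ with $\ord_E D_Y^i = S_l\bigl(V_{m\vec{\bullet}}^{i,(Y,e)};\bar{\sF}_Y, \sF_E\bigr)$ after subtracting the $\ord_Y$-part; together with the log-discrepancy bookkeeping via adjunction this shows $\delta_{\eta,l}^{(Y,e)}\bigl(X,B;\{c_i V_{m\vec{\bullet}}^i\}\bigr)$ is bounded above by a quantity converging to $\inf_{E}A_{X,B}(E)/\sum_i c_i S(V_{\vec{\bullet}}^i;E)=\delta_\eta\bigl(X,B;\{c_i V_{\vec{\bullet}}^i\}\bigr)$. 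Dually, $\liminf$ of the same quantity is bounded below, up to a factor $(1+\varepsilon)^{-1}$, by $\delta_\eta$, using Lemma \ref{bt-div_lemma} \eqref{bt-div_lemma1} applied to the refined series $V_{m\vec{\bullet}}^{i,(Y,e)}$ to control $S_l$ by $(1+\varepsilon)S$, then Lemma \ref{bt-div_lemma} \eqref{bt-div_lemma2} to rewrite $S\bigl(V_{\vec{\bullet}}^{i,(Y)};\bar{\sF}\bigr)=S\bigl(V_{\vec{\bullet}}^i;\sF\bigr)$ for the relevant filtrations.

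Next I would make the reduction from exponent $e$ to exponent $1$ explicit: by Definition \ref{refinement_definition}, $\bigl(V_{m\vec{\bullet}}^{(Y,1)}\bigr)^{(\vec{e})}=V_{m\vec{\bullet}}^{(Y,e)}$ with $\vec{e}=(1,\dots,1,e)$, and Remark \ref{S-T_remark} \eqref{S-T_remark2} shows this rescaling of the last coordinate leaves $T$ and $S$ unchanged because the filtration $\sF_E$ only sees the first coordinate $a_1$; so passing to $e$ does not affect any of the asymptotic $S$-invariants, only the finite-level combinatorics, which is harmless in the limit. This lets me work with $e=1$ throughout, i.e.\ with the honest refinement $V_{\vec{\bullet}}^{(Y)}$ of $\sigma^*V_{\vec{\bullet}}$ by $Y$ in the sense of Definition \ref{refinement_definition}.

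The step I expect to be the main obstacle is the precise translation between $l$-$(Y,e)$-subbasis type $\Q$-divisors on $X$ and ordinary $l$-basis type $\Q$-divisors on $Y$, carrying the log canonical threshold through adjunction correctly. Concretely, given subbases $\{s^i_1,\dots,s^i_{M^i_{\vec a}}\}$ of $V^i_{l,m\vec a}$ compatible with $\sF_Y$, the associated divisor $D'^i$ decomposes, after pullback to $X'$, as $S_l\bigl(V_{m\vec{\bullet}}^{i,(Y,e)};\bar{\sF}_Y\bigr)Y$ plus a divisor $D''^i$ with $Y\not\subset\Supp D''^i$, and $D_Y^i:=D''^i|_Y$ is an $l$-basis type $\Q$-divisor of $V_{m\vec{\bullet}}^{i,(Y,e)}$ (this is Remark \ref{bt-div_remark} \eqref{bt-div_remark2}). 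One then needs: $\lct_\eta\bigl(X,B;\sum_i c_i D'^i\bigr)$ equals, in the relevant range and after the usual inversion-of-adjunction / plt-type argument (exactly as in the proof of \cite[Proposition 11.13 (1)]{r3d28}), the corresponding local log canonical threshold computed on $(Y,B_Y)$ against $\sum_i c_i D_Y^i$, up to the $A_{X,B}(Y)$-shift. Assembling these, the infimum over subbases of the $X$-side threshold matches $\delta_{\eta',l}$ of the pair $\bigl(Y,B_Y;\{c_i V_{\vec{\bullet}}^{i,(Y)}\}\bigr)$ for $\eta'$ over $\eta$, and Proposition \ref{alpha-delta_proposition} \eqref{alpha-delta_proposition2} applied on $Y$ together with Lemma \ref{bt-div_lemma} \eqref{bt-div_lemma2} finishes the identification with $\delta_\eta\bigl(X,B;\{c_i V_{\vec{\bullet}}^i\}\bigr)$. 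I would phrase the final argument as the two-sided squeeze $\limsup \leq \delta_\eta \leq (1+\varepsilon)\liminf$ for every $\varepsilon\in\Q_{>0}$, exactly mirroring the structure of the proof of Proposition \ref{alpha-delta_proposition}, and then let $\varepsilon\to 0$.
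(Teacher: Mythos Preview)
Your two-sided squeeze skeleton and your $\liminf$ bound are right: apply Lemma \ref{bt-div_lemma} \eqref{bt-div_lemma1} to the refined series $V_{m\vec{\bullet}}^{i,(Y,e)}$ and then Lemma \ref{bt-div_lemma} \eqref{bt-div_lemma2} to rewrite $S\bigl(V_{\vec{\bullet}}^{i,(Y)};\bar{\sF}_E\bigr)=S\bigl(V_{\vec{\bullet}}^{i};E\bigr)$. That part matches the paper's intended argument exactly.

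The problem is your treatment of the $\limsup$ bound and the entire ``main obstacle'' paragraph. You pass to $Y$, restrict divisors to get $D_Y^i$, invoke adjunction and inversion of adjunction, and ultimately try to identify $\delta_{\eta,l}^{(Y,e)}(X,B;\cdot)$ with $\delta_{\eta',l}(Y,B_Y;\cdot)$ and then the latter's limit with $\delta_\eta(X,B;\cdot)$. This is the machinery of Theorem \ref{AZ_thm}, not of the present proposition. It fails for two reasons. First, Definition \ref{delta-ref_definition} does not assume $Y$ is plt-type over $(X,B)$, so the pair $(Y,B_Y)$ and the adjunction apparatus are simply not available. Second, even under a plt hypothesis, Theorem \ref{AZ_thm} gives only an \emph{inequality} between $\delta_\eta(X,B;\cdot)$ and the $Y$-side invariant (with the extra term $A_{X,B}(Y)/\sum c_i S(V^i;Y)$), never the equality you assert; and Lemma \ref{bt-div_lemma} \eqref{bt-div_lemma2} is a statement about $S$-invariants of filtrations, not about $\delta$-invariants of $(Y,B_Y)$ versus $(X,B)$.

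The actual $\limsup$ argument stays entirely on $X$ and is the exact analogue of the $\delta$-case in Proposition \ref{alpha-delta_proposition}. For any prime divisor $E$ over $X$ with $\eta\in C_X(E)$, choose each $D'^i$ to be an $l$-$(Y,e)$-subbasis type $\Q$-divisor \emph{compatible with $\sF_E$}. Then (this is the subbasis analogue of Remark \ref{bt-div_remark} \eqref{bt-div_remark1}, contained in \cite[Proposition 11.9]{r3d28}) one has $\ord_E D'^i = S_l\bigl(V_{m\vec{\bullet}}^{i,(Y,e)};\bar{\sF}_E\bigr)$, whence
\[
\delta_{\eta,l}^{(Y,e)}\bigl(X,B;\{c_i V_{m\vec{\bullet}}^i\}\bigr)
\;\le\;
\frac{A_{X,B}(E)}{\sum_{i=1}^k c_i\, S_l\bigl(V_{m\vec{\bullet}}^{i,(Y,e)};\bar{\sF}_E\bigr)}
\;\xrightarrow[l\to\infty]{}\;
\frac{A_{X,B}(E)}{\sum_{i=1}^k c_i\, S\bigl(V_{\vec{\bullet}}^{i};E\bigr)},
\]
using Lemma \ref{bt-div_lemma} \eqref{bt-div_lemma2} (and Remark \ref{S-T_remark} \eqref{S-T_remark2} to pass from exponent $e$ to $1$) in the last step. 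Taking the infimum over $E$ gives $\limsup\le\delta_\eta$. No restriction to $Y$, no $(Y,B_Y)$, no adjunction.
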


Here is an analogue of \cite[Theorem 3.2]{AZ}. We omit the proof, since 
the proof is essentially same as the proof of \cite[Theorem 11.14]{r3d28}
and applying Propositions \ref{delta-basic_proposition} 
\eqref{delta-basic_proposition5} and \ref{delta-ref_proposition}. 

\begin{thm}[{cf.\ \cite[Theorem 3.2]{AZ} 
and \cite[Theorem 11.14]{r3d28}}]\label{AZ_thm}
Let $Y$ be a primitive prime divisor over $X$ and let $\sigma\colon\tilde{X}\to X$ 
be the associated prime blowup. Assume that there exists an open 
subscheme $\eta\in U\subset X$ such that $Y$ is a plt-type prime divisor 
over $(U, B|_U)$. Let $(Y, B_Y)$ be the associated klt pair over $U$ (see Definition 
\ref{primitive_definition} \eqref{primitive_definition3}). Let 
$Z_0\subset Z\subset X$ be a closed subvariety with $Z_0\subset C_X(Y)$ and 
$Z_0\cap U\neq\emptyset$. Let $\eta_0\in X$ be the generic point of $Z_0$. 
\begin{enumerate}
\renewcommand{\theenumi}{\arabic{enumi}}
\renewcommand{\labelenumi}{(\theenumi)}
\item\label{AZ_thm1}
If $\eta\not\in C_X(Y)$, then we have 
\[
\delta_\eta\left(X,B;\left\{c_i V_{\vec{\bullet}}^i\right\}_{i=1}^k\right)\geq
\inf_{\eta'\in\tilde{X}; \sigma(\eta')=\eta_0}\delta_{\eta'}\left(Y, B_Y; 
\left\{c_i V_{\vec{\bullet}}^{i,(Y)}\right\}_{i=1}^k\right).
\]
\item\label{AZ_thm2}
If $\eta\in C_X(Y)$, then we have 
\[
\delta_\eta\left(X,B;\left\{c_i V_{\vec{\bullet}}^i\right\}_{i=1}^k\right)\geq
\min\left\{\frac{A_{X,B}(Y)}{\sum_{i=1}^kc_i S\left(V_{\vec{\bullet}}^i;Y\right)}, \quad
\inf_{\eta'\in\tilde{X}; \sigma(\eta')=\eta_0}\delta_{\eta'}\left(Y, B_Y; 
\left\{c_i V_{\vec{\bullet}}^{i,(Y)}\right\}_{i=1}^k\right)\right\}.
\]
If moreover the equality holds and there exists a prime divisor $E$ over $X$ with 
$Z\subset C_X(E)$, $C_{\tilde{X}}(E)\subset Y$ and 
\[
\delta_\eta\left(X,B;\left\{c_i V_{\vec{\bullet}}^i\right\}_{i=1}^k\right)
=\frac{A_{X,B}(E)}{\sum_{i=1}^k c_i S\left(V_{\vec{\bullet}}^i; E\right)}, 
\]
then the equality 
\[
\delta_\eta\left(X,B;\left\{c_i V_{\vec{\bullet}}^i\right\}_{i=1}^k\right)
=\frac{A_{X,B}(Y)}{\sum_{i=1}^k c_i S\left(V_{\vec{\bullet}}^i; Y\right)}, 
\]
holds. 
\end{enumerate}
\end{thm}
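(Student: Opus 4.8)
The plan is to follow the strategy of \cite[Theorem 3.2]{AZ} and \cite[Theorem 11.14]{r3d28}, adapting the argument to the coupled setting by invoking Propositions \ref{delta-basic_proposition} \eqref{delta-basic_proposition5} and \ref{delta-ref_proposition} at the places where the non-coupled proof uses their analogues. First I would reduce to the case where all $V_{\vec{\bullet}}^i$ are $\Z_{\geq 0}^{r_i}$-graded with $L_j^i$ Cartier divisors, using Proposition \ref{delta-basic_proposition} \eqref{delta-basic_proposition5}; this is harmless since the stability threshold is insensitive to passing to Veronese-type subseries. After this reduction, fix a large and divisible $l\in\Z_{>0}$ and work with $l$-basis type and $l$-$(Y,e)$-subbasis type $\Q$-divisors, aiming to compare $\delta_{\eta,l}$ with the quantities on the right-hand side.

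The core of the argument is the inequality in \eqref{AZ_thm2}, which is proved by the following mechanism. Given an arbitrary prime divisor $E$ over $X$ with $\eta\in C_X(E)$, one wants to bound $A_{X,B}(E)/\sum_i c_i S_l(V_{m\vec{\bullet}}^i;E)$ from below. The key step is to choose, for each $i$, an $l$-basis type $\Q$-divisor $D^i$ of $V_{m\vec{\bullet}}^i$ that is simultaneously compatible with $\sF_E$ and with $\sF_Y$; by Remark \ref{bt-div_remark} \eqref{bt-div_remark2}, subtracting the $\ord_Y$-part of $\sigma^*D^i$ leaves an $l$-$(Y,e)$-subbasis type $\Q$-divisor whose restriction to $Y$ is an $l$-basis type $\Q$-divisor of the refinement $V_{m\vec{\bullet}}^{i,(Y,e)}$. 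Then one splits into two cases according to whether $\ord_Y$ applied to the total divisor $\sum_i c_i D^i$ (with the log-discrepancy normalization) exceeds $A_{X,B}(Y)$: in one case the destabilization is ``caught'' already by $Y$ itself, giving the term $A_{X,B}(Y)/\sum_i c_i S(V_{\vec{\bullet}}^i;Y)$; in the other case one applies inversion of adjunction along $Y$ together with Proposition \ref{delta-ref_proposition} to pass to $(Y,B_Y)$ and the refined series, giving the term $\inf_{\eta'}\delta_{\eta'}(Y,B_Y;\{c_i V_{\vec{\bullet}}^{i,(Y)}\}_{i=1}^k)$. Taking $l\to\infty$ via Lemma \ref{bt-div_lemma} \eqref{bt-div_lemma1} and Proposition \ref{delta-ref_proposition} converts the $l$-level inequalities into the asymptotic statement. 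Case \eqref{AZ_thm1} is the degenerate subcase where $\eta\notin C_X(Y)$, so the $Y$-term never contributes and only the second alternative survives.

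The main obstacle will be the equality-characterization in the last part of \eqref{AZ_thm2}. Here one assumes the inequality is an equality and that there is a prime divisor $E$ over $X$ computing $\delta_\eta$ with $C_{\tilde X}(E)\subset Y$; the goal is to deduce that $Y$ itself computes $\delta_\eta$. The plan is to run the two-case dichotomy above for this particular $E$: since $C_{\tilde X}(E)\subset Y$, the center of $E$ on $\tilde X$ is contained in $Y$, so $E$ ``lives over $Y$'' and its contribution is governed by the refinement $V_{\vec{\bullet}}^{i,(Y)}$; if the minimum on the right were achieved by the $\delta_{\eta'}(Y,B_Y;\dots)$ term strictly below the $Y$-term, one would be able to produce a divisor over $Y$ whose associated ratio is strictly smaller, contradicting that $E$ (equivalently its image on $Y$) computes the threshold — unless in fact the two terms coincide, which forces the $Y$-term to equal $\delta_\eta$. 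Making this dichotomy-chasing precise, and in particular verifying that ``$E$ computes $\delta_\eta$ on $X$'' transfers to ``the induced divisor computes the refined threshold on $Y$'' under the equality hypothesis, is the delicate bookkeeping step; the rest is a routine transcription of \cite[Theorem 11.14]{r3d28} with sums $\sum_i c_i(\cdot)$ in place of single invariants.
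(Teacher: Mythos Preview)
Your proposal is correct and takes essentially the same approach as the paper: the paper omits the proof entirely, stating only that it follows \cite[Theorem 11.14]{r3d28} verbatim with sums $\sum_i c_i(\cdot)$ in place of single invariants, after applying Propositions \ref{delta-basic_proposition} \eqref{delta-basic_proposition5} and \ref{delta-ref_proposition}. Your outline reproduces precisely this strategy, correctly identifying the reduction step, the choice of $l$-basis type divisors compatible with both $\sF_E$ and $\sF_Y$, the two-case dichotomy, and the passage to the limit via Proposition \ref{delta-ref_proposition}.
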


Assume that there exists a finite set $\Lambda_i$ and a decomposition 
\[
\Delta_{\Supp\left(V_{\vec{\bullet}}^{i,(Y)}\right)}=\overline{\bigcup_{\lambda\in\Lambda_i}
\Delta_{\Supp}^{i,\langle\lambda\rangle}}
\]
is given for any $1\leq i\leq k$. 
We consider $V_{\vec{\bullet}}^{i,(Y),\langle\lambda\rangle}$ in the sense of 
Definition \ref{interior_definition} \eqref{interior_definition4}. 
By Proposition \ref{delta-basic_proposition} \eqref{delta-basic_proposition9} and 
\eqref{delta-basic_proposition10}, we have 
\begin{eqnarray*}
\delta_{\eta'}\left(Y, B_Y; 
\left\{c_i V_{\vec{\bullet}}^{i,(Y)}\right\}_{i=1}^k\right)
&=&\delta_{\eta'}\left(Y, B_Y; 
\left\{c_i \frac{\vol\left(V_{\vec{\bullet}}^{i,(Y),\langle\lambda\rangle}\right)}{\vol
\left(V_{\vec{\bullet}}^{i,(Y)\rangle}\right)}
V_{\vec{\bullet}}^{i,(Y),\langle\lambda\rangle}\right\}_{1\leq i\leq k, \lambda\in\Lambda_i}\right)\\
&\geq&
\left(
\sum_{i=1}^k\sum_{\lambda\in\Lambda_i}
c_i \frac{\vol\left(V_{\vec{\bullet}}^{i,(Y),\langle\lambda\rangle}\right)}{\vol
\left(V_{\vec{\bullet}}^{i,(Y)}\right)}
\delta_{\eta'}\left(Y, B_Y; 
V_{\vec{\bullet}}^{i,(Y),\langle\lambda\rangle}\right)^{-1}
\right)^{-1}.
\end{eqnarray*}
Moreover, by Theorem \ref{adequate_thm} and 
Corollary \ref{adequate-divide_corollary}, we can estimate the values 
$\delta_{\eta'}\left(Y, B_Y; 
V_{\vec{\bullet}}^{i,(Y),\langle\lambda\rangle}\right)$, hence also 
the value $\delta_\eta\left(X, B;\left\{c_i V_{\vec{\bullet}}^i\right\}_{i=1}^k\right)$, 
in many situations. 

We end the article by seeing basic examples. 

\begin{example}[{cf.\ \cite[Corollary 2.17]{AZ}}]\label{curve_example}
Assume that $n=1$ and $\eta$ is a closed point. Set $b:=\ord_\eta B\in\Q\cap
 [0, 1)$. Consider $\R$-Cartier $\R$-divisors $L_1,\dots,L_k$ on $X$ with 
$\deg L_i=d_i\in\R_{>0}$. For any Cartier divisor $L$ on $X$ with $\deg L=1$, we 
know that 
\[
\delta_\eta\left(X,B; L\right)=\frac{1-b}{1/2}=2(1-b).
\]
Thus, by Proposition \ref{delta-basic_proposition}, we have
\[
\delta_\eta\left(X,B; \left\{c_iL_i\right\}_{i=1}^k\right)
=\frac{2(1-b)}{\sum_{i=1}^kc_i d_i}.
\]
\end{example}

\begin{example}[{cf.\ \cite[Corollary A.14]{RTZ}}]\label{hirzebruch_example}
Assume that $X=\pr_{\pr^1}\left(\sO\oplus\sO(m)\right)$ with $m\in\Z_{\geq 0}$ 
and $B=0$. Let $F$, $E\in\CaCl(X)$ be the class of a fiber of $X/\pr^1$, 
$(-m)$-curve, respectively. For any $1\leq i\leq k$, let us consider any  big 
$L_i:=a_i E+b_i F\in\CaCl(X)\otimes_\Z\Q$, i.e., $a_i>0$ and $b_i>0$. 
We compute the value $\delta\left(X;\left\{L_i\right\}_{i=1}^k\right)$. 
If $m=0$, i.e., if $X=\pr^1\times\pr^1$, then we have 
\begin{eqnarray*}
\delta\left(\pr^1\times\pr^1; \left\{a_i E+b_i F\right\}_{i=1}^k\right)
&=&\min\left\{\delta\left(\pr^1;\left\{a_i\cdot\sO(1)\right\}_{i=1}^k\right),
\quad\delta\left(\pr^1;\left\{b_i\cdot\sO(1)\right\}_{i=1}^k\right)\right\}\\
&=&\min\left\{\frac{2}{\sum_{i=1}^k a_i}, \quad\frac{2}{\sum_{i=1}^k b_i}\right\}
\end{eqnarray*}
by Corollary \ref{corollary_ziquan} and Proposition \ref{delta-basic_proposition} 
\eqref{delta-basic_proposition8}. From now on, assume that $m\geq 1$. 
For any $1\leq i\leq k$, let us set 
\begin{eqnarray*}
p_i:=\begin{cases}
a_i-\frac{b_i}{3m} & \text{if }m a_i\geq b_i, \\
\frac{a_i(3b_i-m a_i)}{3(2b_i-m a_i)} & \text{if }m a_i<b_i, 
\end{cases}\quad\quad
q_i:=\begin{cases}
\frac{b_i}{3} & \text{if }m a_i\geq b_i, \\
\frac{3b_i^2-3m a_i b_i+m^2 a_i^2}{3(2b_i-m a_i)} & \text{if }m a_i<b_i. 
\end{cases}
\end{eqnarray*}
Then we have 
\begin{eqnarray*}
p_i&=&S\left(L_i; E\right)=S\left(L_i; F'\triangleright F'\cap E\right), \\
q_i&=&S\left(L_i; F'\right)=S\left(L_i; E_\infty\triangleright F'\cap E_\infty\right)
=S\left(L_i; E\triangleright F'\cap E\right)
\end{eqnarray*}
for any $F'\in |F|$ and for any irreducible $E_\infty\in|E+m F|$ by 
Theorem \ref{toric-okounkov_thm} or Corollary \ref{ad-surface_corollary}. 
Thus we get the equality 
\[
\delta\left(X;\left\{L_i\right\}_{i=1}^k\right)=\min\left\{\frac{1}{\sum_{i=1}^k p_i}, 
\quad\frac{1}{\sum_{i=1}^k q_i}\right\}
\]
by Theorem \ref{AZ_thm}. 
\end{example}


\begin{thebibliography}{99}

\bibitem[ACC+23]{FANO}
C.\ Araujo, A-M.\ Castravet, I.\ Cheltsov, K.\ Fujita, A-S.\ Kaloghiros, 
J.\ Martinez-Garcia, C.\ Shramov, H.\ S\"uss and N.\ Viswanathan, 
\emph{The Calabi problem for Fano threefolds}, London Math.\ Soc.\ Lecture Note 
Ser., \textbf{485} Cambridge University Press, Cambridge, 2023, vii+441 pp. 

\bibitem[AZ22]{AZ}
H.\ Abban and Z.\ Zhuang, \emph{K-stability of Fano varieties via 
admissible flags}, Forum Math.\ Pi \textbf{10} (2022), no.\ e15, 1--43. 

\bibitem[AZ23]{AZ21}
H.\ Abban and Z.\ Zhuang, \emph{Seshadri constants and K-stability 
of Fano manifolds}, Duke Math.\ J. \textbf{172}(6) (2023), 1109--1144. 

\bibitem[BBJ21]{BBJ}
R.\ Berman, S.\ Boucksom and M.\ Jonsson, 
\emph{A variational approach to the Yau-Tian-Donaldson conjecture}, 
J.\ Amer.\ Math.\ Soc.\ \textbf{34} (2021), no.\ 3, 605--652. 

\bibitem[BC11]{BC}
S.\ Boucksom and H.\ Chen, \emph{Okounkov bodies of filtered linear series}, 
Compos.\ Math.\ \textbf{147} (2011), no.\ 4, 1205--1229. 

\bibitem[BCL14]{BCL}
S.\ Boucksom, S.\ Cacciola and A.\ Lopez, \emph{Augmented base loci and 
restricted volumes on normal varieties}, Math.\ Z.\ \textbf{278} (2014), no.\ 3–-4, 
979–-985.

\bibitem[BFJ09]{BFJ}
S.\ Boucksom, C.\ Favre and M.\ Jonsson, \emph{Differentiability of volumes of 
divisors and a problem of Teissier}, 
J.\ Algebraic Geom.\ \textbf{18} (2009), no.\ 2, 279--308. 

\bibitem[Bir17]{Birkar}
C.\ Birkar, \emph{The augmented base locus of real divisors over arbitrary fields}, 
Math.\ Ann.\ \textbf{368} (2017), 905--921. 

\bibitem[BJ20]{BJ}
H.\ Blum and M.\ Jonsson, \emph{Thresholds, valuations, and K-stability}, 
Adv.\ Math.\ \textbf{365} (2020), 107062, 57 pp. 

\bibitem[Bou12]{boucksom}
S.\ Boucksom, \emph{Corps d'Okounkov (d'apr\`es Okounkov, 
Lazarsfeld--Musta\c{t}\u{a} et Kaveh--Khovanskii)}, Ast\'erisque No.\ \textbf{361} 
(2014), Exp.\ No.\ 1059, vii, 1--41. 

\bibitem[CFKP23]{CFKP}
I.\ Cheltsov, K.\ Fujita, T.\ Kishimoto and J.\ Park, 
\emph{K-stable Fano $3$-folds in the families 2.18 and 3.4}, 
arXiv:2304.11334v1. 

\bibitem[CLS11]{CLS}
D.\ Cox, J.\ Little and H.\ Schenck, \emph{Toric varieties}, Graduate Studies in 
Mathematics, \textbf{124}. American Mathematical Society, Providence, RI, 2011. 

\bibitem[CMSB02]{CMSB}
K.\ Cho, Y.\ Miyaoka and N.\ I.\ Shepherd-Barron, \emph{Characterizations of 
projective spaces and applications to complex symplectic manifolds}, 
Higher dimensional birational geometry (Kyoto, 1997), 1--88, 
Adv.\ Stud.\ Pure Math.\ \textbf{35}, Math.\ Soc.\ Japan, Tokyo, 2002. 

\bibitem[Dem08]{Dem08}
J.-P.\ Demailly, Appendix to I.\ Cheltsov and C.\ Shramov's article. 
``Log canonical thresholds of smooth Fano threefolds” : 
\emph{On Tian's invariant and log canonical thresholds}, 
Russian Math.\ Surveys \textbf{63} (2008), no.\ 5, 945--950. 

\bibitem[Der16]{dervan}
R.\ Dervan, \emph{Alpha invariants and coercivity of the Mabuchi functional on 
Fano manifolds}, Ann.\ Fac.\ Sci.\ Toulouse Math.\ \textbf{25} (2016), no.\ 4, 
919--934. 

\bibitem[ELMNP06]{ELMNP-b}
L.\ Ein, R.\ Lazarsfeld, M.\ Musta\c{t}\u{a}, M.\ Nakamaye and M.\ Popa, 
\emph{Asymptotic invariants of base loci}, 
Ann.\ Inst.\ Fourier (Grenoble) \textbf{56} (2006), 1701–-1734.

\bibitem[ELMNP09]{ELMNP}
L.\ Ein, R.\ Lazarsfeld, M.\ Musta\c{t}\u{a}, M.\ Nakamaye and M.\ Popa, 
\emph{Restricted volumes and base loci of linear series}, Amer.\ J.\ Math.\ 
\textbf{131} (2009), no.\ 3, 607--651. 

\bibitem[FO18]{FO}
K.\ Fujita and Y.\ Odaka, \emph{On the K-stability of Fano varieties and 
anticanonical 
divisors}, Tohoku Math.\ J.\ \textbf{70} (2018), no.\ 4, 511--521. 

\bibitem[Fuj19]{pltK}
K.\ Fujita, \emph{Uniform K-stability and plt blowups of log Fano pairs}, 
Kyoto J.\ Math.\ \textbf{59} (2019), no.\ 2, 399--418. 

\bibitem[Fuj23]{r3d28}
K.\ Fujita, \emph{On K-stability for Fano threefolds of rank $3$ and degree $28$}, 
Int.\ Math.\ Res.\ Not.\ IMRN (2023), no.\ 15, 12601--12784. 

\bibitem[Has23]{hashimoto}
Y.\ Hashimoto, \emph{Anticanonically balanced metrics and Hilbert--Mumford 
criterion for the $\delta_m$-invariant of Fujita--Odaka}, 
Ann.\ Global Anal.\ Geom.\ \textbf{64}, (2023) no.\ 8, 40 pp.

\bibitem[HWN19]{HWN}
J.\ Hultgren and D.\ W.\ Nystr\"om, \emph{Coupled K\"ahler--Einstein metrics}, 
Int.\ Math.\ Res.\ Not.\ IMRN \textbf{21} (2019), 6765–-6796.

\bibitem[Ish04]{Ishii}
S.\ Ishii, \emph{Extremal functions and prime blow-ups}, 
Comm.\ Algebra \textbf{32} (2004), no.\ 3, 819--827.


\bibitem[HK00]{HK}
Y.\ Hu and S.\ Keel, \emph{Mori dream spaces and GIT}, 
Michigan Math.\ J.\ \textbf{48} (2000), 331--348.

\bibitem[Keb02]{kebekus}
S.\ Kebekus, \emph{Characterizing the projective space after Cho, Miyaoka and 
Shepherd-Barron}, Complex geometry (G\"ottingen, 2000), 147--155, Springer, 
Berlin, 2002.

\bibitem[KM98]{KoMo}
J.\ Koll{\'a}r and S.\ Mori, \emph{Birational geometry of algebraic varieties},
With the collaboration of C.\ H.\ Clemens and A.\ Corti. 
Cambridge Tracts in Math., \textbf{134},
Cambridge University Press, Cambridge, 1998.

\bibitem[Laz04]{L}
R.\ Lazarsfeld, \emph{Positivity in algebraic geometry, I: Classical setting: line bundles 
and linear series}, Ergebnisse der Mathematik und ihrer Grenzgebiete.\ (3) 
\textbf{48}, Springer, Berlin, 2004.

\bibitem[LM09]{LM}
R.\ Lazarsfeld and M.\ Musta\c{t}\u{a}, \emph{Convex bodies associated 
to linear series}, 
Ann.\ Sci.\ \'Ec.\ Norm.\ Sup\'er.\ \textbf{42} (2009), no.\ 5, 783--835. 

\bibitem[Lop15]{lopez}
A.\ Lopez, \emph{Augmented base loci and restricted volumes on normal varieties, II: 
The case of real divisors}, Math.\ Proc.\ Cambridge Philos.\ Soc.\ \textbf{159} 
(2015), no.\ 3, 517-–527

\bibitem[Nak04]{N}
N.\ Nakayama, \emph{Zariski-decomposition and abundance}, MSJ Memoirs, 
\textbf{14}. Mathematical Society of Japan, Tokyo, 2004. 

\bibitem[Oka16]{okawa}
S.\ Okawa, \emph{On images of Mori dream spaces}, Math.\ Ann.\ \textbf{364} 
(2016), no.\ 3-4, 1315--1342. 

\bibitem[RTZ21]{RTZ}
Y.\ Rubinstein, G.\ Tian and K.\ Zhang, \emph{Basis divisors and balanced metrics}, 
J.\ Reine Angew.\ Math.\ \textbf{778} (2021), 171--218. 

\bibitem[Xu25]{Xu}
C.\ Xu, \emph{K-stability of Fano varieties}, 
New Math. Monogr., \textbf{50} 
Cambridge University Press, Cambridge, 2025. xi+411 pp.

\bibitem[Zha21]{kewei}
K.\ Zhang, \emph{Continuity of delta invariants and twisted K\"ahler-Einstein 
metrics}, Adv.\ Math.\ \textbf{388} (2021), Paper No.\ 107888, 25 pp. 

\bibitem[Zha24]{kewei2}
K.\ Zhang, \emph{A quantization proof of the uniform Yau--Tian--Donaldson 
conjecture}, J.\ Eur.\ Math.\ Soc.\ \textbf{26} (2024), no.\ 12, 4763--4778. 

\bibitem[Zhu20]{zhuang}
Z.\ Zhuang, \emph{Product theorem for K-stability}, 
Adv.\ Math.\ \textbf{371} (2020), 107250, 18 pp. 



\end{thebibliography}
\end{document}